\definecolor{mediumblue}{rgb}{0.0, 0.0, 0.8}
\colorlet{darkgreen}{green!50!black}
\renewcommand*{\backreflastsep}{, }
\renewcommand*{\backreftwosep}{, }
\renewcommand*{\backref}[1]{}
\renewcommand*{\backrefalt}[4]{%
 \ifcase #1 No citations.
 \or [Page #2.]
 \else [Pages #2.]
 \fi%
}
\newcommand\mptn[2]{\mathscr{P}^{#1}_{#2}}
\renewcommand{\geq}{\geqslant}
\renewcommand{\leq}{\leqslant}
\renewcommand{\trianglerighteq}{\trianglerighteqslant}
\renewcommand{\trianglelefteq}{\trianglelefteqslant}
 \tikzset{wei/.style={red,double=pink,thick,double
distance=1.5pt}}
 \tikzset{wei2/.style={red,double=red!15!white,double
distance=0.5pt}}
\numberwithin{equation}{section}
\newtheorem{thm}{Theorem}[section]
\newtheorem{cor}[thm]{Corollary}
\newtheorem{lem}[thm]{Lemma}
\newtheorem{prop}[thm]{Proposition}
\newtheorem*{prop*}{Proposition}
\newtheorem*{thm*}{Theorem}
\newtheorem*{cor*}{Corollary}
\newtheorem*{conj*}{Conjecture}
\theoremstyle{remark}
\newtheorem*{rmk}{Remark}
\newtheorem{rem}[thm]{Remark}
\newtheorem*{Acknowledgements*}{Acknowledgements}
\theoremstyle{definition}
\newtheorem{defn}[thm]{Definition}
\newtheorem{eg}[thm]{Example}
\newcommand{\degr}{\mathrm{deg}}
\newcommand{\TL}{\mathrm{TL}}
\newcommand{\rad}{\mathrm{rad}}
\newcommand{\res}{\mathrm{res}}
\newcommand{\ik}{{k}}
\newcommand{\Std}{\operatorname{Std}}
\newcommand{\SStd}{\operatorname{SStd}}
\newcommand{\epsilonLIRONdontchange}{\epsilon}
\newcommand{\TSStd}{\operatorname{\mathcal{T}}}
\newcommand{\Shape}{\operatorname{Shape}} 
\newcommand{\Path}{\operatorname{Path}}
\newcommand{\la}{\lambda}
\newcommand{\Lead}{\operatorname{Lead}}
 \newcommand{\SSTS}{\mathsf{S}}  
\newcommand{\SSTT}{\mathsf{T}}  
\newcommand{\SSTU}{\mathsf{U}}  
\newcommand{\SSTV}{\mathsf{V}}  
 \newcommand{\SSTQ}{\mathsf{Q}}  
  \newcommand{\SSTR}{\mathsf{R}}  
\newcommand{\sts}{\mathsf{s}}  
\newcommand{\stt}{\mathsf{t}}  
\newcommand{\stu}{\mathsf{u}}  
\newcommand{\stv}{\mathsf{v}}  
\newcommand{\ZZ}{{\mathbb Z}}
\newcommand{\NN}{{\mathbb N}}
\newcommand{\g}{g}
\newcommand{\CC}{{\mathbb C}}
\newcommand{\RR}{{\mathbb R}}
\DeclareMathOperator{\Hom}{Hom}
\def\Mod{\textbf{-Mod}}
\newcommand\Dec[1][A]{\mathbf{D}_{#1}(t)}
\newcommand\Cart[1][A]{\mathbf{C}_{#1}(t)}
\let\gedom=\trianglerighteq
\let\gdom=\vartriangleright
\tikzset{
    ultra thin/.style= {line width=0.05pt},
    very thin/.style=  {line width=0.2pt},
    thin/.style=       {line width=0.1pt},
    semithick/.style=  {line width=0.6pt},
    thick/.style=      {line width=0.8pt},
    very thick/.style= {line width=1.2pt},
    ultra thick/.style={line width=1.6pt}
}
\newcommand\Dim[2][t]{\text{\rm Dim}_{#1}#2}
\renewcommand{\labelitemi}{$\circ $}
\def\Item{\item\abovedisplayskip=0pt\abovedisplayshortskip=5pt~\vspace*{-\baselineskip}}
\begin{document}

\title[Graded decomposition numbers of Cherednik algebras]{On graded decomposition numbers  \\  for rational Cherednik algebras}

\author{C.~Bowman}
 \email{Chris.Bowman.2@city.ac.uk }
\author{A.~G.~Cox}
 \email{a.g.cox@city.ac.uk }
\address{Department of Mathematics,
 City University London,
 Northampton Square,
 London,
 EC1V 0HB,
UK}

  \author{L.~Speyer} 
  \email{l.speyer@qmul.ac.uk}  

\address{Queen Mary University of London, Mile End Road, London E1 4NS, UK}

\begin{abstract}
     We provide an algorithmic description of a family of 
    graded decomposition numbers for rational Cherednik algebras in terms of affine Kazhdan--Lusztig polynomials.
   \end{abstract}
 
   \maketitle

   \section*{Introduction}

   Double affine Hecke algebras, also known as \emph{Cherednik algebras},  were first introduced by Cherednik as a tool for proving the MacDonald constant term conjectures.
   In \cite{GGOR03}, the authors introduce  
   a category $\mathcal{O}$
    of modules for a given Cherednik algebra, 
    which control the
     representation theory 
    of the underlying cyclotomic  Hecke algebra 
 in much the same way as the classical $q$-Schur algebra does in the case of type $G(1,1,n)$.

Fix a \emph{weighting} $\theta \in \mathbb{R}^l$, an \emph{$e$-multicharge} $\kappa\in (\ZZ/e\ZZ)^l$,  $n\in \NN_0$, and  $g\in \RR$.
In \cite{Webster}, Webster defines a finite dimensional (graded) cellular algebra,  $A(n,\theta,\kappa)$, 
whose module category provides a graded  lift 
   of this category $\mathcal{O}$.
 This algebra is defined by a diagrammatic presentation similar to that of   Khovanov and Lauda, \cite{kl09}.  
 The diagrammatic calculus of this algebra captures a great deal of
 representation theoretic information.  
 
 In this paper, we shall study a certain saturated quotient of Webster's algebra
   in the case 
 that the weighting $\theta=(\theta_1,\ldots, \theta_l)$ is such that $0<\theta_j - \theta_i < g $ for all $1\leq i<j\leq l$ (we refer to this as a FLOTW weighting, after \cite{FLOTW99}).
  Here, the set of one-column multipartitions, $\pi$, forms a saturated set;  of principal interest in this paper will be the quotient algebra (which we call the \emph{quiver Temperley--Lieb  algebra of type $G(l,1,n)$}) 
whose module category is 
 the subcategory of   representations 
whose simple constituents 
   lie in this saturated set.

Strikingly,  the elements of Webster's cellular basis in this quotient algebra  may be indexed by orbits of  paths  in a Euclidean space 
   under the action of an affine Weyl group of type $\hat{A}_{l-1}$. 
    Moreover,
    the graded dimensions  of the standard  modules are given by running a cancellation-free version of  Soergel's algorithm along the paths in this alcove geometry.

 Motivated by this example, we introduce the notion of an algebra with a \emph{Soergel-path basis}. In Theorem \ref{main:theorem:general}, we show that (under certain assumptions) the graded decomposition numbers of such an algebra are given by the associated Kazhdan--Lusztig polynomials. Our approach makes use only of elementary linear algebra and is based on Kleshchev and Nash's proof of the LLT algorithm, \cite{KN10}.

In particular, given a Cherednik algebra with FLOTW weighting, 
the graded   decomposition 
   numbers  $d_{\lambda\mu}(t)$ for  $\lambda,\mu\in \pi$ 
  are given by the affine Kazhdan--Lusztig polynomials 
   of type   $\hat{A}_{l-1}$ (see Corollary \ref{mainresultforchered}).    
   It is shown in  \cite{losev,Webster,RSVV}  that the decomposition numbers of $A(n,\theta,\kappa)$ can be equated with coefficients of Uglov's canonical basis of a twisted Fock space. 
Therefore   the decomposition numbers may, in principle, be  calculated by running an analogue of
 the LLT algorithm, see  \cite{Jacon05}, but not in terms of Kazhdan--Lusztig polynomials.

Our alcove-geometric description comes complete with a translation principle; it also allows us to deduce that the decomposition numbers are stable as the rank
 $n$ tends to infinity.  
Thus, we conjecture that the algebras considered here 
   are asymptotically related
  (as the rank  tends to infinity) to   affine Kac--Moody algebras (see \cite{Kashi}) and in finite rank to the 
  \emph{generalised blob algebras} (see \cite{MW03}).  In the level 2 case, the blob algebra first arose in the study of two-dimensional Potts models,   \cite{MS94}, and has subsequently been  related to the Virasoro algebra \cite{GJSV13} in the limit as $n$ tends to infinity.

In order to clarify the above, let's consider an example.
 We will omit technical details and definitions at this stage, 
 and instead concentrate on giving a flavour of the combinatorics that is involved.  
 Let $l=3$, $n=13$, $e=8$, $\kappa=(0,4,6)$.  We shall consider a single block/linkage class of the 
 algebra 
$ \TL_{13}(\kappa)$.
We embed the one-column multipartitions into Euclidean space via 
the embedding $((1^{\lambda_1}),(1^{\lambda_2}),(1^{\lambda_3}))\mapsto \sum_{i=1}^3\lambda_i \varepsilon_i$ and shall label representations of the algebra by these points (rather than the multipartitions).  
 These points belong to the codimension 1 hyperplane 
given by $\varepsilon_1+\varepsilon_2+\varepsilon_3=13$, which is depicted in Figure \ref{2diag}, below.  
The affine Weyl group acts on this space 
 fixing the point $-\rho$ for $\rho=e(1,1,1)-\kappa=(8,4,2)$.

%
%
%

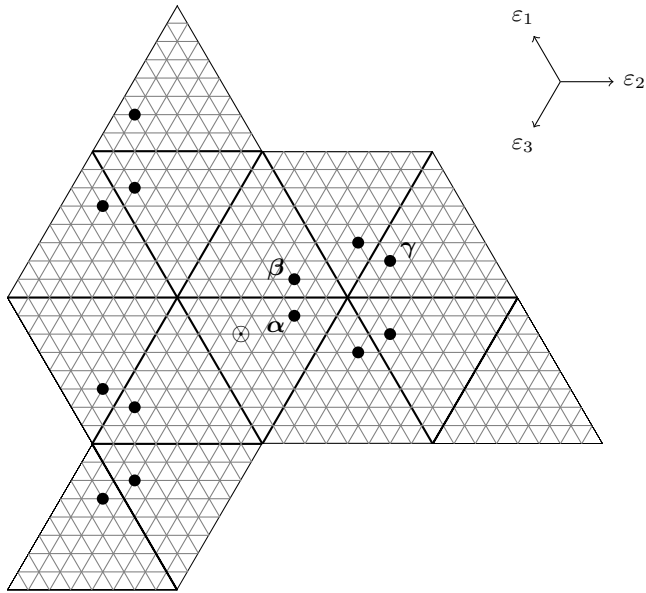
\begin{figure}[ht]\captionsetup{width=0.9\textwidth}\[ 
 \scalefont{0.8}  \begin{tikzpicture}[scale=1.4]       
     \clip (-3.2,1.3) rectangle ++(6.9,5.8);
      \path (120:2.4cm)++(60:2.4cm) coordinate (BB);                    
       \path (0,0) coordinate (origin);
         \path   (origin) ++ (90:6.2 cm)     coordinate (BOOM);   
                  \path   (BOOM) ++(0:2.8 cm)     coordinate (BOOM1);                
  \draw[->](BOOM1) to  ++ (0:0.5 cm);
  \draw[->](BOOM1) to  ++ (120:0.5 cm);
   \draw[->](BOOM1) to  ++ (240:0.5 cm);
 \path(BOOM1)    ++ (0:0.7 cm) coordinate (BOOM2);
 \draw (BOOM2) node {$\varepsilon_2$};
  \path(BOOM1)    ++ (120:0.7 cm) coordinate (BOOM3);
 \draw (BOOM3) node {$\varepsilon_1$};
  \path(BOOM1)    ++ (240:0.7 cm) coordinate (BOOM4);
 \draw (BOOM4) node {$\varepsilon_3$};
   \path[dotted] (origin)++(60:6.4cm) coordinate (aa);
      \path[dotted] (origin)++(60:4.8cm) coordinate (ab);
            \path[dotted] (origin)++(60:4.8cm)++(120:1.6) coordinate (ba);
   \path[dotted] (origin)++(120:6.4cm) coordinate (bb);
   \path[dotted] (origin)++(120:4.8cm) coordinate (bb1);
   \path[dotted] (bb1)++(60:3.2cm) coordinate (bb2);
      \path[dotted] (bb2)++(-60:1.6cm) coordinate (bb3);
      \path[dotted] (origin)++(120:3.2cm) coordinate (cc);
             \path[dotted] (cc)++(-120:1.6cm) coordinate (cc2);
                        \path[dotted] (cc2)++(+0:1.6cm) coordinate (cc3);
            \path[dotted] (origin)++(120:1.6cm)++(60:1.6cm) coordinate (dd);
                        \path[dotted] (origin)++(60:3.2cm)++(0:1.6cm) coordinate (de);
 \clip (ab)--(ba)--(bb3)--(bb2)--(bb1)--(cc)--(cc2)--(cc3)--(dd)--(de)--(ab);

       \foreach \i in {0,1,...,32}
  {
    \path[dotted] (origin)++(60:0.2*\i cm)  coordinate (a\i);
    \path[dotted] (origin)++(120:0.2*\i cm)  coordinate (b\i);
    \path[dotted] (a\i)++(120:4.8cm) coordinate (ca\i);
    \path[dotted] (b\i)++(60:4.8cm) coordinate (cb\i);
   }
      \foreach \i in {33,34,35,36,37,38,39,40,41,42}
     {
    \path[dotted] (origin)++(60:0.2*\i cm)  coordinate (a\i);
    \path[dotted] (origin)++(120:0.2*\i cm)  coordinate (b\i);
    \draw[gray]  (a\i) -- (b\i) ;   
     }
     \foreach \i in {0,8,16,24,32}
{  \draw[thick,black]       (a\i) -- (b\i) ;   
  \draw[thick,black]       (a\i) -- (ca\i);
    \draw[thick,black]       (b\i) -- (cb\i) ; } ;
        \foreach \i 
        in {0,1,2,3,4,5,6,7,9,10,11,12,13,14,15,17,18,19,20,21,22,23,25,26,27,28,29,30,31}
{  \draw[gray]       (a\i) -- (b\i) ;   
  \draw[gray]       (a\i) -- (ca\i);
    \draw[gray]       (b\i) -- (cb\i) ; } ;
  {
    \path[dotted] (a6)++(120:0.2*3 cm)  coordinate (3d);
     \path[dotted] (a10)++(120:0.2*1 cm)  coordinate (2dl);
         \path[dotted] (a15)++(120:0.2*6 cm)  coordinate (1dl);
  \path[dotted] (a17)++(120:0.2*5 cm)  coordinate (2l);
         \path[dotted] (a19)++(120:0.2*7 cm)  coordinate (3l);
  \fill (3d)  circle (1.6pt);
  \fill (2dl)  circle (1.6pt);
   \fill (1dl)  circle (1.6pt);
      \fill (2l)  circle (1.6pt);
         \fill (3l)  circle (1.6pt);
               \draw(3l)++(30:0.2 cm) node {$\boldsymbol{\gamma}$};
      \draw(1dl)++(120:0.8 cm)++(180:0.2 cm)++(150:0.2 cm) node {$\boldsymbol{\beta}$};
            \draw(1dl)++(120:0.4 cm)++(180:0.4 cm)++(210:0.2 cm) node {$\boldsymbol{\alpha}$};
                  \path[dotted] (a11)++(120:23*0.2 cm)  coordinate (XON);  \fill (XON) circle (1.6pt);
    }
 {
      \path[dotted] (b13)++(60:0.2*1 cm)  coordinate (2dr);
         \path[dotted] (b15)++(60:0.2*3 cm)  coordinate (1dr);
  \path[dotted] (b17)++(60:0.2*2 cm)  coordinate (2r);
         \path[dotted] (b22)++(60:0.2*7 cm)  coordinate (3r);
   \fill (2dr)  circle (1.6pt);
   \fill (1dr)  circle (1.6pt);
      \fill (2r)  circle (1.6pt);
         \fill (3r)  circle (1.6pt);
   }
    {
      \path[dotted] (a9)++(120:0.2*21 cm)  coordinate (1u);
         \path[dotted] (a10)++(120:0.2*12 cm)  coordinate (zero);
  \path[dotted] (a13)++(120:0.2*10 cm)  coordinate (2ur);
         \path[dotted] (a14)++(120:0.2*11 cm)  coordinate (2ul);
         \path[dotted] (a18)++(120:0.2*9 cm)  coordinate (2uy);
   \fill (2ur)  circle (1.6pt);
   \fill (2ul)  circle (1.6pt);
      \fill (1u)  circle (1.6pt);
            \fill (2uy)  circle (1.6pt);
         \draw (zero)  node  {$\astrosun$};                 
     }
               \foreach \i in {9,10,11,12,13,14,15,17,18,19,20,21,22,23}
     {
    \path[dotted] (origin)++(60:0.2*\i cm)  coordinate (a\i);
    \path[dotted] (a\i)++(-60:0.2*\i cm)  coordinate (b\i);
    \draw[gray]  (a\i) -- (b\i) ;   
     \path[dotted] (a\i)++(-0:1.6 cm)  coordinate (c\i);
    \draw[gray]  (a\i) -- (c\i) ;         
      \path   (origin)++(60:0.2*16 cm)  coordinate (a1);
          \path  (a1)++(-60:0.2*8 cm)  coordinate (b1);
                    \path  (b1)++(-180:0.2*8 cm)  coordinate (c1);
    \draw  (a1) -- (b1) --(c1) ;   
      \path   (origin)++(60:0.2*24 cm)  coordinate (a2);
          \path  (a2)++(-60:0.2*8 cm)  coordinate (b2);
                    \path  (b2)++(-180:0.2*8 cm)  coordinate (c2);
    \draw  (a2) -- (b2) --(c2) ;   
           \path  (origin)++(60:12 cm)  coordinate (GHY);
    \draw  (origin)--(GHY) ;   
}
 \foreach \i in {1,2,3,4,5,6,7}
{
         \path[dotted] (origin)++(-60:0.2*\i cm)  coordinate (x\i);
     \path[dotted] (x\i)++(60:6 cm)  coordinate (y\i);
     \draw[gray]  (x\i) -- (y\i) ;         
     }
 \path (origin)++(60:1.6 cm)  coordinate (AA);
  \path (origin)++(60:3.2 cm)  coordinate (BB);
    \path (origin)++(60:4.8 cm)  coordinate (CC);
      \path (origin)++(-60:1.6)++(60:4.8 cm)  coordinate (DD);
        \draw   (origin)++(60:1.6 cm)  -- (0:2 cm)  ;
                \foreach \i in {9,10,11,12,13,14,15,17,18,19,20,21,22,23}
     {
    \path[dotted] (origin)++(120:0.2*\i cm)  coordinate (a\i);
    \path[dotted] (a\i)++(-120:0.2*\i cm)  coordinate (b\i);
    \draw[gray]  (a\i) -- (b\i) ;   
     \path[dotted] (a\i)++(180:1.6 cm)  coordinate (c\i);
    \draw[gray]  (a\i) -- (c\i) ;         
      \path   (origin)++(120:0.2*16 cm)  coordinate (a1);
          \path  (a1)++(-120:0.2*8 cm)  coordinate (b1);
                    \path  (b1)++(0:0.2*8 cm)  coordinate (c1);
    \draw  (a1) -- (b1) --(c1) ;   
      \path   (origin)++(120:0.2*24 cm)  coordinate (a2);
          \path  (a2)++(-120:0.2*8 cm)  coordinate (b2);
                    \path  (b2)++(0:0.2*8 cm)  coordinate (c2);
    \draw  (a2) -- (b2) --(c2) ;   
           \path  (origin)++(120:12 cm)  coordinate (GHY);
    \draw  (origin)--(GHY) ;   
}
 \foreach \i in {1,2,3,4,5,6,7}
{
         \path[dotted] (origin)++(-120:0.2*\i cm)  coordinate (x\i);
     \path[dotted] (x\i)++(120:6 cm)  coordinate (y\i);
     \draw[gray]  (x\i) -- (y\i) ;         
     }
 \path (origin)++(120:1.6 cm)  coordinate (AA);
  \path (origin)++(120:3.2 cm)  coordinate (BB);
    \path (origin)++(120:4.8 cm)  coordinate (CC);
      \path (origin)++(-120:1.6)++(120:4.8 cm)  coordinate (DD);
        \draw   (origin)++(120:1.6 cm)  -- (0:2 cm)  ;
                          \path[dotted] (origin)++(120:13*0.2)++(180:0.2 cm)  coordinate (XON);  \fill (XON) circle (1.6pt);             
    \end{tikzpicture} 
\]
  \caption{The black points label the multipartitions of a  block of $\TL_{13}(\kappa)$ via the embedding $(1^{\lambda_1},1^{\lambda_2},1^{\lambda_3})\mapsto \sum_{i=1}^3\lambda_i \varepsilon_i$.     The origin is labelled as $\astrosun$, the points $\alpha=(4,6,3)$, $\beta=(5,6,2)$ and $\gamma=(5,8,0)$ are also marked.  The thick black lines denote the hyperplanes for the $\rho$-shifted action of the Weyl group.  }
\label{2diag}
\end{figure}

  The labels of the representations in this block in which we are most interested are the points 
 $\alpha=(4,6,3)$, $\beta=(5,6,2)$ and $\gamma=(5,8,0)$. 
 The other simple 
 representations  in this block are  labelled by the 
 black points in 
 Figure \ref{2diag}.

Let $\lambda,\mu$ be any elements in our block.  We wish to calculate the graded dimension of the $\mu$-weight space, $\Delta_\mu(\lambda)$, of a 
 cell-module $\Delta(\lambda)$  for $\TL_{13}(\kappa)$.   
  
For a given $\mu$, we   fix 
  a distinguished path, $\omega^\mu$, from the origin to $\mu$, and  
for each $\lambda$ in the above set, we  look at paths which may be obtained
 by folding-up the path $\omega^\mu$ along hyperplanes  so that it terminates at 
$\lambda$ (as illustrated shortly); we denote the set of such paths by $\Path(\lambda,\mu)$.

Each path has an associated degree which can be calculated by running Soergel's (cancellation-free) algorithm along  this path.   
The key to working with the quiver Temperley--Lieb algebras is the following  observation,   
\[\Dim{(\Delta_{\mu}{(\lambda)})} = \; \sum_{\mathclap{\omega \in \Path(\lambda,\mu)}} \; t^{\deg(\omega)}.\] 
 From this, (and the conditions on our distinguished  paths) it is immediate that 
a necessary condition  for $[\Delta(\lambda) :L(\mu)]\neq 0$ is that $\ell(\mu) > \ell(\lambda)$ in the length function associated to our geometry.  
For a fixed $\lambda$, 
we calculate the decomposition numbers $[\Delta(\lambda): L(\mu)]$
 by running Soergel's algorithm not once, but many times: we run the algorithm to each point $\mu$ such that $\ell(\mu)>\ell(\lambda)$.  This is a dual set-up to that usually considered.  

As $n$ tends to infinity, we find that there are infinitely many $\mu$ such that
$\ell(\mu)>\ell(\lambda)$;  the dimension of
 the cell module $\Delta(\lambda)$ 
and the number of  composition factors   of  $\Delta(\lambda)$ also tend  to infinity as $n$ becomes arbitrarily large.  
Fixing a value of $n\in\NN$ truncates the set of weights $\mu$  in our Euclidean space  to a finite set which labels representations of $\TL_n(\kappa)$.   
 We shall see that the decomposition numbers are stable under this limiting behaviour. 



For example, if $\beta=(5,6,2)$, then we take the path 
$\omega^\beta$ given by
\[
 (
 \varepsilon_1,  \varepsilon_2,  \varepsilon_3,
  \varepsilon_1,  \varepsilon_2,  \varepsilon_3,
   \varepsilon_1,  \varepsilon_2,   
   \varepsilon_1,\varepsilon_2,\varepsilon_1,\varepsilon_2,\varepsilon_2
 ).
\]
This path passes through a single hyperplane, namely $x_1-x_3=e$ (this is depicted by the thick black line separating $\alpha$ and $\beta$ in Figure \ref{2diag}).
Reflecting through this hyperplane, we obtain a   path 
\[
 (
 \varepsilon_1,  \varepsilon_2,  \varepsilon_3,
  \varepsilon_1,  \varepsilon_2,  \varepsilon_3,
   \varepsilon_1,  \varepsilon_2,   
\varepsilon_1,\varepsilon_2,\varepsilon_3,\varepsilon_2,\varepsilon_2
 )
\]
of degree 1 which terminates at $\alpha$.  Therefore 
$\Dim{(\Delta_{\beta}(\alpha))}=t^1$.   
We will see that there are no removable subpatterns in following Soergel's procedure in this case, and so (by Theorem \ref{mainresult}) this path labels a graded decomposition number, 
\begin{equation}\label{equationtralalala}\tag{$\dagger$}
[\Delta(\alpha):L(\beta)]=t^1.
\end{equation}


Now let $\gamma= (5,8,0)$; we wish to calculate the dimension of $\Delta_\gamma(\lambda)$ for $\lambda$ in the above set.  The distinguished path, $\omega^\gamma$, in this case is given by
\[
(\varepsilon_1,\varepsilon_2,
\varepsilon_1,\varepsilon_2,
\varepsilon_1,\varepsilon_2,
\varepsilon_1,\varepsilon_2,
\varepsilon_2,\varepsilon_2,\varepsilon_2,\varepsilon_2,\varepsilon_2).
\]
and is pictured in Figure \ref{first path}.
\begin{figure}[ht]\captionsetup{width=0.9\textwidth}
 \[ 
   \begin{tikzpicture}             
      \clip (-0.9,2.4) rectangle ++(3.4,3.3);   
     \path (120:2.4cm)++(60:2.4cm) coordinate (BB);                    
   \path (0,0) coordinate (origin);
 \clip (-0.9,2.4) rectangle ++(3.4,3.3);     \path (0,0) coordinate (origin);
   \path[dotted] (origin)++(60:6.4cm) coordinate (aa);
      \path[dotted] (origin)++(60:4.8cm) coordinate (ab);
            \path[dotted] (origin)++(60:4.8cm)++(120:1.6) coordinate (ba);
   \path[dotted] (origin)++(120:6.4cm) coordinate (bb);
   \path[dotted] (origin)++(120:4.8cm) coordinate (bb1);
   \path[dotted] (bb1)++(60:3.2cm) coordinate (bb2);
      \path[dotted] (bb2)++(-60:1.6cm) coordinate (bb3);
            \path[dotted] (bb3)++(-120:1.6cm) coordinate (bb4);
                       \path[dotted] (bb4)++(-60:1.6cm) coordinate (bb5);
                       \path[dotted] (bb5)++(0:1.6cm) coordinate (bb6);                       
  \clip (ab)--(ba)--(bb3)--(bb4)--(bb5)--(bb6)--(ab);

   \foreach \i in {0,1,...,32}
  {
    \path[dotted] (origin)++(60:0.2*\i cm)  coordinate (a\i);
    \path[dotted] (origin)++(120:0.2*\i cm)  coordinate (b\i);
    \path[dotted] (a\i)++(120:4.8cm) coordinate (ca\i);
    \path[dotted] (b\i)++(60:4.8cm) coordinate (cb\i);
   }
      \foreach \i in {33,34,35,36,37,38,39,40,41,42}
     {
    \path[dotted] (origin)++(60:0.2*\i cm)  coordinate (a\i);
    \path[dotted] (origin)++(120:0.2*\i cm)  coordinate (b\i);
     }
     \foreach \i in {0,8,16,24,32}
{  \draw[thick,black]       (a\i) -- (b\i) ;   
  \draw[thick,black]       (a\i) -- (ca\i);
    \draw[thick,black]       (b\i) -- (cb\i) ; } ;
        \foreach \i 
        in {0,1,2,3,4,5,6,7,9,10,11,12,13,14,15,17,18,19,20,21,22,23,25,26,27,28,29,30,31}
  {
    \path[dotted] (a6)++(120:0.2*3 cm)  coordinate (3d);
     \path[dotted] (a10)++(120:0.2*1 cm)  coordinate (2dl);
         \path[dotted] (a15)++(120:0.2*6 cm)  coordinate (1dl);
  \path[dotted] (a17)++(120:0.2*5 cm)  coordinate (2l);

         \path[dotted] (a19)++(120:0.2*7 cm)  coordinate (3l);
  \fill (3d)  circle (1pt);
  \fill (2dl)  circle (1pt);
   \fill (1dl)  circle (1pt);
      \fill (2l)  circle (1pt);
         \fill (3l)  circle (1pt);
                  \path[dotted] (a11)++(120:23*0.2 cm)  coordinate (XON);  \fill (XON) circle (1pt);
    }
 {
      \path[dotted] (b13)++(60:0.2*1 cm)  coordinate (2dr);
         \path[dotted] (b15)++(60:0.2*3 cm)  coordinate (1dr);
  \path[dotted] (b17)++(60:0.2*2 cm)  coordinate (2r);
         \path[dotted] (b22)++(60:0.2*7 cm)  coordinate (3r);
   \fill (2dr)  circle (1pt);
   \fill (1dr)  circle (1pt);
      \fill (2r)  circle (1pt);
         \fill (3r)  circle (1pt);
   }
    {
      \path[dotted] (a9)++(120:0.2*21 cm)  coordinate (1u);
         \path[dotted] (a10)++(120:0.2*12 cm)  coordinate (zero);
  \path[dotted] (a13)++(120:0.2*10 cm)  coordinate (2ur);
         \path[dotted] (a14)++(120:0.2*11 cm)  coordinate (2ul);
         \path[dotted] (a18)++(120:0.2*9 cm)  coordinate (2uy);
   \fill (2ur)  circle (1pt);
   \fill (2ul)  circle (1pt);
      \fill (1u)  circle (1pt);
            \fill (2uy)  circle (1pt);
         \draw (zero)  circle (1.5pt);                 
     }
     \draw(a10) ++(120:0.2*12 cm)   coordinate (1WALK);
          \draw(1WALK) ++(120:0.2 cm)   coordinate (2WALK);
               \draw(2WALK) ++(0:0.2 cm)   coordinate (3WALK);
\draw(3WALK) ++(120:0.2 cm)   coordinate (4WALK);
               \draw(4WALK) ++(0:0.2 cm)   coordinate (5WALK);
\draw(5WALK) ++(120:0.2 cm)   coordinate (6WALK);
               \draw(6WALK) ++(0:0.2 cm)   coordinate (7WALK);
\draw(7WALK) ++(120:0.2 cm)   coordinate (8WALK);
\draw(8WALK) ++(0:3*0.2 cm)   coordinate (9WALK);
\draw(9WALK) ++(0:2*0.2 cm)   coordinate (10WALK);
\draw(10WALK) ++(0:0.2 cm)   coordinate (11WALK);
\draw(1WALK)--(2WALK)--(3WALK)--(4WALK)--(5WALK)
--(6WALK)--(7WALK)--(8WALK)--(9WALK)--(10WALK)--(11WALK) ;
               \foreach \i in {9,10,11,12,13,14,15,17,18,19,20,21,22,23}
     {
    \path[dotted] (origin)++(60:0.2*\i cm)  coordinate (a\i);
    \path[dotted] (a\i)++(-60:0.2*\i cm)  coordinate (b\i);
     \path[dotted] (a\i)++(-0:1.6 cm)  coordinate (c\i);
      \path   (origin)++(60:0.2*16 cm)  coordinate (a1);
          \path  (a1)++(-60:0.2*8 cm)  coordinate (b1);
                    \path  (b1)++(-180:0.2*8 cm)  coordinate (c1);
    \draw  (a1) -- (b1) --(c1) ;   
      \path   (origin)++(60:0.2*24 cm)  coordinate (a2);
          \path  (a2)++(-60:0.2*8 cm)  coordinate (b2);
                    \path  (b2)++(-180:0.2*8 cm)  coordinate (c2);
    \draw  (a2) -- (b2) --(c2) ;   
           \path  (origin)++(60:12 cm)  coordinate (GHY);
    \draw  (origin)--(GHY) ;   
}

    \end{tikzpicture} 
\]
  \caption{ The distinguished path $\omega^\gamma$ from the origin to  $\gamma=(5,8,0)$.  (The space has been cropped to only include points less than or equal to $\gamma$ in the dominance ordering.) }
\label{first path}
\end{figure}
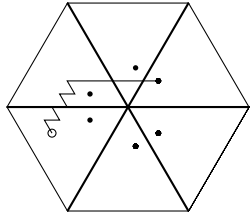

There are a total of $2^3$ distinct paths which may be obtained from this path by a series of reflections (as our path passes through three alcove walls).
For brevity, we truncate our diagrams so as to only consider alcoves between the origin and $\gamma$.  The eight paths are listed in Figures \ref{first path}, \ref{weight a}, \ref{weight b}, and \ref{other weights}.

   \begin{figure}[ht]
\[         \begin{tikzpicture}   \clip (-0.9,2.4) rectangle ++(3.4,3.3);      \path (120:2.4cm)++(60:2.4cm) coordinate (BB);                    
\clip (-0.9,2.4) rectangle ++(3.4,3.3);     \path (0,0) coordinate (origin);
 
\clip (-0.9,2.4) rectangle ++(3.4,3.3);     \path (0,0) coordinate (origin);
   \path[dotted] (origin)++(60:6.4cm) coordinate (aa);
      \path[dotted] (origin)++(60:4.8cm) coordinate (ab);
            \path[dotted] (origin)++(60:4.8cm)++(120:1.6) coordinate (ba);
   \path[dotted] (origin)++(120:6.4cm) coordinate (bb);
   \path[dotted] (origin)++(120:4.8cm) coordinate (bb1);
   \path[dotted] (bb1)++(60:3.2cm) coordinate (bb2);
      \path[dotted] (bb2)++(-60:1.6cm) coordinate (bb3);
            \path[dotted] (bb3)++(-120:1.6cm) coordinate (bb4);
                       \path[dotted] (bb4)++(-60:1.6cm) coordinate (bb5);
                       \path[dotted] (bb5)++(0:1.6cm) coordinate (bb6);                       
  \clip (ab)--(ba)--(bb3)--(bb4)--(bb5)--(bb6)--(ab);   \foreach \i in {0,1,...,32}
  {
    \path[dotted] (origin)++(60:0.2*\i cm)  coordinate (a\i);
    \path[dotted] (origin)++(120:0.2*\i cm)  coordinate (b\i);
    \path[dotted] (a\i)++(120:4.8cm) coordinate (ca\i);
    \path[dotted] (b\i)++(60:4.8cm) coordinate (cb\i);
   }
      \foreach \i in {33,34,35,36,37,38,39,40,41,42}
     {
    \path[dotted] (origin)++(60:0.2*\i cm)  coordinate (a\i);
    \path[dotted] (origin)++(120:0.2*\i cm)  coordinate (b\i);
     }
     \foreach \i in {0,8,16,24,32}
{  \draw[thick,black]       (a\i) -- (b\i) ;   
  \draw[thick,black]       (a\i) -- (ca\i);
    \draw[thick,black]       (b\i) -- (cb\i) ; } ;
        \foreach \i 
        in {0,1,2,3,4,5,6,7,9,10,11,12,13,14,15,17,18,19,20,21,22,23,25,26,27,28,29,30,31}
  {
    \path[dotted] (a6)++(120:0.2*3 cm)  coordinate (3d);
     \path[dotted] (a10)++(120:0.2*1 cm)  coordinate (2dl);
         \path[dotted] (a15)++(120:0.2*6 cm)  coordinate (1dl);
  \path[dotted] (a17)++(120:0.2*5 cm)  coordinate (2l);

         \path[dotted] (a19)++(120:0.2*7 cm)  coordinate (3l);
  \fill (3d)  circle (1pt);
  \fill (2dl)  circle (1pt);
   \fill (1dl)  circle (1pt);
      \fill (2l)  circle (1pt);
         \fill (3l)  circle (1pt);
                  \path[dotted] (a11)++(120:23*0.2 cm)  coordinate (XON);  \fill (XON) circle (1pt);
    }
 {
      \path[dotted] (b13)++(60:0.2*1 cm)  coordinate (2dr);
         \path[dotted] (b15)++(60:0.2*3 cm)  coordinate (1dr);
  \path[dotted] (b17)++(60:0.2*2 cm)  coordinate (2r);
         \path[dotted] (b22)++(60:0.2*7 cm)  coordinate (3r);
   \fill (2dr)  circle (1pt);
   \fill (1dr)  circle (1pt);
      \fill (2r)  circle (1pt);
         \fill (3r)  circle (1pt);
   }
    {
      \path[dotted] (a9)++(120:0.2*21 cm)  coordinate (1u);
         \path[dotted] (a10)++(120:0.2*12 cm)  coordinate (zero);
  \path[dotted] (a13)++(120:0.2*10 cm)  coordinate (2ur);
         \path[dotted] (a14)++(120:0.2*11 cm)  coordinate (2ul);
         \path[dotted] (a18)++(120:0.2*9 cm)  coordinate (2uy);
   \fill (2ur)  circle (1pt);
   \fill (2ul)  circle (1pt);
      \fill (1u)  circle (1pt);
            \fill (2uy)  circle (1pt);
         \draw (zero)  circle (1.5pt);                 
     }
     \draw(a10) ++(120:0.2*12 cm)   coordinate (1WALK);
          \draw(1WALK) ++(120:0.2 cm)   coordinate (2WALK);
               \draw(2WALK) ++(0:0.2 cm)   coordinate (3WALK);
\draw(3WALK) ++(120:0.2 cm)   coordinate (4WALK);
               \draw(4WALK) ++(0:0.2 cm)   coordinate (5WALK);
\draw(5WALK) ++(120:0.2 cm)   coordinate (6WALK);
               \draw(6WALK) ++(0:0.2 cm)   coordinate (7WALK);
\draw(7WALK) ++(120:0.2 cm)   coordinate (8WALK);
\draw(8WALK) ++(0:3*0.2 cm)   coordinate (9WALK);
\draw(9WALK) ++(-120:2*0.2 cm)   coordinate (10WALK);
\draw(10WALK) ++(-120:0.2 cm)   coordinate (11WALK);
\draw(1WALK)--(2WALK)--(3WALK)--(4WALK)--(5WALK)
--(6WALK)--(7WALK)--(8WALK)--(9WALK)--(10WALK)--(11WALK) ;
               \foreach \i in {9,10,11,12,13,14,15,17,18,19,20,21,22,23}
     {
    \path[dotted] (origin)++(60:0.2*\i cm)  coordinate (a\i);
    \path[dotted] (a\i)++(-60:0.2*\i cm)  coordinate (b\i);
     \path[dotted] (a\i)++(-0:1.6 cm)  coordinate (c\i);
      \path   (origin)++(60:0.2*16 cm)  coordinate (a1);
          \path  (a1)++(-60:0.2*8 cm)  coordinate (b1);
                    \path  (b1)++(-180:0.2*8 cm)  coordinate (c1);
    \draw  (a1) -- (b1) --(c1) ;   
      \path   (origin)++(60:0.2*24 cm)  coordinate (a2);
          \path  (a2)++(-60:0.2*8 cm)  coordinate (b2);
                    \path  (b2)++(-180:0.2*8 cm)  coordinate (c2);
    \draw  (a2) -- (b2) --(c2) ;   
           \path  (origin)++(60:12 cm)  coordinate (GHY);
    \draw  (origin)--(GHY) ;   
}

    \end{tikzpicture} 
\quad 
    \quad
          \begin{tikzpicture}   \clip (-0.9,2.4) rectangle ++(3.4,3.3);      \path (120:2.4cm)++(60:2.4cm) coordinate (BB);                    
\clip (-0.9,2.4) rectangle ++(3.4,3.3);     \path (0,0) coordinate (origin);

\clip (-0.9,2.4) rectangle ++(3.4,3.3);     \path (0,0) coordinate (origin);
   \path[dotted] (origin)++(60:6.4cm) coordinate (aa);
      \path[dotted] (origin)++(60:4.8cm) coordinate (ab);
            \path[dotted] (origin)++(60:4.8cm)++(120:1.6) coordinate (ba);
   \path[dotted] (origin)++(120:6.4cm) coordinate (bb);
   \path[dotted] (origin)++(120:4.8cm) coordinate (bb1);
   \path[dotted] (bb1)++(60:3.2cm) coordinate (bb2);
      \path[dotted] (bb2)++(-60:1.6cm) coordinate (bb3);
            \path[dotted] (bb3)++(-120:1.6cm) coordinate (bb4);
                       \path[dotted] (bb4)++(-60:1.6cm) coordinate (bb5);
                       \path[dotted] (bb5)++(0:1.6cm) coordinate (bb6);                       
  \clip (ab)--(ba)--(bb3)--(bb4)--(bb5)--(bb6)--(ab);   \foreach \i in {0,1,...,32}
  {
    \path[dotted] (origin)++(60:0.2*\i cm)  coordinate (a\i);
    \path[dotted] (origin)++(120:0.2*\i cm)  coordinate (b\i);
    \path[dotted] (a\i)++(120:4.8cm) coordinate (ca\i);
    \path[dotted] (b\i)++(60:4.8cm) coordinate (cb\i);
   }
      \foreach \i in {33,34,35,36,37,38,39,40,41,42}
     {
    \path[dotted] (origin)++(60:0.2*\i cm)  coordinate (a\i);
    \path[dotted] (origin)++(120:0.2*\i cm)  coordinate (b\i);
     }
     \foreach \i in {0,8,16,24,32}
{  \draw[thick,black]       (a\i) -- (b\i) ;   
  \draw[thick,black]       (a\i) -- (ca\i);
    \draw[thick,black]       (b\i) -- (cb\i) ; } ;
        \foreach \i 
        in {0,1,2,3,4,5,6,7,9,10,11,12,13,14,15,17,18,19,20,21,22,23,25,26,27,28,29,30,31}
  {
    \path[dotted] (a6)++(120:0.2*3 cm)  coordinate (3d);
     \path[dotted] (a10)++(120:0.2*1 cm)  coordinate (2dl);
         \path[dotted] (a15)++(120:0.2*6 cm)  coordinate (1dl);
  \path[dotted] (a17)++(120:0.2*5 cm)  coordinate (2l);

         \path[dotted] (a19)++(120:0.2*7 cm)  coordinate (3l);
  \fill (3d)  circle (1pt);
  \fill (2dl)  circle (1pt);
   \fill (1dl)  circle (1pt);
      \fill (2l)  circle (1pt);
         \fill (3l)  circle (1pt);
                  \path[dotted] (a11)++(120:23*0.2 cm)  coordinate (XON);  \fill (XON) circle (1pt);
    }
 {
      \path[dotted] (b13)++(60:0.2*1 cm)  coordinate (2dr);
         \path[dotted] (b15)++(60:0.2*3 cm)  coordinate (1dr);
  \path[dotted] (b17)++(60:0.2*2 cm)  coordinate (2r);
         \path[dotted] (b22)++(60:0.2*7 cm)  coordinate (3r);
   \fill (2dr)  circle (1pt);
   \fill (1dr)  circle (1pt);
      \fill (2r)  circle (1pt);
         \fill (3r)  circle (1pt);
   }
    {
      \path[dotted] (a9)++(120:0.2*21 cm)  coordinate (1u);
         \path[dotted] (a10)++(120:0.2*12 cm)  coordinate (zero);
  \path[dotted] (a13)++(120:0.2*10 cm)  coordinate (2ur);
         \path[dotted] (a14)++(120:0.2*11 cm)  coordinate (2ul);
         \path[dotted] (a18)++(120:0.2*9 cm)  coordinate (2uy);
   \fill (2ur)  circle (1pt);
   \fill (2ul)  circle (1pt);
      \fill (1u)  circle (1pt);
            \fill (2uy)  circle (1pt);
         \draw (zero)  circle (1.5pt);                 
     }
     \draw(a10) ++(120:0.2*12 cm)   coordinate (1WALK);
          \draw(1WALK) ++(120:0.2 cm)   coordinate (2WALK);
               \draw(2WALK) ++(0:0.2 cm)   coordinate (3WALK);
\draw(3WALK) ++(120:0.2 cm)   coordinate (4WALK);
               \draw(4WALK) ++(0:0.2 cm)   coordinate (5WALK);
\draw(5WALK) ++(-120:0.2 cm)   coordinate (6WALK);
               \draw(6WALK) ++(0:0.2 cm)   coordinate (7WALK);
\draw(7WALK) ++(-120:0.2 cm)   coordinate (8WALK);
\draw(8WALK) ++(0:3*0.2 cm)   coordinate (9WALK);
\draw(9WALK) ++(120:2*0.2 cm)   coordinate (10WALK);
\draw(10WALK) ++(-120:0.2 cm)   coordinate (11WALK);
\draw(1WALK)--(2WALK)--(3WALK)--(4WALK)--(5WALK)
--(6WALK)--(7WALK)--(8WALK)--(9WALK)--(10WALK)--(11WALK) ;
               \foreach \i in {9,10,11,12,13,14,15,17,18,19,20,21,22,23}
     {
    \path[dotted] (origin)++(60:0.2*\i cm)  coordinate (a\i);
    \path[dotted] (a\i)++(-60:0.2*\i cm)  coordinate (b\i);
     \path[dotted] (a\i)++(-0:1.6 cm)  coordinate (c\i);
      \path   (origin)++(60:0.2*16 cm)  coordinate (a1);
          \path  (a1)++(-60:0.2*8 cm)  coordinate (b1);
                    \path  (b1)++(-180:0.2*8 cm)  coordinate (c1);
    \draw  (a1) -- (b1) --(c1) ;   
      \path   (origin)++(60:0.2*24 cm)  coordinate (a2);
          \path  (a2)++(-60:0.2*8 cm)  coordinate (b2);
                    \path  (b2)++(-180:0.2*8 cm)  coordinate (c2);
    \draw  (a2) -- (b2) --(c2) ;   
           \path  (origin)++(60:12 cm)  coordinate (GHY);
    \draw  (origin)--(GHY) ;   
}

    \end{tikzpicture} 
\]
      \caption{ The  paths in $\Path(\alpha,\gamma)$. These elements are of degrees 1 and 3 respectively. }
\label{weight a}
    \end{figure}
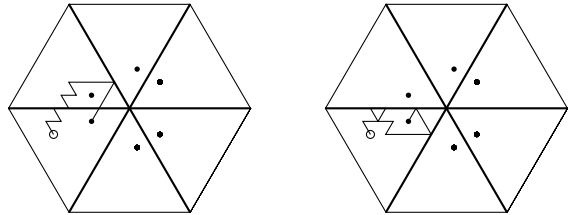

    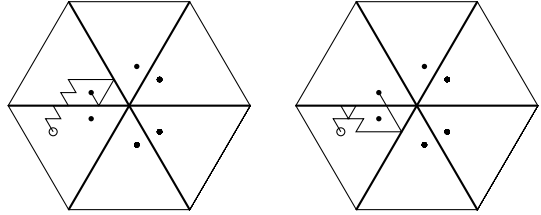
\begin{figure}[ht]
 \[    \begin{tikzpicture}   \clip (-0.9,2.4) rectangle ++(3.4,3.3);      \path (120:2.4cm)++(60:2.4cm) coordinate (BB);                    
\clip (-0.9,2.4) rectangle ++(3.4,3.3);     \path (0,0) coordinate (origin);

\clip (-0.9,2.4) rectangle ++(3.4,3.3);     \path (0,0) coordinate (origin);
   \path[dotted] (origin)++(60:6.4cm) coordinate (aa);
      \path[dotted] (origin)++(60:4.8cm) coordinate (ab);
            \path[dotted] (origin)++(60:4.8cm)++(120:1.6) coordinate (ba);
   \path[dotted] (origin)++(120:6.4cm) coordinate (bb);
   \path[dotted] (origin)++(120:4.8cm) coordinate (bb1);
   \path[dotted] (bb1)++(60:3.2cm) coordinate (bb2);
      \path[dotted] (bb2)++(-60:1.6cm) coordinate (bb3);
            \path[dotted] (bb3)++(-120:1.6cm) coordinate (bb4);
                       \path[dotted] (bb4)++(-60:1.6cm) coordinate (bb5);
                       \path[dotted] (bb5)++(0:1.6cm) coordinate (bb6);                       
  \clip (ab)--(ba)--(bb3)--(bb4)--(bb5)--(bb6)--(ab);   \foreach \i in {0,1,...,32}
  {
    \path[dotted] (origin)++(60:0.2*\i cm)  coordinate (a\i);
    \path[dotted] (origin)++(120:0.2*\i cm)  coordinate (b\i);
    \path[dotted] (a\i)++(120:4.8cm) coordinate (ca\i);
    \path[dotted] (b\i)++(60:4.8cm) coordinate (cb\i);
   }
      \foreach \i in {33,34,35,36,37,38,39,40,41,42}
     {
    \path[dotted] (origin)++(60:0.2*\i cm)  coordinate (a\i);
    \path[dotted] (origin)++(120:0.2*\i cm)  coordinate (b\i);
     }
     \foreach \i in {0,8,16,24,32}
{  \draw[thick,black]       (a\i) -- (b\i) ;   
  \draw[thick,black]       (a\i) -- (ca\i);
    \draw[thick,black]       (b\i) -- (cb\i) ; } ;
        \foreach \i 
        in {0,1,2,3,4,5,6,7,9,10,11,12,13,14,15,17,18,19,20,21,22,23,25,26,27,28,29,30,31}
  {
    \path[dotted] (a6)++(120:0.2*3 cm)  coordinate (3d);
     \path[dotted] (a10)++(120:0.2*1 cm)  coordinate (2dl);
         \path[dotted] (a15)++(120:0.2*6 cm)  coordinate (1dl);
  \path[dotted] (a17)++(120:0.2*5 cm)  coordinate (2l);

         \path[dotted] (a19)++(120:0.2*7 cm)  coordinate (3l);
  \fill (3d)  circle (1pt);
  \fill (2dl)  circle (1pt);
   \fill (1dl)  circle (1pt);
      \fill (2l)  circle (1pt);
         \fill (3l)  circle (1pt);
                  \path[dotted] (a11)++(120:23*0.2 cm)  coordinate (XON);  \fill (XON) circle (1pt);
    }
 {
      \path[dotted] (b13)++(60:0.2*1 cm)  coordinate (2dr);
         \path[dotted] (b15)++(60:0.2*3 cm)  coordinate (1dr);
  \path[dotted] (b17)++(60:0.2*2 cm)  coordinate (2r);
         \path[dotted] (b22)++(60:0.2*7 cm)  coordinate (3r);
   \fill (2dr)  circle (1pt);
   \fill (1dr)  circle (1pt);
      \fill (2r)  circle (1pt);
         \fill (3r)  circle (1pt);
   }
    {
      \path[dotted] (a9)++(120:0.2*21 cm)  coordinate (1u);
         \path[dotted] (a10)++(120:0.2*12 cm)  coordinate (zero);
  \path[dotted] (a13)++(120:0.2*10 cm)  coordinate (2ur);
         \path[dotted] (a14)++(120:0.2*11 cm)  coordinate (2ul);
         \path[dotted] (a18)++(120:0.2*9 cm)  coordinate (2uy);
   \fill (2ur)  circle (1pt);
   \fill (2ul)  circle (1pt);
      \fill (1u)  circle (1pt);
            \fill (2uy)  circle (1pt);
         \draw (zero)  circle (1.5pt);                 
     }
     \draw(a10) ++(120:0.2*12 cm)   coordinate (1WALK);
          \draw(1WALK) ++(120:0.2 cm)   coordinate (2WALK);
               \draw(2WALK) ++(0:0.2 cm)   coordinate (3WALK);
\draw(3WALK) ++(120:0.2 cm)   coordinate (4WALK);
               \draw(4WALK) ++(0:0.2 cm)   coordinate (5WALK);
\draw(5WALK) ++(120:0.2 cm)   coordinate (6WALK);
               \draw(6WALK) ++(0:0.2 cm)   coordinate (7WALK);
\draw(7WALK) ++(120:0.2 cm)   coordinate (8WALK);
\draw(8WALK) ++(0:3*0.2 cm)   coordinate (9WALK);
\draw(9WALK) ++(-120:2*0.2 cm)   coordinate (10WALK);
\draw(10WALK) ++(+120:0.2 cm)   coordinate (11WALK);
\draw(1WALK)--(2WALK)--(3WALK)--(4WALK)--(5WALK)
--(6WALK)--(7WALK)--(8WALK)--(9WALK)--(10WALK)--(11WALK) ;
               \foreach \i in {9,10,11,12,13,14,15,17,18,19,20,21,22,23}
     {
    \path[dotted] (origin)++(60:0.2*\i cm)  coordinate (a\i);
    \path[dotted] (a\i)++(-60:0.2*\i cm)  coordinate (b\i);
     \path[dotted] (a\i)++(-0:1.6 cm)  coordinate (c\i);
      \path   (origin)++(60:0.2*16 cm)  coordinate (a1);
          \path  (a1)++(-60:0.2*8 cm)  coordinate (b1);
                    \path  (b1)++(-180:0.2*8 cm)  coordinate (c1);
    \draw  (a1) -- (b1) --(c1) ;   
      \path   (origin)++(60:0.2*24 cm)  coordinate (a2);
          \path  (a2)++(-60:0.2*8 cm)  coordinate (b2);
                    \path  (b2)++(-180:0.2*8 cm)  coordinate (c2);
    \draw  (a2) -- (b2) --(c2) ;   
           \path  (origin)++(60:12 cm)  coordinate (GHY);
    \draw  (origin)--(GHY) ;   
}

    \end{tikzpicture} \quad 
   \begin{tikzpicture}   \clip (-0.9,2.4) rectangle ++(3.4,3.3);      \path (120:2.4cm)++(60:2.4cm) coordinate (BB);                    
\clip (-0.9,2.4) rectangle ++(3.4,3.3);     \path (0,0) coordinate (origin);
\clip (-0.9,2.4) rectangle ++(3.4,3.3);     \path (0,0) coordinate (origin);
   \path[dotted] (origin)++(60:6.4cm) coordinate (aa);
      \path[dotted] (origin)++(60:4.8cm) coordinate (ab);
            \path[dotted] (origin)++(60:4.8cm)++(120:1.6) coordinate (ba);
   \path[dotted] (origin)++(120:6.4cm) coordinate (bb);
   \path[dotted] (origin)++(120:4.8cm) coordinate (bb1);
   \path[dotted] (bb1)++(60:3.2cm) coordinate (bb2);
      \path[dotted] (bb2)++(-60:1.6cm) coordinate (bb3);
            \path[dotted] (bb3)++(-120:1.6cm) coordinate (bb4);
                       \path[dotted] (bb4)++(-60:1.6cm) coordinate (bb5);
                       \path[dotted] (bb5)++(0:1.6cm) coordinate (bb6);                       
  \clip (ab)--(ba)--(bb3)--(bb4)--(bb5)--(bb6)--(ab);   \foreach \i in {0,1,...,32}
  {
    \path[dotted] (origin)++(60:0.2*\i cm)  coordinate (a\i);
    \path[dotted] (origin)++(120:0.2*\i cm)  coordinate (b\i);
    \path[dotted] (a\i)++(120:4.8cm) coordinate (ca\i);
    \path[dotted] (b\i)++(60:4.8cm) coordinate (cb\i);
   }
      \foreach \i in {33,34,35,36,37,38,39,40,41,42}
     {
    \path[dotted] (origin)++(60:0.2*\i cm)  coordinate (a\i);
    \path[dotted] (origin)++(120:0.2*\i cm)  coordinate (b\i);
     }
     \foreach \i in {0,8,16,24,32}
{  \draw[thick,black]       (a\i) -- (b\i) ;   
  \draw[thick,black]       (a\i) -- (ca\i);
    \draw[thick,black]       (b\i) -- (cb\i) ; } ;
        \foreach \i 
        in {0,1,2,3,4,5,6,7,9,10,11,12,13,14,15,17,18,19,20,21,22,23,25,26,27,28,29,30,31}
  {
    \path[dotted] (a6)++(120:0.2*3 cm)  coordinate (3d);
     \path[dotted] (a10)++(120:0.2*1 cm)  coordinate (2dl);
         \path[dotted] (a15)++(120:0.2*6 cm)  coordinate (1dl);
  \path[dotted] (a17)++(120:0.2*5 cm)  coordinate (2l);

         \path[dotted] (a19)++(120:0.2*7 cm)  coordinate (3l);
  \fill (3d)  circle (1pt);
  \fill (2dl)  circle (1pt);
   \fill (1dl)  circle (1pt);
      \fill (2l)  circle (1pt);
         \fill (3l)  circle (1pt);
                  \path[dotted] (a11)++(120:23*0.2 cm)  coordinate (XON);  \fill (XON) circle (1pt);
    }
 {
      \path[dotted] (b13)++(60:0.2*1 cm)  coordinate (2dr);
         \path[dotted] (b15)++(60:0.2*3 cm)  coordinate (1dr);
  \path[dotted] (b17)++(60:0.2*2 cm)  coordinate (2r);
         \path[dotted] (b22)++(60:0.2*7 cm)  coordinate (3r);
   \fill (2dr)  circle (1pt);
   \fill (1dr)  circle (1pt);
      \fill (2r)  circle (1pt);
         \fill (3r)  circle (1pt);
   }
    {
      \path[dotted] (a9)++(120:0.2*21 cm)  coordinate (1u);
         \path[dotted] (a10)++(120:0.2*12 cm)  coordinate (zero);
  \path[dotted] (a13)++(120:0.2*10 cm)  coordinate (2ur);
         \path[dotted] (a14)++(120:0.2*11 cm)  coordinate (2ul);
         \path[dotted] (a18)++(120:0.2*9 cm)  coordinate (2uy);
   \fill (2ur)  circle (1pt);
   \fill (2ul)  circle (1pt);
      \fill (1u)  circle (1pt);
            \fill (2uy)  circle (1pt);
         \draw (zero)  circle (1.5pt);                 
     }
     \draw(a10) ++(120:0.2*12 cm)   coordinate (1WALK);
          \draw(1WALK) ++(120:0.2 cm)   coordinate (2WALK);
               \draw(2WALK) ++(0:0.2 cm)   coordinate (3WALK);
\draw(3WALK) ++(120:0.2 cm)   coordinate (4WALK);
               \draw(4WALK) ++(0:0.2 cm)   coordinate (5WALK);
\draw(5WALK) ++(-120:0.2 cm)   coordinate (6WALK);
               \draw(6WALK) ++(0:0.2 cm)   coordinate (7WALK);
\draw(7WALK) ++(-120:0.2 cm)   coordinate (8WALK);
\draw(8WALK) ++(0:3*0.2 cm)   coordinate (9WALK);
\draw(9WALK) ++(120:2*0.2 cm)   coordinate (10WALK);
\draw(10WALK) ++(120:0.2 cm)   coordinate (11WALK);
\draw(1WALK)--(2WALK)--(3WALK)--(4WALK)--(5WALK)
--(6WALK)--(7WALK)--(8WALK)--(9WALK)--(10WALK)--(11WALK) ;
               \foreach \i in {9,10,11,12,13,14,15,17,18,19,20,21,22,23}
     {
    \path[dotted] (origin)++(60:0.2*\i cm)  coordinate (a\i);
    \path[dotted] (a\i)++(-60:0.2*\i cm)  coordinate (b\i);
     \path[dotted] (a\i)++(-0:1.6 cm)  coordinate (c\i);
      \path   (origin)++(60:0.2*16 cm)  coordinate (a1);
          \path  (a1)++(-60:0.2*8 cm)  coordinate (b1);
                    \path  (b1)++(-180:0.2*8 cm)  coordinate (c1);
    \draw  (a1) -- (b1) --(c1) ;   
      \path   (origin)++(60:0.2*24 cm)  coordinate (a2);
          \path  (a2)++(-60:0.2*8 cm)  coordinate (b2);
                    \path  (b2)++(-180:0.2*8 cm)  coordinate (c2);
    \draw  (a2) -- (b2) --(c2) ;   
           \path  (origin)++(60:12 cm)  coordinate (GHY);
    \draw  (origin)--(GHY) ;   
}

    \end{tikzpicture} 
\]
 \caption{  The paths in $\Path(\beta,\gamma)$.  These paths are of degrees $0$ and $2$, respectively.}
\label{weight b}
\end{figure}
Those familiar with Soergel's algorithm will  recognise the degrees of the paths listed in the  figures,
 (see Section \ref{pathalgorithm} for more details).
These paths describe the dimension of   $\Delta_{\gamma}(\lambda)$   for any point $\lambda$. 
 In particular, 
  \[\Dim{(\Delta_\gamma(\alpha))}=t^3+t^1\quad \Dim{(\Delta_\gamma(\beta))}=t^2+t^0.\]
The leftmost diagram in Figure \ref{weight b} labels an element of 
$\Path(\beta,\gamma)$ of degree zero. 
This  path of degree zero labels a term that would be removed under Soergel's algorithm. 
 In terms of our basis, this corresponds to  the fact that  this path labels a  vector  in 
  the  basis  of the simple head   $L(\beta)$; in fact
  \[\Dim{(L_\gamma(\beta))}=t^0,\]
  see Section \ref{pathalgorithm} for more details.  The other path, of degree 2, is \emph{not} 
  removed under Soergel's procedure, and  therefore (by  Theorem \ref{mainresult})
   labels a decomposition number
  \[[\Delta(\beta):L{(\gamma)}]=t^2.\]

Having removed the degree zero path in $\Path(\beta,\gamma)$ under Soergel's procedure,
 we also remove all paths in a certain subpattern labelled by this zero (see Section \ref{pathalgorithm}).  In this case, the only other path in this subpattern is the leftmost 
 path pictured in Figure \ref{weight a}.

 The degree 0 path in the subpattern corresponds to $\Dim{(\Delta_\gamma(\beta))} =
 \Dim{(L_\gamma(\beta))}=t^0$.  
 The degree 1 path in the subpattern corresponds to the degree 1 basis element of $\Delta_\gamma(\alpha)$
 which occurs in the  (degree shifted by 1)
 copy of $L_\gamma(\beta)\langle 1 \rangle$ inside $\Delta_\gamma(\alpha)$, (see \ref{equationtralalala}).  
%
%
 This accounts for  the path  in $\Path(\alpha,\gamma)$ of degree $1$.

 That leaves a path of
 degree $3$, which is not removed under Soergel's procedure and so 
    $[\Delta(\alpha):L(\gamma)]=t^3$.
    
In general, we will see that at each stage in our algorithm, the removed subpatterns correspond to the weight spaces of simple modules, and 
the surviving paths correspond to decomposition numbers.


\begin{figure}[ht]
\[
   \begin{tikzpicture}   \clip (-0.9,2.4) rectangle ++(3.4,3.3);      \path (120:2.4cm)++(60:2.4cm) coordinate (BB);                    
\clip (-0.9,2.4) rectangle ++(3.4,3.3);     \path (0,0) coordinate (origin);

\clip (-0.9,2.4) rectangle ++(3.4,3.3);     \path (0,0) coordinate (origin);
   \path[dotted] (origin)++(60:6.4cm) coordinate (aa);
      \path[dotted] (origin)++(60:4.8cm) coordinate (ab);
            \path[dotted] (origin)++(60:4.8cm)++(120:1.6) coordinate (ba);
   \path[dotted] (origin)++(120:6.4cm) coordinate (bb);
   \path[dotted] (origin)++(120:4.8cm) coordinate (bb1);
   \path[dotted] (bb1)++(60:3.2cm) coordinate (bb2);
      \path[dotted] (bb2)++(-60:1.6cm) coordinate (bb3);
            \path[dotted] (bb3)++(-120:1.6cm) coordinate (bb4);
                       \path[dotted] (bb4)++(-60:1.6cm) coordinate (bb5);
                       \path[dotted] (bb5)++(0:1.6cm) coordinate (bb6);                       
  \clip (ab)--(ba)--(bb3)--(bb4)--(bb5)--(bb6)--(ab);   \foreach \i in {0,1,...,32}
  {
    \path[dotted] (origin)++(60:0.2*\i cm)  coordinate (a\i);
    \path[dotted] (origin)++(120:0.2*\i cm)  coordinate (b\i);
    \path[dotted] (a\i)++(120:4.8cm) coordinate (ca\i);
    \path[dotted] (b\i)++(60:4.8cm) coordinate (cb\i);
   }
      \foreach \i in {33,34,35,36,37,38,39,40,41,42}
     {
    \path[dotted] (origin)++(60:0.2*\i cm)  coordinate (a\i);
    \path[dotted] (origin)++(120:0.2*\i cm)  coordinate (b\i);
     }
     \foreach \i in {0,8,16,24,32}
{  \draw[thick,black]       (a\i) -- (b\i) ;   
  \draw[thick,black]       (a\i) -- (ca\i);
    \draw[thick,black]       (b\i) -- (cb\i) ; } ;
        \foreach \i 
        in {0,1,2,3,4,5,6,7,9,10,11,12,13,14,15,17,18,19,20,21,22,23,25,26,27,28,29,30,31}
  {
    \path[dotted] (a6)++(120:0.2*3 cm)  coordinate (3d);
     \path[dotted] (a10)++(120:0.2*1 cm)  coordinate (2dl);
         \path[dotted] (a15)++(120:0.2*6 cm)  coordinate (1dl);
  \path[dotted] (a17)++(120:0.2*5 cm)  coordinate (2l);

         \path[dotted] (a19)++(120:0.2*7 cm)  coordinate (3l);
  \fill (3d)  circle (1.3pt);
  \fill (2dl)  circle (1.3pt);
   \fill (1dl)  circle (1.3pt);
      \fill (2l)  circle (1.3pt);
         \fill (3l)  circle (1.3pt);
                  \path[dotted] (a11)++(120:23*0.2 cm)  coordinate (XON);  \fill (XON) circle (1.3pt);
    }
 {
      \path[dotted] (b13)++(60:0.2*1 cm)  coordinate (2dr);
         \path[dotted] (b15)++(60:0.2*3 cm)  coordinate (1dr);
  \path[dotted] (b17)++(60:0.2*2 cm)  coordinate (2r);
         \path[dotted] (b22)++(60:0.2*7 cm)  coordinate (3r);
   \fill (2dr)  circle (1.3pt);
   \fill (1dr)  circle (1.3pt);
      \fill (2r)  circle (1.3pt);
         \fill (3r)  circle (1.3pt);
   }
    {
      \path[dotted] (a9)++(120:0.2*21 cm)  coordinate (1u);
         \path[dotted] (a10)++(120:0.2*12 cm)  coordinate (zero);
  \path[dotted] (a13)++(120:0.2*10 cm)  coordinate (2ur);
         \path[dotted] (a14)++(120:0.2*11 cm)  coordinate (2ul);
         \path[dotted] (a18)++(120:0.2*9 cm)  coordinate (2uy);
   \fill (2ur)  circle (1.3pt);
   \fill (2ul)  circle (1.3pt);
      \fill (1u)  circle (1.3pt);
            \fill (2uy)  circle (1.3pt);
         \draw (zero)  circle (1.5pt);                 
     }
     \draw(a10) ++(120:0.2*12 cm)   coordinate (1WALK);
          \draw(1WALK) ++(120:0.2 cm)   coordinate (2WALK);
               \draw(2WALK) ++(0:0.2 cm)   coordinate (3WALK);
\draw(3WALK) ++(120:0.2 cm)   coordinate (4WALK);
               \draw(4WALK) ++(0:0.2 cm)   coordinate (5WALK);
\draw(5WALK) ++(-120:0.2 cm)   coordinate (6WALK);
               \draw(6WALK) ++(0:0.2 cm)   coordinate (7WALK);
\draw(7WALK) ++(-120:0.2 cm)   coordinate (8WALK);
\draw(8WALK) ++(0:3*0.2 cm)   coordinate (9WALK);
\draw(9WALK) ++(0:2*0.2 cm)   coordinate (10WALK);
\draw(10WALK) ++(0:0.2 cm)   coordinate (11WALK);
\draw(1WALK)--(2WALK)--(3WALK)--(4WALK)--(5WALK)
--(6WALK)--(7WALK)--(8WALK)--(9WALK)--(10WALK)--(11WALK) ;
               \foreach \i in {9,10,11,12,13,14,15,17,18,19,20,21,22,23}
     {
    \path[dotted] (origin)++(60:0.2*\i cm)  coordinate (a\i);
    \path[dotted] (a\i)++(-60:0.2*\i cm)  coordinate (b\i);
     \path[dotted] (a\i)++(-0:1.6 cm)  coordinate (c\i);
      \path   (origin)++(60:0.2*16 cm)  coordinate (a1);
          \path  (a1)++(-60:0.2*8 cm)  coordinate (b1);
                    \path  (b1)++(-180:0.2*8 cm)  coordinate (c1);
    \draw  (a1) -- (b1) --(c1) ;   
      \path   (origin)++(60:0.2*24 cm)  coordinate (a2);
          \path  (a2)++(-60:0.2*8 cm)  coordinate (b2);
                    \path  (b2)++(-180:0.2*8 cm)  coordinate (c2);
    \draw  (a2) -- (b2) --(c2) ;   
           \path  (origin)++(60:12 cm)  coordinate (GHY);
    \draw  (origin)--(GHY) ;   
}
    \end{tikzpicture} 
    \quad
       \begin{tikzpicture}   \clip (-0.9,2.4) rectangle ++(3.4,3.3);      \path (120:2.4cm)++(60:2.4cm) coordinate (BB);                    
\clip (-0.9,2.4) rectangle ++(3.4,3.3);     \path (0,0) coordinate (origin);

\clip (-0.9,2.4) rectangle ++(3.4,3.3);     \path (0,0) coordinate (origin);
   \path[dotted] (origin)++(60:6.4cm) coordinate (aa);
      \path[dotted] (origin)++(60:4.8cm) coordinate (ab);
            \path[dotted] (origin)++(60:4.8cm)++(120:1.6) coordinate (ba);
   \path[dotted] (origin)++(120:6.4cm) coordinate (bb);
   \path[dotted] (origin)++(120:4.8cm) coordinate (bb1);
   \path[dotted] (bb1)++(60:3.2cm) coordinate (bb2);
      \path[dotted] (bb2)++(-60:1.6cm) coordinate (bb3);
            \path[dotted] (bb3)++(-120:1.6cm) coordinate (bb4);
                       \path[dotted] (bb4)++(-60:1.6cm) coordinate (bb5);
                       \path[dotted] (bb5)++(0:1.6cm) coordinate (bb6);                       
  \clip (ab)--(ba)--(bb3)--(bb4)--(bb5)--(bb6)--(ab);   \foreach \i in {0,1,...,32}
  {
    \path[dotted] (origin)++(60:0.2*\i cm)  coordinate (a\i);
    \path[dotted] (origin)++(120:0.2*\i cm)  coordinate (b\i);
    \path[dotted] (a\i)++(120:4.8cm) coordinate (ca\i);
    \path[dotted] (b\i)++(60:4.8cm) coordinate (cb\i);
   }
      \foreach \i in {33,34,35,36,37,38,39,40,41,42}
     {
    \path[dotted] (origin)++(60:0.2*\i cm)  coordinate (a\i);
    \path[dotted] (origin)++(120:0.2*\i cm)  coordinate (b\i);
     }
     \foreach \i in {0,8,16,24,32}
{  \draw[thick,black]       (a\i) -- (b\i) ;   
  \draw[thick,black]       (a\i) -- (ca\i);
    \draw[thick,black]       (b\i) -- (cb\i) ; } ;
        \foreach \i 
        in {0,1,2,3,4,5,6,7,9,10,11,12,13,14,15,17,18,19,20,21,22,23,25,26,27,28,29,30,31}
  {
    \path[dotted] (a6)++(120:0.2*3 cm)  coordinate (3d);
     \path[dotted] (a10)++(120:0.2*1 cm)  coordinate (2dl);
         \path[dotted] (a15)++(120:0.2*6 cm)  coordinate (1dl);
  \path[dotted] (a17)++(120:0.2*5 cm)  coordinate (2l);

         \path[dotted] (a19)++(120:0.2*7 cm)  coordinate (3l);
  \fill (3d)  circle (1.3pt);
  \fill (2dl)  circle (1.3pt);
   \fill (1dl)  circle (1.3pt);
      \fill (2l)  circle (1.3pt);
         \fill (3l)  circle (1.3pt);
                  \path[dotted] (a11)++(120:23*0.2 cm)  coordinate (XON);  \fill (XON) circle (1.3pt);
    }
 {
      \path[dotted] (b13)++(60:0.2*1 cm)  coordinate (2dr);
         \path[dotted] (b15)++(60:0.2*3 cm)  coordinate (1dr);
  \path[dotted] (b17)++(60:0.2*2 cm)  coordinate (2r);
         \path[dotted] (b22)++(60:0.2*7 cm)  coordinate (3r);
   \fill (2dr)  circle (1.3pt);
   \fill (1dr)  circle (1.3pt);
      \fill (2r)  circle (1.3pt);
         \fill (3r)  circle (1.3pt);
   }
    {
      \path[dotted] (a9)++(120:0.2*21 cm)  coordinate (1u);
         \path[dotted] (a10)++(120:0.2*12 cm)  coordinate (zero);
  \path[dotted] (a13)++(120:0.2*10 cm)  coordinate (2ur);
         \path[dotted] (a14)++(120:0.2*11 cm)  coordinate (2ul);
         \path[dotted] (a18)++(120:0.2*9 cm)  coordinate (2uy);
   \fill (2ur)  circle (1.3pt);
   \fill (2ul)  circle (1.3pt);
      \fill (1u)  circle (1.3pt);
            \fill (2uy)  circle (1.3pt);
         \draw (zero)  circle (1.5pt);                 
     }
     \draw(a10) ++(120:0.2*12 cm)   coordinate (1WALK);
          \draw(1WALK) ++(120:0.2 cm)   coordinate (2WALK);
               \draw(2WALK) ++(0:0.2 cm)   coordinate (3WALK);
\draw(3WALK) ++(120:0.2 cm)   coordinate (4WALK);
               \draw(4WALK) ++(0:0.2 cm)   coordinate (5WALK);
\draw(5WALK) ++(-120:0.2 cm)   coordinate (6WALK);
               \draw(6WALK) ++(0:0.2 cm)   coordinate (7WALK);
\draw(7WALK) ++(-120:0.2 cm)   coordinate (8WALK);
\draw(8WALK) ++(0:3*0.2 cm)   coordinate (9WALK);
\draw(9WALK) ++(0:2*0.2 cm)   coordinate (10WALK);
\draw(10WALK) ++(-120:0.2 cm)   coordinate (11WALK);
\draw(1WALK)--(2WALK)--(3WALK)--(4WALK)--(5WALK)
--(6WALK)--(7WALK)--(8WALK)--(9WALK)--(10WALK)--(11WALK) ;
               \foreach \i in {9,10,11,12,13,14,15,17,18,19,20,21,22,23}
     {
    \path[dotted] (origin)++(60:0.2*\i cm)  coordinate (a\i);
    \path[dotted] (a\i)++(-60:0.2*\i cm)  coordinate (b\i);
     \path[dotted] (a\i)++(-0:1.6 cm)  coordinate (c\i);
      \path   (origin)++(60:0.2*16 cm)  coordinate (a1);
          \path  (a1)++(-60:0.2*8 cm)  coordinate (b1);
                    \path  (b1)++(-180:0.2*8 cm)  coordinate (c1);
    \draw  (a1) -- (b1) --(c1) ;   
      \path   (origin)++(60:0.2*24 cm)  coordinate (a2);
          \path  (a2)++(-60:0.2*8 cm)  coordinate (b2);
                    \path  (b2)++(-180:0.2*8 cm)  coordinate (c2);
    \draw  (a2) -- (b2) --(c2) ;   
           \path  (origin)++(60:12 cm)  coordinate (GHY);
    \draw  (origin)--(GHY) ;   
}

    \end{tikzpicture} \quad
     \begin{tikzpicture}   \clip (-0.9,2.4) rectangle ++(3.4,3.3);      \path (120:2.4cm)++(60:2.4cm) coordinate (BB);                    
\clip (-0.9,2.4) rectangle ++(3.4,3.3);     \path (0,0) coordinate (origin);
 \clip (-0.9,2.4) rectangle ++(3.4,3.3);     \path (0,0) coordinate (origin);
   \path[dotted] (origin)++(60:6.4cm) coordinate (aa);
      \path[dotted] (origin)++(60:4.8cm) coordinate (ab);
            \path[dotted] (origin)++(60:4.8cm)++(120:1.6) coordinate (ba);
   \path[dotted] (origin)++(120:6.4cm) coordinate (bb);
   \path[dotted] (origin)++(120:4.8cm) coordinate (bb1);
   \path[dotted] (bb1)++(60:3.2cm) coordinate (bb2);
      \path[dotted] (bb2)++(-60:1.6cm) coordinate (bb3);
            \path[dotted] (bb3)++(-120:1.6cm) coordinate (bb4);
                       \path[dotted] (bb4)++(-60:1.6cm) coordinate (bb5);
                       \path[dotted] (bb5)++(0:1.6cm) coordinate (bb6);                       
  \clip (ab)--(ba)--(bb3)--(bb4)--(bb5)--(bb6)--(ab);   \foreach \i in {0,1,...,32}
  {
    \path[dotted] (origin)++(60:0.2*\i cm)  coordinate (a\i);
    \path[dotted] (origin)++(120:0.2*\i cm)  coordinate (b\i);
    \path[dotted] (a\i)++(120:4.8cm) coordinate (ca\i);
    \path[dotted] (b\i)++(60:4.8cm) coordinate (cb\i);
   }
      \foreach \i in {33,34,35,36,37,38,39,40,41,42}
     {
    \path[dotted] (origin)++(60:0.2*\i cm)  coordinate (a\i);
    \path[dotted] (origin)++(120:0.2*\i cm)  coordinate (b\i);
     }
     \foreach \i in {0,8,16,24,32}
{  \draw[thick,black]       (a\i) -- (b\i) ;   
  \draw[thick,black]       (a\i) -- (ca\i);
    \draw[thick,black]       (b\i) -- (cb\i) ; } ;
        \foreach \i 
        in {0,1,2,3,4,5,6,7,9,10,11,12,13,14,15,17,18,19,20,21,22,23,25,26,27,28,29,30,31}
  {
    \path[dotted] (a6)++(120:0.2*3 cm)  coordinate (3d);
     \path[dotted] (a10)++(120:0.2*1 cm)  coordinate (2dl);
         \path[dotted] (a15)++(120:0.2*6 cm)  coordinate (1dl);
  \path[dotted] (a17)++(120:0.2*5 cm)  coordinate (2l);

         \path[dotted] (a19)++(120:0.2*7 cm)  coordinate (3l);
  \fill (3d)  circle (1.3pt);
  \fill (2dl)  circle (1.3pt);
   \fill (1dl)  circle (1.3pt);
      \fill (2l)  circle (1.3pt);
         \fill (3l)  circle (1.3pt);
                  \path[dotted] (a11)++(120:23*0.2 cm)  coordinate (XON);  \fill (XON) circle (1.3pt);
    }
 {
      \path[dotted] (b13)++(60:0.2*1 cm)  coordinate (2dr);
         \path[dotted] (b15)++(60:0.2*3 cm)  coordinate (1dr);
  \path[dotted] (b17)++(60:0.2*2 cm)  coordinate (2r);
         \path[dotted] (b22)++(60:0.2*7 cm)  coordinate (3r);
   \fill (2dr)  circle (1.3pt);
   \fill (1dr)  circle (1.3pt);
      \fill (2r)  circle (1.3pt);
         \fill (3r)  circle (1.3pt);
   }
    {
      \path[dotted] (a9)++(120:0.2*21 cm)  coordinate (1u);
         \path[dotted] (a10)++(120:0.2*12 cm)  coordinate (zero);
  \path[dotted] (a13)++(120:0.2*10 cm)  coordinate (2ur);
         \path[dotted] (a14)++(120:0.2*11 cm)  coordinate (2ul);
         \path[dotted] (a18)++(120:0.2*9 cm)  coordinate (2uy);
   \fill (2ur)  circle (1.3pt);
   \fill (2ul)  circle (1.3pt);
      \fill (1u)  circle (1.3pt);
            \fill (2uy)  circle (1.3pt);
         \draw (zero)  circle (1.5pt);                 
     }
     \draw(a10) ++(120:0.2*12 cm)   coordinate (1WALK);
          \draw(1WALK) ++(120:0.2 cm)   coordinate (2WALK);
               \draw(2WALK) ++(0:0.2 cm)   coordinate (3WALK);
\draw(3WALK) ++(120:0.2 cm)   coordinate (4WALK);
               \draw(4WALK) ++(0:0.2 cm)   coordinate (5WALK);
\draw(5WALK) ++(120:0.2 cm)   coordinate (6WALK);
               \draw(6WALK) ++(0:0.2 cm)   coordinate (7WALK);
\draw(7WALK) ++(120:0.2 cm)   coordinate (8WALK);
\draw(8WALK) ++(0:3*0.2 cm)   coordinate (9WALK);
\draw(9WALK) ++(0:2*0.2 cm)   coordinate (10WALK);
\draw(10WALK) ++(120:0.2 cm)   coordinate (11WALK);
\draw(1WALK)--(2WALK)--(3WALK)--(4WALK)--(5WALK)
--(6WALK)--(7WALK)--(8WALK)--(9WALK)--(10WALK)--(11WALK) ;
               \foreach \i in {9,10,11,12,13,14,15,17,18,19,20,21,22,23}
     {
    \path[dotted] (origin)++(60:0.2*\i cm)  coordinate (a\i);
    \path[dotted] (a\i)++(-60:0.2*\i cm)  coordinate (b\i);
     \path[dotted] (a\i)++(-0:1.6 cm)  coordinate (c\i);
      \path   (origin)++(60:0.2*16 cm)  coordinate (a1);
          \path  (a1)++(-60:0.2*8 cm)  coordinate (b1);
                    \path  (b1)++(-180:0.2*8 cm)  coordinate (c1);
    \draw  (a1) -- (b1) --(c1) ;   
      \path   (origin)++(60:0.2*24 cm)  coordinate (a2);
          \path  (a2)++(-60:0.2*8 cm)  coordinate (b2);
                    \path  (b2)++(-180:0.2*8 cm)  coordinate (c2);
    \draw  (a2) -- (b2) --(c2) ;   
           \path  (origin)++(60:12 cm)  coordinate (GHY);
    \draw  (origin)--(GHY) ;   
}
    \end{tikzpicture} 
\]
 \caption{  The remaining paths in $\Path(-,\gamma)$. 
  These paths are of degree $1$, $2$, and $1$ respectively.}
\label{other weights} 
\end{figure}
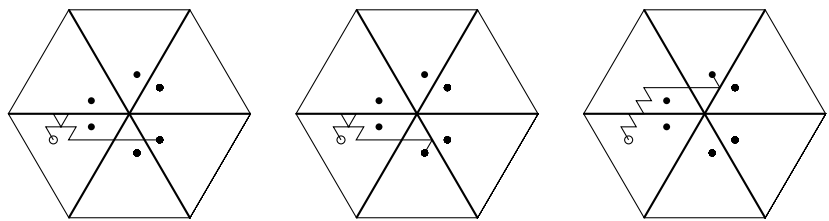

 \begin{Acknowledgements*}
 The authors  would like to thank Joe Chuang, Matt Fayers, Daniel Juteau, and Ben Webster for 
 helpful conversations 
 during the preparation of this manuscript.  

The authors 
  are grateful for the financial support received from 
the {Royal Commission for the Exhibition of 1851},  {EPSRC}  {grant  EP/L01078X/1}, and 
  Queen Mary University of London, respectively.  
 The authors also thank the ICMS in Edinburgh for their hospitality
  during the early stages of this project.  
 \end{Acknowledgements*}
 
\section{Soergel path algebras}

In this section (inspired by  \cite[Section 3]{MW03}), we define an abstract family of algebras whose  bases possess   desirable properties.  The combinatorics of these algebras is controlled by orbits of 
paths in a Euclidean space. 

\subsection{Graded cellular algebras with highest weight theories}

  \begin{defn}\label{defn1}  
  Suppose that $A$ is a $\ZZ$-graded $\CC$-algebra which
  is free of finite rank over $\CC$.
  We say that $A$ is a 
  \emph{graded cellular algebra with a highest weight theory} if the following conditions hold.

 The algebra is equipped with a \emph{cell datum} 
  $(\mptn ln,\TSStd,C,\degr)$, where $(\mptn ln,\trianglerighteq )$ is the \emph{weight poset}.
 For each $\lambda,\mu\in\mptn ln$,  such that $\lambda \trianglerighteq \mu$,  
we have a finite set, denoted $\TSStd(\lambda,\mu)$,  and we let
$\TSStd(\lambda)=\cup_\mu \TSStd(\lambda,\mu)$.
 There exist maps
  \[C:{\coprod_{\lambda\in\mptn ln}\TSStd(\lambda)\times \TSStd(\lambda)}\to A;
      \quad\text{and}\quad
     \degr: {\coprod_{\lambda\in\mptn ln}\TSStd(\lambda)} \to \ZZ \] such that $C$ is injective. We denote $C(\SSTS,\SSTT) = c^\lambda_{\SSTS\SSTT}$  for $\SSTS,\SSTT\in\TSStd(\lambda)$, and
  \begin{enumerate}
    \item Each   element $c^\lambda_{\SSTS\SSTT}$ is homogeneous
	of degree 
	\[\degr
        (c^\lambda_{\SSTS\SSTT})=\degr(\SSTS)+\degr(\SSTT),\] for
        $\lambda\in\mptn ln$ and 
      $\SSTS,\SSTT\in \TSStd(\lambda)$.
    \item The set $\{c^\lambda_{\SSTS\SSTT}\mid\SSTS,\SSTT\in \TSStd(\lambda), \,
      \lambda\in\mptn ln \}$ is a  
      $\CC$-basis of $A$.
    \item  If $\SSTS,\SSTT\in \TSStd(\lambda)$, for some
      $\lambda\in\mptn ln$, and $a\in A$ then 
    there exist scalars $r_{\SSTS\SSTU}(a)$, which do not depend on
    $\SSTT$, such that 
      \[ac^\lambda_{\SSTS\SSTT}  =\sum_{\SSTU\in
      \TSStd(\lambda)}r_{\SSTS\SSTU}(a)c^\lambda_{\SSTU\SSTT}\pmod 
      {A^{\triangleright  \lambda}},\]
      where $A^{\triangleright  \lambda}$ is the $\CC$-submodule of $A$ spanned by
      \[\{c^\mu_{\SSTQ\SSTR}\mid\mu \triangleright  \lambda\text{ and }\SSTQ,\SSTR\in \TSStd(\mu )\}.\]
    \item  The $\mathbb{C}$-linear map $*:A\to A$ determined by
      $(c^\lambda_{\SSTS\SSTT})^*=c^\lambda_{\SSTT\SSTS}$, for all $\lambda\in\mptn ln$ and
      all $\SSTS,\SSTT\in\TSStd(\lambda)$, is an anti-isomorphism of $A$.
\item    The algebra $A$ has an identity element, $1_A$, such that $1_A=\sum_{\lambda\in\mptn ln}1_\lambda$ is an orthogonal idempotent decomposition.  
    \item 
 For $\SSTS\in\TSStd(\lambda,\mu)$, $\SSTT \in\TSStd(\lambda,\nu)$, we have that $1_\mu c^\lambda_{\SSTS\SSTT}1_\nu =  c^\lambda_{\SSTS\SSTT}$.  
 There exists a unique element $\SSTT^\lambda \in \TSStd(\lambda,\lambda)$, and $c_{\SSTT^\lambda\SSTT^\lambda}^\lambda = 1_\lambda$.  
   \end{enumerate}
\end{defn}

\begin{rmk}
Notice that the above satisfies the conditions of a graded cellular algebra \cite{hm10}.  In addition, such an algebra is quasi-hereditary (as every cell-ideal contains an idempotent).   Conditions (5) and (6)   allow us to examine standard modules by 
considering their weight space decompositions.  
\end{rmk}

Unless otherwise stated, all results in this section follow from \cite{hm10}.  
 Suppose that $A$ is a graded cellular algebra with a highest weight theory. Given any $\lambda\in\mptn ln$, the   \emph{graded standard module} $\Delta(\lambda)$ is the graded left $A$-module
    \[\Delta(\lambda)=\bigoplus_{\begin{subarray}c \mu\in\mptn ln \\ z\in\ZZ \end{subarray}}\Delta_\mu(\lambda)_z,\]
    where $\Delta_\mu(\lambda)_z$ is the $\mathbb{C}$ vector-space  with basis
    $\{c^\lambda_{\SSTS } \mid \SSTS\in \TSStd(\lambda,\mu)\text{ and }\deg(\SSTS)=z\}$.
    The action of $A$ on $\Delta(\lambda)$ is given by
    \[a c^\lambda_{ \SSTS  }  =\sum_{ \SSTU \in \TSStd(\lambda)}r_{\SSTS\SSTU}(a) c^\lambda_{\SSTU},\]
    where the scalars $r_{\SSTS\SSTU}(a)$ are the scalars appearing in condition (3) of Definition \ref{defn1}.
 
Suppose that $\lambda \in \mptn ln$. There is a bilinear form $\langle\ ,\ \rangle_\lambda$ on $\Delta(\lambda) $ which
is determined by
\[c^\lambda_{\SSTU\SSTS}c^\lambda_{\SSTT\SSTV}\equiv
  \langle c^\lambda_\SSTS,c^\lambda_\SSTT\rangle_\lambda c^\lambda_{\SSTU\SSTV}\pmod{A^{\triangleright \lambda}},\]
for any $\SSTS,\SSTT, \SSTU,\SSTV\in \TSStd(\lambda  )$.

Let $t$ be an indeterminate over $\NN_0$. If $M=\oplus_{z\in\ZZ}M_z$ is
a free graded $\mathbb{C}$-module,  
then its \emph{graded dimension} is the Laurent  polynomial
\[\Dim{(M)}=\sum_{k\in\ZZ}(\dim_{\mathbb{C}}M_k)t^k.\]

If $M$ is a graded $A$-module and $k\in\ZZ$, define $M\langle k \rangle$ to be the same module with $(M\langle k \rangle)_i = M_{i-k}$ for all $i\in\ZZ$. We call this a \emph{degree shift} by $k$.
If $M$ is a graded
$A$-module and $L$ is a graded simple module let $[M:L\langle k\rangle]$ be the
multiplicity of 
$L\langle k\rangle$ as a graded composition factor
of $M$, for $k\in\ZZ$.

   Suppose that $A$ is a graded cellular algebra with a highest weight theory. 
For every $\lambda \in \mptn ln$,  define  $L(\lambda)$ to be  the quotient of the corresponding standard module $\Delta(\lambda)$ by 
  the radical of the bilinear form $\langle\ ,\ \rangle_\lambda$.  This module is simple, and every simple module is of the form $L(\lambda)\langle k \rangle$ for some $k \in \ZZ$, $\lambda \in \mptn ln$.  We let $L_\mu(\lambda)$ denote the $\mu$-weight space $1_\mu L(\lambda)$.  
  The 
  \emph{graded decomposition matrix} of $A$ is the matrix
  $\Dec=(d_{\lambda\mu}(t))$, where
  \[d_{\lambda\mu}(t)=\sum_{k\in\ZZ} [\Delta(\lambda):L(\mu)\langle k\rangle]\,t^k,\]
  for $\lambda,\mu\in\mptn ln$.
 The following proposition is a key ingredient in our proof of the main result of this paper. 
\begin{prop}[\cite{hm10}, Proposition 2.18]\label{humathasprop}
If $\mu\in\mptn ln$ then $\Dim{(L(\mu))} \in \NN_0[t+t^{-1}]$.  
\end{prop}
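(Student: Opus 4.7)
The plan is to exploit the fact that the contravariant form $\langle\ ,\ \rangle_\mu$ on $\Delta(\mu)$ is homogeneous of total degree zero, which forces the graded dimension of its non-degenerate simple quotient $L(\mu)$ to be bar-invariant under $t \mapsto t^{-1}$.

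First I would check that the scalar $\langle c^\mu_\SSTS, c^\mu_\SSTT\rangle_\mu \in \CC$ must vanish whenever $\deg(\SSTS) + \deg(\SSTT) \neq 0$. Comparing degrees in the defining identity
\[ c^\mu_{\SSTU\SSTS}\, c^\mu_{\SSTT\SSTV} \equiv \langle c^\mu_\SSTS, c^\mu_\SSTT\rangle_\mu\, c^\mu_{\SSTU\SSTV} \pmod{A^{\triangleright \mu}}, \]
the left-hand side lies in the degree-$(\deg(\SSTU) + \deg(\SSTS) + \deg(\SSTT) + \deg(\SSTV))$ component of $A$ by axiom (1), while $c^\mu_{\SSTU\SSTV}$ is homogeneous of degree $\deg(\SSTU) + \deg(\SSTV)$. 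Since $A^{\triangleright \mu}$ is spanned by homogeneous basis elements (and hence is a graded ideal), and since scalars in $\CC$ sit in degree $0$, the congruence forces $\langle c^\mu_\SSTS, c^\mu_\SSTT\rangle_\mu = 0$ unless $\deg(\SSTS) + \deg(\SSTT) = 0$. Consequently $\langle\ ,\ \rangle_\mu$ restricts to a pairing $\Delta(\mu)_k \times \Delta(\mu)_{-k} \to \CC$ for each $k \in \ZZ$, and its radical is a graded submodule of $\Delta(\mu)$.

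The simple quotient $L(\mu) = \Delta(\mu)/\rad\langle\ ,\ \rangle_\mu$ therefore inherits a grading together with a non-degenerate bilinear form of total degree zero, which gives a perfect pairing $L(\mu)_k \times L(\mu)_{-k} \to \CC$. Hence $\dim L(\mu)_k = \dim L(\mu)_{-k}$ for every $k \in \ZZ$, so $\Dim(L(\mu))$ is a palindromic Laurent polynomial with non-negative integer coefficients, i.e.\ lies in $\NN_0[t + t^{-1}]$. The argument is essentially degree-bookkeeping; the only point requiring care is the degree-homogeneity of the form, and this is immediate from axiom (1) together with the graded nature of the cellular ideal $A^{\triangleright \mu}$, so I do not foresee any serious obstacle.
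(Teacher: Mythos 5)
Your argument is correct and is essentially the proof behind the paper's citation: the paper gives no proof of Proposition \ref{humathasprop}, deferring to Hu--Mathas, and their argument is exactly yours --- the cellular form is homogeneous of degree zero, so its radical is a graded submodule and the induced non-degenerate form gives a perfect pairing $L(\mu)_k\times L(\mu)_{-k}\to\CC$ (equivalently, $L(\mu)$ is self-dual as a graded module), whence $\dim L(\mu)_k=\dim L(\mu)_{-k}$ for all $k$. The only caveat is your closing ``i.e.'': a bar-invariant Laurent polynomial with coefficients in $\NN_0$ need not lie in $\NN_0[t+t^{-1}]$ if that is read literally as non-negative combinations of powers of $t+t^{-1}$ (e.g.\ $t^2+1+t^{-2}$), but bar-invariance plus non-negativity is precisely what this notation is used to mean here and is all that is invoked later (in the uniqueness step of the proof of Theorem \ref{main:theorem:general}), so your proof establishes the statement in the sense in which it is used.
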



Given $\lambda,\mu\in \mptn ln$  such that $\lambda\rhd \mu$, we say that $\lambda$ and $\mu$   
are \emph{tableau-linked} if the set $\TSStd(\lambda,\mu)$ is non-empty. The equivalence classes of the equivalence relation on $\mptn ln$ generated by this tableau-linkage are called the \emph{tableau-blocks} of $A$.
\begin{prop}\label{linakge}[The Linkage Principle]
If $\lambda,\mu\in\mptn ln$ label simple modules in  the same block of $A$, then $\lambda$ and $\mu$ are tableau-linked.  
\end{prop}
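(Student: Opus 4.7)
The plan is to reduce the block decomposition of $A$ to the standard relation generated by common composition factors of cell modules, and then show that this relation is captured by tableau-linkage. The central observation I would establish first is the following: if $[\Delta(\lambda):L(\mu)\langle k\rangle]\neq 0$ for some $k\in\ZZ$ with $\mu\neq\lambda$, then $\lambda\rhd\mu$ and $\TSStd(\lambda,\mu)\neq\emptyset$, so $\lambda$ and $\mu$ are tableau-linked.

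To prove this observation, I would apply the idempotent $1_\mu$ to a graded composition series of $\Delta(\lambda)$. By condition~(6) of Definition~\ref{defn1}, the set $\TSStd(\mu,\mu)$ contains the distinguished element $\SSTT^\mu$ and $c^\mu_{\SSTT^\mu\SSTT^\mu}=1_\mu$; hence the weight space $1_\mu L(\mu)=L_\mu(\mu)$ is nonzero, as it contains the image of $c^\mu_{\SSTT^\mu}$. Consequently each graded subquotient of the composition series that is isomorphic to $L(\mu)\langle k\rangle$ contributes a nonzero subquotient to $1_\mu\Delta(\lambda)=\Delta_\mu(\lambda)$. By the definition of $\Delta(\lambda)$ given immediately after Definition~\ref{defn1}, a nonzero $\mu$-weight space forces $\TSStd(\lambda,\mu)$ to be non-empty; since this set is only defined when $\lambda\trianglerighteq\mu$, and equality is excluded by hypothesis, we obtain $\lambda\rhd\mu$.

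The proof then concludes via the standard block-theoretic argument for quasi-hereditary algebras, which applies here because every cell-ideal of $A$ contains an idempotent (as noted in the remark following Definition~\ref{defn1}): each indecomposable projective admits a $\Delta$-filtration, and the multiplicity of $\Delta(\xi)$ in $P(\nu)$ equals $[\Delta(\xi):L(\nu)]$. Hence two simples $L(\lambda),L(\mu)$ lie in the same block of $A$ exactly when there is a chain $\lambda=\nu_0,\nu_1,\ldots,\nu_r=\mu$ such that for each $i$ the simples $L(\nu_{i-1})$ and $L(\nu_i)$ both occur as composition factors of some common standard $\Delta(\xi_i)$. Applying the observation to each pair $(\xi_i,\nu_{i-1})$ and $(\xi_i,\nu_i)$ shows that $\xi_i$ is tableau-linked to both $\nu_{i-1}$ and $\nu_i$ (discarding the trivial case where $\xi_i$ equals one of them), and chaining these links produces the required equivalence between $\lambda$ and $\mu$.

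The main technical point, and essentially the only real work, is the first step: one must use conditions~(5) and~(6) of Definition~\ref{defn1} carefully in order to identify $\Delta_\mu(\lambda)$ with $1_\mu\Delta(\lambda)$ and to witness that $L_\mu(\mu)\neq 0$. Once this weight-space bookkeeping is in place the remainder of the proof is standard material on quasi-hereditary cellular algebras.
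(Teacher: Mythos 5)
Your proposal is correct and follows essentially the same route as the paper: the key step in both is that $[\Delta(\lambda):L(\mu)]\neq 0$ forces $\Delta_\mu(\lambda)\neq 0$ and hence $\TSStd(\lambda,\mu)\neq\emptyset$, exactly as you argue via the weight-space idempotents. The only difference is that the paper then simply cites \cite[(3.9.8)]{GL98} for the fact that blocks of a cellular algebra are generated by common composition factors of cell modules, whereas you reprove that fact using $\Delta$-filtrations of projectives and reciprocity, which is legitimate here since the algebra is quasi-hereditary with a cellular duality.
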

\begin{proof}
It is clear that a necessary condition for 
$\Dim{(\Hom(P(\lambda),\Delta(\mu)))} = [\Delta(\lambda):L(\mu)]\neq 0$, is 
that $\TSStd(\lambda,\mu)\neq\emptyset$.  The result then follows from 
\cite[(3.9.8)]{GL98}.  
\end{proof}

 This result inspires the next section, in which we connect tableaux to paths in an alcove geometry.  

\subsection{The alcove geometry}  \label{Soergeldeg}
 
 We shall assume standard facts concerning root systems, see \cite{Bourb}.
 Let $\{\varepsilon_1, \varepsilon_2, \ldots , \varepsilon_r\}$ be a set of formal symbols and set
\[E_r=\bigoplus_{i=1}^{r} \mathbb{R}\varepsilon_i\]
to be the $r$-dimensional real vector space with basis $\varepsilon_1, \varepsilon_2, \ldots ,\varepsilon_r$. 
We have an inner product $\langle \, , \, \rangle$ given by extending linearly the relations
\[\langle \varepsilon_i, \varepsilon_j\rangle = \delta_{i,j}\]
for all $1\leq i, j \leq r$, where $\delta_{i,j}$ is the Kronecker delta. 

 Let $A(\rho,e)$ denote a cellular algebra with a highest weight theory depending on  parameters $\rho\in E_r$ and $e \in \NN\cup \{\infty\}$ and let  
  $\mptn ln$ denote the indexing set of the simple modules.  
   Suppose that there exists an embedding 
   $ \mptn ln \hookrightarrow E_r   
$; we identify an element $\lambda \in \mptn ln$ with its image under this map.  

Let $\Phi$ 
 denote a root system embedded in $E_r$ as in \cite[Plates I to IX]{Bourb} and let $h$ denote the corresponding  Coxeter number. We take $R^+$ to be the set of positive roots.  
  For each $\alpha \in \Phi$ there is a unique coroot $\alpha^\vee$ such that 
 $\langle \alpha , \alpha^\vee \rangle=2$.
For $e \in \NN \cup \{\infty\}$ we let $W^e$ denote the affine reflection group generated by the reflections $s_{\alpha,me}$ (for $\alpha \in \Phi$, $m\in \ZZ$) given by
\[s_{\alpha,me}(x)=x-(\langle x, \alpha^\vee \rangle -me) \alpha\]
for all $x\in E_r$. 
 For $e= \infty$, define $W^\infty $ to be the subgroup generated by the reflections $s_{\alpha,0}$ for $\alpha \in \Phi$.

  Now, given $  \rho \in E_r$,   we shall always consider the shifted action of $W^e$ by $\rho$ given by
\[w \cdot x = w(x+\rho)-\rho\]
for all $w\in W^e$ and $x\in E_r$.  
 We regard $s_{\alpha,me}$ as a reflection with respect to the hyperplane
\[
h_{\alpha,me}= \{\lambda \in E_r\mid \langle \lambda+\rho , \alpha^\vee\rangle =me\}.
\]
The reflection group $W^e$ acting on $E_r$ defines a system of facets.  A \emph{facet} is a non-empty subset of $E_r$ of the form
\begin{align*}
\mathfrak{f}=\{
\lambda \in E_r \mid &
\langle \lambda + \rho, \alpha^\vee\rangle = m_\alpha e \text{ for all }\alpha \in R^0_+(\mathfrak{f}), \\
& (m_\alpha-1)e < \langle \lambda + \rho, \alpha^\vee\rangle < m_\alpha e \text{ for all }\alpha \in R^1_+(\mathfrak{f})  
\},
\end{align*}
 for suitable integers $m_\alpha \in \ZZ$ and a disjoint decomposition $R^+ = R^0_+(\mathfrak{f}) \cup  R^1_+(\mathfrak{f})$.  A facet, $\mathfrak{f}$, is called an \emph{alcove} if $|R^0_+(\mathfrak{f})|=0$ and a \emph{wall} if
  $|R^0_+(\mathfrak{f})|=1$.  
  A  point $x\in\mathfrak{f}$  is called $e$-\emph{regular} if $|R^0_+(\mathfrak{f})|=0$, and 
  is called $e$-\emph{singular} if $|R^0_+(\mathfrak{f})|\geq1$.  
We assume that $\rho_i\neq 0$ modulo $e$ for any $1\leq i \leq r$,  so that     the origin is always contained in an alcove, which we refer to as the
 \emph{fundamental alcove}. 
 The \emph{closure}, $ \overline{\mathfrak{f}}$,  of a facet, $\mathfrak{f}$, is defined as follows
\begin{align*}\overline{\mathfrak{f}} = \{
\lambda \in E_r \mid &
 \langle \lambda + \rho, \alpha^\vee\rangle = m_\alpha e \text{ for all }\alpha \in R^0_+(\mathfrak{f}), \\
& (m_\alpha-1)e \leq \langle \lambda + \rho, \alpha^\vee\rangle \leq m_\alpha e \text{ for all }\alpha \in R^1_+(\mathfrak{f})  
\} .
\end{align*}
We define a length function on the set of alcoves as follows.  
We say that two alcoves, $\mathfrak{a}_i, \mathfrak{a}_j$ are adjacent if $\overline{\mathfrak{a}}_i \cap \overline{\mathfrak{a}}_j$ is non-empty. 
   Given any pair of alcoves $\mathfrak{a}$ and $\mathfrak{b}$, there exists a  chain of adjacent alcoves,
\[\mathfrak{a} =\mathfrak{a}_0, \mathfrak{a}_1, \ldots ,\mathfrak{a}_\ell = \mathfrak{b},\]
  and we define the \emph{length} $\ell( \mathfrak{a}, \mathfrak{b})$ to be the minimal number of alcoves in such a chain.  We extend this notation to points in alcoves in the obvious manner.  

\subsection{Paths in an alcove geometry}\label{pathsinanaclove} 
In this section we fix $e>h$ and consider paths in our alcove geometry.  
  Given $k\in \NN$, we let $\mathbf{k}$ denote the set $\{1,2,\ldots, k\}$.  
Given   a map  $w: \mathbf{n}\to \mathbf{r}$  we 
 define points $\omega(k)\in E_r$ 
 by
  \[
  \omega(k)=\sum_{1\leq i \leq k}\varepsilon_{w(i)},
  \]   
 for $1\leq \ik \leq n$. We define the associated path of length $n$ in our alcove geometry $E_r$ by $\omega=(\omega(0),\omega(1),\omega(2), \ldots, \omega(n))$, where we fix all paths to begin at the origin, so that $\omega(0)=\astrosun \in E_r$.  We let $\omega_{\leq \ik}$ denote the subpath of $\omega$ of length $\ik$ corresponding to   $w_{\leq \ik}: \mathbf{\ik}\to \mathbf{r}$.

 
\begin{defn}\label{Soergeldegree}
 Fix a path $\omega=(\omega(0),\omega(1),\omega(2), \ldots, \omega(n))$ such that $\omega(0)=\astrosun \in E_r$. We define a
 degree function on $\omega$ by induction.
 We set $\deg(\omega(0))=0$ and set 
 \[
 \deg(\omega_{\leq \ik}) = \deg(\omega_{\leq \ik-1})	 + \sum_\alpha d_\alpha(\omega,\ik)
 \]
where $d_\alpha(\omega,\ik)$ is defined as follows.  
Fix  $\alpha \in \Phi$, and consider the hyperplanes $h_{\alpha,me} $ for $m\in \ZZ$.  
If $\omega(\ik)$ and $\omega(\ik+1)$   both lie on 
some $h_{\alpha,me}$ or if neither lie on some $h_{\alpha,me}$ for $m \in \ZZ$, then
$d_\alpha(\omega,\ik)=0$.  Otherwise, exactly one of $\omega(\ik)$ and $\omega(\ik-1)$ lies on 
some hyperplane $h_{\alpha,me}$.  
Removing the hyperplane $h_{\alpha,me}$ leaves two distinct subsets
$E_r^+(\alpha,me)$ and $E_r^-(\alpha,me)$ where $\astrosun \in  E_r^-(\alpha,me)$.  If $\omega(\ik-1)\in E_r^-(\alpha,me)$, or $\omega(\ik)\in E_r^+(\alpha,me)$, then set $d_\alpha(\omega,\ik)=0$. 
 If  $\omega(\ik-1)\in E_r^+(\alpha,me)$, then $d_\alpha(\omega,\ik)=-1$.
 If  $\omega(\ik)\in E_r^-(\alpha,me)$, then $d_\alpha(\omega,\ik)=+1$. 
 \end{defn}
 
 Figure \ref{hyper} illustrates the four subcases outlined above.  
 In each case the diagram depicts a hyperplane, labelled by $h_{\alpha,me}$, with the corresponding subsets  $ E_r^+(\alpha,me)$ and $ E_r^-(\alpha,me)$ labelled.  The incoming/outgoing arrows labels steps onto and off of the hyperplane and the corresponding $d_{\alpha}(\omega,\ik)$.
 
 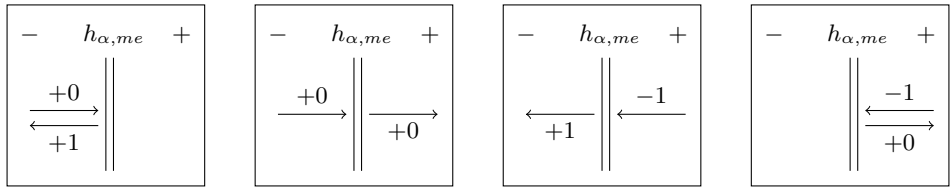
\begin{figure}[ht]\captionsetup{width=0.9\textwidth}
 \[ 
 \scalefont{0.8}  
   \begin{tikzpicture}         
        \draw (-1.3,-0.2) rectangle ++(2.6,2.4); 
\draw(0,0)--(0,1.5);\draw(0.1,0)--(0.1,1.5);
\draw(0.05,1.8) node { \scalefont{1}  $h_{\alpha,me}$};
\draw(-1,1.8) node {$-$};\draw(1,1.8) node {$+$};
\draw[->](-1,.8) to node[auto] {\(+0\)} (-0.1,.8);
\draw[<-](-1,.6) to node[below] {\(+1\)} (-0.1,.6);
       \end{tikzpicture} 
       \quad\quad
    \begin{tikzpicture}         
        \draw (-1.3,-0.2) rectangle ++(2.6,2.4); 
\draw(0,0)--(0,1.5);\draw(0.1,0)--(0.1,1.5);
\draw(0.05,1.8) node { \scalefont{1}  $h_{\alpha,me}$};
\draw(-1,1.8) node {$-$};\draw(1,1.8) node {$+$};
\draw[->](-1,.75) to node[auto] {\(+0\)} (-0.1,.75);
\draw[<-](1.1,.75) to node[auto] {\(+0\)} (0.2,.75);
       \end{tikzpicture} 
\quad\quad
    \begin{tikzpicture}         
        \draw (-1.3,-0.2) rectangle ++(2.6,2.4); 
\draw(0,0)--(0,1.5);\draw(0.1,0)--(0.1,1.5);
\draw(0.05,1.8) node { \scalefont{1}  $h_{\alpha,me}$};
\draw(-1,1.8) node {$-$};\draw(1,1.8) node {$+$};
\draw[<-](-1,.75) to node[below] {\(+1\)} (-0.1,.75);
\draw[->](1.1,.75) to node[above] {\(-1\)} (0.2,.75);
       \end{tikzpicture} 
       \quad\quad
        \begin{tikzpicture}         
        \draw (-1.3,-0.2) rectangle ++(2.6,2.4); 
\draw(0,0)--(0,1.5);\draw(0.1,0)--(0.1,1.5);
\draw(0.05,1.8) node { \scalefont{1}  $h_{\alpha,me}$};
\draw(-1,1.8) node {$-$};\draw(1,1.8) node {$+$};
 \draw[->](1.1,.8) to node[above] {\(-1\)} (0.2,.8);
\draw[<-](1.1,.6) to node[below] {\(+0\)} (0.2,.6);
       \end{tikzpicture} 
\]
\caption{The four subcases for the values of $d_{\alpha}(\omega,\ik)$ as $\omega$ crosses a wall.
The $\pm$ indicate the distinct subsets $E_r^{+}$ and $E_r^{-}$ of $E_r$. In each case the first (respectively second) step has its degree recorded as a superscript (respectively subscript).}
\label{hyper}
\end{figure}

 Let $\omega $ be a path which
  passes through a hyperplane $h_{\alpha,me}$   at point $\omega(\ik)$ (note that $\ik$ is not necessarily unique).  Then,
  let  $\omega'$  be the path 
 obtained from $\omega$  by applying the reflection 
 $s_{\alpha,me}$ to   all the steps in     $\omega $ after the point $\omega(\ik)$.
In other words,    $\omega'(i)=\omega(i)$ for all $1\leq i \leq \ik$ and
  $\omega(i')=s_{\alpha,me} \cdot \omega(i)$ for $\ik \leq i \leq n$. 
  We refer to the path $\omega'$ as the  reflection of 
 $\omega$ in $h_{\alpha,me}$ at point $\omega(\ik)$ and denote this by $s_{\alpha,me}^{\ik}\cdot \omega$.  
  We write $\omega \sim \omega'$ if the path $\omega$ can be obtained from $\omega'$ by a series of reflections in $W^e$.

 Let  $\lambda,\mu  \in E_r$.  We fix a distinguished path $\omega^\mu$ 
  from the origin to $\mu$ such that $\deg(\omega(\ik))=0$ for all $1\leq i \leq n$.
   (It is easy to see that such a path always exists.)
 We let $\Path(\lambda,\mu)$ denote the set  of all  
 paths 
 from the origin to $\lambda$ which may be obtained from $\omega^\mu$ by a series of reflections.

\begin{eg}
Recall the example from the introduction.  Here the geometry is of type $\hat{A}_2$, $n=13$, $e=8$ and $\rho=(8,4,2)$.  

The distinguished path $\omega^\gamma$ is recorded in Figure \ref{first path}.  We clearly have that $d_{\alpha}(\omega,\ik)=0$ at all points $1\leq \ik \leq n$.  
Figure \ref{weight a} contains the two elements of $\Path((4,6,3),\gamma)$. 
Let $\omega$ (respectively  $\omega'$) denote the path in the leftmost (respectively rightmost) case.  We have that
\[d_{\varepsilon_2-\varepsilon_3}(\omega,11)=1, \quad
 d_{\varepsilon_1-\varepsilon_3}(\omega,12)=-1, \quad
 d_{\varepsilon_1-\varepsilon_3}(\omega,13)=1\]  
are the only non-zero values of $d_{\alpha}(\omega,\ik)$ for $1\leq \ik \leq n$,  and therefore $\deg(\omega)=1$.  
We have that
\[d_{\varepsilon_2-\varepsilon_3}(\omega',5)=1, \quad
 d_{\varepsilon_1-\varepsilon_3}(\omega',12)=1, \quad
 d_{\varepsilon_1-\varepsilon_3}(\omega',13)=1\]
 are the only non-zero values of $d_{\alpha}(\omega',\ik)$ for $1\leq \ik \leq n$,  and therefore $\deg(\omega')=3$.  
\end{eg}

   \subsection{Soergel's algorithm for paths}\label{pathalgorithm}
 
Fix $e>h$, we now recall the classical construction of Soergel's algorithm with respect to a walk in the geometry.  The procedure outlined below is somewhat simpler, as all points in our geometry belong to the   \emph{dominant chamber} \cite[Section 4]{Soergel}.  


\begin{defn}\label{admissibledefn}
Let $e>h$, and assume $\mu$ is $e$-regular.  We say that a   path   $\omega$ from $\astrosun$ to $\mu$ of length $n$ is \emph{admissible} if 
 $(i)$ 
  $\deg(\omega(\ik))=0$ for all $1\leq \ik \leq n$, and $(ii)$   whenever 
$\omega (\ik)$ lies on  two hyperplanes $h_{\alpha,m_1e}$ and $h_{\beta,m_2e}$ for some $1\leq \ik\leq n$ this implies that  $\langle \alpha, \beta^\vee\rangle = 0$ (we say that the hyperplanes are orthogonal).
\end{defn}

\begin{rmk}
For $\mu$ an $e$-regular point,  and $\omega$ an admissible path from $\astrosun$ to $\mu$, there exist $2^{\ell(\mu)}$ paths $\omega'$ such that $\omega'\sim \omega$.  
\end{rmk}

 We say that a path, 
  $\omega $, is an \emph{alcove-wall path} if 
$(i)$   $\deg(\omega(\ik))=0$ for all $1\leq \ik \leq n$
and $(ii)$  every 
  step lies either on a wall or in an alcove. 
 It is clear that any alcove-wall path is admissible.

\begin{defn}   For a distinguished admissible path $\omega^\mu$, we define 
  \[ \overline{m}_\mu(\lambda) = \;
  \sum_{
\mathclap{\omega \in \Path(\lambda,\mu)}
  } \; t^{\deg(\omega)}.\]
\end{defn}

 Given $\omega $  an admissible path of length $n$, 
 we let $\mathfrak{f}_k$ denote the facet containing the point $\omega (\ik)$ for $1\leq \ik \leq n$.

\begin{defn}\label{alcovseries}  
We fix an admissible path $\omega$  from $\astrosun$ to $\mu$ of length $n$.
  For $1\leq k \leq n$, we let 
\begin{align*}
A_+(\omega,k) =&
  \{  (\gamma , m_ke)  \mid \omega(k)  \in h_{\gamma,m_{k }e}  \} \setminus \{ (\gamma ,  m_{k-1} e) \mid \omega(k-1)  \in  h_{\gamma,m_{k -1}e}  \},  \\ A_{-}(\omega,k) =&
  \{ (\gamma ,  m_{k-1} e)  \mid \omega(k-1)  \in h_{\gamma,m'_{k-1 }e} \} \setminus
   \{( \gamma,    m_{k} e) \mid \omega(k)  \in  h_{\gamma,m_{k }'e} \}.
   \end{align*}
   The orthogonality condition on the admissible path ensures
 that for $1\leq k \leq n$, the set  
$A_+(\omega,k)$ (respectively $A_{-}(\omega,k)$) either consists of one element, 
denoted
 $\alpha_+(\omega,k)$ (respectively $\alpha_{-}(\omega,k)$) or is empty.  
\end{defn}

\begin{rmk}
The $\alpha_+(\omega,k) $ (respectively $\alpha_{-}(\omega,k) $) record the steps in $\omega$ which are on to (respectively off of) hyperplanes in the geometry.  
\end{rmk}

\begin{defn}
We fix an admissible path $\omega$  from $\astrosun$ to $\mu$ of length $n$.
For $1\leq k \leq n$, we set $A_k$ to be the  
  alcove, minimal in the length ordering, such that 
  $\langle \lambda+\rho , \alpha_+(\omega,i)^\vee \rangle >0 $ for all $\lambda \in A_+(\omega,k)$ and all $0\leq i\leq k$ such that $A_+(\omega,k)\not= \emptyset$.   
We define  the \emph{alcove-series} of $\omega$ to be the ordered set whose elements are given by  the  alcoves   $A_k$ for $0\leq k \leq n$ recorded without repeats and in increasing order. 
\end{defn}



\begin{eg}\label{omegagamma}
Consider a geometry of type $\hat{A}_{2}$ with $\rho=(8,4,2)$ and $n=13$.  
The path $\omega^\gamma$ in Figure \ref{first path} is an alcove-wall path.
We let  $\overline{\omega}^\gamma$  denote the alcove-wall path 
$$
(
\varepsilon_1,\varepsilon_2,
\varepsilon_1,\varepsilon_2,
\varepsilon_1,\varepsilon_2,
\varepsilon_2,
\varepsilon_2,\varepsilon_2,
\varepsilon_2,\varepsilon_2,\varepsilon_2,
\varepsilon_1
).
$$
Both paths pass through (the same)  alcoves of length $0, 1, 2, 3$, which we denote by $\mathfrak{a}(i)$   for $i=0, 1, 2, 3$.  We have that 
$$A^{\omega^\gamma}_{ \ik}=\begin{cases}
 \{ \mathfrak{a}(0) \} & \text{for $\ik=0,1,2$, } \\
 \{    \mathfrak{a}(1)\}  & \text{for $\ik=3,4,5,6,7,8,9$, }\\
  \{   \mathfrak{a}(2)\}  & \text{for $\ik=10,11$, }\\
  \{   \mathfrak{a}(3)\}  & \text{for $\ik=12,13$; } 
 \end{cases}
 \quad
 A^{\overline{\omega}^\gamma}_{ \ik}=\begin{cases}
 \{ \mathfrak{a}(0) \} & \text{for $\ik=0,1,2$, } \\
 \{    \mathfrak{a}(1)\}  & \text{for $\ik= 3,4,5,6,7,8$, }\\
 \{   \mathfrak{a}(2)\}  & \text{for $\ik=9$, }\\
  \{   \mathfrak{a}(3)\}  & \text{for $\ik=10,11,12,13$; } 
 \end{cases}$$
and so the alcove series in both cases is given by $\{  \mathfrak{a}(0), \mathfrak{a}(1),\mathfrak{a}(2),\mathfrak{a}(3)\}$.  
\end{eg}

 We let $ \mathfrak{A}$ denote the set of all alcoves in $E_r$.
We let   $\mathfrak{b}, \mathfrak{c},\mathfrak{d}$ denote   alcoves in our geometry and let $\mathfrak{a}_0,\ldots,\mathfrak{a}_{\ell(\mu)}$ denote the alcove series of an admissible path from $\astrosun$ to $\mu$. 
 We define   maps
$$n_{\mathfrak{a}_i}  : \mathfrak{A} \to \NN_0[t] \quad m_{\mathfrak{a}_i}   :  \mathfrak{A} \to  \NN_0[t] \quad e_{\mathfrak{a}_i}   :  \mathfrak{A} \to \NN_0[t+t^{-1}] ,$$
where $t$ is a formal parameter, as follows. 
  We set
\[n_{\mathfrak{a}_i}  ({\mathfrak{a}_i} ) = 1, \quad
 m_{\mathfrak{a}_i} ({\mathfrak{a}_i} ) = 1, \quad
 e_{\mathfrak{a}_i}  ({\mathfrak{a}_i} ) = 1.\]
 We define 
  \[n_{\mathfrak{a}_{i}}(\mathfrak{b})= 0, \quad
  m_{\mathfrak{a}_{i}}(\mathfrak{b}) = 0, \quad
  e_{\mathfrak{a}_{i}}(\mathfrak{b}) = 0\] 
whenever  $\ell(\mathfrak{b}) \not\leq \ell(\mathfrak{a}_i)$.
     For each adjacent  pair of alcoves $\mathfrak{a}_i$ and  $\mathfrak{a}_{i+1}$,
     we let $s_i$ denote the reflection in the hyperplane passing through
      $\overline{\mathfrak{a}}_i \cap \overline{\mathfrak{a}}_{i+1}$.  
The closure, $\overline{\mathfrak{b}}$, of any alcove $\mathfrak{b}$ has one wall which is in the $W^e$-orbit of $s_i$, and we shall write $s_i\cdot \mathfrak{b}$ for the image of $\mathfrak{b}$ in that
wall.
   Then, with ${m}_{\mathfrak{a}_i} $ known, we set
\begin{equation}\label{oohlalala}
m_{{\mathfrak{a}_{i+1}} }(s_i \cdot \mathfrak{b})
=
\begin{cases}
 m_{\mathfrak{a}_i} (\mathfrak{b}) + t^{-1} m_{\mathfrak{a}_i}(s_i \cdot \mathfrak{b}), &  \ell(s_i \cdot \mathfrak{b}) > \ell(\mathfrak{b}), \\
  m_{\mathfrak{a}_i}(\mathfrak{b}) +  t m_{\mathfrak{a}_i}(s_i \cdot \mathfrak{b}), &  \ell(s_i \cdot \mathfrak{b}) < \ell(\mathfrak{b}).
\end{cases}
\end{equation}
We refer to this procedure as the \emph{cancellation-free Soergel algorithm}.

 \begin{prop}\label{the same}
Given $e>h$, suppose that $\mu$ and $\lambda$ belong to 
alcoves $ \mathfrak{a}$ and $\mathfrak{b}$ respectively, and furthermore that   $\mu \in W^e \cdot \lambda$. 
We let $\mathfrak{a}_0$ denote the fundamental alcove and $\mathfrak{a}_0,\ldots,\mathfrak{a}_{\ell(\mu)}=\mathfrak{a}$ denote the alcove series of
an admissible path $\omega^\mu$.  
  We have that $\overline{m}_{\mu}(\lambda)=m_{\mathfrak{a}}(\mathfrak{b})$.  
 \end{prop}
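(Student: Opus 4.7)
The plan is to prove the equality by induction on the length of the alcove series, i.e.\ on $\ell(\mu)$. For the base case $\ell(\mu)=0$, the distinguished path $\omega^\mu$ stays inside the fundamental alcove $\mathfrak{a}_0$; it crosses no hyperplanes, so $\Path(\lambda,\mu) = \{\omega^\mu\}$ if $\lambda=\mu$ and is empty otherwise, matching the initialisation $m_{\mathfrak{a}_0}(\mathfrak{a}_0)=1$ and $m_{\mathfrak{a}_0}(\mathfrak{b})=0$ for $\mathfrak{b}\neq\mathfrak{a}_0$.

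For the inductive step, let $\omega^\mu$ have alcove series $\mathfrak{a}_0,\ldots,\mathfrak{a}_{\ell(\mu)}=\mathfrak{a}$, and let $i=\ell(\mu)-1$, so $s_i$ is the reflection in the wall separating $\mathfrak{a}_i$ from $\mathfrak{a}_{i+1}$. I would truncate $\omega^\mu$ at the step just before it enters $\mathfrak{a}_{i+1}$, obtaining an admissible path $\omega'$ ending at a point $\mu'\in\overline{\mathfrak{a}_i}$ whose alcove series is $\mathfrak{a}_0,\ldots,\mathfrak{a}_i$; admissibility of $\omega'$ is inherited from $\omega^\mu$, and the inductive hypothesis gives $\overline{m}_{\mu'}(\nu)=m_{\mathfrak{a}_i}(\mathfrak{c})$ whenever $\nu\in\mathfrak{c}$. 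The remainder $\omega^\mu\setminus\omega'$ consists of a short segment that crosses the single hyperplane $H$ associated with $s_i$ (and no others, by the orthogonality clause of Definition \ref{admissibledefn}); write this segment as $\sigma$.

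The key step is to build a weight-preserving bijection
\[
\Path(\lambda,\mu) \longleftrightarrow \Path(\lambda,\mu') \sqcup \Path(s_i\cdot\lambda,\mu'),
\]
obtained by either (a) transporting the reflections of $\omega^\mu$ that occurred before the start of $\sigma$ to a path of the same shape ending in the same alcove as $\lambda$, or (b) using a reflection across $H$ at the unique point where the path meets $H$ inside $\sigma$, which swaps $\mathfrak{b}$ and $s_i\cdot\mathfrak{b}$. Degree tracking across $\sigma$ is carried out using Definition \ref{Soergeldegree}: if $\mathfrak{b}$ lies on the shorter side of $H$ (so $\ell(s_i\cdot\mathfrak{b})>\ell(\mathfrak{b})$), then extending an $\omega'$-path ending in $\mathfrak{b}$ by $\sigma$ adds degree $0$, while extending one ending in $s_i\cdot\mathfrak{b}$ and reflecting at $H$ adds degree $-1$ (the step off $H$ into the ``$+$'' side becomes a step off $H$ into the ``$-$'' side after reflection, cf.\ Figure \ref{hyper}); the opposite signs occur when $\ell(s_i\cdot\mathfrak{b})<\ell(\mathfrak{b})$. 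Summing over $\Path(\lambda,\mu)$ and extracting a factor of $t^{\pm 1}$ exactly reproduces the recursion \eqref{oohlalala} defining $m_{\mathfrak{a}_{i+1}}(\mathfrak{b})$ from $m_{\mathfrak{a}_i}$.

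The main obstacle will be the bookkeeping for this bijection: one must check that the ``choice of reflection at $H$'' is independent of the internal crossing point inside $\sigma$ (so that $\Path(\lambda,\mu)$ really splits into the two disjoint classes above), and that the degree contributions of $\sigma$ depend only on which side of $H$ the final point $\lambda$ lies on, not on the combinatorial structure of $\sigma$ itself. Both follow from admissibility ($\omega^\mu$ has degree $0$ at every step, so $\sigma$ contributes $0$ to the degree of $\omega^\mu$) together with the sign rules of Definition \ref{Soergeldegree}, which imply that reflecting the tail across $H$ flips the local contribution of the unique step that crosses $H$ by exactly $\pm 1$. Once this is verified, the inductive step is immediate, proving $\overline{m}_\mu(\lambda)=m_{\mathfrak{a}}(\mathfrak{b})$.
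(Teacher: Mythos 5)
Your local bookkeeping at the wall is the right mechanism (a crossing contributes $-1+1=0$, a bounce contributes $\pm 1$, matching the two terms of \eqref{oohlalala}), but the inductive set-up has a genuine gap: the claim that the tail segment $\sigma$ "crosses the single hyperplane $H$ and no others, by the orthogonality clause" is not what admissibility gives, and it is false in general. Definition \ref{admissibledefn} allows the distinguished path to sit on a hyperplane for many steps and, while there, to step onto and off other (necessarily orthogonal) hyperplanes; crossings of orthogonal walls can therefore interleave or even nest inside one another (compare $\overline{\omega}^\gamma$ in Example \ref{omegagamma}, which steps off one hyperplane and onto another in the same step). Two things then break in your induction: (a) there need not exist any $e$-regular point at which to truncate between consecutive alcoves of the alcove series, so your $\mu'$ can lie on one or more walls, where the inductive hypothesis (stated for weights in alcoves) does not apply and where "the distinguished admissible path to $\mu'$" is not even defined, since Definition \ref{admissibledefn} presupposes $e$-regularity; and (b) $\sigma$ can contain the steps \emph{off} earlier hyperplanes whose crossings began before your truncation point, so the asserted two-class splitting of $\Path(\lambda,\mu)$ and the claim that $\sigma$ contributes degree $0$ or $\pm 1$ depending only on the side of $H$ are exactly what still needs to be proved, not a consequence of "$\sigma$ contributes $0$ to $\deg(\omega^\mu)$".

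The missing ingredient is the deferral argument that forms the core of the paper's proof: using the orthogonality in Definition \ref{admissibledefn}, one shows that if the path steps onto a hyperplane at time $k$ and off it only at a later time $k'$, then all reflections applied in between are in hyperplanes orthogonal to it, hence preserve which side of it the relevant step lands on (this is the displayed computation with $s_{\beta,m'e}\cdot(\lambda+\varepsilon_i)$), so the degree contribution $d_{\alpha}(\omega,k')$ is the same as if the step off were taken immediately at $k+1$. Only after this reduction may one assume the path is an alcove-wall path, and only then does your clean local matching with \eqref{oohlalala} (equivalently, your bijection $\Path(\lambda,\mu)\leftrightarrow\Path(\lambda,\mu')\sqcup\Path(s_i\cdot\lambda,\mu')$) go through; it also removes the need to truncate at a regular point at all. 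As written, your appeal to orthogonality asserts the wrong conclusion (that no other hyperplanes are met by $\sigma$) instead of establishing this independence of interleaved crossings, so the inductive step is not yet a proof.
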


\begin{proof}  
 For $1\leq i \leq \ell(\mu)$, note that 
 the $i$th  hyperplane $\overline{\mathfrak{a}}_i
  \cap \overline{  \mathfrak{a}}_{i+1}$
 is the hyperplane given by the $i$th non-trivial
  $ {\alpha_+(m,k)}$.  
This gives the required bijection between paths (obtained from $\omega^\mu$ by a series of reflections 
through the $h_{\alpha_+(m,k)}$ for $1\leq k \leq n$) and terms in Soergel's cancellation-free algorithm (given by a sequence of alcoves, which are determined by the alcove walls $\overline{\mathfrak{a}}_i
  \cap \overline{  \mathfrak{a}}_{i+1}$ through which we reflect).  

The   $\alpha_+(\omega^\mu,k)$ and  $\alpha_{-}(\omega^\mu,k')$
for $1\leq k < k' \leq n$ come in pairs (whenever we step on to a   hyperplane,
 we  must step off of it at some later point). 
 For a pair $1\leq k <k' \leq n$, the hyperplanes 
$h_{\alpha_+(\omega^\mu,k'')}$ and 
 $h_{\alpha_{-}(\omega^\mu,k'')}$ for $k < k'' <k'$
  are orthogonal  to 
 $h_{\alpha_+(m,k)}$.

Fix two points $\lambda, \lambda+\varepsilon_i \in E_r$ and suppose that
$\lambda \in h_{\alpha,me}$.   Assume that  $\lambda+\varepsilon_i $ belongs to   $E_r^+(\alpha,me)$ or $E_r^-(\alpha,me)$.  
Let $h_{\beta,m'e}$ denote a hyperplane orthogonal to 
$h_{\alpha,me}$ and
  $s_{\beta,m'e}$ denote the reflection through this hyperplane.  
It is clear that  $s_{\beta,me} \cdot (\lambda+\varepsilon_i)  $ still belongs to either $E_r^+(\alpha,me)$ or $E_r^-(\alpha,me)$, respectively.  
 (Compare this with the definition of the degree of a path, Definition  \ref{Soergeldegree}.)
Note that in general, this would not be true for non-orthogonal hyperplanes.

 Therefore the contribution $d_{\alpha_+(m,k)}(\omega, k')$
  to the  degree given by the step 
 at point $k'-1$, is the same as if it were taken   at point     $ k+1$.  
Thus we can assume that $k'=k+1$, in other words that our path is an alcove wall path.  
 Folding up an alcove wall path, $\omega^\mu$, so that it terminates at $\lambda$ corresponds to tracing one of the terms in the Soergel cancellation-free algorithm, as follows:
    \begin{itemize}
\item[$(i)$]
   When the path steps from  alcove $\mathfrak{b}$ onto the wall $\overline{\mathfrak{b}}\cap \overline{s_i\cdot \mathfrak{b}}$
    and through to the alcove $s_i\cdot \mathfrak{b}$, the degree of the path does not change on alcoves (as $-1+1=0$), as illustrated in the second and third diagrams in Figure \ref{hyper}.  This is equivalent to the first term in each of the two cases of equation \ref{oohlalala}.
    \item [$(ii)$]
 When the path  steps from  alcove $s_i\cdot \mathfrak{b}$ onto the wall $\overline{\mathfrak{b}}\cap \overline{s_i\cdot \mathfrak{b}}$ and then returns to the alcove $s_i\cdot \mathfrak{b}$, the degree either increases or decreases by one, as seen  in the first and fourth  diagrams in Figure \ref{hyper}, respectively.
   This is equivalent to 
  the second term  in the two cases of equation \ref{oohlalala}. 
    \end{itemize}
For ease in the above, we have
tacitly assumed that we never simultaneously step off of a hyperplane and on to  another hyperplane in the same step  
 (as in $\omega^\gamma$ in Example \ref{omegagamma}). 
  In general,  
  this is not the case (as in $\overline{\omega}^\gamma$ in Example \ref{omegagamma}).
 Our ignoring of this   is justified as the Soergel-degree is given by 
     summing over the Soergel-degrees of the 
  steps from passing through these separate facets 
  (note that in Definition \ref{Soergeldegree}, 
  the contributions of the  $d_{\alpha}$ for $\alpha \in R^+$ to the sum are independent).  
\end{proof}

 \begin{rmk}
 Motivated by  the above Proposition, we will omit the bar for 
  $\overline{m}_\mu(\lambda)$
for $\mu,\lambda\in E_r$.    
\end{rmk}

 Similarly, with $n_\mathfrak{a_i} $  known by induction, we set 
\[
n'_{{\mathfrak{a}_{i+1}}}(s_i \cdot \mathfrak{b})
=
\begin{cases}
 n_{\mathfrak{a}_i}(\mathfrak{b}) + t^{-1} n_{\mathfrak{a}_i}(s_i \cdot \mathfrak{b}), &  \ell(s_i \cdot \mathfrak{b}) > \ell(\mathfrak{b}); \\
  n_{\mathfrak{a}_i}(\mathfrak{b}) +   t  n_{\mathfrak{a}_i}(s_i \cdot \mathfrak{b}), &  \ell(s_i \cdot \mathfrak{b}) < \ell(\mathfrak{b}); 
\end{cases}
\]
and 
\[
n_{{\mathfrak{a}_{i+1}}}(\mathfrak{b})=n'_{{\mathfrak{a}_{i+1}}}(\mathfrak{b}) -\; \;
  \sum_{\mathclap{\{\mathfrak{d}\mid \ell(\mathfrak{d}) < 
\ell({\mathfrak{a}_{i+1}})\}}} \; \;
(n'_{ {\mathfrak{a}_{i+1}}}(\mathfrak{d})|_{t=0})n_\mathfrak{d}(\mathfrak{b}).
\]
We refer to this procedure as the \emph{Soergel algorithm}.  Importantly for us,
 it is shown in \cite{Soergel} that this procedure is  independent of the path taken.  
Finally, with $e_{\mathfrak{a}_{i}}$ known by induction, we set 
\begin{align*}
e_{{\mathfrak{a}_{i+1}}}( s_i \cdot \mathfrak{c})
= 
  (t+t^{-1}) e_{ {\mathfrak{a}_i}}( s_i \cdot  \mathfrak{c})	+ 	
 	 e_{{\mathfrak{a}_i}}(   \mathfrak{c})
	 +	 (n'_{{\mathfrak{a}_{i+1}}}(s_i \cdot \mathfrak{c})|_{t=0})
\end{align*}
if   $\ell(s_i \cdot \mathfrak{c}) > \ell(\mathfrak{c})$, 
and $e_{{\mathfrak{a}_{i+1}}}( s_i \cdot \mathfrak{c}) = 0$ otherwise.
 We refer to this procedure   as the \emph{character algorithm}.  
 We extend the   $e$ and $n$ functions to $e$-regular points in a given linkage class in the obvious fashion.    
  
  \begin{eg}
    Let $e=4$ and $\rho = (4,2)$ and consider the root system of type $\hat{A}_1$. The space $E_1$   can be pictured as  $\ZZ$; however, in order to make the steps $+\varepsilon_1$ and $+\varepsilon_2$ in a walk of length $n$ clear,  we draw a graph with $n$ levels, the $i$th level featuring the points $\{-i, -(i-1), \ldots, (i-1), i\}$ and
let   $\omega$ start at level $0$ at point $\astrosun$ and proceed downwards to level $n$, this is made clear in Figure \ref{LEVEL2EXAMPLE1}.  
As pointed out in \cite{PR13,Pla13}, these can be regarded as walks on Pascal's triangle.

    Let $\mu=(0,11)$ and $\lambda=(4,7)$.  
There are two elements $\omega,\omega'\in {\rm Path}_{11}(\lambda,\mu)$, depicted in Figure \ref{LEVEL2EXAMPLE1}.  The former is of degree 2 and the latter of degree 0. 
In the former case, $d_{\varepsilon_1-\varepsilon_2}(\omega,3)=1$, $d_{\varepsilon_1-\varepsilon_2}(\omega,7)=1$.
In the latter case, $d_{\varepsilon_1-\varepsilon_2}(\omega',7)=1$
$d_{\varepsilon_1-\varepsilon_2}(\omega',10)=-1$.   
\begin{figure}[ht]\captionsetup{width=0.9\textwidth}
$$ \scalefont{0.8} \begin{tikzpicture}[scale=0.6]
{ \clip (-4,-4.3) rectangle ++(8,5);
\path [draw,name path=upward line] (-3,-5) -- (3,-5);
  \path 
      (0, 0)  ++ (90:0.5 cm)coordinate (XX);
        \path 
      (XX)  ++ (0:0.1 cm)coordinate (XX1);
              \path 
      (XX)  ++ (180:0.1 cm)coordinate (XX2);
      \draw[->] (XX2) to ++(180:0.7 cm);
  \scalefont{0.8}      \draw  (XX2)  ++(180:1.5 cm) node {$+\varepsilon_2$};
      \draw[->] (XX1) to ++(0:0.7 cm);
        \draw  (XX)  ++(0:1.5 cm) node {$+\varepsilon_1$};
  \path 
      (0, 0)              coordinate (a1)
        (a1)    -- +(-45:0.5) coordinate (a2) 
        (a1)    -- +(-135:0.5) coordinate (a3) 
        (a2)    -- +(-45:0.5) coordinate (b1) 
        (a2)    -- +(-135:0.5) coordinate (b2) 
         (a3)    -- +(-135:0.5) coordinate (b3) 
  (b1)    -- +(-45:0.5) coordinate (c1) 
        (b1)    -- +(-135:0.5) coordinate (c2) 
  (b3)    -- +(-45:0.5) coordinate (c3) 
        (b3)    -- +(-135:0.5) coordinate (c4) 
  (c1)    -- +(-45:0.5) coordinate (d1) 
        (c1)    -- +(-135:0.5) coordinate (d2) 
  (c3)    -- +(-45:0.5) coordinate (d3) 
        (c3)    -- +(-135:0.5) coordinate (d4) 
                (c4)    -- +(-135:0.5) coordinate (d5) 
  (d1)    -- +(-45:0.5) coordinate (e1) 
        (d1)    -- +(-135:0.5) coordinate (e2) 
  (d3)    -- +(-45:0.5) coordinate (e3) 
        (d3)    -- +(-135:0.5) coordinate (e4) 
                (d5)    -- +(-135:0.5) coordinate (e6) 
                                (d5)    -- +(-45:0.5) coordinate (e5) 
  (e1)    -- +(-45:0.5) coordinate (f1) 
        (e1)    -- +(-135:0.5) coordinate (f2) 
  (e3)    -- +(-45:0.5) coordinate (f3) 
        (e3)    -- +(-135:0.5) coordinate (f4) 
                (e5)    -- +(-45:0.5) coordinate (f5) 
                (e5)    -- +(-135:0.5) coordinate (f6) 
                                (e6)    -- +(-135:0.5) coordinate (f7) 
  (f1)    -- +(-45:0.5) coordinate (g1) 
        (f1)    -- +(-135:0.5) coordinate (g2) 
  (f3)    -- +(-45:0.5) coordinate (g3) 
        (f3)    -- +(-135:0.5) coordinate (g4) 
                (f5)    -- +(-45:0.5) coordinate (g5) 
                (f5)    -- +(-135:0.5) coordinate (g6) 
                                (f6)    -- +(-135:0.5) coordinate (g7)          
                                                                (f7)    -- +(-135:0.5) coordinate (g8)        
          (g1)    -- +(-45:0.5) coordinate (h1) 
        (g1)    -- +(-135:0.5) coordinate (h2) 
  (g3)    -- +(-45:0.5) coordinate (h3) 
        (g3)    -- +(-135:0.5) coordinate (h4) 
                (g5)    -- +(-45:0.5) coordinate (h5) 
                (g5)    -- +(-135:0.5) coordinate (h6) 
                                (g6)    -- +(-135:0.5) coordinate (h7)          
                                                                (g7)    -- +(-135:0.5) coordinate (h8)           
                                                                       (h1)    -- +(-45:0.5) coordinate (i1) 
        (h1)    -- +(-135:0.5) coordinate (i2) 
  (h3)    -- +(-45:0.5) coordinate (i3) 
        (h3)    -- +(-135:0.5) coordinate (i4) 
                (h5)    -- +(-45:0.5) coordinate (i5) 
                (h5)    -- +(-135:0.5) coordinate (i6) 
                                (h6)    -- +(-135:0.5) coordinate (i7)          
                                                                (h7)    -- +(-135:0.5) coordinate (i8)           
                            (i1)    -- +(-45:0.5) coordinate (j1) 
        (i1)    -- +(-135:0.5) coordinate (j2) 
  (i3)    -- +(-45:0.5) coordinate (j3) 
        (i3)    -- +(-135:0.5) coordinate (j4) 
                (i5)    -- +(-45:0.5) coordinate (j5) 
                (i5)    -- +(-135:0.5) coordinate (j6) 
                                (i6)    -- +(-135:0.5) coordinate (j7)          
                                                                (i7)    -- +(-135:0.5) coordinate (j8)      
         (j1)    -- +(-45:0.5) coordinate (k1) 
        (j1)    -- +(-135:0.5) coordinate (k2) 
  (j3)    -- +(-45:0.5) coordinate (k3) 
        (j3)    -- +(-135:0.5) coordinate (k4) 
                (j5)    -- +(-45:0.5) coordinate (k5) 
                (j5)    -- +(-135:0.5) coordinate (k6) 
                                (j6)    -- +(-135:0.5) coordinate (k7)          
                                                                (j7)    -- +(-135:0.5) coordinate (k8)                                      
 (g8)    -- +(-135:0.5) coordinate (h9)
   (h9)    -- +(-45:0.5) coordinate (i9)  
   (h9)    -- +(-135:0.5) coordinate (i10)
   (i9)    -- +(-45:0.5) coordinate (j9)  
   (i9)    -- +(-135:0.5) coordinate (j10)
   (i10)    -- +(-135:0.5) coordinate (j11)   
   (j9)    -- +(-45:0.5) coordinate (k9)  
   (j9)    -- +(-135:0.5) coordinate (k10)
   (j10)    -- +(-135:0.5) coordinate (k11)  
      (j11)    -- +(-135:0.5) coordinate (k12)  
   ;
            \fill(a1) circle (1pt);             \fill(a2) circle (1pt); \fill(a3) circle (1pt); 
              \fill(b1) circle (1pt);                 \fill(b2) circle (1pt);                          \fill(b3) circle (1pt);   
               \fill(c1) circle (1pt);                 \fill(c2) circle (1pt);                          \fill(c3) circle (1pt);   \fill(c4) circle (1pt);   
                              \fill(d1) circle (1pt);                 \fill(d2) circle (1pt);                          \fill(d3) circle (1pt);   \fill(d4) circle (1pt);   \fill(d5) circle (1pt);   
                                    \fill(e1) circle (1pt);                 \fill(e2) circle (1pt);                          \fill(e3) circle (1pt);   \fill(e4) circle (1pt);   
\fill(e5) circle (1pt);     \fill(e6) circle (1pt);   
                                                                        \fill(f1) circle (1pt);                 \fill(f2) circle (1pt);                          \fill(f3) circle (1pt);   \fill(f4) circle (1pt);   \fill(f5) circle (1pt);     \fill(f6) circle (1pt);    \fill(f7) circle (1pt);   
    \fill(g1) circle (1pt);                 \fill(g2) circle (1pt);                          \fill(g3) circle (1pt);   \fill(g4) circle (1pt);   \fill(g5) circle (1pt);     \fill(g6) circle (1pt);    \fill(g7) circle (1pt);   
 \fill(g8) circle (1pt);   
  \fill(h1) circle (1pt);                 \fill(h2) circle (1pt);                          \fill(h3) circle (1pt);   \fill(h4) circle (1pt);   \fill(h5) circle (1pt);     \fill(h6) circle (1pt);    \fill(h7) circle (1pt);   
 \fill(h8) circle (1pt);   
  \fill(j1) circle (1pt);                 \fill(j2) circle (1pt);                          \fill(j3) circle (1pt);   \fill(j4) circle (1pt);   \fill(j5) circle (1pt);     \fill(j6) circle (1pt);    \fill(j7) circle (1pt);   
 \fill(j8) circle (1pt);   
  \fill(i1) circle (1pt);                 \fill(i2) circle (1pt);                          \fill(i3) circle (1pt);   \fill(i4) circle (1pt);   \fill(i5) circle (1pt);     \fill(i6) circle (1pt);    \fill(i7) circle (1pt);   
 \fill(i8) circle (1pt);  
   \fill(k1) circle (1pt);                 \fill(k2) circle (1pt);                          \fill(k3) circle (1pt);   \fill(k4) circle (1pt);   \fill(k5) circle (1pt);     \fill(k6) circle (1pt);    \fill(k7) circle (1pt);   
 \fill(k8) circle (1pt);   
        \draw[dotted](b3) -- +(270:10);       \draw[dotted](b3) -- +(90:1);
        \draw[dotted](b1) -- +(270:10);       \draw[dotted](b1) -- +(90:1);
        \draw[dotted](f7) -- +(270:10);        \draw[dotted](f7) -- +(90:1);
         \draw[dotted](f1) -- +(270:10);        \draw[dotted](f1) -- +(90:1);
 \draw[dotted](j1) -- +(270:10);        \draw[dotted](j1) -- +(90:1);
 \draw[dotted](j11) -- +(270:10);        \draw[dotted](j11) -- +(90:1);       ; 
          \fill(i10) circle (1pt);     \fill(i9) circle (1pt);   \fill(h9) circle (1pt);  
             \fill(j9) circle (1pt);   \fill(j10) circle (1pt);  \fill(j11) circle (1pt);  
             \fill(k9) circle (1pt);               \fill(k10) circle (1pt);      \fill(k11) circle (1pt);               \fill(k12) circle (1pt);  
\draw(a1) --  (b3);   \draw(j7) --  (f3); \draw(j7) --  (k8);   \draw(f3) --  (b3);          }
    \end{tikzpicture}
\quad \quad\quad 
 \scalefont{0.8} \begin{tikzpicture}[scale=0.6]
{ \clip (-4,-4.3) rectangle ++(8,5);
\path [draw,name path=upward line] (-3,-5) -- (3,-5);
  \path 
      (0, 0)  ++ (90:0.5 cm)coordinate (XX);
        \path 
      (XX)  ++ (0:0.1 cm)coordinate (XX1);
              \path 
      (XX)  ++ (180:0.1 cm)coordinate (XX2);
      \draw[->] (XX2) to ++(180:0.7 cm);
        \draw  (XX2)  ++(180:1.5 cm) node {$+\varepsilon_2$};
      \draw[->] (XX1) to ++(0:0.7 cm);
        \draw  (XX)  ++(0:1.5 cm) node {$+\varepsilon_1$};
  \path 
      (0, 0)              coordinate (a1)
        (a1)    -- +(-45:0.5) coordinate (a2) 
        (a1)    -- +(-135:0.5) coordinate (a3) 
        (a2)    -- +(-45:0.5) coordinate (b1) 
        (a2)    -- +(-135:0.5) coordinate (b2) 
         (a3)    -- +(-135:0.5) coordinate (b3) 
  (b1)    -- +(-45:0.5) coordinate (c1) 
        (b1)    -- +(-135:0.5) coordinate (c2) 
  (b3)    -- +(-45:0.5) coordinate (c3) 
        (b3)    -- +(-135:0.5) coordinate (c4) 
  (c1)    -- +(-45:0.5) coordinate (d1) 
        (c1)    -- +(-135:0.5) coordinate (d2) 
  (c3)    -- +(-45:0.5) coordinate (d3) 
        (c3)    -- +(-135:0.5) coordinate (d4) 
                (c4)    -- +(-135:0.5) coordinate (d5) 
  (d1)    -- +(-45:0.5) coordinate (e1) 
        (d1)    -- +(-135:0.5) coordinate (e2) 
  (d3)    -- +(-45:0.5) coordinate (e3) 
        (d3)    -- +(-135:0.5) coordinate (e4) 
                (d5)    -- +(-135:0.5) coordinate (e6) 
                                (d5)    -- +(-45:0.5) coordinate (e5) 
  (e1)    -- +(-45:0.5) coordinate (f1) 
        (e1)    -- +(-135:0.5) coordinate (f2) 
  (e3)    -- +(-45:0.5) coordinate (f3) 
        (e3)    -- +(-135:0.5) coordinate (f4) 
                (e5)    -- +(-45:0.5) coordinate (f5) 
                (e5)    -- +(-135:0.5) coordinate (f6) 
                                (e6)    -- +(-135:0.5) coordinate (f7) 
  (f1)    -- +(-45:0.5) coordinate (g1) 
        (f1)    -- +(-135:0.5) coordinate (g2) 
  (f3)    -- +(-45:0.5) coordinate (g3) 
        (f3)    -- +(-135:0.5) coordinate (g4) 
                (f5)    -- +(-45:0.5) coordinate (g5) 
                (f5)    -- +(-135:0.5) coordinate (g6) 
                                (f6)    -- +(-135:0.5) coordinate (g7)          
                                                                (f7)    -- +(-135:0.5) coordinate (g8)        
          (g1)    -- +(-45:0.5) coordinate (h1) 
        (g1)    -- +(-135:0.5) coordinate (h2) 
  (g3)    -- +(-45:0.5) coordinate (h3) 
        (g3)    -- +(-135:0.5) coordinate (h4) 
                (g5)    -- +(-45:0.5) coordinate (h5) 
                (g5)    -- +(-135:0.5) coordinate (h6) 
                                (g6)    -- +(-135:0.5) coordinate (h7)          
                                                                (g7)    -- +(-135:0.5) coordinate (h8)           
                                                                       (h1)    -- +(-45:0.5) coordinate (i1) 
        (h1)    -- +(-135:0.5) coordinate (i2) 
  (h3)    -- +(-45:0.5) coordinate (i3) 
        (h3)    -- +(-135:0.5) coordinate (i4) 
                (h5)    -- +(-45:0.5) coordinate (i5) 
                (h5)    -- +(-135:0.5) coordinate (i6) 
                                (h6)    -- +(-135:0.5) coordinate (i7)          
                                                                (h7)    -- +(-135:0.5) coordinate (i8)           
                            (i1)    -- +(-45:0.5) coordinate (j1) 
        (i1)    -- +(-135:0.5) coordinate (j2) 
  (i3)    -- +(-45:0.5) coordinate (j3) 
        (i3)    -- +(-135:0.5) coordinate (j4) 
                (i5)    -- +(-45:0.5) coordinate (j5) 
                (i5)    -- +(-135:0.5) coordinate (j6) 
                                (i6)    -- +(-135:0.5) coordinate (j7)          
                                                                (i7)    -- +(-135:0.5) coordinate (j8)      
         (j1)    -- +(-45:0.5) coordinate (k1) 
        (j1)    -- +(-135:0.5) coordinate (k2) 
  (j3)    -- +(-45:0.5) coordinate (k3) 
        (j3)    -- +(-135:0.5) coordinate (k4) 
                (j5)    -- +(-45:0.5) coordinate (k5) 
                (j5)    -- +(-135:0.5) coordinate (k6) 
                                (j6)    -- +(-135:0.5) coordinate (k7)          
                                                                (j7)    -- +(-135:0.5) coordinate (k8)                                      
 (g8)    -- +(-135:0.5) coordinate (h9)
   (h9)    -- +(-45:0.5) coordinate (i9)  
   (h9)    -- +(-135:0.5) coordinate (i10)
   (i9)    -- +(-45:0.5) coordinate (j9)  
   (i9)    -- +(-135:0.5) coordinate (j10)
   (i10)    -- +(-135:0.5) coordinate (j11)   
   (j9)    -- +(-45:0.5) coordinate (k9)  
   (j9)    -- +(-135:0.5) coordinate (k10)
   (j10)    -- +(-135:0.5) coordinate (k11)  
      (j11)    -- +(-135:0.5) coordinate (k12)  
   ;
            \fill(a1) circle (1pt);             \fill(a2) circle (1pt); \fill(a3) circle (1pt); 
              \fill(b1) circle (1pt);                 \fill(b2) circle (1pt);                          \fill(b3) circle (1pt);   
               \fill(c1) circle (1pt);                 \fill(c2) circle (1pt);                          \fill(c3) circle (1pt);   \fill(c4) circle (1pt);   
                              \fill(d1) circle (1pt);                 \fill(d2) circle (1pt);                          \fill(d3) circle (1pt);   \fill(d4) circle (1pt);   \fill(d5) circle (1pt);   
                                    \fill(e1) circle (1pt);                 \fill(e2) circle (1pt);                          \fill(e3) circle (1pt);   \fill(e4) circle (1pt);   
\fill(e5) circle (1pt);     \fill(e6) circle (1pt);   
                                                                        \fill(f1) circle (1pt);                 \fill(f2) circle (1pt);                          \fill(f3) circle (1pt);   \fill(f4) circle (1pt);   \fill(f5) circle (1pt);     \fill(f6) circle (1pt);    \fill(f7) circle (1pt);   
    \fill(g1) circle (1pt);                 \fill(g2) circle (1pt);                          \fill(g3) circle (1pt);   \fill(g4) circle (1pt);   \fill(g5) circle (1pt);     \fill(g6) circle (1pt);    \fill(g7) circle (1pt);   
 \fill(g8) circle (1pt);   
  \fill(h1) circle (1pt);                 \fill(h2) circle (1pt);                          \fill(h3) circle (1pt);   \fill(h4) circle (1pt);   \fill(h5) circle (1pt);     \fill(h6) circle (1pt);    \fill(h7) circle (1pt);   
 \fill(h8) circle (1pt);   
  \fill(j1) circle (1pt);                 \fill(j2) circle (1pt);                          \fill(j3) circle (1pt);   \fill(j4) circle (1pt);   \fill(j5) circle (1pt);     \fill(j6) circle (1pt);    \fill(j7) circle (1pt);   
 \fill(j8) circle (1pt);   
  \fill(i1) circle (1pt);                 \fill(i2) circle (1pt);                          \fill(i3) circle (1pt);   \fill(i4) circle (1pt);   \fill(i5) circle (1pt);     \fill(i6) circle (1pt);    \fill(i7) circle (1pt);   
 \fill(i8) circle (1pt);  
   \fill(k1) circle (1pt);                 \fill(k2) circle (1pt);                          \fill(k3) circle (1pt);   \fill(k4) circle (1pt);   \fill(k5) circle (1pt);     \fill(k6) circle (1pt);    \fill(k7) circle (1pt);   
 \fill(k8) circle (1pt);   
        \draw[dotted](b3) -- +(270:10);       \draw[dotted](b3) -- +(90:1);
        \draw[dotted](b1) -- +(270:10);       \draw[dotted](b1) -- +(90:1);
        \draw[dotted](f7) -- +(270:10);        \draw[dotted](f7) -- +(90:1);
         \draw[dotted](f1) -- +(270:10);        \draw[dotted](f1) -- +(90:1);
 \draw[dotted](j1) -- +(270:10);        \draw[dotted](j1) -- +(90:1);
 \draw[dotted](j11) -- +(270:10);        \draw[dotted](j11) -- +(90:1);       ; 
          \fill(i10) circle (1pt);     \fill(i9) circle (1pt);   \fill(h9) circle (1pt);  
             \fill(j9) circle (1pt);   \fill(j10) circle (1pt);  \fill(j11) circle (1pt);  
             \fill(k9) circle (1pt);               \fill(k10) circle (1pt);      \fill(k11) circle (1pt);               \fill(k12) circle (1pt);  
\draw(a1) --    (f7); \draw(f7) --  (j7);           \draw(k8) --  (j7);  }
    \end{tikzpicture}
 $$
 \caption{Two paths $\omega ,\omega' \in  \Path((4,7),(0,11))$ }
 \label{LEVEL2EXAMPLE1}
 \end{figure}
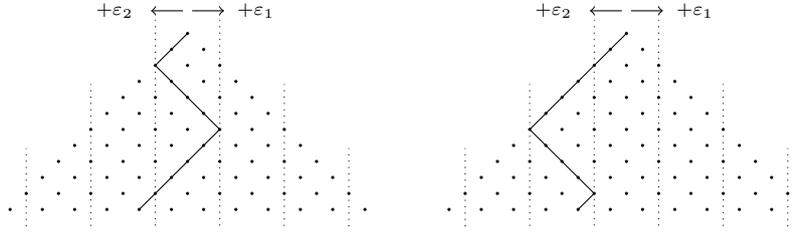
 

  Let $\nu=(5,6)$,
there are two elements of $\omega'',\omega'''\in {\rm Path}_{11}(\nu,\mu)$, depicted in Figure \ref{LEVEL2EXAMPLE2}, of degree 3 and degree 1 respectively.  In the former case, $d_{\varepsilon_1-\varepsilon_2}(\omega'',3)=1$, $d_{\varepsilon_1-\varepsilon_2}(\omega'',7)=1$ and $d_{\varepsilon_1-\varepsilon_2}(\omega'',11)=1$.
In the latter case, $d_{\varepsilon_1-\varepsilon_2}(\omega''',7)=1$.  

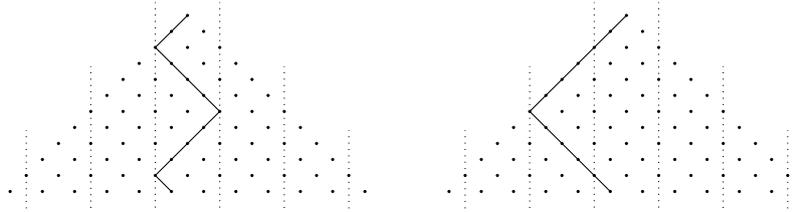
\begin{figure}[ht]\captionsetup{width=0.9\textwidth}  $$  \scalefont{0.8} \begin{tikzpicture}[scale=0.6]
{ \clip (-4,-4.3) rectangle ++(8,5);
\path [draw,name path=upward line] (-3,-5) -- (3,-5);
  \path 
      (0, 0)              coordinate (a1)
        (a1)    -- +(-45:0.5) coordinate (a2) 
        (a1)    -- +(-135:0.5) coordinate (a3) 
        (a2)    -- +(-45:0.5) coordinate (b1) 
        (a2)    -- +(-135:0.5) coordinate (b2) 
         (a3)    -- +(-135:0.5) coordinate (b3) 
  (b1)    -- +(-45:0.5) coordinate (c1) 
        (b1)    -- +(-135:0.5) coordinate (c2) 
  (b3)    -- +(-45:0.5) coordinate (c3) 
        (b3)    -- +(-135:0.5) coordinate (c4) 
  (c1)    -- +(-45:0.5) coordinate (d1) 
        (c1)    -- +(-135:0.5) coordinate (d2) 
  (c3)    -- +(-45:0.5) coordinate (d3) 
        (c3)    -- +(-135:0.5) coordinate (d4) 
                (c4)    -- +(-135:0.5) coordinate (d5) 
  (d1)    -- +(-45:0.5) coordinate (e1) 
        (d1)    -- +(-135:0.5) coordinate (e2) 
  (d3)    -- +(-45:0.5) coordinate (e3) 
        (d3)    -- +(-135:0.5) coordinate (e4) 
                (d5)    -- +(-135:0.5) coordinate (e6) 
                                (d5)    -- +(-45:0.5) coordinate (e5) 
  (e1)    -- +(-45:0.5) coordinate (f1) 
        (e1)    -- +(-135:0.5) coordinate (f2) 
  (e3)    -- +(-45:0.5) coordinate (f3) 
        (e3)    -- +(-135:0.5) coordinate (f4) 
                (e5)    -- +(-45:0.5) coordinate (f5) 
                (e5)    -- +(-135:0.5) coordinate (f6) 
                                (e6)    -- +(-135:0.5) coordinate (f7) 
  (f1)    -- +(-45:0.5) coordinate (g1) 
        (f1)    -- +(-135:0.5) coordinate (g2) 
  (f3)    -- +(-45:0.5) coordinate (g3) 
        (f3)    -- +(-135:0.5) coordinate (g4) 
                (f5)    -- +(-45:0.5) coordinate (g5) 
                (f5)    -- +(-135:0.5) coordinate (g6) 
                                (f6)    -- +(-135:0.5) coordinate (g7)          
                                                                (f7)    -- +(-135:0.5) coordinate (g8)        
          (g1)    -- +(-45:0.5) coordinate (h1) 
        (g1)    -- +(-135:0.5) coordinate (h2) 
  (g3)    -- +(-45:0.5) coordinate (h3) 
        (g3)    -- +(-135:0.5) coordinate (h4) 
                (g5)    -- +(-45:0.5) coordinate (h5) 
                (g5)    -- +(-135:0.5) coordinate (h6) 
                                (g6)    -- +(-135:0.5) coordinate (h7)          
                                                                (g7)    -- +(-135:0.5) coordinate (h8)           
                                                                       (h1)    -- +(-45:0.5) coordinate (i1) 
        (h1)    -- +(-135:0.5) coordinate (i2) 
  (h3)    -- +(-45:0.5) coordinate (i3) 
        (h3)    -- +(-135:0.5) coordinate (i4) 
                (h5)    -- +(-45:0.5) coordinate (i5) 
                (h5)    -- +(-135:0.5) coordinate (i6) 
                                (h6)    -- +(-135:0.5) coordinate (i7)          
                                                                (h7)    -- +(-135:0.5) coordinate (i8)           
                            (i1)    -- +(-45:0.5) coordinate (j1) 
        (i1)    -- +(-135:0.5) coordinate (j2) 
  (i3)    -- +(-45:0.5) coordinate (j3) 
        (i3)    -- +(-135:0.5) coordinate (j4) 
                (i5)    -- +(-45:0.5) coordinate (j5) 
                (i5)    -- +(-135:0.5) coordinate (j6) 
                                (i6)    -- +(-135:0.5) coordinate (j7)          
                                                                (i7)    -- +(-135:0.5) coordinate (j8)      
         (j1)    -- +(-45:0.5) coordinate (k1) 
        (j1)    -- +(-135:0.5) coordinate (k2) 
  (j3)    -- +(-45:0.5) coordinate (k3) 
        (j3)    -- +(-135:0.5) coordinate (k4) 
                (j5)    -- +(-45:0.5) coordinate (k5) 
                (j5)    -- +(-135:0.5) coordinate (k6) 
                                (j6)    -- +(-135:0.5) coordinate (k7)          
                                                                (j7)    -- +(-135:0.5) coordinate (k8)                                      
 (g8)    -- +(-135:0.5) coordinate (h9)
   (h9)    -- +(-45:0.5) coordinate (i9)  
   (h9)    -- +(-135:0.5) coordinate (i10)
   (i9)    -- +(-45:0.5) coordinate (j9)  
   (i9)    -- +(-135:0.5) coordinate (j10)
   (i10)    -- +(-135:0.5) coordinate (j11)   
   (j9)    -- +(-45:0.5) coordinate (k9)  
   (j9)    -- +(-135:0.5) coordinate (k10)
   (j10)    -- +(-135:0.5) coordinate (k11)  
      (j11)    -- +(-135:0.5) coordinate (k12)  
   ;
            \fill(a1) circle (1pt);             \fill(a2) circle (1pt); \fill(a3) circle (1pt); 
              \fill(b1) circle (1pt);                 \fill(b2) circle (1pt);                          \fill(b3) circle (1pt);   
               \fill(c1) circle (1pt);                 \fill(c2) circle (1pt);                          \fill(c3) circle (1pt);   \fill(c4) circle (1pt);   
                              \fill(d1) circle (1pt);                 \fill(d2) circle (1pt);                          \fill(d3) circle (1pt);   \fill(d4) circle (1pt);   \fill(d5) circle (1pt);   
                                    \fill(e1) circle (1pt);                 \fill(e2) circle (1pt);                          \fill(e3) circle (1pt);   \fill(e4) circle (1pt);   
\fill(e5) circle (1pt);     \fill(e6) circle (1pt);   
                                                                        \fill(f1) circle (1pt);                 \fill(f2) circle (1pt);                          \fill(f3) circle (1pt);   \fill(f4) circle (1pt);   \fill(f5) circle (1pt);     \fill(f6) circle (1pt);    \fill(f7) circle (1pt);   
    \fill(g1) circle (1pt);                 \fill(g2) circle (1pt);                          \fill(g3) circle (1pt);   \fill(g4) circle (1pt);   \fill(g5) circle (1pt);     \fill(g6) circle (1pt);    \fill(g7) circle (1pt);   
 \fill(g8) circle (1pt);   
  \fill(h1) circle (1pt);                 \fill(h2) circle (1pt);                          \fill(h3) circle (1pt);   \fill(h4) circle (1pt);   \fill(h5) circle (1pt);     \fill(h6) circle (1pt);    \fill(h7) circle (1pt);   
 \fill(h8) circle (1pt);   
  \fill(j1) circle (1pt);                 \fill(j2) circle (1pt);                          \fill(j3) circle (1pt);   \fill(j4) circle (1pt);   \fill(j5) circle (1pt);     \fill(j6) circle (1pt);    \fill(j7) circle (1pt);   
 \fill(j8) circle (1pt);   
  \fill(i1) circle (1pt);                 \fill(i2) circle (1pt);                          \fill(i3) circle (1pt);   \fill(i4) circle (1pt);   \fill(i5) circle (1pt);     \fill(i6) circle (1pt);    \fill(i7) circle (1pt);   
 \fill(i8) circle (1pt);  
   \fill(k1) circle (1pt);                 \fill(k2) circle (1pt);                          \fill(k3) circle (1pt);   \fill(k4) circle (1pt);   \fill(k5) circle (1pt);     \fill(k6) circle (1pt);    \fill(k7) circle (1pt);   
 \fill(k8) circle (1pt);   
        \draw[dotted](b3) -- +(270:10);       \draw[dotted](b3) -- +(90:1);
        \draw[dotted](b1) -- +(270:10);       \draw[dotted](b1) -- +(90:1);
        \draw[dotted](f7) -- +(270:10);        \draw[dotted](f7) -- +(90:1);
         \draw[dotted](f1) -- +(270:10);        \draw[dotted](f1) -- +(90:1);
 \draw[dotted](j1) -- +(270:10);        \draw[dotted](j1) -- +(90:1);
 \draw[dotted](j11) -- +(270:10);        \draw[dotted](j11) -- +(90:1);       ; 
          \fill(i10) circle (1pt);     \fill(i9) circle (1pt);   \fill(h9) circle (1pt);  
             \fill(j9) circle (1pt);   \fill(j10) circle (1pt);  \fill(j11) circle (1pt);  
             \fill(k9) circle (1pt);               \fill(k10) circle (1pt);      \fill(k11) circle (1pt);               \fill(k12) circle (1pt);  
\draw(a1) --  (b3);   \draw(j7) --  (f3); \draw(j7) --  (k7);   \draw(f3) --  (b3);          }
    \end{tikzpicture}
\quad \quad\quad 
  \begin{tikzpicture}[scale=0.6]
{ \clip (-4,-4.3) rectangle ++(8,5);
\path [draw,name path=upward line] (-3,-5) -- (3,-5);
  \path 
      (0, 0)              coordinate (a1)
        (a1)    -- +(-45:0.5) coordinate (a2) 
        (a1)    -- +(-135:0.5) coordinate (a3) 
        (a2)    -- +(-45:0.5) coordinate (b1) 
        (a2)    -- +(-135:0.5) coordinate (b2) 
         (a3)    -- +(-135:0.5) coordinate (b3) 
  (b1)    -- +(-45:0.5) coordinate (c1) 
        (b1)    -- +(-135:0.5) coordinate (c2) 
  (b3)    -- +(-45:0.5) coordinate (c3) 
        (b3)    -- +(-135:0.5) coordinate (c4) 
  (c1)    -- +(-45:0.5) coordinate (d1) 
        (c1)    -- +(-135:0.5) coordinate (d2) 
  (c3)    -- +(-45:0.5) coordinate (d3) 
        (c3)    -- +(-135:0.5) coordinate (d4) 
                (c4)    -- +(-135:0.5) coordinate (d5) 
  (d1)    -- +(-45:0.5) coordinate (e1) 
        (d1)    -- +(-135:0.5) coordinate (e2) 
  (d3)    -- +(-45:0.5) coordinate (e3) 
        (d3)    -- +(-135:0.5) coordinate (e4) 
                (d5)    -- +(-135:0.5) coordinate (e6) 
                                (d5)    -- +(-45:0.5) coordinate (e5) 
  (e1)    -- +(-45:0.5) coordinate (f1) 
        (e1)    -- +(-135:0.5) coordinate (f2) 
  (e3)    -- +(-45:0.5) coordinate (f3) 
        (e3)    -- +(-135:0.5) coordinate (f4) 
                (e5)    -- +(-45:0.5) coordinate (f5) 
                (e5)    -- +(-135:0.5) coordinate (f6) 
                                (e6)    -- +(-135:0.5) coordinate (f7) 
  (f1)    -- +(-45:0.5) coordinate (g1) 
        (f1)    -- +(-135:0.5) coordinate (g2) 
  (f3)    -- +(-45:0.5) coordinate (g3) 
        (f3)    -- +(-135:0.5) coordinate (g4) 
                (f5)    -- +(-45:0.5) coordinate (g5) 
                (f5)    -- +(-135:0.5) coordinate (g6) 
                                (f6)    -- +(-135:0.5) coordinate (g7)          
                                                                (f7)    -- +(-135:0.5) coordinate (g8)        
          (g1)    -- +(-45:0.5) coordinate (h1) 
        (g1)    -- +(-135:0.5) coordinate (h2) 
  (g3)    -- +(-45:0.5) coordinate (h3) 
        (g3)    -- +(-135:0.5) coordinate (h4) 
                (g5)    -- +(-45:0.5) coordinate (h5) 
                (g5)    -- +(-135:0.5) coordinate (h6) 
                                (g6)    -- +(-135:0.5) coordinate (h7)          
                                                                (g7)    -- +(-135:0.5) coordinate (h8)           
                                                                       (h1)    -- +(-45:0.5) coordinate (i1) 
        (h1)    -- +(-135:0.5) coordinate (i2) 
  (h3)    -- +(-45:0.5) coordinate (i3) 
        (h3)    -- +(-135:0.5) coordinate (i4) 
                (h5)    -- +(-45:0.5) coordinate (i5) 
                (h5)    -- +(-135:0.5) coordinate (i6) 
                                (h6)    -- +(-135:0.5) coordinate (i7)          
                                                                (h7)    -- +(-135:0.5) coordinate (i8)           
                            (i1)    -- +(-45:0.5) coordinate (j1) 
        (i1)    -- +(-135:0.5) coordinate (j2) 
  (i3)    -- +(-45:0.5) coordinate (j3) 
        (i3)    -- +(-135:0.5) coordinate (j4) 
                (i5)    -- +(-45:0.5) coordinate (j5) 
                (i5)    -- +(-135:0.5) coordinate (j6) 
                                (i6)    -- +(-135:0.5) coordinate (j7)          
                                                                (i7)    -- +(-135:0.5) coordinate (j8)      
         (j1)    -- +(-45:0.5) coordinate (k1) 
        (j1)    -- +(-135:0.5) coordinate (k2) 
  (j3)    -- +(-45:0.5) coordinate (k3) 
        (j3)    -- +(-135:0.5) coordinate (k4) 
                (j5)    -- +(-45:0.5) coordinate (k5) 
                (j5)    -- +(-135:0.5) coordinate (k6) 
                                (j6)    -- +(-135:0.5) coordinate (k7)          
                                                                (j7)    -- +(-135:0.5) coordinate (k8)                                      
 (g8)    -- +(-135:0.5) coordinate (h9)
   (h9)    -- +(-45:0.5) coordinate (i9)  
   (h9)    -- +(-135:0.5) coordinate (i10)
   (i9)    -- +(-45:0.5) coordinate (j9)  
   (i9)    -- +(-135:0.5) coordinate (j10)
   (i10)    -- +(-135:0.5) coordinate (j11)   
   (j9)    -- +(-45:0.5) coordinate (k9)  
   (j9)    -- +(-135:0.5) coordinate (k10)
   (j10)    -- +(-135:0.5) coordinate (k11)  
      (j11)    -- +(-135:0.5) coordinate (k12)  
   ;
            \fill(a1) circle (1pt);             \fill(a2) circle (1pt); \fill(a3) circle (1pt); 
              \fill(b1) circle (1pt);                 \fill(b2) circle (1pt);                          \fill(b3) circle (1pt);   
               \fill(c1) circle (1pt);                 \fill(c2) circle (1pt);                          \fill(c3) circle (1pt);   \fill(c4) circle (1pt);   
                              \fill(d1) circle (1pt);                 \fill(d2) circle (1pt);                          \fill(d3) circle (1pt);   \fill(d4) circle (1pt);   \fill(d5) circle (1pt);   
                                    \fill(e1) circle (1pt);                 \fill(e2) circle (1pt);                          \fill(e3) circle (1pt);   \fill(e4) circle (1pt);   
\fill(e5) circle (1pt);     \fill(e6) circle (1pt);   
                                                                        \fill(f1) circle (1pt);                 \fill(f2) circle (1pt);                          \fill(f3) circle (1pt);   \fill(f4) circle (1pt);   \fill(f5) circle (1pt);     \fill(f6) circle (1pt);    \fill(f7) circle (1pt);   
    \fill(g1) circle (1pt);                 \fill(g2) circle (1pt);                          \fill(g3) circle (1pt);   \fill(g4) circle (1pt);   \fill(g5) circle (1pt);     \fill(g6) circle (1pt);    \fill(g7) circle (1pt);   
 \fill(g8) circle (1pt);   
  \fill(h1) circle (1pt);                 \fill(h2) circle (1pt);                          \fill(h3) circle (1pt);   \fill(h4) circle (1pt);   \fill(h5) circle (1pt);     \fill(h6) circle (1pt);    \fill(h7) circle (1pt);   
 \fill(h8) circle (1pt);   
  \fill(j1) circle (1pt);                 \fill(j2) circle (1pt);                          \fill(j3) circle (1pt);   \fill(j4) circle (1pt);   \fill(j5) circle (1pt);     \fill(j6) circle (1pt);    \fill(j7) circle (1pt);   
 \fill(j8) circle (1pt);   
  \fill(i1) circle (1pt);                 \fill(i2) circle (1pt);                          \fill(i3) circle (1pt);   \fill(i4) circle (1pt);   \fill(i5) circle (1pt);     \fill(i6) circle (1pt);    \fill(i7) circle (1pt);   
 \fill(i8) circle (1pt);  
   \fill(k1) circle (1pt);                 \fill(k2) circle (1pt);                          \fill(k3) circle (1pt);   \fill(k4) circle (1pt);   \fill(k5) circle (1pt);     \fill(k6) circle (1pt);    \fill(k7) circle (1pt);   
 \fill(k8) circle (1pt);   
        \draw[dotted](b3) -- +(270:10);       \draw[dotted](b3) -- +(90:1);
        \draw[dotted](b1) -- +(270:10);       \draw[dotted](b1) -- +(90:1);
        \draw[dotted](f7) -- +(270:10);        \draw[dotted](f7) -- +(90:1);
         \draw[dotted](f1) -- +(270:10);        \draw[dotted](f1) -- +(90:1);
 \draw[dotted](j1) -- +(270:10);        \draw[dotted](j1) -- +(90:1);
 \draw[dotted](j11) -- +(270:10);        \draw[dotted](j11) -- +(90:1);       ; 
          \fill(i10) circle (1pt);     \fill(i9) circle (1pt);   \fill(h9) circle (1pt);  
             \fill(j9) circle (1pt);   \fill(j10) circle (1pt);  \fill(j11) circle (1pt);  
             \fill(k9) circle (1pt);               \fill(k10) circle (1pt);      \fill(k11) circle (1pt);               \fill(k12) circle (1pt);  
\draw(a1) --    (f7); \draw(f7) --  (j7);           \draw(k7) --  (j7);  }
    \end{tikzpicture}
 $$
 \caption{Two path $\omega'',\omega''' \in  \Path((6,5),(0,11))$}
 \label{LEVEL2EXAMPLE2}
\end{figure}
\begin{figure}[ht]\captionsetup{width=0.9\textwidth}
$$\begin{tabular}{c|cccccc} \text{ alcove } 
&  \  \ $\mathfrak{a}_{3'}$ \  \  &  \  \ $\mathfrak{a}_{2'}$ \  \ &  \  \ $\mathfrak{a}_{1'}$ \  \  &\ $\mathfrak{a}_{0}$ \  \  &  \  \ $\mathfrak{a}_{1R}$ \  \  & \  \ $\mathfrak{a}_{2R}$  \  \  \\  \hline  &   &  &  & 1 &  &  \\  &   &  & 1 & $t$ &  & \\  &   &1  &$t$  & $t^2$ &$t$  & 
\\  & 1& $t$  &$t^2+1$  &$t^3+t$  & $t^2$ &  $t$    \end{tabular}.$$
\caption{This table records the
result  of running the
 (cancellation-free) Soergel algorithm along the path $\omega^{\mu}$.  The alcoves are labelled by their length and primed (respectively unprimed)  if they correspond to an alcove to the left (respectively right) of the origin in the
  diagrams in Figures \ref{LEVEL2EXAMPLE1} and 
 \ref{LEVEL2EXAMPLE2}}
\label{tabular}\end{figure}
Figure \ref{tabular} records the
result  of running the
 (cancellation-free) Soergel algorithm along the path $\omega^{\mu}$.
Notice that the algorithm produces $m_{\mu}(\lambda) =n'_{\mu}(\lambda)=t^2+1$ and $ n_{\mu}(\lambda) =t^2$;
similarly $m_{\mu}(\nu) =n'_{\mu}(\nu)=t^3+t$ and $ n_{\mu}(\nu) =t^3$.  We have that $e_\mu(\lambda)=1$ and $e_\mu(\nu)=0$.
     \end{eg}

\begin{prop}\label{changeofbasis}
Let $\lambda, \mu$ denote points belonging to alcoves  in $E_r$.
Fix an admissible  path $\omega^\mu$.  
Let   $\nu$ vary over all points such that
$ \Path(\nu,\mu)\neq \emptyset $ and 
  $\Path(\lambda,\nu)\neq \emptyset$.
  We have that,
 $$
m_\mathfrak{\mu}(\mathfrak{\lambda}) = \;
\sum_
{\mathclap{  \begin{subarray}c  
   \Path(\nu,\mu)\neq \emptyset\\
  \Path(\lambda,\nu)\neq \emptyset \end{subarray} }} \;
  n_{\nu}(\lambda)    e_\mu(\nu)  .$$
  
  \end{prop}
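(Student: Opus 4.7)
The plan is to induct on the position $i$ in the alcove series $\mathfrak{a}_0,\ldots,\mathfrak{a}_{\ell(\mu)}$ of the fixed admissible path $\omega^\mu$, proving the identity with $\mathfrak{a}_i$ in place of $\mu$. The base case $i=0$ is immediate: the initial conditions of the three algorithms all reduce to delta-functions at $\mathfrak{a}_0$, so both sides equal $\delta_{\lambda,\mathfrak{a}_0}$.

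For the inductive step, I introduce the operator $T_i^*$ acting on $\ZZ[t,t^{-1}]$-valued functions on alcoves by $(T_i^*f)(\mathfrak{b}) := f(s_i\cdot\mathfrak{b}) + t^{\epsilon_i(\mathfrak{b})} f(\mathfrak{b})$, with $\epsilon_i(\mathfrak{b})=+1$ if $\ell(s_i\mathfrak{b}) > \ell(\mathfrak{b})$ and $-1$ otherwise. The cancellation-free Soergel and Soergel recurrences of Section \ref{pathalgorithm} translate to the operator identities $m_{\mathfrak{a}_{i+1}} = T_i^* m_{\mathfrak{a}_i}$ and $n'_{\mathfrak{a}_{i+1}} = T_i^* n_{\mathfrak{a}_i}$, and a direct check comparing $T_i^*(T_i^*f)$ with $(t+t^{-1})T_i^*f$ at the pair $\{\mathfrak{b},s_i\mathfrak{b}\}$ yields the quadratic relation $(T_i^*)^2 = (t+t^{-1}) T_i^*$. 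Applying the induction hypothesis and linearity gives $m_{\mathfrak{a}_{i+1}} = \sum_\nu e_{\mathfrak{a}_i}(\nu) T_i^* n_\nu$, reducing the problem to expanding each $T_i^* n_\nu$ in the family $\{n_{\nu'}\}$.

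By the path-independence of the Soergel algorithm, I may compute each $n_\nu$ using an alcove series in which $\nu$ and $s_i\nu$ appear consecutively. When $s_i\nu > \nu$, the $n$-recurrence at that step directly yields $T_i^* n_\nu = n_{s_i\nu} + \sum_\mathfrak{y}\mu_\nu(\mathfrak{y}) n_\mathfrak{y}$; evaluating at $(t=0,\mathfrak{d})$ and using the inductive fact $n_\mathfrak{d}(\mathfrak{y})|_{t=0} = \delta_{\mathfrak{d},\mathfrak{y}}$ (itself a direct consequence of the subtraction in the $n$-algorithm) identifies $\mu_\nu(\mathfrak{d})$ with the $t^{-\epsilon_i(\mathfrak{d})}$-coefficient of $n_\nu(\mathfrak{d})$, which vanishes when $\mathfrak{d}$ is short since $n_\nu(\mathfrak{d})\in\NN_0[t]$. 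When $s_i\nu < \nu$, the $n$-recurrence expresses $n_\nu$ as $T_i^* n_{s_i\nu}$ minus lower-order $n$-terms; applying $T_i^*$ and using the quadratic relation on $T_i^* n_{s_i\nu}$, together with a secondary induction on $\ell(\nu)$ to eliminate the lower contributions, yields $T_i^* n_\nu = (t+t^{-1}) n_\nu$.

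Substituting these two expansions into the formula for $m_{\mathfrak{a}_{i+1}}$ and collecting the coefficient of each $n_\mathfrak{d}$ produces $(t+t^{-1}) e_{\mathfrak{a}_i}(\mathfrak{d}) + e_{\mathfrak{a}_i}(s_i\mathfrak{d}) + \sum_{\nu:\, s_i\nu>\nu} e_{\mathfrak{a}_i}(\nu) \mu_\nu(\mathfrak{d})$ when $s_i\mathfrak{d}<\mathfrak{d}$, and zero when $s_i\mathfrak{d}>\mathfrak{d}$. The $e$-recurrence then leaves the final combinatorial identity $\sum_{\nu:\, s_i\nu>\nu} e_{\mathfrak{a}_i}(\nu) \mu_\nu(\mathfrak{d}) = (n'_{\mathfrak{a}_{i+1}}(\mathfrak{d})|_{t=0})$, which I will verify by extracting the $t^1$-coefficient from the induction hypothesis $m_{\mathfrak{a}_i}(\mathfrak{d}) = \sum_\nu e_{\mathfrak{a}_i}(\nu) n_\nu(\mathfrak{d})$ and using the consequence $n_\nu(s_i\mathfrak{d}) = t\cdot n_\nu(\mathfrak{d})$ (valid for $s_i\nu<\nu$ and $\mathfrak{d}$ long) of $T_i^*n_\nu=(t+t^{-1})n_\nu$. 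The main obstacle is thus the expansion $T_i^* n_\nu = (t+t^{-1}) n_\nu$ for $s_i\nu<\nu$: naive application of the $n$-recurrence produces stray $n_\mathfrak{y}$-contributions, and removing them requires the quadratic relation together with a nested induction on $\ell(\nu)$.
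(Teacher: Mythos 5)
Your reduction is carried out correctly up to its last step, and it is a genuinely different route from the paper's proof (which argues qualitatively that a removed subpattern keeps a constant leading term, or picks up a factor $(t+t^{-1})$ when reflected in a wall of the lower closure, rather than collecting coefficients). The operator identities $m_{\mathfrak{a}_{i+1}}=T_i^{*}m_{\mathfrak{a}_i}$, $n'_{\mathfrak{a}_{i+1}}=T_i^{*}n_{\mathfrak{a}_i}$, $(T_i^{*})^2=(t+t^{-1})T_i^{*}$, and the two expansions of $T_i^{*}n_\nu$ (including the vanishing of $\mu_\nu(\mathfrak{d})$ on the short side of the wall, via positivity and $n_{\mathfrak{d}}(\mathfrak{y})|_{t=0}=\delta_{\mathfrak{d}\mathfrak{y}}$) are all sound. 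But the "final combinatorial identity"
\[
\sum_{\nu\,:\,\ell(s_i\cdot\nu)>\ell(\nu)} e_{\mathfrak{a}_i}(\nu)\,\mu_\nu(\mathfrak{d})\;=\;n'_{\mathfrak{a}_{i+1}}(\mathfrak{d})\big|_{t=0}
\]
is exactly where the whole content of the proposition sits, and your plan for it does not close. Since $e_{\mathfrak{a}_i}(\mathfrak{a}_i)=1$ and $\mu_{\mathfrak{a}_i}(\mathfrak{d})=n'_{\mathfrak{a}_{i+1}}(\mathfrak{d})|_{t=0}$, the identity amounts to the vanishing of $e_{\mathfrak{a}_i}(\nu)\mu_\nu(\mathfrak{d})$ for every $\nu\neq\mathfrak{a}_i$ on the short side of the wall; the proposed $t^{1}$-extraction, together with the fact $n_\nu(s_i\cdot\mathfrak{d})=t\,n_\nu(\mathfrak{d})$ (which only concerns $\nu$ on the long side), gives no control over precisely these terms.

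Worse, these terms need not vanish. In the $\hat{A}_1$ geometry of Figure \ref{tabular} ($e=4$, $\rho=(4,2)$), extend the admissible path by two further walls, so that the alcove series is $\mathfrak{a}_0,\mathfrak{a}_{1'},\mathfrak{a}_{2'},\mathfrak{a}_{3'},\mathfrak{a}_{4'},\mathfrak{a}_{5'}$ (realised, for instance, by the straight path to $\mu=(0,19)$). Running the recursions gives $m_{\mathfrak{a}_{4'}}=n_{\mathfrak{a}_{4'}}+2\,n_{\mathfrak{a}_{2'}}$, so $e_{\mathfrak{a}_{4'}}(\mathfrak{a}_{2'})=2$; at the fifth wall $\mathfrak{a}_{2'}$ lies on the short side and $T_i^{*}n_{\mathfrak{a}_{2'}}=n_{\mathfrak{a}_{3'}}+n_{\mathfrak{a}_{1'}}$, i.e.\ $\mu_{\mathfrak{a}_{2'}}(\mathfrak{a}_{1'})=1$, while $\mu_{\mathfrak{a}_{4'}}(\mathfrak{a}_{1'})=0$. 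Thus at $\mathfrak{d}=\mathfrak{a}_{1'}$ the left-hand side of your identity equals $2$, whereas $n'_{\mathfrak{a}_{5'}}(\mathfrak{a}_{1'})=t^4+t^2$ has zero constant term. Concretely, $m_{\mathfrak{a}_{5'}}(\mathfrak{a}_{1'})=t^4+3t^2+2$, while the character recursion of Section \ref{pathalgorithm}, read literally, returns $e_{\mathfrak{a}_{5'}}(\mathfrak{a}_{1'})=0$ and $e_{\mathfrak{a}_{5'}}(\mathfrak{a}_{3'})=3$, so that $\sum_\nu n_\nu(\mathfrak{a}_{1'})e_{\mathfrak{a}_{5'}}(\nu)=t^4+3t^2$. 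The coefficient your collection actually produces in the third slot is $\sum_{\nu}e_{\mathfrak{a}_i}(\nu)\,(T_i^{*}n_\nu)(\mathfrak{d})|_{t=0}$, not the single summand with $\nu=\mathfrak{a}_i$; you must either prove that the remaining summands vanish (they do not in the configuration above) or work with this corrected form of the $e$-recursion, reinterpreting the statement accordingly. As it stands, the concluding step is a genuine gap and cannot be repaired by the $t^{1}$-extraction you describe; note that the paper's own subpattern-tracking argument is silent on exactly this point, which your more explicit bookkeeping brings to the surface.
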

\begin{proof}
Let $\lambda, \mu,\nu$ denote  points  
 in alcoves  
$\mathfrak{a}, \mathfrak{b}$, and $\mathfrak{c}$, respectively.
We have fixed a distinguished walk, $\omega^\mu$, and so both 
$e_\mathfrak{b}$ and $m_\mathfrak{b}$ are well-defined.  
By \cite{Soergel},   $n_\mathfrak{c}$ is independent of the choice of 
 path from $\astrosun$ to $\nu$. 
Therefore the expression above is well-defined.  



A subpattern in Soergel's algorithm is removed if $(n'_{ {\mathfrak{a}_{i+1}}}(\mathfrak{d})|_{t=0})\neq 0$ for some alcove $\mathfrak{d}$.  
 The subpatterns removed in the $n$-algorithm are (of course) not removed
 by the $m$-algorithm; the $e$-algorithm will keep track of the leading  terms in these subpatterns. 
The leading  term of the subpattern
 will remain 
constant unless it is reflected through  
a hyperplane through  
 the lower closure of the alcove,  in which case 
we multiply the subpattern by $(t+t^{-1})$.   This is particularly clear from the alcove-wall path definition of Soergel's algorithm 
(see also  the 
singular combinatorics for Soergel's algorithm developed in \cite{ryomtilt}).  
 The result then follows from the definitions.  
\end{proof}

        \begin{eg}
   Let $l=3$, $n=13$, $e=8$, $\rho=(8,4,2)$ and consider the root system of type $\hat{A}_2$.  
Take $\alpha=(4,6,3)$, $\beta=(5,6,2)$ and $\gamma=(5,8,0)$.  
Let $\omega^\gamma$ be the alcove-wall path depicted in Figure \ref{first path} in the introduction.  The set of elements in   $\Path(-,\gamma)$, together with their degrees, is depicted across Figures \ref{weight a}, \ref{weight b}, and \ref{other weights}.  Figures \ref{belowsoergel1} and \ref{belowsoergel2} depict the four steps of running Soregel's algorithm along $\omega^\gamma$.

\begin{figure}[ht]\captionsetup{width=0.9\textwidth} $$ 
\scalefont{0.9} \begin{tikzpicture}[scale=1.0000]
    \clip (-1.2,1) rectangle (60:3.6cm);
     \path (120:2.4cm)++(60:2.4cm) coordinate (BB);
                     \clip (120:1.2cm)  -- (120:2.4cm) -- (BB) -- (60:2.4cm) --(60:1.2cm) ;
  \path (0,0) coordinate (origin);
    \clip   (120:3.6cm) -- (60:3.6cm) -- (origin)    ;
  \path[dotted] (60:1.2cm) coordinate (A1);
  \path[dotted] (60:2.4cm) coordinate (A2);
  \path[dotted] (60:3.6cm) coordinate (A3);
  \path[dotted] (60:4.8cm) coordinate (A4);
    \path[dotted] (60:6cm) coordinate (A5);
  \path[dotted] (120:1.2cm) coordinate (B1);
  \path[dotted] (120:2.4cm) coordinate (B2);
  \path[dotted] (120:3.6cm) coordinate (B3);
  \path[dotted] (120:4.8cm) coordinate (B4);
    \path[dotted] (120:6cm) coordinate (B5);
  \path[dotted] (A1) ++(120:13cm) coordinate (C1);
  \path[dotted] (A2) ++(120:12cm) coordinate (C2);
  \path[dotted] (A3) ++(120:11cm) coordinate (C3);
    \path[dotted] (A4) ++(120:14cm) coordinate (C4);
    \path[dotted] (A5) ++(120:14cm) coordinate (C5);
      \path[dotted] (B1) ++(60:13cm) coordinate (D1);
  \path[dotted] (B2) ++(60:12cm) coordinate (D2);
  \path[dotted] (B3) ++(60:11cm) coordinate (D3);
    \path[dotted] (B4) ++(60:14cm) coordinate (D4);
    \path[dotted] (B5) ++(60:14cm) coordinate (D5);
   \foreach \i in {1,...,60}
  {
    \path[dotted] (origin)++(60:0.2*\i cm)  coordinate (a\i);
    \path[dotted] (origin)++(120:0.2*\i cm)  coordinate (b\i);
    \path[dotted] (a\i)++(120:12cm) coordinate (ca\i);
    \path[dotted] (b\i)++(60:12cm) coordinate (cb\i);
    \draw[thin,white] (a\i) -- (ca\i) (a\i)--(b\i) (b\i) -- (cb\i) ; 
  }
     \draw (b8)++(60:0.4cm)  node  {$\mathbf{0}$};
  \foreach \i in {1,2,3,4,5}
{ \draw[thick]       (A2) -- (B2) (A1) -- (C1) 
(D1) -- (B1);
\draw[thick] (origin) --(A5) (origin)-- (B5)  (A1)--(B1) (A3)--(B3)
(A2) -- (C2) 
(D2) -- (B2);}
\end{tikzpicture}
  \begin{tikzpicture}[scale=1.0000]
    \clip (-1.2,1) rectangle (60:3.6cm);
     \path (120:2.4cm)++(60:2.4cm) coordinate (BB);
                     \clip (120:1.2cm)  -- (120:2.4cm) -- (BB) -- (60:2.4cm) --(60:1.2cm) ;
  \path (0,0) coordinate (origin);
    \clip   (120:3.6cm) -- (60:3.6cm) -- (origin)    ;
  \path[dotted] (60:1.2cm) coordinate (A1);
  \path[dotted] (60:2.4cm) coordinate (A2);
  \path[dotted] (60:3.6cm) coordinate (A3);
  \path[dotted] (60:4.8cm) coordinate (A4);
    \path[dotted] (60:6cm) coordinate (A5);
  \path[dotted] (120:1.2cm) coordinate (B1);
  \path[dotted] (120:2.4cm) coordinate (B2);
  \path[dotted] (120:3.6cm) coordinate (B3);
  \path[dotted] (120:4.8cm) coordinate (B4);
    \path[dotted] (120:6cm) coordinate (B5);
  \path[dotted] (A1) ++(120:13cm) coordinate (C1);
  \path[dotted] (A2) ++(120:12cm) coordinate (C2);
  \path[dotted] (A3) ++(120:11cm) coordinate (C3);
    \path[dotted] (A4) ++(120:14cm) coordinate (C4);
    \path[dotted] (A5) ++(120:14cm) coordinate (C5);
      \path[dotted] (B1) ++(60:13cm) coordinate (D1);
  \path[dotted] (B2) ++(60:12cm) coordinate (D2);
  \path[dotted] (B3) ++(60:11cm) coordinate (D3);
    \path[dotted] (B4) ++(60:14cm) coordinate (D4);
    \path[dotted] (B5) ++(60:14cm) coordinate (D5);
   \foreach \i in {1,...,60}
  {
    \path[dotted] (origin)++(60:0.2*\i cm)  coordinate (a\i);
    \path[dotted] (origin)++(120:0.2*\i cm)  coordinate (b\i);
    \path[dotted] (a\i)++(120:12cm) coordinate (ca\i);
    \path[dotted] (b\i)++(60:12cm) coordinate (cb\i);
    \draw[thin,white] (a\i) -- (ca\i) (a\i)--(b\i) (b\i) -- (cb\i) ; 
  }
 \draw (b10)++(60:0.8cm) node  {$\mathbf{0}$};
     \draw (b8)++(60:0.4cm)  node  {$\mathbf{1}$};
  \foreach \i in {1,2,3,4,5}
{ \draw[thick]       (A2) -- (B2) (A1) -- (C1) 
(D1) -- (B1);
\draw[thick] (origin) --(A5) (origin)-- (B5)  (A1)--(B1) (A3)--(B3)
(A2) -- (C2) 
(D2) -- (B2);}
\end{tikzpicture}
 \begin{tikzpicture}[scale=1.0000]
    \clip (-1.2,1) rectangle (60:3.6cm);
     \path (120:2.4cm)++(60:2.4cm) coordinate (BB);
                     \clip (120:1.2cm)  -- (120:2.4cm) -- (BB) -- (60:2.4cm) --(60:1.2cm) ;
  \path (0,0) coordinate (origin);
    \clip   (120:3.6cm) -- (60:3.6cm) -- (origin)    ;
  \path[dotted] (60:1.2cm) coordinate (A1);
  \path[dotted] (60:2.4cm) coordinate (A2);
  \path[dotted] (60:3.6cm) coordinate (A3);
  \path[dotted] (60:4.8cm) coordinate (A4);
    \path[dotted] (60:6cm) coordinate (A5);
  \path[dotted] (120:1.2cm) coordinate (B1);
  \path[dotted] (120:2.4cm) coordinate (B2);
  \path[dotted] (120:3.6cm) coordinate (B3);
  \path[dotted] (120:4.8cm) coordinate (B4);
    \path[dotted] (120:6cm) coordinate (B5);
  \path[dotted] (A1) ++(120:13cm) coordinate (C1);
  \path[dotted] (A2) ++(120:12cm) coordinate (C2);
  \path[dotted] (A3) ++(120:11cm) coordinate (C3);
    \path[dotted] (A4) ++(120:14cm) coordinate (C4);
    \path[dotted] (A5) ++(120:14cm) coordinate (C5);
      \path[dotted] (B1) ++(60:13cm) coordinate (D1);
  \path[dotted] (B2) ++(60:12cm) coordinate (D2);
  \path[dotted] (B3) ++(60:11cm) coordinate (D3);
    \path[dotted] (B4) ++(60:14cm) coordinate (D4);
    \path[dotted] (B5) ++(60:14cm) coordinate (D5);
   \foreach \i in {1,...,60}
  {
    \path[dotted] (origin)++(60:0.2*\i cm)  coordinate (a\i);
    \path[dotted] (origin)++(120:0.2*\i cm)  coordinate (b\i);
    \path[dotted] (a\i)++(120:12cm) coordinate (ca\i);
    \path[dotted] (b\i)++(60:12cm) coordinate (cb\i);
    \draw[thin,white] (a\i) -- (ca\i) (a\i)--(b\i) (b\i) -- (cb\i) ; 
  }
 \draw (b10)++(60:0.8cm) node  {$\mathbf{1}$};
 \draw (b8)++(60:1.6cm) node  {$\mathbf{0}$};
     \draw (b8)++(60:0.4cm)  node  {$\mathbf{2}$};
        \draw (b4)++(60:0.8cm)   node  {$\mathbf{1}$};
  \foreach \i in {1,2,3,4,5}
{ \draw[thick]       (A2) -- (B2) (A1) -- (C1) 
(D1) -- (B1);
\draw[thick] (origin) --(A5) (origin)-- (B5)  (A1)--(B1) (A3)--(B3)
(A2) -- (C2) 
(D2) -- (B2);}
\end{tikzpicture}
 \begin{tikzpicture}[scale=1.0000]
    \clip (-1.2,1) rectangle (60:3.6cm);
     \path (120:2.4cm)++(60:2.4cm) coordinate (BB);
                     \clip (120:1.2cm)  -- (120:2.4cm) -- (BB) -- (60:2.4cm) --(60:1.2cm) ;
  \path (0,0) coordinate (origin);
    \clip   (120:3.6cm) -- (60:3.6cm) -- (origin)    ;
  \path[dotted] (60:1.2cm) coordinate (A1);
  \path[dotted] (60:2.4cm) coordinate (A2);
  \path[dotted] (60:3.6cm) coordinate (A3);
  \path[dotted] (60:4.8cm) coordinate (A4);
    \path[dotted] (60:6cm) coordinate (A5);
  \path[dotted] (120:1.2cm) coordinate (B1);
  \path[dotted] (120:2.4cm) coordinate (B2);
  \path[dotted] (120:3.6cm) coordinate (B3);
  \path[dotted] (120:4.8cm) coordinate (B4);
    \path[dotted] (120:6cm) coordinate (B5);
  \path[dotted] (A1) ++(120:13cm) coordinate (C1);
  \path[dotted] (A2) ++(120:12cm) coordinate (C2);
  \path[dotted] (A3) ++(120:11cm) coordinate (C3);
    \path[dotted] (A4) ++(120:14cm) coordinate (C4);
    \path[dotted] (A5) ++(120:14cm) coordinate (C5);
      \path[dotted] (B1) ++(60:13cm) coordinate (D1);
  \path[dotted] (B2) ++(60:12cm) coordinate (D2);
  \path[dotted] (B3) ++(60:11cm) coordinate (D3);
    \path[dotted] (B4) ++(60:14cm) coordinate (D4);
    \path[dotted] (B5) ++(60:14cm) coordinate (D5);
   \foreach \i in {1,...,60}
  {
    \path[dotted] (origin)++(60:0.2*\i cm)  coordinate (a\i);
    \path[dotted] (origin)++(120:0.2*\i cm)  coordinate (b\i);
    \path[dotted] (a\i)++(120:12cm) coordinate (ca\i);
    \path[dotted] (b\i)++(60:12cm) coordinate (cb\i);
    \draw[thin,white] (a\i) -- (ca\i) (a\i)--(b\i) (b\i) -- (cb\i) ; 
  }
 \draw (b10)++(60:0.8cm) node  {$\mathbf{2\!\!+\!\!0}$};
 \draw (b8)++(60:1.6cm) node  {$\mathbf{1}$};
        \draw (a10)++(120:0.8cm) node  {$\mathbf{0}$};
     \draw (b8)++(60:0.4cm)  node  {$\mathbf{3\!\!+\!\!1}$};
        \draw (b4)++(60:0.8cm)   node  {$\mathbf{2}$};
          \draw (b2)++(60:1.6cm)   node  {$\mathbf{1}$};
  \foreach \i in {1,2,3,4,5}
{ \draw[thick]       (A2) -- (B2) (A1) -- (C1) 
(D1) -- (B1);
\draw[thick] (origin) --(A5) (origin)-- (B5)  (A1)--(B1) (A3)--(B3)
(A2) -- (C2) 
(D2) -- (B2);}
\end{tikzpicture}
$$
\caption{The first four steps of running Soergel's cancellation-free algorithm along $\omega^\gamma$.
We have recorded the powers of the polynomials only (for example, $2+0$ should be read as $t^2+t^0$).  
}
\label{belowsoergel1}
\end{figure}
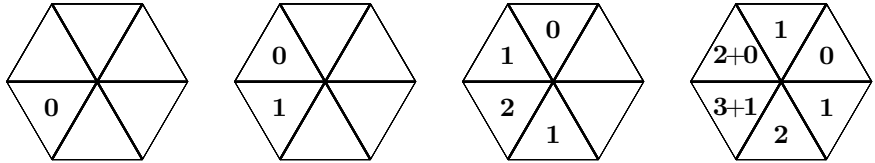
Under the Soergel procedure, we remove the subpattern labelled by the zero in the alcove containing the point $\beta$.   
The `new zero' is recorded by the character algorithm.  
We have that 
$$m_{\gamma}(\lambda)= e_{\gamma}(\gamma)n_{\gamma}(\lambda) + 
 e_{\gamma}(\beta)n_{\beta}(\lambda)$$ for any point $\lambda\in E_r$.   	
 Here 	$e_{\gamma}(\beta)=t^0$ and $e_{\gamma}(\gamma)=t^0$ and
  $e_\gamma(\lambda)=0$ otherwise.  This rewriting process is depicted in Figure \ref{belowsoergel2}.  

    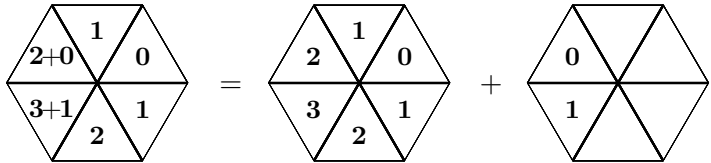
\begin{figure}[ht]\captionsetup{width=0.9\textwidth}
$$   \scalefont{0.9}  \begin{minipage}{27mm} \begin{tikzpicture}[scale=1.0000]
    \clip (-1.2,1) rectangle (60:3.6cm);
     \path (120:2.4cm)++(60:2.4cm) coordinate (BB);
                     \clip (120:1.2cm)  -- (120:2.4cm) -- (BB) -- (60:2.4cm) --(60:1.2cm) ;
  \path (0,0) coordinate (origin);
    \clip   (120:3.6cm) -- (60:3.6cm) -- (origin)    ;
  \path[dotted] (60:1.2cm) coordinate (A1);
  \path[dotted] (60:2.4cm) coordinate (A2);
  \path[dotted] (60:3.6cm) coordinate (A3);
  \path[dotted] (60:4.8cm) coordinate (A4);
    \path[dotted] (60:6cm) coordinate (A5);
  \path[dotted] (120:1.2cm) coordinate (B1);
  \path[dotted] (120:2.4cm) coordinate (B2);
  \path[dotted] (120:3.6cm) coordinate (B3);
  \path[dotted] (120:4.8cm) coordinate (B4);
    \path[dotted] (120:6cm) coordinate (B5);
  \path[dotted] (A1) ++(120:13cm) coordinate (C1);
  \path[dotted] (A2) ++(120:12cm) coordinate (C2);
  \path[dotted] (A3) ++(120:11cm) coordinate (C3);
    \path[dotted] (A4) ++(120:14cm) coordinate (C4);
    \path[dotted] (A5) ++(120:14cm) coordinate (C5);
      \path[dotted] (B1) ++(60:13cm) coordinate (D1);
  \path[dotted] (B2) ++(60:12cm) coordinate (D2);
  \path[dotted] (B3) ++(60:11cm) coordinate (D3);
    \path[dotted] (B4) ++(60:14cm) coordinate (D4);
    \path[dotted] (B5) ++(60:14cm) coordinate (D5);
   \foreach \i in {1,...,60}
  {
    \path[dotted] (origin)++(60:0.2*\i cm)  coordinate (a\i);
    \path[dotted] (origin)++(120:0.2*\i cm)  coordinate (b\i);
    \path[dotted] (a\i)++(120:12cm) coordinate (ca\i);
    \path[dotted] (b\i)++(60:12cm) coordinate (cb\i);
    \draw[thin,white] (a\i) -- (ca\i) (a\i)--(b\i) (b\i) -- (cb\i) ; 
  }
 \draw (b10)++(60:0.8cm) node  {$\mathbf{2\!\!+\!\!0}$};
 \draw (b8)++(60:1.6cm) node  {$\mathbf{1}$};
        \draw (a10)++(120:0.8cm) node  {$\mathbf{0}$};
     \draw (b8)++(60:0.4cm)  node  {$\mathbf{3\!\!+\!\!1}$};
        \draw (b4)++(60:0.8cm)   node  {$\mathbf{2}$};
          \draw (b2)++(60:1.6cm)   node  {$\mathbf{1}$};
  \foreach \i in {1,2,3,4,5}
{ \draw[thick]       (A2) -- (B2) (A1) -- (C1) 
(D1) -- (B1);
\draw[thick] (origin) --(A5) (origin)-- (B5)  (A1)--(B1) (A3)--(B3)
(A2) -- (C2) 
(D2) -- (B2);}
\end{tikzpicture}\end{minipage} = \ \
 \begin{minipage}{27mm}\begin{tikzpicture}[scale=1.0000]
    \clip (-1.2,1) rectangle (60:3.6cm);
     \path (120:2.4cm)++(60:2.4cm) coordinate (BB);
                     \clip (120:1.2cm)  -- (120:2.4cm) -- (BB) -- (60:2.4cm) --(60:1.2cm) ;
  \path (0,0) coordinate (origin);
    \clip   (120:3.6cm) -- (60:3.6cm) -- (origin)    ;
  \path[dotted] (60:1.2cm) coordinate (A1);
  \path[dotted] (60:2.4cm) coordinate (A2);
  \path[dotted] (60:3.6cm) coordinate (A3);
  \path[dotted] (60:4.8cm) coordinate (A4);
    \path[dotted] (60:6cm) coordinate (A5);
  \path[dotted] (120:1.2cm) coordinate (B1);
  \path[dotted] (120:2.4cm) coordinate (B2);
  \path[dotted] (120:3.6cm) coordinate (B3);
  \path[dotted] (120:4.8cm) coordinate (B4);
    \path[dotted] (120:6cm) coordinate (B5);
  \path[dotted] (A1) ++(120:13cm) coordinate (C1);
  \path[dotted] (A2) ++(120:12cm) coordinate (C2);
  \path[dotted] (A3) ++(120:11cm) coordinate (C3);
    \path[dotted] (A4) ++(120:14cm) coordinate (C4);
    \path[dotted] (A5) ++(120:14cm) coordinate (C5);
      \path[dotted] (B1) ++(60:13cm) coordinate (D1);
  \path[dotted] (B2) ++(60:12cm) coordinate (D2);
  \path[dotted] (B3) ++(60:11cm) coordinate (D3);
    \path[dotted] (B4) ++(60:14cm) coordinate (D4);
    \path[dotted] (B5) ++(60:14cm) coordinate (D5);
   \foreach \i in {1,...,60}
  {
    \path[dotted] (origin)++(60:0.2*\i cm)  coordinate (a\i);
    \path[dotted] (origin)++(120:0.2*\i cm)  coordinate (b\i);
    \path[dotted] (a\i)++(120:12cm) coordinate (ca\i);
    \path[dotted] (b\i)++(60:12cm) coordinate (cb\i);
    \draw[thin,white] (a\i) -- (ca\i) (a\i)--(b\i) (b\i) -- (cb\i) ; 
  }
 \draw (b10)++(60:0.8cm) node  {$\mathbf{2 }$};
 \draw (b8)++(60:1.6cm) node  {$\mathbf{1}$};
        \draw (a10)++(120:0.8cm) node  {$\mathbf{0}$};
     \draw (b8)++(60:0.4cm)  node  {$\mathbf{3 }$};
        \draw (b4)++(60:0.8cm)   node  {$\mathbf{2}$};
          \draw (b2)++(60:1.6cm)   node  {$\mathbf{1}$};
  \foreach \i in {1,2,3,4,5}
{ \draw[thick]       (A2) -- (B2) (A1) -- (C1) 
(D1) -- (B1);
\draw[thick] (origin) --(A5) (origin)-- (B5)  (A1)--(B1) (A3)--(B3)
(A2) -- (C2) 
(D2) -- (B2);}
\end{tikzpicture}
\end{minipage}
+ \ \ 
 \begin{minipage}{27mm}\begin{tikzpicture}[scale=1.0000]
    \clip (-1.2,1) rectangle (60:3.6cm);
     \path (120:2.4cm)++(60:2.4cm) coordinate (BB);
                     \clip (120:1.2cm)  -- (120:2.4cm) -- (BB) -- (60:2.4cm) --(60:1.2cm) ;
  \path (0,0) coordinate (origin);
    \clip   (120:3.6cm) -- (60:3.6cm) -- (origin)    ;
  \path[dotted] (60:1.2cm) coordinate (A1);
  \path[dotted] (60:2.4cm) coordinate (A2);
  \path[dotted] (60:3.6cm) coordinate (A3);
  \path[dotted] (60:4.8cm) coordinate (A4);
    \path[dotted] (60:6cm) coordinate (A5);
  \path[dotted] (120:1.2cm) coordinate (B1);
  \path[dotted] (120:2.4cm) coordinate (B2);
  \path[dotted] (120:3.6cm) coordinate (B3);
  \path[dotted] (120:4.8cm) coordinate (B4);
    \path[dotted] (120:6cm) coordinate (B5);
  \path[dotted] (A1) ++(120:13cm) coordinate (C1);
  \path[dotted] (A2) ++(120:12cm) coordinate (C2);
  \path[dotted] (A3) ++(120:11cm) coordinate (C3);
    \path[dotted] (A4) ++(120:14cm) coordinate (C4);
    \path[dotted] (A5) ++(120:14cm) coordinate (C5);
      \path[dotted] (B1) ++(60:13cm) coordinate (D1);
  \path[dotted] (B2) ++(60:12cm) coordinate (D2);
  \path[dotted] (B3) ++(60:11cm) coordinate (D3);
    \path[dotted] (B4) ++(60:14cm) coordinate (D4);
    \path[dotted] (B5) ++(60:14cm) coordinate (D5);
   \foreach \i in {1,...,60}
  {
    \path[dotted] (origin)++(60:0.2*\i cm)  coordinate (a\i);
    \path[dotted] (origin)++(120:0.2*\i cm)  coordinate (b\i);
    \path[dotted] (a\i)++(120:12cm) coordinate (ca\i);
    \path[dotted] (b\i)++(60:12cm) coordinate (cb\i);
    \draw[thin,white] (a\i) -- (ca\i) (a\i)--(b\i) (b\i) -- (cb\i) ; 
  }
 \draw (b10)++(60:0.8cm) node  {$\mathbf{ 0}$};
     \draw (b8)++(60:0.4cm)  node  {$\mathbf{ 1}$};
  \foreach \i in {1,2,3,4,5}
{ \draw[thick]       (A2) -- (B2) (A1) -- (C1) 
(D1) -- (B1);
\draw[thick] (origin) --(A5) (origin)-- (B5)  (A1)--(B1) (A3)--(B3)
(A2) -- (C2) 
(D2) -- (B2);}
\end{tikzpicture}\end{minipage}
$$
\caption{
Rewriting the $m_\gamma(\lambda)$ in terms of $n_{\mu}(\lambda)$ and $e_{\gamma}(\mu)$.  }
\label{belowsoergel2}
    \end{figure}
     \end{eg}
    
        \begin{eg}\label{exampleofnonpositive}
   Let $l=3$, $n=21$, $e=6$, $\rho=(6,4,2)$ 
 and consider the root system of type $\hat{A}_2$.  
 We leave it as an exercise for the reader to show that
 $$
 m_{(4,17,0)}(\lambda)=  n_{(4,17,0)}(\lambda)+ 
 n_{(15,4,2)}(\lambda) + (t+t^{-1}) n_{(6,9,0)}(\lambda)
 $$
for any $\lambda \in E_r$.  This is the smallest example where we find a path in negative degree.  In this case,
 $$e_{(4,17,0)}(6,9,0)=(t+t^{-1}), \quad e_{(4,17,0)}(15,4,2)=t^0, 
 \quad  e_{(4,17,0)}(4,17,0)=t^0.$$  
 \end{eg}
    
%
 
  \subsection{Algebras with Soergel path bases }
We shall now define   a family of algebras  whose representation theory is governed by paths in Euclidean space and show that
 the decomposition numbers of such an algebra are given by Soergel's algorithm.  Our proof is based on Kleshchev and Nash's algorithm for computing decomposition numbers (see \cite{KN10}).

\begin{defn}\label{soergeldefn}
  Let $A(\rho,e)$ denote a  graded cellular algebra with a theory of highest weights with respect to the poset $\mptn ln$.  
  Let $\mptn ln \hookrightarrow E_r$ where $E_r$ is equipped with 
 the action of a  Weyl group associated to a root system $\Phi$.  
  We say that the algebra $A(\rho,e)$ has a \emph{Soergel-path basis} with respect to 
  $\Phi$ if there exists a degree preserving bijective map
$$\omega  :\TSStd(\lambda,\mu) \to {\rm Path}(\lambda,\mu)$$
such that $\omega (\SSTT^\mu)=\omega^\mu$ is  admissible.  
    \end{defn}

    \begin{prop}\label{troll}
  Let $A(\rho,e)$ denote an algebra with  a  {Soergel-path basis} and suppose that $d_{\lambda\mu}(t) \in t\mathbb{N}_{0} (t)$ for all $\lambda\neq \mu \in \mptn ln$.  
   Then the following hold:
\begin{itemize}
\item[$(i)$] we have  $ \Dim{(\Delta_\mu(\lambda))} = m_\mu(\lambda) \in \NN_0[t,t^{-1}]$ and $ \Dim{(L_\mu(\lambda))} \in \NN_0[t+t^{-1}]$;
\item[$(ii)$] if $ \Dim{(\Delta_\mu(\lambda))}=0$, then $d_{\lambda\mu}(t)=0$;
\item[$(iii)$] we have $ \Dim{(\Delta_\mu(\mu))} =  \Dim{(L_\mu(\mu))} =1$;
\item[$(iv)$] if $\Path(\lambda,\mu)=\emptyset$, then $\Dim{(\Delta_\mu(\lambda))}=0$;
\item[$(v)$] if $\Path(\lambda,\mu)=\emptyset$, then $\Dim{(L_\mu(\lambda))}=0$;
\item[$(vi)$] we have that
\[
\Dim{(\Delta_\mu(\lambda)})= \;
\sum_
{  \mathclap{\begin{subarray}c \nu \neq \mu \\ 
   \Path(\nu,\mu)\neq \emptyset\\
  \Path(\lambda,\nu)\neq \emptyset \end{subarray} }} \; d_{\lambda\nu}(t)\Dim{(L_\mu(\nu))}+d_{ \lambda\mu}(t).
  \]
\end{itemize}
   \end{prop}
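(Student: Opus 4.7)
The plan is to establish the six assertions in the order (iv), (v), (iii), the first half of (i), the palindrome part of (i), (vi), (ii), since each step naturally feeds the next. First I would dispatch (iv) and (v) directly from the Soergel-path bijection $\omega$ and the cellular set-up: the basis of $\Delta_\mu(\lambda)$ is indexed by $\TSStd(\lambda,\mu)$, which is empty exactly when $\Path(\lambda,\mu)$ is, and $L_\mu(\lambda)$ is a quotient of $\Delta_\mu(\lambda)$. For (iii) I would use condition (6) of Definition \ref{defn1}: $\TSStd(\mu,\mu) = \{\SSTT^\mu\}$ with $c^\mu_{\SSTT^\mu\SSTT^\mu} = 1_\mu$ in degree $0$, giving $\Dim{(\Delta_\mu(\mu))} = 1$; and the relation $1_\mu = 1_\mu \cdot 1_\mu \equiv \langle c^\mu_{\SSTT^\mu}, c^\mu_{\SSTT^\mu}\rangle_\mu \, 1_\mu \pmod{A^{\triangleright \mu}}$ forces $\langle c^\mu_{\SSTT^\mu}, c^\mu_{\SSTT^\mu}\rangle_\mu = 1$, so $c^\mu_{\SSTT^\mu}$ survives in $L(\mu)$ and $\Dim{(L_\mu(\mu))} = 1$. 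The first half of (i) is then immediate: pushing the graded count $\sum_{\SSTT \in \TSStd(\lambda,\mu)} t^{\deg \SSTT}$ through $\omega$ produces $\overline{m}_\mu(\lambda)$, which Proposition \ref{the same} identifies with $m_\mu(\lambda)$.

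The palindromicity $\Dim{(L_\mu(\lambda))} \in \NN_0[t+t^{-1}]$ in (i) is where the cellular structure does real work. My plan is to adapt the proof of Proposition \ref{humathasprop} one weight at a time: the graded form $\langle\,,\,\rangle_\lambda$ pairs degree $k$ with degree $-k$ and satisfies $\langle 1_\mu v, w\rangle_\lambda = \langle v, 1_\mu^* w\rangle_\lambda$. Since $1_\mu^* = (c^\mu_{\SSTT^\mu\SSTT^\mu})^* = c^\mu_{\SSTT^\mu\SSTT^\mu} = 1_\mu$ by condition (4), the form restricts to a nondegenerate graded pairing on each weight space $L_\mu(\lambda)$ that exchanges degrees $k$ and $-k$, yielding the desired symmetry.

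I would then prove (vi) by starting from the graded composition-factor identity $[\Delta(\lambda)] = \sum_\nu d_{\lambda\nu}(t)[L(\nu)]$ in the graded Grothendieck group and taking graded dimensions of $\mu$-weight spaces on both sides. The restriction on the sum then follows from two observations: by (v), terms with $\Path(\nu,\mu) = \emptyset$ vanish; and by cellularity $d_{\lambda\nu}(t) = 0$ unless $\nu \trianglelefteq \lambda$, in which case the highest-weight vector of each copy of $L(\nu)$ forces $\Dim{(\Delta_\nu(\lambda))} \neq 0$, hence $\Path(\lambda,\nu) \neq \emptyset$ by (iv). Isolating the $\nu = \mu$ term via (iii) recovers the stated formula. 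Finally, (ii) follows by positivity: if $\Dim{(\Delta_\mu(\lambda))} = 0$, the identity in (vi) expresses $0$ as a sum of Laurent polynomials in $t$ with non-negative coefficients (using the hypothesis $d_{\lambda\nu}(t)\in t\NN_0[t]$ for $\nu \neq \lambda$ together with the non-negativity of graded dimensions), so each summand, and in particular $d_{\lambda\mu}(t)$, vanishes.

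The step requiring the most care is the palindromicity in (i), which hinges on the compatibility of the cellular anti-involution with the weight idempotents $1_\mu$; the remaining assertions are bookkeeping on the Soergel-path bijection and on standard graded cellular machinery.
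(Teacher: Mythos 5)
Your proposal is correct, and its skeleton is the same as the paper's (very terse) proof: (iv) and (v) from the bijection $\omega$ and the cellular structure, (iii) from the uniqueness of $\SSTT^\mu$ (equivalently of $\omega^\mu\in\Path(\mu,\mu)$), the identification $\Dim{(\Delta_\mu(\lambda))}=m_\mu(\lambda)$ from the degree-preserving bijection together with Proposition \ref{the same}, and (vi) by taking $\mu$-weight-space dimensions in the graded composition series. Two points where you genuinely deviate are worth recording. First, for the palindromicity in (i) the paper simply cites Proposition \ref{humathasprop}, which as stated gives $\Dim{(L(\mu))}\in\NN_0[t+t^{-1}]$ for the whole simple module rather than for each weight space; your refinement — orthogonality of distinct weight spaces under the cellular form via $1_\mu^*=1_\mu$, plus the fact that the form pairs degree $k$ with degree $-k$ — is exactly the argument needed to get the weight-space statement, so you are filling in a step the paper leaves implicit. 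Second, the paper proves (ii) directly from the observation that a copy of $L(\mu)$ inside $\Delta(\lambda)$ (equivalently a nonzero map $P(\mu)\to\Delta(\lambda)$) forces $\Delta_\mu(\lambda)\neq 0$, whereas you deduce (ii) from (vi) and positivity of graded multiplicities; both are valid, and in fact your proof of (vi) already contains the paper's argument for (ii) when you discard the terms with $\Path(\lambda,\nu)=\emptyset$, so the positivity step is a harmless redundancy (and does not even need the hypothesis $d_{\lambda\mu}(t)\in t\NN_0[t]$). One cosmetic slip: the clause ``$d_{\lambda\nu}(t)=0$ unless $\nu\trianglelefteq\lambda$'' has the dominance direction reversed relative to this paper's conventions (the composition factors of $\Delta(\lambda)$ are labelled by $\theta$-more-dominant weights), but since that clause is never used, nothing in your argument depends on it.
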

\begin{proof}
Part $(i)$ is clear by Proposition \ref{humathasprop}, $(iii)$ is a restatement of the condition that $\omega^\mu$ is the only path  in $\Path(\mu,\mu)$.  
A necessary condition for $\Dim{(\Hom (P(\mu),\Delta(\lambda))}\neq 0$ is that $\Delta_\mu(\lambda)\neq 0$, therefore $(ii)$ follows.

Part $(iv)$ is by definition, and part $(v)$ follows from the cellular structure.
Finally, $(vi)$ follows from $(i), (iii), (v)$ and our assumption that $d_{\lambda\mu}(t) \in t\mathbb{N}_0(t)$ for $\lambda \neq \mu$.  
\end{proof}

    \begin{thm}\label{main:theorem:general}
    Let $A(\rho,e)$ denote an algebra with a  {Soergel-path basis} of type $\Phi$.  
 Suppose that $d_{\lambda\mu}(t) \in t\mathbb{N}_{0} (t)$ for all $\lambda,\mu \in \mptn ln$ such that $\lambda \neq \mu$.  
  The graded decomposition numbers of  an $e$-regular block of 
$A(\rho,e)$ are given by the Soergel algorithm  
\begin{align*} 
d_{\lambda\mu}	(t)= n_\mu({\lambda})   \end{align*}
and the characters of the $e$-regular simple modules are given by the character algorithm 
\[
\Dim{(L_\mu(\lambda))} = e_\mu(\lambda).
\]
			      \end{thm}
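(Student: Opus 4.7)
The strategy is to induct on $\ell(\mu)$, reconciling the two expansions of $m_\mu(\lambda)=\Dim{(\Delta_\mu(\lambda))}$ now available---the path-theoretic one from Proposition \ref{changeofbasis} and the representation-theoretic one from Proposition \ref{troll}(vi)---via a Kazhdan--Lusztig-flavoured bar-invariance uniqueness argument in the spirit of Kleshchev--Nash.

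Setting up the key identity, I equate the two expansions of $m_\mu(\lambda)$ and isolate the $\nu=\mu$ contribution (using $e_\mu(\mu)=\Dim{(L_\mu(\mu))}=1$) to obtain
\[n_\mu(\lambda)-d_{\lambda\mu}(t)\;=\;\sum_{\nu\neq\mu}\bigl[d_{\lambda\nu}(t)\Dim{(L_\mu(\nu))}-n_\nu(\lambda)e_\mu(\nu)\bigr].\]
Admissibility of $\omega^\mu$ forces every $\nu\neq\mu$ contributing to the sum (i.e.\ with $\Path(\nu,\mu)\neq\emptyset$) to satisfy $\ell(\nu)<\ell(\mu)$, so the induction hypothesis permits replacing $n_\nu(\lambda)$ by $d_{\lambda\nu}(t)$ throughout. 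Writing $g(\lambda):=n_\mu(\lambda)-d_{\lambda\mu}(t)$ and $f(\nu):=\Dim{(L_\mu(\nu))}-e_\mu(\nu)$ (with $f(\mu)=0$ automatic), the identity collapses to
\[g(\lambda)\;=\;\sum_\nu d_{\lambda\nu}(t)\,f(\nu).\]

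The uniqueness step rests on two auxiliary properties. First, $f(\nu)$ is bar-invariant: Proposition \ref{humathasprop} gives this for $\Dim{(L_\mu(\nu))}$, while $e_\mu(\nu)$ is visibly bar-invariant by induction on the alcove series, its recursion involving only $(t+t^{-1})$ and constants. Second, $g(\lambda)\in t\ZZ[t]$: the hypothesis on $d$ gives $d_{\lambda\mu}(t)\in t\NN_0[t]$ for $\lambda\neq\mu$, while a parallel induction on the alcove series shows $n_\mu(\lambda)|_{t=0}=\delta_{\lambda\mu}$, since the subtraction in the definition of $n_{\mathfrak{a}_{i+1}}(\mathfrak{b})$ is engineered precisely to annihilate the constant-term contribution at each lower alcove. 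Granting these, suppose $f\not\equiv 0$ and pick $\lambda_0$ of maximal length with $f(\lambda_0)\neq 0$ (necessarily $\lambda_0\neq\mu$). Any $\nu\neq\lambda_0$ with $d_{\lambda_0\nu}(t)\neq 0$ satisfies $\ell(\nu)>\ell(\lambda_0)$ by Proposition \ref{linakge} together with the length bound on decomposition numbers emphasised in the introduction, so $f(\nu)=0$ by maximality; this leaves $g(\lambda_0)=d_{\lambda_0\lambda_0}(t)f(\lambda_0)=f(\lambda_0)$. But $g(\lambda_0)\in t\ZZ[t]$ and $f(\lambda_0)$ is bar-invariant, forcing $f(\lambda_0)=0$, a contradiction. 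Hence $f\equiv 0$, so $e_\mu(\nu)=\Dim{(L_\mu(\nu))}$ for every $\nu$; plugging back gives $g\equiv 0$, so $n_\mu(\lambda)=d_{\lambda\mu}(t)$ for every $\lambda$.

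The base case $\ell(\mu)=0$ is immediate: $\omega^\mu$ crosses no hyperplane, so $\Path(\lambda,\mu)=\emptyset$ for $\lambda\neq\mu$, whence $n_\mu(\lambda)=e_\mu(\lambda)=\delta_{\lambda\mu}$ from the algorithms and $d_{\lambda\mu}(t)=\Dim{(L_\mu(\lambda))}=0$ from Proposition \ref{troll}(ii),(v). I expect the main technical obstacle to be verification of the two ancillary properties---especially $n_\mu(\lambda)|_{t=0}=\delta_{\lambda\mu}$---which, although morally built into the algorithm's design, requires a careful inductive unpacking of the subtraction step of the $n$-recursion along the alcove series; by contrast, the bar-invariance of $e_\mu$ and the length-ordering bound on $d_{\lambda_0\nu}$ used in the maximality argument are comparatively routine.
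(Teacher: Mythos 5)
Your proposal is correct and is essentially the paper's own argument: both proofs reconcile the representation-theoretic expansion of $\Dim{(\Delta_\mu(\lambda))}$ from Proposition \ref{troll}$(vi)$ with the path-theoretic one from Proposition \ref{changeofbasis}, and then conclude by the Kleshchev--Nash uniqueness of splitting a Laurent polynomial into a bar-invariant part (constrained by Proposition \ref{humathasprop}) plus a part lying in $t\ZZ[t]$. The only difference is bookkeeping: the paper inducts on $\ell(\mu,\lambda)$ and resolves the pair $d_{\lambda\mu}(t)$, $\Dim{(L_\mu(\lambda))}$ directly from the single identity $\Dim{(L_\mu(\lambda))}+d_{\lambda\mu}(t)=e_\mu(\lambda)+n_\mu(\lambda)$ at each step, whereas you fix $\mu$, induct on $\ell(\mu)$ and determine all $f(\nu)=\Dim{(L_\mu(\nu))}-e_\mu(\nu)$ at once by a maximal-counterexample argument; the auxiliary facts you flag (vanishing constant term of $n_\mu(\lambda)$ for $\lambda\neq\mu$, and that nonzero decomposition numbers force strictly larger length, via Proposition \ref{troll}$(ii)$,$(iv)$ and the support of $m$) are exactly what the paper absorbs into its appeal to \cite{Soergel} and \cite{KN10}.
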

   
\begin{proof}
By Proposition \ref{troll} $(ii)$, we may assume $\Path(\lambda,\mu)\neq \emptyset$.  We now calculate $d_{ \lambda\mu}(t)$ and $\Dim{(L_\mu(\lambda))}$ by induction on the length ordering on alcoves.  Induction begins when $\ell(\mu,\lambda)=0$, hence $\mu=\lambda$, and we have $d_{\mu\mu}(t)=1$ by Proposition \ref{troll} $(iii)$ and $\Dim{(L_\mu(\mu))}=e_\mu(\mu)=1$.

Let $\ell(\mu,\lambda)\geq 1$.  By induction, we know $d_{\lambda\nu}(t)$   and $\Dim{(L_\mu(\nu))}$ for points $\nu\in E_r$ such that  $\ell(\mu,\nu), \ell(\lambda,\nu)<\ell(\mu,\lambda)$
. By Proposition \ref{troll} $(vi)$ we have
\[
\Dim{(L_\mu(\lambda))} + d_{ \lambda\mu} (t)
 =
\Dim{(\Delta_\mu(\lambda))} 
- \;
\sum_
{  \mathclap{\begin{subarray}c 
\nu \neq \mu, \,
\nu \neq \lambda \\ 
   \Path(\nu,\mu)\neq \emptyset\\
  \Path( \lambda,\nu)\neq \emptyset \end{subarray} }} \;d_{\lambda\nu}(t)\Dim{(L_\mu(\nu))}.
\]
By induction and Proposition \ref{troll} $(i)$, the right-hand side is equal to
\[
m_\mu(\lambda) - \;
\sum_
{  \mathclap{\begin{subarray}c 
\nu \neq \mu, \, 
\nu \neq \lambda \\ 
   \Path(\nu,\mu)\neq \emptyset \\
  \Path(\lambda, \nu)\neq \emptyset \end{subarray} }} \; n_{\nu}(\lambda)e_\mu(\nu).
\]
We know that this final sum is equal to 
$e_\mu(\lambda)
n_\lambda(\lambda)
+n_\mu({\lambda})
e_\mu(\mu)
$ by Proposition \ref{changeofbasis}.  Our base case for induction showed that
$n_\lambda(\lambda)=1=e_\mu(\mu)$, therefore
\[
m_\mu(\lambda) - \;
\sum_
{  \mathclap{\begin{subarray}c 
\nu \neq \mu, \, 
\nu \neq \lambda \\ 
   \Path(\nu,\mu)\neq \emptyset \\
  \Path(\lambda, \nu)\neq \emptyset \end{subarray} }} \; n_{\nu}(\lambda)e_\mu(\nu) = e_\mu(\lambda)+n_\mu({\lambda}).
\]
Recall that 
$\Dim{(L_\mu(\lambda))} \in \mathbb{N}_0(t+t^{-1})$  and $d_{  \lambda\mu}\in t\NN(t)$. Therefore there is a unique  solution to the equality (see \cite[Section 4.1: Basic Algorithm]{KN10} for a general form, or \cite{Soergel} for the interpretation in terms of Kazhdan--Lusztig theory) given by
\[\Dim{(L_\mu(\lambda))} = e_\mu(\lambda), \quad d_{\lambda\mu}(t)=n_\mu({\lambda}).  \qedhere\] 
\end{proof}
\begin{cor}
  Let $A(\rho,e)$ denote an algebra with  a  {Soergel-path basis} and  suppose  $d_{\lambda\mu}(t)\in t\mathbb{N}_{0} (t)$ for $\lambda \neq \mu$.  
  Let    $\lambda,\lambda' \in \mathfrak{a}$  and  $\mu,\mu'\in \mathfrak{b}$ for some  alcoves $\mathfrak{a},\mathfrak{b}$ and 
suppose that $\mu \in W^e \cdot \lambda$ and  $\mu' \in W^e \cdot \lambda'$.  Then 
  $$
  d_{\lambda\mu}(t) =   d_{\lambda'\mu'}(t). 
  $$
\end{cor}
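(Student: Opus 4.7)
The plan is to reduce the corollary to Theorem \ref{main:theorem:general} and then invoke the known alcove-invariance of Soergel's $n$-algorithm. Since $\mu \in W^e\cdot\lambda$ and $\mu'\in W^e\cdot\lambda'$, each pair lies in a single $W^e$-orbit, so by the Linkage Principle (Proposition \ref{linakge}) the decomposition numbers $d_{\lambda\mu}(t)$ and $d_{\lambda'\mu'}(t)$ are computed within blocks of the kind to which the theorem applies. Moreover, membership in alcoves forces all four points to be $e$-regular, so Theorem \ref{main:theorem:general} gives
\[
d_{\lambda\mu}(t) = n_\mu(\lambda) \qquad \text{and} \qquad d_{\lambda'\mu'}(t) = n_{\mu'}(\lambda').
\]

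Next, I will use the fact that $n_{\mathfrak{b}}(\mathfrak{a})$ depends only on the source and target alcoves $\mathfrak{a}, \mathfrak{b}$, not on the intermediate alcove-series or the choice of distinguished admissible path $\omega^\mu$; this is precisely the path-independence of the Soergel algorithm noted in Section \ref{pathalgorithm} (attributed to \cite{Soergel}). Combined with the definition of $n$ on $e$-regular points as the value of $n$ on the ambient alcove, this yields $n_\mu(\lambda) = n_{\mathfrak{b}}(\mathfrak{a})$ whenever $\mu \in \mathfrak{b}$ and $\lambda \in \mathfrak{a}$.

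Applying this observation to both pairs, and using $\lambda,\lambda'\in\mathfrak{a}$ together with $\mu,\mu'\in\mathfrak{b}$, we obtain
\[
d_{\lambda\mu}(t) = n_\mu(\lambda) = n_{\mathfrak{b}}(\mathfrak{a}) = n_{\mu'}(\lambda') = d_{\lambda'\mu'}(t),
\]
as required. There is no real obstacle here beyond invoking the right references: the heavy lifting is done by Theorem \ref{main:theorem:general} and by Soergel's path-independence result. The only mild point to verify is that the extension of $n$ from alcoves to $e$-regular points (the phrase ``in the obvious fashion'' in Section \ref{pathalgorithm}) is indeed constant on each alcove, which is immediate from the recursive definition, since the update rule for $n$ only references alcove data.
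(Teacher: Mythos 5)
Your proposal is correct and follows essentially the same route as the paper: the paper's proof is the one-line observation that Soergel's algorithm is well-defined on alcoves, which (via Theorem \ref{main:theorem:general} giving $d_{\lambda\mu}(t)=n_\mu(\lambda)$) is exactly the reduction you spell out. You have simply made explicit the intermediate steps ($e$-regularity of points in alcoves, path-independence from \cite{Soergel}, and constancy of $n$ on each alcove) that the paper leaves implicit.
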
   
   \begin{proof}
   This follows as Soergel's algorithm is well-defined on alcoves.  
   \end{proof}

  \section{The diagrammatic Cherednik algebra}

%
%

In this section we recall the definition of  the diagrammatic Cherednik algebras 
(reduced steadied quotients of weighted KLR algebras in Webster's
 terminology) constructed in  \cite{Webster}.

 \subsection{Combinatorial preliminaries}

Fix integers $l,n\in\ZZ_{\geq 0} $, $\g\in \RR_{> 0}$ and $e\in\{3,4,\dots\}\cup\{\infty\}$. 
We define a \emph{weighting} $\theta = (\theta_1,\dots, \theta_l) \in \RR^l$ to be any $l$-tuple such that
$\theta_i-\theta_j$ is not an integer multiple of  $\g$ for $1\leq i< j \leq l$.
Let $\kappa$ denote an $e$-\emph{multicharge} $\kappa = (\kappa_1,\dots,\kappa_l)\in (\ZZ/e\ZZ)^l$.

\begin{rmk}
We say that a weighting $\theta\in\RR^l$ is \emph{well-separated} for $A(n,\theta,\kappa)$ if $|\theta_j-\theta_i| > ng$ for all $1\leq i < j \leq l$.
We say that a weighting $\theta\in\RR^l$ is a \emph{FLOTW} weighting  for $A(n,\theta,\kappa)$ if $0<|\theta_i-\theta_j| < g$ for all $1\leq i < j \leq l$.
\end{rmk}

\begin{defn}
An \emph{$l$-multipartition} $\lambda=(\lambda^{(1)},\dots,\lambda^{(l)})$ of $n$ is an $l$-tuple of partitions such that $|\lambda^{(1)}|+\dots+ |\lambda^{(l)}|=n$. We will denote the set of $l$-multipartitions of $n$ by $\mptn ln$.
\end{defn}

We define the \emph{Russian array} as follows.
For each $1\leq k\leq l$, we place a point on the real line at $\theta_m$ and consider the region bounded by  half-lines at angles $3\pi/4 $ and $\pi/4$.
We tile the resulting quadrant with a lattice of squares, each with diagonal of length $2 \g $.

Let $\lambda=(\lambda^{(1)},\lambda^{(2)},\ldots ,\lambda^{(l)}) \in \mptn ln$.
The \emph{Young diagram} $[\lambda]$ is defined to be the set
\[
\{(r,c,m)\in\mathbb{N}\times\mathbb{N}\times\{1,\dots,l\} \mid c\leq \lambda^{(m)}_r\}.
\]
We refer to elements of $[\la]$ as nodes (of  $[\la]$ or $\la$). 
We define the \emph{residue} of a node $(r,c,m) \in [\lambda]$ to be $\kappa_m+c-r \pmod e$.

For each node of $[\la]$ we draw a box in the plane; we shall draw our Young diagrams in a mirrored-Russian convention. We place the first node of component $m$ at $\theta_m$ on the real line, with rows going northwest from this node, and columns going northeast. The diagram is tilted ever-so-slightly in the clockwise direction so that the top vertex of the box $(r,c,m)$ (that is, the box in the $r$th row and $c$th column of the $m$th component of $ [\lambda] $) has $x$-coordinate $\theta_m + g(r-c) + (r+c)\epsilon$.  

Here the tilt $\epsilon$ is chosen so that $n\epsilon$ is absolutely small with respect to $g$ (so that $\epsilon \ll g/n$) and with respect to the weighting (so that $g$ does not divide any number in the interval  $|\theta_i-\theta_j| + (-n\epsilon,+n\epsilon)$ for $1\leq i < j\leq l$).  With these assumptions firmly  in place, we will drop any mention of $\epsilon$ when speaking of the ghost distance, $g\in \RR_{>0}$, or the weighting, $\theta\in \RR^l$.

We define a \emph{loading}, $\mathbf{i}$, to be an element of $(\mathbb{R} \times (\ZZ/e\ZZ))^n$ such that no real number occurs with multiplicity greater than one.
Given a multipartition $\lambda \in \mptn ln$  we have an associated loading, $\mathbf{i}_\lambda$, given by the projection of the top vertex of each box $(r,c,m)\in[\la]$ to its $x$-coordinate $\mathbf{i}_{(r,c,m)}\in\RR$, and attaching to each point the {residue} $\kappa_m+c-r \pmod e$ of this node.

We let $D_\lambda$ denote the underlying ordered subset of $\RR$ given by the points of the loading. Given $a \in D_\lambda$, we abuse notation and let $a$ denote the corresponding node of $ \lambda $ (that is, the node whose top vertex projects onto $x$-coordinate $a \in \RR$).
The \emph{residue sequence} of $\lambda$ is given by reading the residues of the nodes of $ \lambda $ according to the ordering given by $D_\lambda$.

\begin{eg}
Let $l=2$,  $\g=1$, $\epsilon=1/100$, and $\theta=(0,0.5)$.
The bipartition $((2,1),(1^3))$ has Young diagram and corresponding loading $\mathbf{i}_\lambda$ given in \cref{ALoading}.
The residue sequence of $\lambda$ is $(\kappa_1+1,\kappa_1,\kappa_2,\kappa_1-1,\kappa_2-1,\kappa_2-2)$, and the ordered set $D_\lambda$ is $\{-0.97,0.02, 0.52, 1.03, 1.53 ,2.54 \}$.
The node $x={-0.97}$ in $  \lambda $ can be identified with the node in the first row and second column of the first component of  $\lambda$.

\begin{figure}[ht]\captionsetup{width=0.9\textwidth}
\[
\begin{tikzpicture}[scale=0.6]
 {\clip (-1.7,-2) rectangle ++(5,5);
\path [draw,name path=upward line] (-1.7,-2) -- (2.8,-2);
  \draw 
      (0, 0)              coordinate (a1)
            -- +(40:1) coordinate (a2)  
      (a2)   -- +(130:1) coordinate (a3)
      (a3)   -- +(220:1) coordinate (a4)       
      (a4)   -- +(310:1) coordinate (a5)     
(a3)  -- +(270:10)
      (a2)             
            -- +(40:1) coordinate (b2)  
      (b2)   -- +(130:1) coordinate (b3)
      (b3)   -- +(220:1) coordinate (b4)       
      (b4)   -- +(310:1) coordinate (b5)   
      (a4)   -- +(130:1) coordinate (c1)            
      (c1) -- +(40:1)coordinate (c2)    
            (c2) -- +(-50:1)coordinate (c3)            
      (c2)  -- +(270:10)
      (b3)  -- +(270:10)
        ; 
      \draw[wei2](a1) -- +(270:10);
      \draw[wei2](a1)  circle (2pt);
     \path   (0,0)  -- +(0:0.5) coordinate (d)   
      ;
        \draw[wei2](d) -- +(270:10);

        \draw 
      (d)              
               -- +(40:1) coordinate (d2)  
                (d2)   -- +(130:1) coordinate (d3)
                 (d3)   -- +(220:1) coordinate (d4)       
      (d4)   -- +(310:1) coordinate (d5) 
            (d3) -- +(270:10)
               (d2)              
               -- +(40:1) coordinate (e2)  
                  (e2)   -- +(130:1) coordinate (e3)
                     (e3)   -- +(220:1) coordinate (e4)       
(e3)  -- +(270:10)      
(e2)              
               -- +(40:1) coordinate (f2)  
                  (f2)   -- +(130:1) coordinate (f3)
                    (f3)   -- +(220:1) coordinate (f4)       
 (f3)  -- +(270:10)      
      ;  
        \draw[wei2](d)  circle (2pt); }
    \end{tikzpicture}
    \]
\caption{The diagram and loading of the bipartition $((2,1),(1^3))$ for $l=2$, $g=1$, $\theta=(0,0.5)$.}
\label{ALoading}
\end{figure}
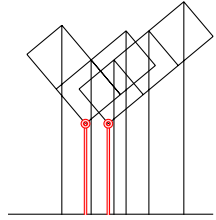
   \end{eg}

    \begin{eg}\label{previous1}
  Let $n=3$, $l=2$, $e=4$, $g=2$,  $\kappa=(0,2)$, and $\theta=(0,1)$.  Consider the block with residue 
  $\{0,1,2\}$.  This block contains 4 multipartitions, $(\emptyset,(1^3)), ((1),(1^2)), ((2),(1)),$ and $((3),\emptyset)$.  
 We record the diagrams corresponding to these partitions in Figure \ref{domorderon012}; in the cases where one of the components  is empty, we record where it would be, for perspective.  

\begin{figure}[ht]\captionsetup{width=0.9\textwidth}
 $$
        \quad    \quad 
 \begin{tikzpicture}[scale=0.6]
{\clip (-.7,-2) rectangle ++(3.8,6);
\path [draw,name path=upward line] (-1.7,-2) -- (2.8,-2);
   \draw 
      (-0.05,1.2)              coordinate (a1)
;
      \draw[wei2](a1) -- +(270:10);
\path   (0,0)  -- +(0:0.5) coordinate (d)   
      ;        \draw[wei2](d) -- +(270:10);
        \draw 
      (d)              
               -- +(30:1) coordinate (d2)  
                (d2)   -- +(120:1) coordinate (d3)
                 (d3)   -- +(210:1) coordinate (d4)       
      (d4)   -- +(300:1) coordinate (d5) 
            (d3) -- +(270:10)
               (d2)              
               -- +(30:1) coordinate (e2)  
                  (e2)   -- +(120:1) coordinate (e3)
                     (e3)   -- +(210:1) coordinate (e4)       
(e3)  -- +(270:10)      
(e2)              
               -- +(30:1) coordinate (f2)  
                  (f2)   -- +(120:1) coordinate (f3)
                    (f3)   -- +(210:1) coordinate (f4)       
 (f3)  -- +(270:10)      
      ;               \draw[wei2,fill](a1) circle (2pt);                               \draw[wei2,fill]  (d)   circle(2pt); 
}
    \end{tikzpicture}
        \quad    \quad 
 \begin{tikzpicture}[scale=0.6]
{\clip (-.7,-2) rectangle ++(3,6);
\path [draw,name path=upward line] (-1.7,-2) -- (2.8,-2);
  \draw 
      (-0.05,1.2)              coordinate (a1)
            -- +(30:1) coordinate (a2)  
      (a2)   -- +(120:1) coordinate (a3)
      (a3)   -- +(210:1) coordinate (a4)       
      (a4)   -- +(300:1) coordinate (a5)     
(a3)  -- +(270:10);
       \draw[wei2](a1) -- +(270:10);
     \path   (0,0)  -- +(0:0.5) coordinate (d)   
      ;
        \draw[wei2](d) -- +(270:10);
        \draw 
      (d)              
               -- +(30:1) coordinate (d2)  
                (d2)   -- +(120:1) coordinate (d3)
                 (d3)   -- +(210:1) coordinate (d4)       
      (d4)   -- +(300:1) coordinate (d5) 
            (d3) -- +(270:10)
               (d2)              
               -- +(30:1) coordinate (e2)  
                  (e2)   -- +(120:1) coordinate (e3)
                     (e3)   -- +(210:1) coordinate (e4)       
(e3)  -- +(270:10)     ;                                    \draw[wei2,fill]  (a1)   circle(2pt);     \draw[wei2,fill]  (d)   circle(2pt); 
}
    \end{tikzpicture}
\quad
 \begin{tikzpicture}[scale=0.6]
 { \clip (-1.1,-2) rectangle ++(2.5,6);
\path [draw,name path=upward line] (-1.7,-2) -- (2.8,-2);
  \draw 
      (-0.05,1.2)              coordinate (a1)
            -- +(30:1) coordinate (a2)  
      (a2)   -- +(120:1) coordinate (a3)
      (a3)   -- +(210:1) coordinate (a4)       
      (a4)   -- +(300:1) coordinate (a5)     
(a3)  -- +(270:10) 
      (a4)   -- +(120:1) coordinate (c1)            
      (c1) -- +(30:1)coordinate (c2)    
             (c2) -- +(-60:1)coordinate (c3)            
      (c2)  -- +(270:10)
;
      \draw[wei2](a1) -- +(270:10);
     \path   (0,0)  -- +(0:0.5) coordinate (d)   
      ;
        \draw[wei2](d) -- +(270:10);
        \draw 
      (d)              
               -- +(30:1) coordinate (d2)  
                (d2)   -- +(120:1) coordinate (d3)
                 (d3)   -- +(210:1) coordinate (d4)       
      (d4)   -- +(300:1) coordinate (d5) 
            (d3) -- +(270:10);                                 \draw[wei2,fill]  (a1)   circle(2pt);     \draw[wei2,fill]  (d)   circle(2pt); }
    \end{tikzpicture}
      \quad    \quad 
 \begin{tikzpicture}[scale=0.6]
 {\clip (-1.7,-2) rectangle ++(3,6.5);
\path [draw,name path=upward line] (-1.7,-2) -- (2.8,-2);
  \draw 
      (-0.05,1.2)              coordinate (a1)
            -- +(30:1) coordinate (a2)  
      (a2)   -- +(120:1) coordinate (a3)
      (a3)   -- +(210:1) coordinate (a4)       
      (a4)   -- +(300:1) coordinate (a5)     
(a3)  -- +(270:10) 
       (a4)   -- +(120:1) coordinate (c1)            
      (c1) -- +(30:1)coordinate (c2)    
             (c2) -- +(-60:1)coordinate (c3)            
      (c2)  -- +(270:10)
      (c1)             
            -- +(30:1) coordinate (b2)  
      (b2)   -- +(120:1) coordinate (b3)
      (b3)   -- +(210:1) coordinate (b4)       
      (b4)   -- +(300:1) coordinate (b5)   
            (b3)  -- +(270:10); 
      \draw[wei2](a1) -- +(270:10);
     \path   (0,0)  -- +(0:0.5) coordinate (d)   
      ;
          \path   (0,0)  -- +(0:0.5) coordinate (d)   
      ;
        \draw[wei2](d) -- +(270:10);
          \draw[wei2,fill]  (d)   circle(2pt);                                  \draw[wei2,fill]  (a1)   circle(2pt);      
}
    \end{tikzpicture}
    $$
  \caption{The loadings of the bipartitions of 3 with residue $\{0,1,2\}$ for $\theta=(0,1)$, $g=2$.  }
   \label{domorderon012}
   \end{figure}
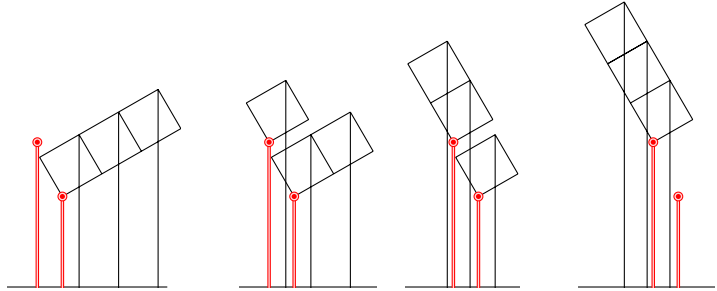
    The respective sets $D_\mu$ for the bipartitions $(\emptyset,(1^3)), ((1),(1^2)), ((2),(1)),$ and $((3),\emptyset)$, 
 are   as follows:
 \[\{1+2\epsilonLIRONdontchange,3+3\epsilonLIRONdontchange,5+4\epsilonLIRONdontchange\}
 , \{0+2\epsilonLIRONdontchange, 1+2\epsilonLIRONdontchange,3+3\epsilonLIRONdontchange\},
       \{-2+3\epsilonLIRONdontchange, 0+2\epsilonLIRONdontchange,   1+2\epsilonLIRONdontchange\}  ,
            \{-4+4\epsilonLIRONdontchange,-2+3\epsilonLIRONdontchange, 0+2\epsilonLIRONdontchange\}.  
   \] 
       \end{eg}
  
%
%
%
%
%
%
%

\begin{defn} Let $\lambda,\mu \in \mptn ln$.  A $\lambda$-tableau of weight $\mu$ is a
  bijective map $\SSTT : [\lambda] \to D_\mu$ which respects residues.  
  In other words,  we fill a given node $(r,c,m)$ of the diagram
  $[\lambda]$ with a real number $d$ from $D_\mu$ (without
  multiplicities) so that the residue attached to the real number $d$ in the loading $\mathbf{i}_\mu$
  is equal to $\kappa_m+c-r \pmod e$.    
    \end{defn}
  
  \begin{defn}
  A $\lambda$-tableau, $\SSTT$, of shape $\lambda$ and weight $\mu$ is
  said to be semistandard if  
  \begin{itemize}
  \item     $\SSTT(1,1,m)>\theta_m$,
  \item    $\SSTT(r,c,m)> \SSTT(r-1,c,m)  +\g$,
  \item 
   $\SSTT(r,c,m)> \SSTT(r,c-1,m) -\g$.
  \end{itemize}
  We  denote the set of all  semistandard tableaux of shape $\lambda$
  and weight $\mu$ by $\SStd(\lambda,\mu)$.   Given $\SSTT \in 
 \SStd(\lambda,\mu)$, we   write $\Shape(\SSTT)=\lambda$.  
  \end{defn}

 \begin{rem}\label{noambiguity}
In this paper, we only consider examples 
 of multipartitions in which each component is a hook.  
 This means that when drawing diagrams in the Russian convention, no two nodes have the same $x$-coordinate for $\epsilonLIRONdontchange=0$, therefore we omit $\epsilonLIRONdontchange$ from our tableaux and weightings without introducing ambiguity.   

 \end{rem}

 \begin{defn} 
 Let $\mathbf{i}$ and $\mathbf{j}$ denote two loadings of size $n$ and let
  $r\in \ZZ/e\ZZ$.  
  We say that  $\mathbf{i}$ dominates $\mathbf{j}$ if 
 for every real number $a \in \RR$  and every $r \in \ZZ/e\ZZ$, we have that 
\[
|\{(x,r ) \in \mathbf{i} \mid  x<a\}| \geq  |\{(x,r ) \in \mathbf{j} \mid x<a\}|.
\]
Given $\lambda,\mu \in \mptn ln$, $\theta\in \RR^l$, we say that $\lambda$
$\theta$-dominates $\mu$ (and write $\mu \trianglelefteq_{\theta}
\lambda$) if $\mathbf{i}_\lambda$ dominates $\mathbf{i}_\mu$. 
 \end{defn}

\begin{eg}
We have the following two important examples of dominance orders.  
 Let $n=3$ and $l=2$ and take $(\theta_1,\theta_2)$ so that 
$(i)$
   $0<\theta_2-\theta_1< g    $
$(ii)$  $\theta_2-\theta_1> ng$.   
We specialise $\kappa$ so that the algebra is non-semisimple. The dominance order on a given block  is  given by intersecting the   posets in Figure \ref{posets} with the set of bipartitions of a given residue class. 
The leftmost poset in Figure \ref{posets} will be of the most interest to us in this paper.  
 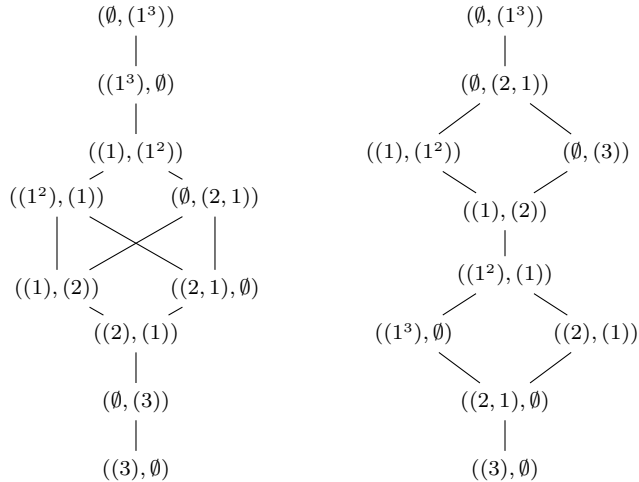
\begin{figure}[ht!]\captionsetup{width=0.9\textwidth}$$  
\scalefont{0.7}\begin{tikzpicture}[scale=0.6]
  \node (higher) at (0,5) {$(\emptyset,(1^3))$};
  \node (high) at (0,3.5) {$((1^3), \emptyset)$};
  \node (one) at (90:2cm) {$((1),(1^2))$};
  \node (b) at (150:2cm) {$((1^2),(1))$};
  \node (a) at (210:2cm) {$((1),(2)) $};
  \node (zero) at (270:2cm) {$((2),(1))$};
  \node (c) at (330:2cm) {$((2,1),\emptyset)$};
  \node (d) at (30:2cm) {$(\emptyset , (2,1))$};
  \draw (zero) -- (a) -- (b) -- (one) -- (d) -- (c) -- (zero);
    \draw (b) -- (c) ;    \draw (a) -- (d) ;
    \draw (one) -- (high) --(higher) ;
      \node (lower) at (0,-5) { $((3), \emptyset)$};
  \node (low) at (0,-3.5) {$(\emptyset,(3))$};
  \draw (zero) -- (low) --(lower) ;
\end{tikzpicture}
\quad \quad \quad \quad 
 \begin{tikzpicture}[scale=0.6]
  \node (higher) at (0,5) {$(\emptyset,(1^3))$};
  \node (high) at (0,3.5) {$( \emptyset, (2,1))$};
   \node (one) at (2,2) {$( \emptyset, (3))$};
    \node (two) at (-2,2) {$( (1), (1^2) )$};
       \node (three) at (-2,-2) {$((1^3),\emptyset)$};
    \node (four) at (2,-2) {$((2),(1))$};
        \node (five) at (0,0.7) {$( (1), (2))$};
        \node (six) at (0,-0.7) {$( (1^2), (1))$};
     \draw (two) -- (high) ;    \draw (one) -- (five) ;\draw (two) -- (five) ;
    \draw (three) -- (six) -- (five) ;
    \draw (four) -- (six) ;    \draw (four) -- (low) -- (three) ;
    \draw (one) -- (high) --(higher) ;
      \node (lower) at (0,-5) {$( (3),\emptyset)$};
  \node (low) at (0,-3.5) {$((2,1), \emptyset)$};
  \draw  (low) --(lower) ;
\end{tikzpicture}
$$
\caption{The Hasse diagrams of the posets corresponding to the FLOTW and well-separated weightings.}
\label{posets}
\end{figure}
\end{eg}

  \begin{eg}
   We continue the example above with  $n=3$, $l=2$, $e=4$, $g=2$,   $\kappa=(0,2)$ and  $\theta=(0,1)$; we consider the block with residue 
  $\{0,1,2\}$.   
 In this case, the dominance order on bipartitions of residue $\{0,1,2\}$
  is given by reading the diagrams in Figure \ref{domorderon012} from left to right in ascending order.  
  In other words
  \[
  (\emptyset,(1^3)) \vartriangleleft_\theta
   ((1),(1^2))
  \vartriangleleft_\theta ((2),(1))
  \vartriangleleft_\theta ((3),\emptyset).
  \]
 Recall the loadings of these bipartitions from Example \ref{previous1}.  
Recall that we let $\epsilonLIRONdontchange\to 0$ for ease of notation.  
   Figure \ref{semistandard} lists all  three semistandard tableaux of shape $\lambda$ and weight $\mu$ (for $\mu\neq \lambda)$ for $\lambda,\mu$ in this block.  
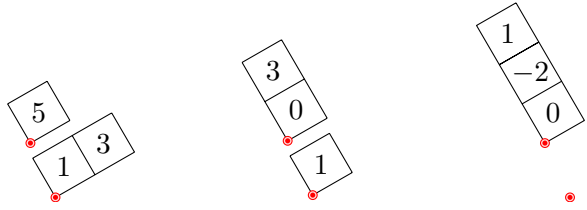
\begin{figure}[ht]\captionsetup{width=0.9\textwidth}
 $$
 \begin{tikzpicture}[scale=0.6]
{\clip (-1.6,-0.1) rectangle ++(5,4.5);
\path [draw,name path=upward line] (-1.7,-2) -- (2.8,-2);
  \draw 
      (-0.05,1.2)              coordinate (a1)
            -- +(30:1) coordinate (a2)  
      (a2)   -- +(120:1) coordinate (a3)
      (a3)   -- +(210:1) coordinate (a4)       
      (a4)   -- +(300:1) coordinate (a5)     
;
     \path   (0,0)  -- +(0:0.5) coordinate (d)   
      ;
        \draw 
      (d)              
               -- +(30:1) coordinate (d2)  
                (d2)   -- +(120:1) coordinate (d3)
                 (d3)   -- +(210:1) coordinate (d4)       
      (d4)   -- +(300:1) coordinate (d5) 
               (d2)              
               -- +(30:1) coordinate (e2)  
                  (e2)   -- +(120:1) coordinate (e3)
                     (e3)   -- +(210:1) coordinate (e4)       
;

  \draw  (a1)
    ++(30:0.5)
   ++(120:0.5) node {${5}$} ; 
      \draw  (d2)  ++(30:0.5)
   ++(120:0.5) node  {$\tiny{3}$}   ; 
            \draw  (d)  ++(30:0.5)
   ++(120:0.5) node  {$\tiny{1}$}   ; 
          \draw[wei2,fill]  (d)   circle(2pt); 
                    \draw[wei2,fill]  (a1)   circle(2pt);  }
    \end{tikzpicture}
\quad
 \begin{tikzpicture}[scale=0.6]
 {\clip (-1.6,-0.15) rectangle ++(5,4.5);
\path [draw,name path=upward line] (-1.7,-2) -- (2.8,-2);
  \draw 
      (-0.05,1.2)              coordinate (a1)
            -- +(30:1) coordinate (a2)  
      (a2)   -- +(120:1) coordinate (a3)
      (a3)   -- +(210:1) coordinate (a4)       
      (a4)   -- +(300:1) coordinate (a5)     
       (a4)   -- +(120:1) coordinate (c1)            
      (c1) -- +(30:1)coordinate (c2)    
             (c2) -- +(-60:1)coordinate (c3)            
 ;
     \path   (0,0)  -- +(0:0.5) coordinate (d)   
      ;                    \draw[wei2,fill]  (a1)   circle(2pt);  
        \draw 
      (d)              
               -- +(30:1) coordinate (d2)  
                (d2)   -- +(120:1) coordinate (d3)
                 (d3)   -- +(210:1) coordinate (d4)       
      (d4)   -- +(300:1) coordinate (d5) ;
   \draw  (a1)  ++(30:0.5)
   ++(120:0.5) node   {$\tiny{0}$}   ;
      \draw  (a4)  ++(30:0.5)
   ++(120:0.5) node  {$\tiny{3}$}   ; 
      \draw  (d)  ++(30:0.5)
   ++(120:0.5) node  {$\tiny{1}$}   ;
                \draw[wei2,fill]  (d)   circle(2pt); }
    \end{tikzpicture}
   \quad 
 \begin{tikzpicture}[scale=0.6]
 {\clip (-1.6,-0.1) rectangle ++(5,4.5);
\path [draw,name path=upward line] (-1.7,-2) -- (2.8,-2);
  \draw 
      (-0.05,1.2)              coordinate (a1)
            -- +(30:1) coordinate (a2)  
      (a2)   -- +(120:1) coordinate (a3)
      (a3)   -- +(210:1) coordinate (a4)       
      (a4)   -- +(300:1) coordinate (a5)     
       (a4)   -- +(120:1) coordinate (c1)            
      (c1) -- +(30:1)coordinate (c2)    
             (c2) -- +(-60:1)coordinate (c3)            
      (c1)             
            -- +(30:1) coordinate (b2)  
      (b2)   -- +(120:1) coordinate (b3)
      (b3)   -- +(210:1) coordinate (b4)       
      (b4)   -- +(300:1) coordinate (b5)   
;                     \draw[wei2,fill]  (a1)   circle(2pt);  
     \path   (0,0)  -- +(0:0.5) coordinate (d)   
      ;
          \path   (0,0)  -- +(0:0.5) coordinate (d)   
      ;
          \draw[wei2,fill]  (d)   circle(2pt); 
   \draw  (a1)  ++(30:0.5)
   ++(120:0.5) node  {$\tiny{0}$}   ;
      \draw  (a4)  ++(30:0.5)
   ++(120:0.5) node  {$\tiny{-2}$}   ; 
      \draw  (c1)  ++(30:0.5)
   ++(120:0.5) node  {$\tiny{1}$}   ;}
    \end{tikzpicture}
    $$
  \caption{ The tableaux in $\SStd(((1),(1^2)), (\emptyset,(1^3)))$, 
  $\SStd(((2),(1)), ((1),(1^2)))$,
  and   $\SStd(	((3),\emptyset),		((2),(1)))$, respectively.  }
   \label{semistandard}
   \end{figure}

%
%
    \end{eg}

 \begin{defn}
 We refer to an unordered  multiset  
  $\mathcal{R}$ of $n$ elements from $ (\ZZ/e\ZZ)$ as a residue set of cardinality $n$.  We let $\mptn ln(\mathcal{R})$ denote the subset of $\mptn ln$ whose residue set is equal to $\mathcal{R}$.  
  \end{defn}

\begin{rmk}
We have that $\mptn ln = \cup_{\mathcal{R} } \mptn ln(\mathcal{R})$ is a disjoint decomposition of  the set $\mptn ln$; notice that all of the above combinatorics respects this decomposition.  
\end{rmk}

 \subsection{The diagrammatic Cherednik algebra}
\label{relationspageofstuff} 

Recall that we have fixed   $l,n\in \ZZ_{>0}$,   $\g\in \RR_{>0}$
and $e\in\{3,4,\dots\}\cup\{\infty\}$. Given any weighting $\theta=(\theta_1,\dots, \theta_l)$ and $\kappa=(\kappa_1,\ldots ,\kappa_l)$ an $e$-multicharge, we will define what we refer to as the diagrammatic Cherednik algebra, $A(n,\theta,\kappa)$.

This is an example of one of many finite dimensional algebras (reduced steadied quotients of weighted KLR algebras in Webster's terminology) constructed in \cite{Webster}, whose module categories are equivalent, over the complex field, to    category $\mathcal{O}$ for rational cyclotomic Cherednik algebra \cite[Theorem 2.3 and 3.9]{Webster}.
  
\begin{defn}
We define a  $\theta$-\emph{diagram} {of type} $G(l,1,n)$ to  be a  \emph{frame}
 $\mathbb{R}\times [0,1]$ with  distinguished  black points on the northern and southern boundaries
 given by the loadings $\mathbf{i}_\mu$ and $\mathbf{i}_\lambda$ for some
  $\lambda,\mu \in \mptn ln(\mathcal{R})$  and a 
 collection of curves each of which starts at a northern point 
 and ends at a southern
  point of the same residue, $i$ say (we refer to this as a \emph{black $i$-strand}). 
   We further require that each curve has a 
 mapping diffeomorphically to $[0,1]$ via the projection to the $y$-axis.  
  Each curve is allowed to carry any number of dots.  We draw
\begin{itemize}
\item  a dashed line $g$ units to the left of each strand, which we call a \emph{ghost $i$-strand} or $i$-\emph{ghost};
\item vertical red lines at $\theta_m \in \RR$ each of which carries a 
residue $\kappa_m$ for $1\leq m\leq l$ which we call a \emph{red $\kappa_m$-strand}. 
\end{itemize}
We now require that there are no triple points or tangencies involving any combination of strands, ghosts or red lines and no dots lie on crossings. We consider these diagrams equivalent if they are related by an isotopy that avoids these tangencies, double points and dots on crossings. 
 \end{defn}
 
\begin{rmk}Note  that our diagrams  do not distinguish between `over' and `under' crossings. 
 \end{rmk}
 \begin{defn}[\cite{Webster}]
 The \emph{diagrammatic Cherednik algebra}, $A(n,\theta,\kappa)$,  is the   span of all $\theta$-diagrams modulo the following local relations (here a local relation means one that 
 can be applied on a small region of the diagram).
  \begin{enumerate}[label=(1.\arabic*)] 
 \item\label{rel1}  Any diagram may be deformed isotopically; that is, by a continuous deformation
  of the diagram which at no point introduces or removes any crossings of 
  strands (black,  ghost, or  red).  
    \item\label{rel2} 
For $i\neq j$ we have that dots pass through crossings. 
    \[   \scalefont{0.8}\begin{tikzpicture}[scale=.6,baseline]
      \draw[very thick](-4,0) +(-1,-1) -- +(1,1) node[below,at start]
      {$i$}; \draw[very thick](-4,0) +(1,-1) -- +(-1,1) node[below,at
      start] {$j$}; \fill (-4.5,.5) circle (5pt);
      \node at (-2,0){=}; \draw[very thick](0,0) +(-1,-1) -- +(1,1)
      node[below,at start] {$i$}; \draw[very thick](0,0) +(1,-1) --
      +(-1,1) node[below,at start] {$j$}; \fill (.5,-.5) circle (5pt);
      \node at (4,0){ };
    \end{tikzpicture}\]
\item\label{rel3}  For two like-labelled strands we get an error term.
\[
\scalefont{0.8}\begin{tikzpicture}[scale=.5,baseline]
      \draw[very thick](-4,0) +(-1,-1) -- +(1,1) node[below,at start]
      {$i$}; \draw[very thick](-4,0) +(1,-1) -- +(-1,1) node[below,at
      start] {$i$}; \fill (-4.5,.5) circle (5pt);
       \node at (-2,0){=}; \draw[very thick](0,0) +(-1,-1) -- +(1,1)
      node[below,at start] {$i$}; \draw[very thick](0,0) +(1,-1) --
      +(-1,1) node[below,at start] {$i$}; \fill (.5,-.5) circle (5pt);
      \node at (2,0){+}; \draw[very thick](4,0) +(-1,-1) -- +(-1,1)
      node[below,at start] {$i$}; \draw[very thick](4,0) +(0,-1) --
      +(0,1) node[below,at start] {$i$};
    \end{tikzpicture}  \quad \quad \quad
    \scalefont{0.8}\begin{tikzpicture}[scale=.5,baseline]
      \draw[very thick](-4,0) +(-1,-1) -- +(1,1) node[below,at start]
      {$i$}; \draw[very thick](-4,0) +(1,-1) -- +(-1,1) node[below,at
      start] {$i$}; \fill (-4.5,-.5) circle (5pt);
           \node at (-2,0){=}; \draw[very thick](0,0) +(-1,-1) -- +(1,1)
      node[below,at start] {$i$}; \draw[very thick](0,0) +(1,-1) --
      +(-1,1) node[below,at start] {$i$}; \fill (.5,.5) circle (5pt);
      \node at (2,0){+}; \draw[very thick](4,0) +(-1,-1) -- +(-1,1)
      node[below,at start] {$i$}; \draw[very thick](4,0) +(0,-1) --
      +(0,1) node[below,at start] {$i$};
\end{tikzpicture}\]
\item\label{rel4} For double crossings of black strands, we have the following.
\[
\scalefont{0.8}\begin{tikzpicture}[very thick,scale=0.6,baseline]
      \draw (-2.8,0) +(0,-1) .. controls +(1.6,0) ..  +(0,1)
      node[below,at start]{$i$}; \draw (-1.2,0) +(0,-1) .. controls
      +(-1.6,0) ..  +(0,1) node[below,at start]{$i$}; \node at (-.5,0)
      {=}; \node at (0.4,0) {$0$};
    \end{tikzpicture}
\hspace{.7cm}
    \scalefont{0.8}\begin{tikzpicture}[very thick,scale=0.6,baseline]

      \draw (-2.8,0) +(0,-1) .. controls +(1.6,0) ..  +(0,1)
      node[below,at start]{$i$}; \draw (-1.2,0) +(0,-1) .. controls
      +(-1.6,0) ..  +(0,1) node[below,at start]{$j$}; \node at (-.5,0)
      {=};

\draw (1.8,0) +(0,-1) -- +(0,1) node[below,at start]{$j$};
      \draw (1,0) +(0,-1) -- +(0,1) node[below,at start]{$i$}; 
    \end{tikzpicture}\]
  \end{enumerate}
\begin{enumerate}[resume, label=(1.\arabic*)]  
\item\label{rel5} If $j\neq i-1$,  then we can pass ghosts through black strands.
 \[\begin{tikzpicture}[very thick,xscale=1,yscale=0.6,baseline]
 \draw (1,-1) to[in=-90,out=90]  node[below, at start]{$i$} (1.5,0) to[in=-90,out=90] (1,1)
;
  \draw[dashed] (1.5,-1) to[in=-90,out=90] (1,0) to[in=-90,out=90] (1.5,1);
  \draw (2.5,-1) to[in=-90,out=90]  node[below, at start]{$j$} (2,0) to[in=-90,out=90] (2.5,1);
\node at (3,0) {=};
  \draw (3.7,-1) -- (3.7,1) node[below, at start]{$i$}
 ;
  \draw[dashed] (4.2,-1) to (4.2,1);
  \draw (5.2,-1) -- (5.2,1) node[below, at start]{$j$};
\end{tikzpicture} \quad\quad \quad \quad 
 \scalefont{0.8}\begin{tikzpicture}[very thick,xscale=1,yscale=0.6,baseline]
 \draw[dashed] (1,-1) to[in=-90,out=90] (1.5,0) to[in=-90,out=90] (1,1)
;
  \draw (1.5,-1) to[in=-90,out=90] node[below, at start]{$i$} (1,0) to[in=-90,out=90] (1.5,1);
  \draw (2,-1) to[in=-90,out=90]  node[below, at start]{$\tiny j$} (2.5,0) to[in=-90,out=90] (2,1);
\node at (3,0) {=};
  \draw (3.7,-1) -- (3.7,1) node[below, at start]{$i$}
 ;
  \draw[dashed] (4.2,-1) to (4.2,1);
  \draw (5.2,-1) -- (5.2,1) node[below, at start]{$j$};
 \end{tikzpicture}
\]
\end{enumerate} \begin{enumerate}[resume, label=(1.\arabic*)]  
\item\label{rel6} On the other hand, in the case where $j= i-1$, we have the following.
  \[\scalefont{0.8}\begin{tikzpicture}[very thick,xscale=1,yscale=0.6,baseline]
 \draw (1,-1) to[in=-90,out=90]  node[below, at start]{$i$} (1.5,0) to[in=-90,out=90] (1,1)
;
  \draw[dashed] (1.5,-1) to[in=-90,out=90] (1,0) to[in=-90,out=90] (1.5,1);
  \draw (2.5,-1) to[in=-90,out=90]  node[below, at start]{$\tiny i\!-\!1$} (2,0) to[in=-90,out=90] (2.5,1);
\node at (3,0) {=};
  \draw (3.7,-1) -- (3.7,1) node[below, at start]{$i$}
 ;
  \draw[dashed] (4.2,-1) to (4.2,1);
  \draw (5.2,-1) -- (5.2,1) node[below, at start]{$\tiny i\!-\!1$} node[midway,fill,inner sep=2.5pt,circle]{};
\node at (5.75,0) {$-$};

  \draw (6.2,-1) -- (6.2,1) node[below, at start]{$i$} node[midway,fill,inner sep=2.5pt,circle]{};
  \draw[dashed] (6.7,-1)-- (6.7,1);
  \draw (7.7,-1) -- (7.7,1) node[below, at start]{$\tiny i\!-\!1$};
\end{tikzpicture}\]
   
\item\label{rel7} We also have the relation below, obtained by symmetry.  \[
\scalefont{0.8}\begin{tikzpicture}[very thick,xscale=1,yscale=0.6,baseline]
 \draw[dashed] (1,-1) to[in=-90,out=90] (1.5,0) to[in=-90,out=90] (1,1)
;
  \draw (1.5,-1) to[in=-90,out=90] node[below, at start]{$i$} (1,0) to[in=-90,out=90] (1.5,1);
  \draw (2,-1) to[in=-90,out=90]  node[below, at start]{$\tiny i\!-\!1$} (2.5,0) to[in=-90,out=90] (2,1);
\node at (3,0) {=};
  \draw[dashed] (3.7,-1) -- (3.7,1);
  \draw (4.2,-1) -- (4.2,1) node[below, at start]{$i$};
  \draw (5.2,-1) -- (5.2,1) node[below, at start]{$\tiny i\!-\!1$} node[midway,fill,inner sep=2.5pt,circle]{};
\node at (5.75,0) {$-$};

  \draw[dashed] (6.2,-1) -- (6.2,1);
  \draw (6.7,-1)-- (6.7,1) node[midway,fill,inner sep=2.5pt,circle]{} node[below, at start]{$i$};
  \draw (7.7,-1) -- (7.7,1) node[below, at start]{$\tiny i\!-\!1$};
\end{tikzpicture}\]
\end{enumerate}
  \begin{enumerate}[resume, label=(1.\arabic*)] \item\label{rel8} Strands can move through crossings of black strands freely.
\[
\scalefont{0.8}\begin{tikzpicture}[very thick,scale=0.6 ,baseline]
\draw (-3,0) +(1,-1) -- +(-1,1) node[below,at start]{$k$}; \draw
      (-3,0) +(-1,-1) -- +(1,1) node[below,at start]{$i$}; \draw
      (-3,0) +(0,-1) .. controls +(-1,0) ..  +(0,1) node[below,at
      start]{$j$}; \node at (-1,0) {=}; \draw (1,0) +(1,-1) -- +(-1,1)
      node[below,at start]{$k$}; \draw (1,0) +(-1,-1) -- +(1,1)
      node[below,at start]{$i$}; \draw (1,0) +(0,-1) .. controls
      +(1,0) ..  +(0,1) node[below,at start]{$j$};
    \end{tikzpicture}\]
  \end{enumerate}
\indent Similarly, this holds for triple points involving ghosts, 
except for the following relations when $j=i-1$.
\begin{enumerate}[resume, label=(1.\arabic*)]  \Item\label{rel9}
    \[\scalefont{0.8}\begin{tikzpicture}[very thick,xscale=1,yscale=0.6,baseline]
      \draw[dashed] (-3,0) +(.4,-1) -- +(-.4,1);
 \draw[dashed]      (-3,0) +(-.4,-1) -- +(.4,1); 
    \draw (-1.5,0) +(.4,-1) -- +(-.4,1) node[below,at start]{$\tiny j$}; \draw
      (-1.5,0) +(-.4,-1) -- +(.4,1) node[below,at start]{$\tiny j$}; 
 \draw (-3,0) +(0,-1) .. controls +(-.5,0) ..  +(0,1) node[below,at
      start]{$i$};\node at (-.75,0) {=};  \draw[dashed] (0,0) +(.4,-1) -- +(-.4,1);
 \draw[dashed]      (0,0) +(-.4,-1) -- +(.4,1); 
    \draw (1.5,0) +(.4,-1) -- +(-.4,1) node[below,at start]{$\tiny j$}; \draw
      (1.5,0) +(-.4,-1) -- +(.4,1) node[below,at start]{$\tiny j$}; 
 \draw (0,0) +(0,-1) .. controls +(.5,0) ..  +(0,1) node[below,at
      start]{$i$};
\node at (2.25,0)
      {$-$};   
     \draw (4.5,0)
      +(.4,-1) -- +(.4,1) node[below,at start]{$\tiny j$}; \draw (4.5,0)
      +(-.4,-1) -- +(-.4,1) node[below,at start]{$\tiny j$}; 
 \draw[dashed] (3,0)
      +(.4,-1) -- +(.4,1); \draw[dashed] (3,0)
      +(-.4,-1) -- +(-.4,1); 
\draw (3,0)
      +(0,-1) -- +(0,1) node[below,at start]{$i$};
     \end{tikzpicture}\]
  \Item\label{rel10}
    \[\scalefont{0.8}\begin{tikzpicture}[very thick,xscale=1,yscale=0.6,baseline]
\draw[dashed] (-3,0) +(0,-1) .. controls +(-.5,0) ..  +(0,1) ;  
  \draw (-3,0) +(.4,-1) -- +(-.4,1) node[below,at start]{$i$}; \draw
      (-3,0) +(-.4,-1) -- +(.4,1) node[below,at start]{$i$}; 
 \draw (-1.5,0) +(0,-1) .. controls +(-.5,0) ..  +(0,1) node[below,at
      start]{$\tiny j$};\node at (-.75,0) {=};  
    \draw (0,0) +(.4,-1) -- +(-.4,1) node[below,at start]{$i$}; \draw
      (0,0) +(-.4,-1) -- +(.4,1) node[below,at start]{$i$}; 
 \draw[dashed] (0,0) +(0,-1) .. controls +(.5,0) ..  +(0,1);
 \draw (1.5,0) +(0,-1) .. controls +(.5,0) ..  +(0,1) node[below,at
      start]{$\tiny j$};
\node at (2.25,0)
      {$+$};   
     \draw (3,0)
      +(.4,-1) -- +(.4,1) node[below,at start]{$i$}; \draw (3,0)
      +(-.4,-1) -- +(-.4,1) node[below,at start]{$i$}; 
\draw[dashed] (3,0)
      +(0,-1) -- +(0,1);\draw (4.5,0)
      +(0,-1) -- +(0,1) node[below,at start]{$\tiny j$};
     \end{tikzpicture}\]
  \end{enumerate}
 In the diagrams with crossings in \ref{rel9} and \ref{rel10}, we say that the black (respectively ghost) strand bypasses the crossing of ghost strands (respectively black strands).
 The ghost strands may pass through red strands freely.
  For $i\neq j$, the black $i$-strands may pass through red $j$-strands freely. If the red and black strands have the same label, a dot is added to the black strand when straightening.
  \begin{enumerate}[resume, label=(1.\arabic*)] \Item\label{rel11}
  \[\scalefont{0.8}\begin{tikzpicture}[very thick,baseline,scale=0.6]
    \draw (-2.8,0)  +(0,-1) .. controls +(1.6,0) ..  +(0,1) node[below,at start]{$i$};
       \draw[wei] (-1.2,0)  +(0,-1) .. controls +(-1.6,0) ..  +(0,1) node[below,at start]{$i$};
           \node at (-.3,0) {=};
    \draw[wei] (2.8,0)  +(0,-1) -- +(0,1) node[below,at start]{$i$};
       \draw (1.2,0)  +(0,-1) -- +(0,1) node[below,at start]{$i$};
       \fill (1.2,0) circle (3pt);
          \draw[wei] (6.8,0)  +(0,-1) .. controls +(-1.6,0) ..  +(0,1) node[below,at start]{$j$};
  \draw (5.2,0)  +(0,-1) .. controls +(1.6,0) ..  +(0,1) node[below,at start]{$i$};
           \node at (7.7,0) {=};
    \draw (9.2,0)  +(0,-1) -- +(0,1) node[below,at start]{$i$};
       \draw[wei] (10.8,0)  +(0,-1) -- +(0,1) node[below,at start]{$j$};
  \end{tikzpicture}\]
\end{enumerate}
and their mirror images. All black crossings and dots can pass through  red strands, with a
correction term.
\begin{enumerate}[resume, label=(1.\arabic*)]
\Item\label{rel12}
\[
\scalefont{0.8}\begin{tikzpicture}[very thick,baseline,scale=0.6]
      \draw (-3,0)  +(1,-1) -- +(-1,1) node[at start,below]{$i$};
      \draw (-3,0) +(-1,-1) -- +(1,1)node [at start,below]{$j$};
      \draw[wei] (-3,0)  +(0,-1) .. controls +(-1,0) ..  node[at start,below]{$k$}+(0,1);
      \node at (-1,0) {=};
      \draw (1,0)  +(1,-1) -- +(-1,1) node[at start,below]{$i$};
      \draw (1,0) +(-1,-1) -- +(1,1) node [at start,below]{$j$};
      \draw[wei] (1,0) +(0,-1) .. controls +(1,0) .. node[at start,below]{$k$} +(0,1);   
\node at (2.8,0) {$+ $};
      \draw (6.5,0)  +(1,-1) -- +(1,1)  node[at start,below]{$i$};
      \draw (6.5,0) +(-1,-1) -- +(-1,1) node [at start,below]{$j$};
      \draw[wei] (6.5,0) +(0,-1) -- node[at start,below]{$k$} +(0,1);
\node at (3.8,-.2){$\delta_{i,j,k} $}  ;
 \end{tikzpicture}
\]
\Item\label{rel13}
\[
\scalefont{0.8}\begin{tikzpicture}[scale=0.6,very thick,baseline=2cm]
      \draw[wei] (-3,3)  +(1,-1) -- +(-1,1);
      \draw (-3,3)  +(0,-1) .. controls +(-1,0) ..  +(0,1);
      \draw (-3,3) +(-1,-1) -- +(1,1);
      \node at (-1,3) {=};
      \draw[wei] (1,3)  +(1,-1) -- +(-1,1);
  \draw (1,3)  +(0,-1) .. controls +(1,0) ..  +(0,1);
      \draw (1,3) +(-1,-1) -- +(1,1); 
    
    \draw (1,3)  +(5,-1) -- +(3,1);
    \draw (1,3)  +(4,-1) .. controls +(3,0) ..  +(4,1);
    \draw[wei] (1,3) +(3,-1) -- +(5,1);
    \node at (7,3) {=};
    \draw (5,3)  +(5,-1) -- +(3,1);
    \draw (5,3)  +(4,-1) .. controls +(5,0) ..  +(4,1);
    \draw[wei] (5,3) +(3,-1) -- +(5,1);
               \end{tikzpicture}
\]

\Item\label{rel14}
\[
\scalefont{0.8}\begin{tikzpicture}[very thick,baseline,scale=0.6]
  \draw(-3,0) +(-1,-1) -- +(1,1);
  \draw[wei](-3,0) +(1,-1) -- +(-1,1);
\fill (-3.5,-.5) circle (3pt);
\node at (-1,0) {=};
 \draw(1,0) +(-1,-1) -- +(1,1);
  \draw[wei](1,0) +(1,-1) -- +(-1,1);
\fill (1.5,.5) circle (3pt);

\draw[wei](1,0) +(3,-1) -- +(5,1);
  \draw(1,0) +(5,-1) -- +(3,1);
\fill (5.5,-.5) circle (3pt);
\node at (7,0) {=};
 \draw[wei](5,0) +(3,-1) -- +(5,1);
  \draw(5,0) +(5,-1) -- +(3,1);
\fill (8.5,.5) circle (3pt);
    \end{tikzpicture}
    \]
  \end{enumerate}
Finally, we have the following non-local idempotent relation.
\begin{enumerate}[resume, label=(1.\arabic*)]
\item
Any idempotent  where the strands can be broken into two
groups separated by a blank space of size $>g$ (so no ghost from
the right-hand  group can be left of a strand in the left group and {\it vice versa}) with all double-red strands in the right-hand group is referred to as unsteady and set to be equal to zero.
\end{enumerate}
\end{defn}

\subsection{The grading on the diagrammatic Cherednik algebra}\label{grsubsec}
This algebra is graded as follows:
\begin{itemize}
\item
 dots have degree 2;
\item
 the crossing of two strands has degree 0, unless they have the
  same label, in which case it has degree $-2$;
\item the crossing of a black strand with label $i$ and a ghost has degree 1 if the ghost has label $i-1$ and 0 otherwise;
   \item
  the crossing of a black strand with a red strand has degree 0, unless they have the same label, in which case it has degree 1.
\end{itemize}
In other words,
\[
\deg\tikz[baseline,very thick,scale=1.5]{\draw
  (0,.3) -- (0,-.1) node[at end,below,scale=.8]{$i$}
  node[midway,circle,fill=black,inner
  sep=2pt]{};}=
  2 \qquad \deg\tikz[baseline,very thick,scale=1.5]
  {\draw (.2,.3) --
    (-.2,-.1) node[at end,below, scale=.8]{$i$}; \draw
    (.2,-.1) -- (-.2,.3) node[at start,below,scale=.8]{$j$};} =-2\delta_{i,j} \qquad  
  \deg\tikz[baseline,very thick,scale=1.5]{\draw[densely dashed] 
  (-.2,-.1)-- (.2,.3) node[at start,below, scale=.8]{$i$}; \draw
  (.2,-.1) -- (-.2,.3) node[at start,below,scale=.8]{$j$};} =\delta_{j,i+1} \qquad \deg\tikz[baseline,very thick,scale=1.5]{\draw (.2,.3) --
  (-.2,-.1) node[at end,below, scale=.8]{$i$}; \draw [densely dashed]
  (.2,-.1) -- (-.2,.3) node[at start,below,scale=.8]{$j$};} =\delta_{j,i-1}\]
\[
  \deg\tikz[baseline,very thick,scale=1.5]{ \draw[wei]
  (-.2,-.1)-- (.2,.3) node[at start,below, scale=.8]{$i$}; \draw
  (.2,-.1) -- (-.2,.3) node[at start,below,scale=.8]{$j$};} =\delta_{i,j} 
   \qquad \deg\tikz[baseline,very thick,scale=1.5] {\draw (.2,.3) --
  (-.2,-.1) node[at end,below, scale=.8]{$i$}; \draw[wei]   (.2,-.1) -- (-.2,.3) node[at start,below,scale=.8]{$j$};} =\delta_{j,i}.
\]

 \subsection{Representation theory of the diagrammatic Cherednik algebra}
%
%
 Given $\SSTT \in \SStd(\lambda,\mu)$, we have a 
 $\theta$-diagram $B_\SSTT$ consisting of a {\it frame} in which the  
 $n$ black strands each connecting a northern and southern distinguished point are drawn
   so that they trace out the bijection determined by $\SSTT$ in such a way that we use  the minimal number of crossings 
  without creating any bigons between pairs of strands or strands and ghosts. This diagram is not unique up to isotopy (since we have not specified how to resolve triple points), but we can choose one such diagram arbitrarily.

Given a pair of semistandard tableaux of the same shape
$(\SSTS,\SSTT)\in\SStd(\lambda,\mu)\times\SStd(\lambda,\nu)$,  we  have
a diagram  $C_{\SSTS, \SSTT}=B_\SSTS B^\ast_\SSTT$ where $B^\ast_\SSTT$
is the diagram obtained from $B_\SSTT $ by flipping it through the
horizontal axis.   
 Notice that there is a unique element   $\SSTT^\lambda\in
\SStd(\lambda,\lambda)$ and the corresponding basis element
$C_{\SSTT^\lambda,\SSTT^\lambda}$ is the idempotent in which all black strands are
vertical.  A degree function on tableaux is defined in \cite[Defintion 2.13]{Webster};
for our purposes it is enough to note that $\deg(\SSTT)=\deg(B_\SSTT)$ as we shall
always work with the $\theta$-diagrams directly. 


\begin{thm}[{\cite[Section 2.6]{Webster}}]
\label{cellularitybreedscontempt}
The algebra $A(n,\theta,\kappa)$ is a graded cellular algebra with a theory of highest weights.
The cellular basis is given by
\[
 \mathcal{C}=\{C_{\SSTS, \SSTT} \mid \SSTS \in \SStd(\lambda,\mu), \SSTT\in \SStd(\lambda,\nu), 
 \lambda,\mu, \nu \in \mptn ln\}
\]
with respect to the $\theta$-dominance order on the set   $\mptn ln$
and the anti-isomorphism given by flipping a diagram through the horizontal axis.  
  \end{thm}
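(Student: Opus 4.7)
The plan is to verify, one by one, the six axioms of Definition \ref{defn1}, mostly by invoking Webster's construction \cite[Section 2.6]{Webster} and translating its output into the present language. In broad strokes, the weight poset $(\mptn{l}{n}, \trianglerighteq)$ is taken to be the $\theta$-dominance order, and the set $\TSStd(\lambda,\mu)$ is identified with $\SStd(\lambda,\mu)$ via $\SSTT \mapsto B_{\SSTT}$. The anti-automorphism $*$ is the vertical flip, which preserves all relations \ref{rel1}--\ref{rel14} (each relation is either symmetric under such a flip or is paired with its mirror image in the list), giving axiom (4) essentially for free.

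First I would verify that $\mathcal{C}$ is a basis. Homogeneity and the degree additivity formula of axiom (1) follow directly from the grading rules of \S\ref{grsubsec}, which assign degrees to each local feature of a diagram, together with the fact that $\deg(B_{\SSTT})=\deg(\SSTT)$ as defined in \cite{Webster}. The \emph{spanning} half of axiom (2) is the main combinatorial task: given an arbitrary $\theta$-diagram, use isotopy \ref{rel1} to put crossings in general position; use \ref{rel4} to remove bigons of black strands; use \ref{rel5}--\ref{rel10} to straighten ghost/black triple points, accumulating error terms that have strictly fewer crossings; use \ref{rel11}--\ref{rel14} to pass strands through red lines modulo lower-order terms and the unsteadiness relation; and finally use \ref{rel2}--\ref{rel3} to move dots onto strands. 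The semistandardness conditions on $\SSTT$ correspond precisely to the combinatorial constraints that the strands of $B_{\SSTT}$ avoid the forbidden configurations (ghosts landing left of red lines, over-loaded columns, etc.), so what survives lies in the span of $\mathcal{C}$. \emph{Linear independence} is the hard part: I would invoke Webster's identification of $A(n,\theta,\kappa)$ with a reduced steadied quotient of the weighted KLR algebra, whose graded dimension matches $\sum_{\lambda}|\SStd(\lambda,\mu)|\cdot|\SStd(\lambda,\nu)|$ via the polynomial representation constructed in \cite[\S 3]{Webster}.

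Next, axiom (3) is the cellular multiplication rule. Given $a \in A(n,\theta,\kappa)$ and $c^{\lambda}_{\SSTS\SSTT} = C_{\SSTS,\SSTT}$, I would stack $a$ on top of $B_{\SSTS}B^{\ast}_{\SSTT}$ and straighten using the relations above. The key observation, which is the essential content of Webster's filtration argument, is that every straightening step either preserves the top tableau $\SSTS$ (while possibly replacing it by another tableau of the same shape $\lambda$) or produces a term whose top and bottom shapes are strictly larger than $\lambda$ in the $\theta$-dominance order. The latter terms land in $A^{\rhd \lambda}$, and the coefficients of the former depend only on $a$ and $\SSTS$ (not on $\SSTT$), because $B^{\ast}_{\SSTT}$ sits at the bottom of the diagram and is untouched during the straightening.

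Finally, axioms (5) and (6) are the highest-weight conditions tying standard modules to weight spaces. For each $\lambda \in \mptn{l}{n}$ there is a unique $\SSTT^{\lambda} \in \SStd(\lambda,\lambda)$, namely the tableau filling each node $(r,c,m)$ with its own $x$-coordinate; the corresponding diagram $C_{\SSTT^{\lambda},\SSTT^{\lambda}}$ has all strands vertical and is the idempotent $1_{\lambda}$. These idempotents are orthogonal because distinct $\lambda$ induce different loadings $\mathbf{i}_{\lambda}$, so the northern and southern boundary points fail to match, and their sum is the identity by the spanning argument. The weight-space factorization $1_{\mu}c^{\lambda}_{\SSTS\SSTT}1_{\nu} = c^{\lambda}_{\SSTS\SSTT}$ for $\SSTS\in\SStd(\lambda,\mu), \SSTT\in\SStd(\lambda,\nu)$ is immediate from the definition of $B_{\SSTS}$ and $B_{\SSTT}$. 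The main obstacle throughout is the linear independence step in axiom (2); all other verifications are essentially checks that the diagrammatic relations interact correctly with the combinatorial $\theta$-dominance order, which is built into the construction.
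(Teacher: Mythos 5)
The paper does not actually prove this statement---it is imported directly from Webster \cite[Section 2.6]{Webster}---and your outline is essentially a faithful sketch of Webster's own argument: spanning by diagrammatic straightening with the relations, linear independence via the identification with a (reduced steadied quotient of the) weighted KLR algebra and its polynomial representation, the cellular filtration with respect to the $\theta$-dominance order, the flip anti-automorphism, and the orthogonal idempotents attached to the loadings $\mathbf{i}_\lambda$. Since the one genuinely hard step (linear independence) is delegated to Webster exactly as the paper itself does, your proposal takes essentially the same route as the paper and contains no gap relative to it.
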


 \begin{eg}
    We continue the example above with  $n=3$, $l=2$, $e=4$, $g=2$, and $\kappa=(0,2)$ and let $\theta=(0,1)$.  Consider the block with residue 
  $\{0,1,2\}$.   
In this case the  elements $B_\SSTS$ for the semistandard tableaux 
$\SSTS$ of shape $\lambda$ and weight $\mu$ for $\lambda\neq \mu$ are given in Figure \ref{basiselements} below.
   
   \begin{figure}[ht] $$
    \scalefont{0.7}
    \begin{tikzpicture}[scale=0.8] 
   \draw (-2,0) rectangle (10,2);  
  \draw[wei2] (3.6,0)--(3.6,2);
    \draw[wei2] (4.6,0)--(4.6,2);
      \node [wei2,below] at (3.6,0) {\tiny $0$};
            \node [wei2,below] at (4.6,0) {\tiny $2$}; \draw[wei2,fill]  (4.6,0)   circle(1pt);\draw[wei2,fill]  (3.6,0)   circle(1pt);\draw[wei2,fill]  (4.6,2)   circle(1pt);\draw[wei2,fill]  (3.6,2)   circle(1pt);
      \draw (9,2)  to [out=-120,in=60] (4,0) ;    \draw (5,2) -- (5,0);
       \draw (7,2) -- (7,0);
     \draw[dashed] (9-1.8,2)  to [out=-120,in=60] (4-1.8,0) ;    \draw[dashed] (5-1.8,2) -- (5-1.8,0);
       \draw[dashed] (7-1.8,2) -- (7-1.8,0);
       \node [below] at (4,0) {\tiny $0$};
            \node [below] at (5,0) {\tiny $2$};
            \node [below] at (7,0) {\tiny $1$};
 \end{tikzpicture}
    $$
     $$
    \scalefont{0.7}
    \begin{tikzpicture}[scale=0.8] 
  \draw (-2,0) rectangle (10,2);  
  \draw[wei2] (3.6,0)--(3.6,2);
    \draw[wei2] (4.6,0)--(4.6,2);\node [wei2,below] at (3.6,0) {\tiny $0$};
            \node [wei2,below] at (4.6,0) {\tiny $2$}; \draw[wei2,fill]  (4.6,0)   circle(1pt);\draw[wei2,fill]  (3.6,0)   circle(1pt);\draw[wei2,fill]  (4.6,2)   circle(1pt);\draw[wei2,fill]  (3.6,2)   circle(1pt);
      \draw (7,2)  to [out=-120,in=60] (2,0) ;   
       \draw (5,2) -- (5,0);
       \draw (4,2) -- (4,0);
      \draw[dashed] (7-1.8,2)  to [out=-120,in=60] (2-1.8,0) ;   
       \draw[dashed] (5-1.8,2) -- (5-1.8,0);
       \draw[dashed] (4-1.8,2) -- (4-1.8,0);
        \node [below] at (4,0) {\tiny $0$};
            \node [below] at (5,0) {\tiny $2$};
            \node [below] at (2,0) {\tiny $1$};       
  \end{tikzpicture}
    $$
  \[
    \scalefont{0.7}
    \begin{tikzpicture}[scale=0.8] 
  \draw (-2,0) rectangle (10,2);
  \draw[wei2] (3.6,0)--(3.6,2);\node [wei2,below] at (3.6,0) {\tiny $0$};
            \node [wei2,below] at (4.6,0) {\tiny $2$}; 
    \draw[wei2] (4.6,0)--(4.6,2);
      \draw (5,2)  to [out=-120,in=60] (0,0) ;   
       \draw (2,2) -- (2,0);
       \draw (4,2) -- (4,0);
           \draw[dashed] (5-1.8,2)  to [out=-120,in=60] (0-1.8,0) ;   
       \draw[dashed] (2-1.8,2) -- (2-1.8,0);
       \draw[dashed] (4-1.8,2) -- (4-1.8,0);
               \node [below] at (4,0) {\tiny $0$};
            \node [below] at (0,0) {\tiny $2$};
            \node [below] at (2,0) {\tiny $1$};   \draw[wei2,fill]  (4.6,0)   circle(1pt);\draw[wei2,fill]  (3.6,0)   circle(1pt);\draw[wei2,fill]  (4.6,2)   circle(1pt);\draw[wei2,fill]  (3.6,2)   circle(1pt);
  \end{tikzpicture}
    \]
 \caption{The basis elements corresponding to the tableaux in 
  $\SStd(((1),(1^2)), (\emptyset,(1^3)))$, 
  $\SStd(((2),(1)), ((1),(1^2)))$,
  and   $\SStd(	((3),\emptyset),		((2),(1)))$, respectively (see Figure \ref{semistandard}). }
\label{basiselements}
\end{figure}
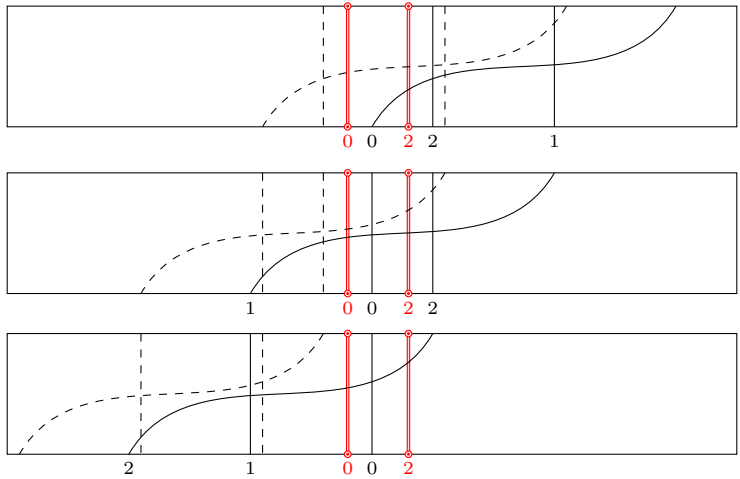

 \end{eg}

 \begin{thm}[\cite{Webster}, Theorem 6.2]\label{step1}
 The (basic algebra of the) diagrammatic  Cherednik algebra  $A(n,\theta,\kappa)$ is Koszul.
 In particular, given $\lambda\neq \mu \in \mptn ln$, the graded decomposition numbers  $d_{\lambda\mu}(t)\in t\NN_0(t)$.
 \end{thm}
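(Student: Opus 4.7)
The first assertion, that the basic algebra of $A(n,\theta,\kappa)$ is Koszul, is exactly \cite[Theorem 6.2]{Webster}, so the plan is simply to invoke it. The second assertion is then a formal consequence of Koszulity together with the graded cellular highest weight structure recorded in Theorem \ref{cellularitybreedscontempt}.

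To derive the positivity claim $d_{\lambda\mu}(t)\in t\mathbb{N}_0[t]$ for $\lambda\neq \mu$, I would argue as follows. By the cellular datum, the idempotent $1_\lambda = c^\lambda_{\SSTT^\lambda\SSTT^\lambda}$ is homogeneous of degree $0$, and $\Delta(\lambda)$ is generated as an $A$-module by the degree-$0$ element $c^\lambda_{\SSTT^\lambda}$. Consequently $\Delta(\lambda)$ is concentrated in non-negative degrees, its degree-$0$ component is one-dimensional, and this degree-$0$ component maps isomorphically onto the simple head $L(\lambda)$. In particular $L(\lambda)$ itself lives entirely in degree $0$ (for every $\lambda$).

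Now I invoke Koszulity. For a positively graded algebra with $A_0$ semisimple, the Koszul property is equivalent to the statement that each simple module (concentrated in a single degree) admits a linear projective resolution. In our setting this means the radical filtration on any projective coincides with the grading filtration; restricting to the quotient $\Delta(\lambda) = P(\lambda)/\bigl(A^{\triangleright\lambda}P(\lambda)\bigr)$ gives that $\rad \Delta(\lambda)$ lives in strictly positive degrees. Therefore any graded composition factor $L(\mu)\langle k\rangle$ of $\Delta(\lambda)$ with $\mu\neq \lambda$ must satisfy $k\geq 1$, which is precisely the assertion $d_{\lambda\mu}(t)\in t\mathbb{N}_0[t]$.

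The main obstacle in this whole argument is, of course, the Koszulity itself, which is done by Webster via constructing explicit linear projective resolutions of the standards using diagrammatic manipulations in the weighted KLR setting; my plan reduces to quoting that result. The deduction of the positivity statement from Koszulity is essentially a formal unravelling of how the grading interacts with the radical filtration on standards, using nothing beyond the degree-$0$ concentration of $L(\lambda)$ coming from the cellular datum.
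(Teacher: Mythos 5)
The first half of your plan coincides with what the paper does: Theorem \ref{step1} carries no proof in the paper at all, both assertions being imported from \cite[Theorem 6.2]{Webster} (whose identification of the graded decomposition numbers, together with Koszulity, contains the positivity claim). The problem is your attempt to re-derive the second sentence from the first: the intermediate statements you use are false for $A(n,\theta,\kappa)$ with its cellular grading, and they are contradicted by the paper itself. The grading of Section \ref{grsubsec} assigns degree $-2$ to like-labelled crossings, and the algebra is genuinely not non-negatively graded: Example \ref{exampleofnonpositive} exhibits (for $l=3$, $e=6$) a standard module with a weight space in negative degree and a simple module with $\Dim(L_\mu(\lambda))=t+t^{-1}$, and the remark following the level-two proposition states explicitly that $\TL_n(\kappa)$ (a quotient of $A(n,\theta,\kappa)$ inheriting grading, standards and simples, cf.\ Proposition \ref{step2}) is not positively graded for $l\geq 3$. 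So your claims that $\Delta(\lambda)$ is concentrated in non-negative degrees with one-dimensional degree-zero part, and that $L(\lambda)$ is concentrated in degree $0$, do not follow from the cell datum: being generated by a degree-zero element only bounds degrees below when the algebra itself is non-negatively graded, and Proposition \ref{humathasprop} gives only $\Dim(L(\mu))\in\NN_0[t+t^{-1}]$, not degree-zero concentration.

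Consequently the step ``Koszulity $\Rightarrow$ $\rad\Delta(\lambda)$ lives in strictly positive degrees'' is being applied to the wrong algebra. Webster's Koszulity concerns the \emph{basic} algebra with its own (necessarily non-negative) grading; to transport the resulting positivity of composition-factor shifts back to the $d_{\lambda\mu}(t)$ of Definition \ref{defn1} — which are computed with the self-dual normalisation of simples and unshifted standards — you must control the degree shifts $a_\lambda$ relating $L(\lambda)$ to the degree-zero simples of the basic algebra under the graded Morita equivalence; a priori the argument only yields $d_{\lambda\mu}(t)\in t^{a_\lambda-a_\mu}\,t\NN_0[t]$. Pinning down these shifts is not a formal unravelling and is precisely the content one should instead take from \cite{Webster} (via his computation of the graded decomposition numbers as canonical-basis/parabolic Kazhdan--Lusztig coefficients together with the Koszul, indeed standard Koszul, structure in the correct normalisation). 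So either cite Webster for the positivity statement as the paper does, or supply the missing normalisation argument; as written, the deduction has a genuine gap.
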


\begin{rmk}Notice that  the basis of  $A(n,\theta,\kappa)$ also respects the decomposition of $\mptn ln$ 
by residue sets.  Given a residue set $\mathcal{R}$, we let  $A_\mathcal{R}(n,\theta,\kappa)$ denote the subalgebra of $A(n,\theta,\kappa)$ with basis given by all $\theta$-diagrams indexed by multipartitions $\lambda,\mu,\nu \in \mptn ln(\mathcal{R})$.  
\end{rmk}

  \subsection{An example}
  
  Let $e=2$, $l=1$,  $\g=1$,
   and $n=2$ and $\kappa=(0)$.  In this case
  we shall see that the algebra $A(n,\theta, \kappa)$ is the basic
  algebra of the Schur algebra of the Hecke algebra of type
  $G(1,1,2)$ specialised at $e=2$.     
There are two  partitions $(2)$ and $(1^2)$ with loadings $(-1+2\epsilonLIRONdontchange, \epsilonLIRONdontchange)$ and $(\epsilonLIRONdontchange, 1+2\epsilonLIRONdontchange)$ respectively, depicted in Figure \ref{MORELOADINGS}, 
 .   When discussing the combinatorics of tableaux, we will
 adopt the conventions of Remark \ref{noambiguity}.  
 
 \begin{figure}[ht]\captionsetup{width=0.9\textwidth}   $$
     \begin{tikzpicture}[scale=0.6]
     \clip (-1.7,-2.5) rectangle ++(4.3,4.7);
    \node [wei2,below] at (-0.05,-2) {\tiny $0$};
    \node [below] at (0.2,-2) {\tiny $0$};
        \node [below] at (-0.55,-2) {\tiny $1$};
 {\clip (-1.7,-2) rectangle ++(5,4.3);
\path [draw,name path=upward line] (-1.7,-2) -- (2.8,-2);
  \draw 
      (0, 0)              coordinate (a1)
            -- +(40:1) coordinate (a2)  
      (a2)   -- +(130:1) coordinate (a3)
      (a3)   -- +(220:1) coordinate (a4)       
      (a4)   -- +(310:1) coordinate (a5)     
(a3)  -- +(270:10)
      (a2)             
            -- +(130:1) coordinate (b2)  
      (b2)   -- +(130:1) coordinate (b3)
      (b3)   -- +(220:1) coordinate (b4)       
      (b4)   -- +(310:1) coordinate (b5)   
        ; 
      \draw[wei2](a1) -- +(270:10);
          \draw[wei2,fill]  (a1)   circle(1pt);  
       \draw(b3) -- +(270:10);
     \path  (0,0)  -- +(0:0.5) coordinate (d)   
      ;
}
    \end{tikzpicture}
    \quad    \quad
    \begin{tikzpicture}[scale=0.6]
     \clip (-1.7,-2.5) rectangle ++(4.3,4.7);
   \node [wei2,below] at (-0.05,-2) {\tiny $0$};    \node [below] at (0.2,-2) {\tiny $0$};
        \node [below] at (0.9,-2) {\tiny $1$};
 {\clip (-1.7,-2) rectangle ++(5,4.3);
\path [draw,name path=upward line] (-1.7,-2) -- (2.8,-2);
  \draw 
      (0, 0)              coordinate (a1)
            -- +(40:1) coordinate (a2)  
      (a2)   -- +(130:1) coordinate (a3)
      (a3)   -- +(220:1) coordinate (a4)       
      (a4)   -- +(310:1) coordinate (a5)     
(a3)  -- +(270:10)
      (a2)             
            -- +(40:1) coordinate (b2)  
      (b2)   -- +(130:1) coordinate (b3)
      (b3)   -- +(220:1) coordinate (b4)       
      (b4)   -- +(310:1) coordinate (b5)   
        ; 
      \draw[wei2](a1) -- +(270:10);
       \draw(b3) -- +(270:10);
                 \draw[wei2,fill]  (a1)   circle(1pt);  
     \path  (0,0)  -- +(0:0.5) coordinate (d)   
      ;
}
    \end{tikzpicture}
$$    \caption{Partitions and loadings for $G(1,1,2)$.}
\label{MORELOADINGS}    \end{figure}
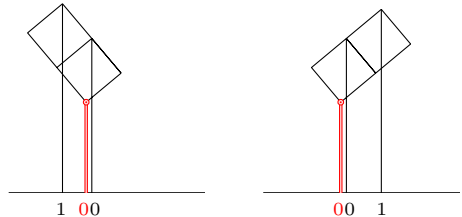 
  There is a unique element   $\SSTU\in \SStd((1^2),(1^2))$ given in Figure \ref{a few choice tableau} below, and the corresponding cell module is 1-dimensional.  The two
        elements $\SSTS, \SSTT \in \SStd((2), -)$ are also given in Figure \ref{a few choice tableau} and are of weight $(2)$ and $(1^2)$ respectively.  The left cell
 module has basis given by $B_\SSTS$ and $B_\SSTT$, depicted in Figure \ref{basisofcell}; the full 5-dimensional algebra is given by 
taking the pairs of flipped elements $C_{\SSTU\SSTU},
C_{\SSTS\SSTS},C_{\SSTS\SSTT},C_{\SSTT\SSTS},C_{\SSTT\SSTT}$.   

  \begin{figure}[ht]
   $$
        \begin{tikzpicture}[scale=1.3,rotate=45]               \YRussian\tyoung(1cm,3cm,<0.1><1.2>)  
    \end{tikzpicture}
\quad \quad 
 \begin{tikzpicture}[scale=1.3,rotate=45]
             \YRussian\tyoung(1cm,3cm,<0>,<{-1}>)       \end{tikzpicture}
 \quad     \quad      \begin{tikzpicture}[scale=1.3,rotate=45]               \YRussian\tyoung(1cm,3cm,<0>,<1>)  
    \end{tikzpicture} $$
    \caption{Semistandard tableaux for $n=2$ and $l=1$.}
        \label{a few choice tableau}
    \end{figure}
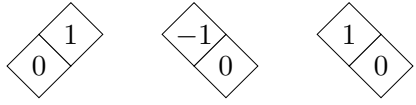

  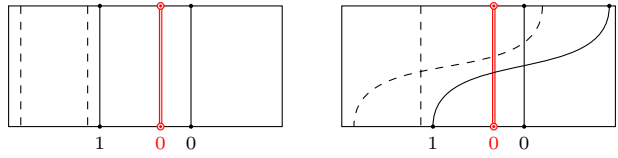
\begin{figure}[ht]\captionsetup{width=0.9\textwidth}$$
\begin{tikzpicture}[scale=0.8]
  \draw (-0.5,0) rectangle (4,2);
  \foreach \x in {1,2.5}
    {\fill (\x,2) circle (1pt);
     \fill (\x,0) circle (1pt);}
 \draw[wei2] (2,0)--(2,2);
 \draw[wei2,fill] (2,0) circle (1pt)(2,2) circle (1pt); 
     \draw (1,2) -- (1,0) ;    \draw (2.5,2) -- (2.5,0);
       \node[wei2] [below] at (2,0) {\tiny $0$};
              \node  [below] at (1,0) {\tiny $1$};
                     \node [below] at (2.5,0) {\tiny $0$};
          \draw[dashed] (0.8,2) -- (0.8,0)  (-0.3,2) -- (-0.3,0);
 \end{tikzpicture}
\quad\quad
\begin{tikzpicture}[scale=0.8]
  \draw (-0.5,0) rectangle (4,2);
  \foreach \x in {3.9,2.5}
    {\fill (\x,2) circle (1pt);}
     \foreach \x in {1,2.5}
{     \fill (\x,0) circle (1pt);}
 
 \draw[wei2] (2,0)--(2,2);
 \draw[wei2,fill] (2,0) circle (1pt)  (2,2) circle (1pt); 
     \draw (3.9,2)  to [out=-90,in=90] (1,0) ;    \draw (2.5,2) -- (2.5,0);
          \draw[dashed] (0.8,2) -- (0.8,0)  (2.8,2)  to [out=-90,in=90] (-0.3,0);
            \node[wei2] [below] at (2,0) {\tiny $0$};
              \node  [below] at (1,0) {\tiny $1$};
                     \node [below] at (2.5,0) {\tiny $0$};
 \end{tikzpicture}
$$
\caption{Basis of the cell module $\Delta(2)$.}
\label{basisofcell}
\end{figure}

 In  $B_\SSTT$, the crossing  
of the ghost strand of  residue 1 with the black strand of residue 0 
has degree 1.  All other crossings in  $B_\SSTT$ have degree 0 and therefore $\deg(B_{\SSTT})=1$.  
We shall now show that $[\Delta(2):L(1^2)]=t$.  
To see this, it is
enough to check that $ B_\SSTT $ is in the radical of  $ \Delta(2)$,
in other words the products $B_\SSTS^\ast B_\SSTT$ and
$B_\SSTT^\ast B_\SSTT$ are zero in the cell module. 

The first case is obvious as we  are multiplying two non-equal
idempotents, in the second case we apply relations $(2.6)$ and (2.11) followed by (2.7) to the product $B_\SSTT^\ast B_\SSTT$ as in Figure \ref{multiplystuff}.
This product   is equal to zero, as the centre of the final diagram in Figure \ref{multiplystuff} is an
unsteady idempotent.
  
  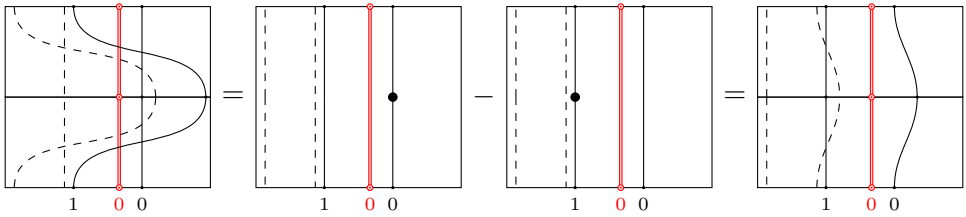
\begin{figure}[ht]\captionsetup{width=0.9\textwidth}
  \[
  \begin{tikzpicture}[scale=0.6]
  \draw (-0.5,0) rectangle (4,2);
   \draw (-0.5,2) rectangle (4,4);
  \foreach \x in {3.9,2.5}
    {\fill (\x,2) circle (1pt);}
     \foreach \x in {1,2.5}
{     \fill (\x,0) circle (1pt);\fill (\x,4) circle (1pt);}
 
 \draw[wei2] (2,0)--(2,4);
 \draw[wei2,fill] (2,0) circle (1pt)  (2,2) circle (1pt) (2,4) circle (1pt); 
     \draw (3.9,2)  to [out=-90,in=90] (1,0) ; 
      \draw (3.9,2)  to [out=90,in=-90] (1,4) ; 
        \draw (2.5,4) -- (2.5,0);
          \draw[dashed]   (2.8,2)  to [out=-90,in=90] (-0.3,0);
  \draw[dashed] (0.8,4) -- (0.8,0)  (2.8,2)  to [out=90,in=-90] (-0.3,4);
            \node[wei2] [below] at (2,0) {\tiny $0$};
              \node  [below] at (1,0) {\tiny $1$};
                     \node [below] at (2.5,0) {\tiny $0$};
\node at (4.5,2){=}; 
  \draw (5.5+-0.5,0) rectangle (5.5+4,4);
     \foreach \x in {5.5+1,5.5+2.5}
{     \fill (\x,0) circle (1pt);\fill (\x,4) circle (1pt);}
      \fill (5.5+2.5,2) circle (3pt);
 \draw[wei2] (5.5+2,0)--(5.5+2,4);
 \draw[wei2,fill] (5.5+2,0) circle (1pt)   (5.5+2,4) circle (1pt); 
     \draw (5.5+1,2)  to [out=-90,in=90] (5.5+1,0) ; 
      \draw (5.5+1,2)  to [out=90,in=-90] (5.5+1,4) ; 
        \draw (5.5+2.5,4) -- (5.5+2.5,0);
          \draw[dashed]   (5.5+-0.3,2)  to [out=-90,in=90] (5.5+-0.3,0);
  \draw[dashed] (5.5+0.8,4) -- (5.5+0.8,0)  (5.5+-0.3,2)  to [out=90,in=-90] (5.5+-0.3,4);
            \node[wei2] [below] at (5.5+2,0) {\tiny $0$};
              \node  [below] at (5.5+1,0) {\tiny $1$};
                     \node [below] at (5.5+2.5,0) {\tiny $0$};
  \draw (11+-0.5,0) rectangle (11+4,4);
     \foreach \x in {11+1,11+2.5}
{     \fill (\x,0) circle (1pt);\fill (\x,4) circle (1pt);}
      \fill (11+1,2) circle (3pt);
 \draw[wei2] (11+2,0)--(11+2,4);
 \draw[wei2,fill] (11+2,0) circle (1pt)   (11+2,4) circle (1pt); 
     \draw (11+1,2)  to [out=-90,in=90] (11+1,0) ; 
      \draw (11+1,2)  to [out=90,in=-90] (11+1,4) ; 
        \draw (11+2.5,4) -- (11+2.5,0);
          \draw[dashed]   (11+-0.3,2)  to [out=-90,in=90] (11+-0.3,0);
  \draw[dashed] (11+0.8,4) -- (11+0.8,0)  (11+-0.3,2)  to [out=90,in=-90] (11+-0.3,4);
            \node[wei2] [below] at (11+2,0) {\tiny $0$};
              \node  [below] at (11+1,0) {\tiny $1$};
                     \node [below] at (11+2.5,0) {\tiny $0$};
                                           \node at (15.5,2){$=$};
   \draw (16.5+-0.5,0) rectangle (16.5+4,2);
   \draw (16.5+-0.5,2) rectangle (16.5+4,4);
  \foreach \x in {16.5+1,16.5+3}
    {\fill (\x,2) circle (1pt);}
     \foreach \x in {16.5+1,16.5+2.5}
{     \fill (\x,0) circle (1pt);\fill (\x,4) circle (1pt);}
 
 \draw[wei2] (16.5+2,0)--(16.5+2,4);
 \draw[wei2,fill] (16.5+2,0) circle (1pt)  (16.5+2,2) circle (1pt) (16.5+2,4) circle (1pt); 
        \draw (16.5+1,4) -- (16.5+1,0);
       \draw (16.5+2.5+0.5,2)  to [out=-90,in=90] (16.5+2.5,0) ; 
      \draw (16.5+2.5+0.5,2)  to [out=90,in=-90] (16.5+2.5,4) ; 
       \draw[dashed] (16.5+0.8,0)  to [out=90,in=-90]  (16.5+0.8+0.5,2)  ; 
      \draw[dashed] (16.5+0.8,4)   to [out=-90,in=90] (16.5+0.8+0.5,2)  ; 
          \draw[dashed]   (16.5-0.3,2)  to [out=-90,in=90] (16.5+-0.3,0);
  \draw[dashed]  
   (16.5-0.3,2)  to [out=90,in=-90] (16.5+-0.3,4);
            \node[wei2] [below] at (16.5+2,0) {\tiny $0$};
              \node  [below] at (16.5+1,0) {\tiny $1$};
                     \node [below] at (16.5+2.5,0) {\tiny $0$};
                     \node at (10,2){$-$};
 \end{tikzpicture}
  \]
  \caption{The product $(B_\SSTT)^\ast(B_\SSTT)$.  
  The first equality follows from relations (2.6) and (2.11); the second  follows from relation (2.7).  The final diagram is zero as the centre is an unsteady idempotent.}
  \label{multiplystuff}
\end{figure}
  
\section{{The quiver Temperley--Lieb algebra of type $G(l,1,n)$  }}
   In this section we define the quiver Temperley--Lieb algebra of type $G(l,1,n)$, which we denote by
  $\TL_n(\kappa)$ for $\kappa \in (\ZZ/e\ZZ)^l$ and $n\in \mathbb{N}$.   
 Given $l \in \NN$,   and  $\g\in\RR$, 
 we   take $\theta \in \RR^l$  such that $0<\theta_j-\theta_i < g $
   for all $1\leq i < j\leq l$.

  We let $\pi $ denote the set of all multipartitions  all of whose components have at most one column.   We refer to this as the set of one-column multipartitions.
 We
   let \[e_{\pi}=\sum_{\lambda\not\in\pi}C_{\SSTT^\lambda\SSTT^\lambda}.\]   
We shall see in the proof of Proposition \ref{step2} below that any such choice of
 $\theta$ guarantees that  $\pi$ is a saturated subset of $\mptn ln$.  
 We shall fix a choice of $\theta$ below; however, given any such choice the resulting algebras are isomorphic (see Proof of Proposition \ref{step2}, below).  
\begin{defn}
Fix $e\in\{3,4,\dots\}\cup\{\infty\}$ and integers $l \leq e/2 $ and  $n\geq 1$.  
  Fix a multicharge $ \kappa\in (\ZZ/e\ZZ)^l$ such  that 
  $\kappa_i\not\in \{
  \kappa_j ,  \kappa_j +1  \}$
   for $i\neq j$ and $\theta$ as above.      
   We define the \emph{quiver Temperley--Lieb algebra of type $G(l,1,n)$}, denoted $\TL_n(\kappa)$,
to be the  algebra   
\[\TL_n(\kappa)=  A(n,\theta,\kappa)/(A(n,\theta,\kappa)  e_\pi A(n,\theta,\kappa)).\]
\end{defn}

\noindent \textbf{Notation:} For the remainder of the paper, given $\TL_n(\kappa)$ with $\kappa \in (\ZZ/e\ZZ)^l$, we shall
fix  $\g=l$ and
 $\theta=(0,1,2,\ldots, l-1)$ (this choice is easily seen to satisfy the conditions above), and 
 adopt the conventions of Remark \ref{noambiguity}.
 With this choice made, the loadings  of multipartitions in $\pi$ have a very simple form.  
 Namely,  any $l$-partition of $n$ is of the form
 \[ {\lambda}=(1^{\lambda_1}, 1^{\lambda_2}, \ldots , 1^{\lambda_l})\in \pi\]
  (with $\sum_{i=1}^l \lambda_i = n$) and has loading 
 \[
 \{(i-1) + jl \mid \lambda_i\neq 0 \text{ and }1\leq j\leq \lambda_i  \}.\]
 Given $\theta$ as above, we refer to the $\theta$-dominance order on multipartitions as the FLOTW dominance order.

\begin{prop}\label{step2}
The quiver Temperley--Lieb algebra of type $G(l,1,n)$ is a   graded cellular algebra with a theory of highest weights.  
The cellular basis is given by
 \[
 \{C_{\SSTS\SSTT}\mid \SSTS \in \SStd(\lambda,\mu), \, \SSTT\in \SStd(\lambda,\nu), \, \lambda,\mu, \nu \in \pi\},
 \]
 with respect to the FLOTW dominance order on the set of one-column multipartitions, $\pi$.  We have that $d_{\lambda\mu}(t)\in t \mathbb{N}_0[t]$ for $\lambda \neq \mu$ elements of   $ \pi$.  
\end{prop}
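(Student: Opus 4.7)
The plan is to realise $\TL_n(\kappa)$ as a Schur-type quotient of $A(n,\theta,\kappa)$ by a cell ideal, and then transfer cellularity, highest weight structure, and positivity of decomposition numbers from the ambient algebra. The main technical point is to show that the set $\pi$ of one-column multipartitions forms a \emph{lower} set (equivalently, $\pi^c$ is an upper set) in the FLOTW dominance order on $\mptn ln$.

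First I would give a geometric characterisation of $\pi$: with the fixed choice $g=l$ and $\theta=(0,1,\dots,l-1)$, a multipartition $\lambda$ lies in $\pi$ if and only if every box of $\lambda$ has non-negative $x$-coordinate. Indeed, a one-column box $(r,1,m)$ has $x=\theta_m+g(r-1)\geq 0$, whereas any multipartition whose $m$th component has two or more columns contains the box $(1,2,m)$ with $x=\theta_m-g<0$. Using this, given $\mu\notin\pi$ I fix a box of $\mu$ at some negative $x$-coordinate $x_0$ with residue $r$; for any $\lambda\in\pi$ lying in the same residue block, all boxes of $\lambda$ of residue $r$ sit at positions $\geq 0$, so the count of residue-$r$ boxes of $\lambda$ strictly left of $x_0/2$ is zero while that of $\mu$ is at least one. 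This shows $\mathbf{i}_\lambda$ does not dominate $\mathbf{i}_\mu$, i.e.\ $\mu\not\trianglelefteq_\theta\lambda$. Consequently no $\mu\notin\pi$ lies strictly below any $\lambda\in\pi$, so $\pi$ is down-closed.

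Given this, the span $J_\pi$ of the cellular basis elements $C_{\SSTS\SSTT}$ whose shape lies outside $\pi$ is a two-sided ideal of $A(n,\theta,\kappa)$: axiom (3) of Definition~\ref{defn1} only produces cells of the same or strictly greater shape, and $\pi^c$ is upper closed. Since $J_\pi$ contains $1_\mu=c^\mu_{\SSTT^\mu\SSTT^\mu}$ for every $\mu\in\pi^c$, we have $Ae_\pi A\subseteq J_\pi$. For the reverse inclusion I proceed by downward induction on $\pi^c$: for $\lambda\in\pi^c$, the product $c^\lambda_{\SSTS\SSTT^\lambda}\,c^\lambda_{\SSTT^\lambda\SSTT}$ equals $c^\lambda_{\SSTS\SSTT}$ modulo $A^{\triangleright\lambda}$ by axioms (3) and (6) (the scalar $\langle c^\lambda_{\SSTT^\lambda},c^\lambda_{\SSTT^\lambda}\rangle_\lambda$ equals $1$ because $1_\lambda$ is a genuine idempotent); by induction $A^{\triangleright\lambda}\subseteq Ae_\pi A$, and the first product manifestly lies in $A\,1_\lambda\,A\subseteq Ae_\pi A$, so $c^\lambda_{\SSTS\SSTT}\in Ae_\pi A$. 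Hence $Ae_\pi A=J_\pi$, and the quotient $\TL_n(\kappa)=A(n,\theta,\kappa)/J_\pi$ is a graded cellular algebra with basis $\{C_{\SSTS\SSTT}\mid\SSTS\in\SStd(\lambda,\mu),\,\SSTT\in\SStd(\lambda,\nu),\,\lambda,\mu,\nu\in\pi\}$ (note that $\SStd(\lambda,\mu)=\emptyset$ unless $\mu\trianglelefteq_\theta\lambda$, so $\mu\in\pi$ is automatic once $\lambda\in\pi$). Axioms (5) and (6) of Definition~\ref{defn1} descend to the quotient, giving the highest weight structure.

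For the positivity claim, observe that for $\lambda\in\pi$ every composition factor $L(\nu)$ of the standard module $\Delta(\lambda)$ of $A(n,\theta,\kappa)$ satisfies $\nu\trianglelefteq_\theta\lambda$ by the Linkage Principle (Proposition~\ref{linakge}), and hence $\nu\in\pi$ since $\pi$ is a lower set. Therefore $\Delta(\lambda)$ and each of its composition factors survive the quotient unchanged, so the graded decomposition number $d_{\lambda\mu}(t)$ computed in $\TL_n(\kappa)$ agrees with the one computed in $A(n,\theta,\kappa)$, and Theorem~\ref{step1} then gives $d_{\lambda\mu}(t)\in t\NN_0[t]$ for $\lambda\neq\mu$ in $\pi$. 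The main obstacle in the whole argument is really the first, geometric step identifying $\pi$ with the ``non-negative'' part of loading space and deducing down-closedness; thereafter the argument is standard cellular-algebra machinery.
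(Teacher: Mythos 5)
Your proposal is correct and follows essentially the same route as the paper: one shows $\pi$ is saturated in the FLOTW $\theta$-dominance order via the observation that a second-column box lies strictly to the left of every first-column box, then quotients by the cell ideal generated by $e_\pi$ and transfers the positivity of graded decomposition numbers from Webster's Koszulity theorem. The only differences are cosmetic: you spell out $Ae_\pi A = J_\pi$ and the preservation of decomposition numbers directly, where the paper asserts the former parenthetically and cites Donkin for the latter (and additionally records the independence of the choice of FLOTW weighting, which is not needed for the fixed $\theta=(0,1,\dots,l-1)$, $g=l$).
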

\begin{proof} Fix  $\theta$ such that  $0<\theta_j-\theta_i < g $ for $1 \leq i < j\leq l$.  
We shall show that the set $\pi$ is saturated in the $\theta$-dominance order.  In other words given any $\lambda\in \pi$ and 
$\mu \lhd_\theta \lambda$, we have that $\mu \in \pi$.  
This will imply that 
 \[
 \langle C_{\SSTS\SSTT} \mid \SSTS \in \SStd(\lambda,\mu), \, \SSTT\in \SStd(\lambda,\nu), \, \lambda,\mu, \nu \not\in \pi\rangle_{\CC}
 \]
is an ideal of $A(n,\theta,\kappa)$ (the ideal generated by $e_\pi$, in fact)
and the resulting quotient has the desired basis (by conditions (2) and (3)  of  
Definition \ref{defn1} and Theorem  \ref{cellularitybreedscontempt}).  
The graded decomposition numbers (as well as dimensions of higher extension groups) are preserved under this quotient, 
see for example \cite[Appendix]{Donkin} for the ungraded case. Applying Theorem \ref{cellularitybreedscontempt} will thus prove  the claim about graded decomposition numbers.

Given any two choices  $\theta^{(1)}$ and  $\theta^{(2)}$ satisfying the above, 
the combinatorics of tableaux are identical.  
This results in a bijection between the cell-bases  of the algebras $A(n,\theta^{(1)},\kappa)$ and 
$A(n,\theta^{(2)},\kappa)$.   
 The basis elements identified under this bijection may be obtained from one another by isotopy. 
Therefore this is an isomorphism of algebras, via relation (2.1) of  Section \ref{relationspageofstuff}.

Our choice of $\theta$ implies that if we add a box to the second
column of any component $\lambda^{(m)}$ (that is, add a node $(1,2,m)$ for some $m$), this box  has $x$-coordinate
strictly less than all boxes in the first column of all components, and thus the resulting multipartition is more dominant.
Therefore the set of one-column  multipartitions is saturated. 
 \end{proof}

\begin{defn}
Let $\lambda$ be a one-column multipartition $(1^{\lambda_1},1^{\lambda_2}, \ldots, 1^{\lambda_l})$.   
A node  of $\lambda$ is \emph{removable} if it can be removed from the diagram of $\lambda$ to leave the  diagram of a (one-column) multipartition, while a node not in the diagram of $\lambda$ is an \emph{addable} node of $\lambda$ if it can be added to the diagram of $\lambda$ to give the   diagram of a one-column multipartition. 

 If the node has residue $r \in \ZZ/e\ZZ$, we say that the node is $r$-\emph{removable} or $r$-\emph{addable}.
Given $\lambda \in \pi$ and $r\in \ZZ/e\ZZ$, we let ${\rm Add} (\lambda,r)$ denote the set of $1\leq j\leq l$ such that there is an $r$-addable node in the $j$th component of  $\lambda$.  
 
\end{defn}

 In the previous section, we refrained from defining the degree of a general tableau.  This was because  of the technicalities in defining \emph{addable} and \emph{removable} nodes for such tableaux (see \cite[Section 2.2]{Webster}).  These difficulties do not appear for tableaux corresponding to one-column multipartitions.

\begin{defn} Suppose $\lambda\in \pi $
 and $\square$ is a removable $A$-node of $[\lambda]$. Set  
\begin{align*}
 d_\lambda(\square)=&
  \big|\{\text{addable $A$-nodes of $\lambda$ to the right of }\square\}\big| 
\\ &  -\big|\{\text{removable $A$-nodes of $\lambda$ to the right of }\square\}\big|.
\end{align*}
Given $1\leq k \leq n$ and $\SSTT \in\SStd(\lambda,\mu)$,
 we let $\SSTT_k$ denote the node of 
$[\lambda]$ containing the entry $D_\mu(k)$ and we let 
$\SSTT_{\leq k}$ denote the tableau consisting of the
nodes with entries in $D_\mu\{1,2,\ldots,k\}$.  
%

For $\SSTT\in\SStd(\lambda,-)$ we define the \emph{degree} of $\SSTT$ recursively, setting $\deg(\SSTT) =0$ when $\SSTT$ is the unique $\emptyset$-tableau.  
 We set
 \[
\deg(\SSTT) = d_\lambda(\SSTT_n)  +\deg(\SSTT_{<n}).
\]
\end{defn}

  \begin{eg}
    Let  $e=4$, $l=2$, $n=7$,  and $\kappa = (0,2)$. 
 By tableau-linkage, it is clear that any residue class
decomposes as a sum of  blocks of $\TL_n(\kappa)$.  
Fix the residue class to be $\{0,0,1,2,2,3,3\}$.  The one-line multipartitions with these residues for our given value $e$-multicharge are 
$$
\{((1^7),(0)),
((1^6),(1)),
((1^3),(1^4)),
((1^2),(1^5))
 \}.
$$
Formally, the loading  of the multipartition $\lambda=((1^7),(0))$ is  
$$D_\lambda=\{0+\epsilonLIRONdontchange,2+2\epsilonLIRONdontchange,4+3\epsilonLIRONdontchange,6+4\epsilonLIRONdontchange,8+5 \epsilonLIRONdontchange,10+6\epsilonLIRONdontchange,12+7\epsilonLIRONdontchange\}.$$  
With the  conventions of Remark \ref{noambiguity} in place, our loadings are
$$
(0,2,4,6,8,10,12), \quad 
(0,1,2,4,6,8,10),\quad
(0,1,2,3,4,5,7),\quad
(0,1,2,3,5,7,9).
$$
The semistandard tableaux of shape $(1^3,1^4)$ are 
given in Figure \ref{ASHOWLBUNCH OF STUFF}, along with their degrees.  For example, the nodes in the rightmost diagram are of degree 0  
except for  
 those containing the integers
$4$ and $12$, which are of degree 1.  Therefore the rightmost tableau has degree 2.

\begin{figure}[ht]
$$
\begin{tikzpicture}[scale=0.5]
{\clip (-.7,-0.2) rectangle ++(4.8,3.8);
\path [draw,name path=upward line] (-1.7,-2) -- (2.8,-2);
  \draw 
      (-0.05,1.2)              coordinate (a1)
            -- +(30:1) coordinate (a2)  
      (a2)   -- +(120:1) coordinate (a3)
      (a3)   -- +(210:1) coordinate (a4)       
      (a4)   -- +(300:1) coordinate (a5)  
;
\draw      (a2)             
            -- +(30:1) coordinate (b2)  
      (b2)   -- +(120:1) coordinate (b3)
      (b3)   -- +(210:1) coordinate (b4)       
      (b4)   -- +(300:1) coordinate (b5)   
       ;        
       \draw      (b2)             
            -- +(30:1) coordinate (c2)  
      (c2)   -- +(120:1) coordinate (c3)
      (c3)   -- +(210:1) coordinate (c4)       
      (c4)   -- +(300:1) coordinate (c5)   
       ;        
     \path   (0,0)  -- +(0:0.5) coordinate (d)   
      ;
        \draw 
      (d)              
               -- +(30:1) coordinate (d2)  
                (d2)   -- +(120:1) coordinate (d3)
                 (d3)   -- +(210:1) coordinate (d4)       
      (d4)   -- +(300:1) coordinate (d5) 
               (d2)              
               -- +(30:1) coordinate (e2)  
                  (e2)   -- +(120:1) coordinate (e3)
                     (e3)   -- +(210:1) coordinate (e4)       
; 
 \draw (e2)              
               -- +(30:1) coordinate (f2)  
                  (f2)   -- +(120:1) coordinate (f3)
                    (f3)   -- +(210:1) coordinate (f4) ;
                    \draw (f2)              
               -- +(30:1) coordinate (g2)  
                  (g2)   -- +(120:1) coordinate (g3)
                    (g3)   -- +(210:1) coordinate (g4)       
      ;   
      \draw  (a1)  ++(30:0.5)
   ++(120:0.5) node  {$0$}   ;
  \draw  (a2)  ++(30:0.5)
   ++(120:0.5) node  {$2$}   ;        
        \draw  (b2)  ++(30:0.5)
   ++(120:0.5) node  {$4$}   ;
                \draw  (d)  ++(30:0.5)
   ++(120:0.5) node  {$1$}   ;
        \draw  (d2)  ++(30:0.5)
   ++(120:0.5) node  {$3$}   ;
        \draw  (e2)  ++(30:0.5)
   ++(120:0.5) node  {$5$}   ;
                \draw  (f2)  ++(30:0.5)
   ++(120:0.5) node  {$7$}   ;
        }
    \end{tikzpicture}
\quad
\begin{tikzpicture}[scale=0.5]
{\clip (-.7,-0.2) rectangle ++(4.8,3.8);
\path [draw,name path=upward line] (-1.7,-2) -- (2.8,-2);
  \draw 
      (-0.05,1.2)              coordinate (a1)
            -- +(30:1) coordinate (a2)  
      (a2)   -- +(120:1) coordinate (a3)
      (a3)   -- +(210:1) coordinate (a4)       
      (a4)   -- +(300:1) coordinate (a5)  
;
\draw      (a2)             
            -- +(30:1) coordinate (b2)  
      (b2)   -- +(120:1) coordinate (b3)
      (b3)   -- +(210:1) coordinate (b4)       
      (b4)   -- +(300:1) coordinate (b5)   
       ;        
       \draw      (b2)             
            -- +(30:1) coordinate (c2)  
      (c2)   -- +(120:1) coordinate (c3)
      (c3)   -- +(210:1) coordinate (c4)       
      (c4)   -- +(300:1) coordinate (c5)   
       ;        
     \path   (0,0)  -- +(0:0.5) coordinate (d)   
      ;
        \draw 
      (d)              
               -- +(30:1) coordinate (d2)  
                (d2)   -- +(120:1) coordinate (d3)
                 (d3)   -- +(210:1) coordinate (d4)       
      (d4)   -- +(300:1) coordinate (d5) 
               (d2)              
               -- +(30:1) coordinate (e2)  
                  (e2)   -- +(120:1) coordinate (e3)
                     (e3)   -- +(210:1) coordinate (e4)       
; 
 \draw (e2)              
               -- +(30:1) coordinate (f2)  
                  (f2)   -- +(120:1) coordinate (f3)
                    (f3)   -- +(210:1) coordinate (f4) ;
                    \draw (f2)              
               -- +(30:1) coordinate (g2)  
                  (g2)   -- +(120:1) coordinate (g3)
                    (g3)   -- +(210:1) coordinate (g4)       
      ;   
      \draw  (a1)  ++(30:0.5)
   ++(120:0.5) node  {$0$}   ;
  \draw  (a2)  ++(30:0.5)
   ++(120:0.5) node  {$2$}   ;        
        \draw  (b2)  ++(30:0.5)
   ++(120:0.5) node  {$9$}   ;
                \draw  (d)  ++(30:0.5)
   ++(120:0.5) node  {$1$}   ;
        \draw  (d2)  ++(30:0.5)
   ++(120:0.5) node  {$3$}   ;
        \draw  (e2)  ++(30:0.5)
   ++(120:0.5) node  {$5$}   ;
                \draw  (f2)  ++(30:0.5)
   ++(120:0.5) node  {$7$}   ;
        }
    \end{tikzpicture}
\quad
\begin{tikzpicture}[scale=0.5]
{\clip (-.7,-0.2) rectangle ++(4.8,3.8);
\path [draw,name path=upward line] (-1.7,-2) -- (2.8,-2);
  \draw 
      (-0.05,1.2)              coordinate (a1)
            -- +(30:1) coordinate (a2)  
      (a2)   -- +(120:1) coordinate (a3)
      (a3)   -- +(210:1) coordinate (a4)       
      (a4)   -- +(300:1) coordinate (a5)  
;
\draw      (a2)             
            -- +(30:1) coordinate (b2)  
      (b2)   -- +(120:1) coordinate (b3)
      (b3)   -- +(210:1) coordinate (b4)       
      (b4)   -- +(300:1) coordinate (b5)   
       ;        
       \draw      (b2)             
            -- +(30:1) coordinate (c2)  
      (c2)   -- +(120:1) coordinate (c3)
      (c3)   -- +(210:1) coordinate (c4)       
      (c4)   -- +(300:1) coordinate (c5)   
       ;        
     \path   (0,0)  -- +(0:0.5) coordinate (d)   
      ;
        \draw 
      (d)              
               -- +(30:1) coordinate (d2)  
                (d2)   -- +(120:1) coordinate (d3)
                 (d3)   -- +(210:1) coordinate (d4)       
      (d4)   -- +(300:1) coordinate (d5) 
               (d2)              
               -- +(30:1) coordinate (e2)  
                  (e2)   -- +(120:1) coordinate (e3)
                     (e3)   -- +(210:1) coordinate (e4)       
; 
 \draw (e2)              
               -- +(30:1) coordinate (f2)  
                  (f2)   -- +(120:1) coordinate (f3)
                    (f3)   -- +(210:1) coordinate (f4) ;
                    \draw (f2)              
               -- +(30:1) coordinate (g2)  
                  (g2)   -- +(120:1) coordinate (g3)
                    (g3)   -- +(210:1) coordinate (g4)       
      ;   
      \draw  (a1)  ++(30:0.5)
   ++(120:0.5) node  {$0$}   ;
  \draw  (a2)  ++(30:0.5)
   ++(120:0.5) node  {$2$}   ;        
        \draw  (b2)  ++(30:0.5)
   ++(120:0.5) node  {$4$}   ;
                \draw  (d)  ++(30:0.5)
   ++(120:0.5) node  {$1$}   ;
        \draw  (d2)  ++(30:0.5)
   ++(120:0.5) node  {$6$}   ;
        \draw  (e2)  ++(30:0.5)
   ++(120:0.5) node  {$8$}   ;
                \draw  (f2)  ++(30:0.5)
   ++(120:0.5) node  {$10$}   ;
        }
    \end{tikzpicture}
\quad
\begin{tikzpicture}[scale=0.5]
{\clip (-.7,-0.2) rectangle ++(4.8,3.8);
\path [draw,name path=upward line] (-1.7,-2) -- (2.8,-2);
  \draw 
      (-0.05,1.2)              coordinate (a1)
            -- +(30:1) coordinate (a2)  
      (a2)   -- +(120:1) coordinate (a3)
      (a3)   -- +(210:1) coordinate (a4)       
      (a4)   -- +(300:1) coordinate (a5)  
;
\draw      (a2)             
            -- +(30:1) coordinate (b2)  
      (b2)   -- +(120:1) coordinate (b3)
      (b3)   -- +(210:1) coordinate (b4)       
      (b4)   -- +(300:1) coordinate (b5)   
       ;        
       \draw      (b2)             
            -- +(30:1) coordinate (c2)  
      (c2)   -- +(120:1) coordinate (c3)
      (c3)   -- +(210:1) coordinate (c4)       
      (c4)   -- +(300:1) coordinate (c5)   
       ;        
     \path   (0,0)  -- +(0:0.5) coordinate (d)   
      ;
        \draw 
      (d)              
               -- +(30:1) coordinate (d2)  
                (d2)   -- +(120:1) coordinate (d3)
                 (d3)   -- +(210:1) coordinate (d4)       
      (d4)   -- +(300:1) coordinate (d5) 
               (d2)              
               -- +(30:1) coordinate (e2)  
                  (e2)   -- +(120:1) coordinate (e3)
                     (e3)   -- +(210:1) coordinate (e4)       
; 
 \draw (e2)              
               -- +(30:1) coordinate (f2)  
                  (f2)   -- +(120:1) coordinate (f3)
                    (f3)   -- +(210:1) coordinate (f4) ;
                    \draw (f2)              
               -- +(30:1) coordinate (g2)  
                  (g2)   -- +(120:1) coordinate (g3)
                    (g3)   -- +(210:1) coordinate (g4)       
      ;   
      \draw  (a1)  ++(30:0.5)
   ++(120:0.5) node  {$0$}   ;
  \draw  (a2)  ++(30:0.5)
   ++(120:0.5) node  {$2$}   ;        
        \draw  (b2)  ++(30:0.5)
   ++(120:0.5) node  {$12$}   ;
                \draw  (d)  ++(30:0.5)
   ++(120:0.5) node  {$4$}   ;
        \draw  (d2)  ++(30:0.5)
   ++(120:0.5) node  {$6$}   ;
        \draw  (e2)  ++(30:0.5)
   ++(120:0.5) node  {$8$}   ;
                \draw  (f2)  ++(30:0.5)
   ++(120:0.5) node  {$10$}   ;
        }
    \end{tikzpicture}
$$  
\caption{These semistandard tableaux are of weights $((1^3),(1^4))$, $((1^2),(1^5))$, $((1^6),(1))$ and $((1^7),\emptyset)$ respectively.  
The tableaux are of degrees 0, 1, 1, and 2 respectively.
} 
\label{ASHOWLBUNCH OF STUFF}
\end{figure}
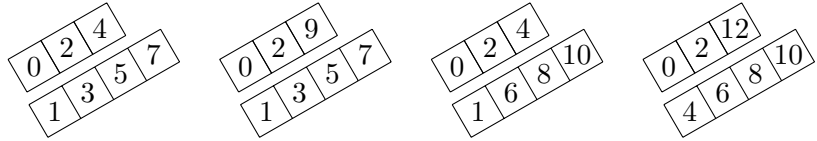
\end{eg}

\subsection{The geometry} Fix   integers $n,l,e\in\NN$ and $\kappa \in (\ZZ/e\ZZ)^l$.  
Let $\Phi_{l-1}$ be a root system of type $A_{l-1}$ with simple roots
$$\{\varepsilon_i-\varepsilon_j: 1\leq i < j \leq l\},$$ and let $W_{l}^e$ denote the corresponding affine Weyl group, generated by the affine reflections 
$s_{i,j,me}$ with $1\leq i<j \leq l$ and $m\in\ZZ$ and which acts on
$E_l$ via 
$$s_{i,j,me}(x)=x-(\langle
x,\varepsilon_i-\varepsilon_j\rangle-me)(\varepsilon_i-\varepsilon_j).$$
  Given a multicharge $\kappa=(\kappa_1,\ldots,\kappa_l)$
  we let $\rho=(e-\kappa_1,\ldots, e-\kappa_l)$.  Given an 
element $w\in W_l^e$ we set 
$$w\cdot_{\rho}x=w(x+\rho)-\rho. $$
 We identify   $\lambda$ an $l$-partition of $n$ with a point in the hyperplane $V$ 
 of $  E_l$ consisting of all points the sum of which is $n$. 
 This is done via the map 
 $ (1^{\lambda_1},\ldots,1^{\lambda_l})\mapsto
 \sum_i \lambda_i \varepsilon_i$.  

\begin{lem}\label{useful-lemma}Given $\lambda\in \pi$, we have that 
 $$ 
 \langle \lambda +\rho , \varepsilon_i-\varepsilon_j \rangle = me
 $$ 
 for some $m\in\ZZ$, if and only if the addable nodes in the $i$th and $j$th components  of the multipartition $\lambda$ have the same residue.  
\end{lem}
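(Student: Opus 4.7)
The plan is to prove this by a direct computation, unwinding the definitions of the residue of an addable node and of the pairing $\langle\lambda + \rho, \varepsilon_i - \varepsilon_j\rangle$.

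First I would compute the left-hand side explicitly. Under the embedding $\lambda = (1^{\lambda_1}, \ldots, 1^{\lambda_l}) \mapsto \sum_k \lambda_k \varepsilon_k$, and with $\rho = (e-\kappa_1,\ldots,e-\kappa_l)$, we have
\[
\langle \lambda + \rho, \varepsilon_i - \varepsilon_j\rangle = (\lambda_i + e - \kappa_i) - (\lambda_j + e - \kappa_j) = (\lambda_i - \kappa_i) - (\lambda_j - \kappa_j).
\]
This is divisible by $e$ if and only if $\kappa_i - \lambda_i \equiv \kappa_j - \lambda_j \pmod{e}$.

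Next I would compute the right-hand side. Since $\lambda^{(k)} = (1^{\lambda_k})$, the unique addable node in component $k$ is $(\lambda_k + 1, 1, k)$ (or $(1,1,k)$ if $\lambda_k = 0$, which fits the same formula). Its residue is $\kappa_k + 1 - (\lambda_k + 1) = \kappa_k - \lambda_k \pmod e$. Hence the addable nodes in the $i$th and $j$th components have the same residue precisely when $\kappa_i - \lambda_i \equiv \kappa_j - \lambda_j \pmod{e}$, which matches the condition obtained above.

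There is essentially no obstacle — the lemma is just a restatement in geometric language of the combinatorial fact about residues of addable nodes of one-column multipartitions. The only subtlety is to treat the case $\lambda_k = 0$ uniformly, which is handled automatically since the addable node is then $(1,1,k)$ of residue $\kappa_k$, matching the formula $\kappa_k - \lambda_k$ with $\lambda_k = 0$.
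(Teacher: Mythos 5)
Your proof is correct and follows essentially the same route as the paper, which simply observes that both conditions are equivalent to the congruence $(\lambda_i+e-\kappa_i)\equiv(\lambda_j+e-\kappa_j)\pmod e$; you have merely written out the two computations (the pairing and the residue $\kappa_k-\lambda_k$ of the unique addable node $(\lambda_k+1,1,k)$) that the paper leaves implicit.
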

\begin{proof}
 To see this, note that both statements are equivalent to
 \[
 (\lambda_i+e-\kappa_i) \equiv (\lambda_j+e-\kappa_j) \pmod e. \qedhere
 \]
 \end{proof}
 
\begin{defn} Given $\SSTT\in\SStd(\lambda,\mu)$, we define the \emph{component word} $R(\SSTT)$ of 
$\SSTT$ to be given by reading the entries of the tableau in numerical order
  and recording the components in which they appear.
 We define the path  $\omega(\SSTT)$ to be the associated path in the alcove geometry. 
\end{defn}

\begin{eg}
    Let  $e=4$, $l=2$, $n=7$,  and $\kappa = (0,2)$. Let $\SSTT\in\SStd(((1^3),(1^4)),((1^7),\emptyset))$ be the following tableau.
$$
 \begin{tikzpicture}[scale=0.5]
{
\clip (-.7,-0.2) rectangle ++(4.8,3.8);
\path [draw,name path=upward line] (-1.7,-2) -- (2.8,-2);
  \draw 
      (-0.05,1.2)              coordinate (a1)
            -- +(30:1) coordinate (a2)  
      (a2)   -- +(120:1) coordinate (a3)
      (a3)   -- +(210:1) coordinate (a4)       
      (a4)   -- +(300:1) coordinate (a5)  
;
\draw      (a2)             
            -- +(30:1) coordinate (b2)  
      (b2)   -- +(120:1) coordinate (b3)
      (b3)   -- +(210:1) coordinate (b4)       
      (b4)   -- +(300:1) coordinate (b5)   
       ;        
       \draw      (b2)             
            -- +(30:1) coordinate (c2)  
      (c2)   -- +(120:1) coordinate (c3)
      (c3)   -- +(210:1) coordinate (c4)       
      (c4)   -- +(300:1) coordinate (c5)   
       ;        
     \path   (0,0)  -- +(0:0.5) coordinate (d)   
      ;
        \draw 
      (d)              
               -- +(30:1) coordinate (d2)  
                (d2)   -- +(120:1) coordinate (d3)
                 (d3)   -- +(210:1) coordinate (d4)       
      (d4)   -- +(300:1) coordinate (d5) 
               (d2)              
               -- +(30:1) coordinate (e2)  
                  (e2)   -- +(120:1) coordinate (e3)
                     (e3)   -- +(210:1) coordinate (e4)       
; 
 \draw (e2)              
               -- +(30:1) coordinate (f2)  
                  (f2)   -- +(120:1) coordinate (f3)
                    (f3)   -- +(210:1) coordinate (f4) ;
                    \draw (f2)              
               -- +(30:1) coordinate (g2)  
                  (g2)   -- +(120:1) coordinate (g3)
                    (g3)   -- +(210:1) coordinate (g4)       
      ;   
      \draw  (a1)  ++(30:0.5)
   ++(120:0.5) node  {$0$}   ;
  \draw  (a2)  ++(30:0.5)
   ++(120:0.5) node  {$2$}   ;        
        \draw  (b2)  ++(30:0.5)
   ++(120:0.5) node  {$12$}   ;
                \draw  (d)  ++(30:0.5)
   ++(120:0.5) node  {$4$}   ;
        \draw  (d2)  ++(30:0.5)
   ++(120:0.5) node  {$6$}   ;
        \draw  (e2)  ++(30:0.5)
   ++(120:0.5) node  {$8$}   ;
                \draw  (f2)  ++(30:0.5)
   ++(120:0.5) node  {$10$}   ;
        }
    \end{tikzpicture}  $$
    The component  word, $R(\SSTT)$, is 
    $(1,1,2,2,2,2,1)$.
  The path  $\omega(\SSTT)$ is pictured in Figure \ref{another path}.
  \begin{figure}[ht]\captionsetup{width=0.9\textwidth}
 $$  \begin{tikzpicture}[scale=0.6]
{ \clip (-4,-2.6) rectangle ++(8,3);
\path [draw,name path=upward line] (-3,-5) -- (3,-5);
  \path 
      (0, 0)              coordinate (a1)
        (a1)    -- +(-45:0.5) coordinate (a2) 
        (a1)    -- +(-135:0.5) coordinate (a3) 
        (a2)    -- +(-45:0.5) coordinate (b1) 
        (a2)    -- +(-135:0.5) coordinate (b2) 
         (a3)    -- +(-135:0.5) coordinate (b3) 
  (b1)    -- +(-45:0.5) coordinate (c1) 
        (b1)    -- +(-135:0.5) coordinate (c2) 
  (b3)    -- +(-45:0.5) coordinate (c3) 
        (b3)    -- +(-135:0.5) coordinate (c4) 
  (c1)    -- +(-45:0.5) coordinate (d1) 
        (c1)    -- +(-135:0.5) coordinate (d2) 
  (c3)    -- +(-45:0.5) coordinate (d3) 
        (c3)    -- +(-135:0.5) coordinate (d4) 
                (c4)    -- +(-135:0.5) coordinate (d5) 
  (d1)    -- +(-45:0.5) coordinate (e1) 
        (d1)    -- +(-135:0.5) coordinate (e2) 
  (d3)    -- +(-45:0.5) coordinate (e3) 
        (d3)    -- +(-135:0.5) coordinate (e4) 
                (d5)    -- +(-135:0.5) coordinate (e6) 
                                (d5)    -- +(-45:0.5) coordinate (e5) 
  (e1)    -- +(-45:0.5) coordinate (f1) 
        (e1)    -- +(-135:0.5) coordinate (f2) 
  (e3)    -- +(-45:0.5) coordinate (f3) 
        (e3)    -- +(-135:0.5) coordinate (f4) 
                (e5)    -- +(-45:0.5) coordinate (f5) 
                (e5)    -- +(-135:0.5) coordinate (f6) 
                                (e6)    -- +(-135:0.5) coordinate (f7) 
  (f1)    -- +(-45:0.5) coordinate (g1) 
        (f1)    -- +(-135:0.5) coordinate (g2) 
  (f3)    -- +(-45:0.5) coordinate (g3) 
        (f3)    -- +(-135:0.5) coordinate (g4) 
                (f5)    -- +(-45:0.5) coordinate (g5) 
                (f5)    -- +(-135:0.5) coordinate (g6) 
                                (f6)    -- +(-135:0.5) coordinate (g7)          
                                                                (f7)    -- +(-135:0.5) coordinate (g8)        
          (g1)    -- +(-45:0.5) coordinate (h1) 
        (g1)    -- +(-135:0.5) coordinate (h2) 
  (g3)    -- +(-45:0.5) coordinate (h3) 
        (g3)    -- +(-135:0.5) coordinate (h4) 
                (g5)    -- +(-45:0.5) coordinate (h5) 
                (g5)    -- +(-135:0.5) coordinate (h6) 
                                (g6)    -- +(-135:0.5) coordinate (h7)          
                                                                (g7)    -- +(-135:0.5) coordinate (h8)           
   ;
            \fill(a1) circle (1pt);             \fill(a2) circle (1pt); \fill(a3) circle (1pt); 
              \fill(b1) circle (1pt);                 \fill(b2) circle (1pt);                          \fill(b3) circle (1pt);   
               \fill(c1) circle (1pt);                 \fill(c2) circle (1pt);                          \fill(c3) circle (1pt);   \fill(c4) circle (1pt);   
                              \fill(d1) circle (1pt);                 \fill(d2) circle (1pt);                          \fill(d3) circle (1pt);   \fill(d4) circle (1pt);   \fill(d5) circle (1pt);   
                                    \fill(e1) circle (1pt);                 \fill(e2) circle (1pt);                          \fill(e3) circle (1pt);   \fill(e4) circle (1pt);   
\fill(e5) circle (1pt);     \fill(e6) circle (1pt);   
                                                                        \fill(f1) circle (1pt);                 \fill(f2) circle (1pt);                          \fill(f3) circle (1pt);   \fill(f4) circle (1pt);   \fill(f5) circle (1pt);     \fill(f6) circle (1pt);    \fill(f7) circle (1pt);   
    \fill(g1) circle (1pt);                 \fill(g2) circle (1pt);                          \fill(g3) circle (1pt);   \fill(g4) circle (1pt);   \fill(g5) circle (1pt);     \fill(g6) circle (1pt);    \fill(g7) circle (1pt);   
 \fill(g8) circle (1pt);   
         \draw[dotted](b3) -- +(270:10);       \draw[dotted](b3) -- +(90:1);
        \draw[dotted](b1) -- +(270:10);       \draw[dotted](b1) -- +(90:1);
        \draw[dotted](f7) -- +(270:10);        \draw[dotted](f7) -- +(90:1);
         \draw[dotted](f1) -- +(270:10);        \draw[dotted](f1) -- +(90:1);
 \draw[dotted](j1) -- +(270:10);        \draw[dotted](j1) -- +(90:1);
 \draw[dotted](j11) -- +(270:10);        \draw[dotted](j11) -- +(90:1);       ; 
          \fill(i10) circle (1pt);     \fill(i9) circle (1pt);   \fill(h9) circle (1pt);  
             \fill(j9) circle (1pt);   \fill(j10) circle (1pt);  \fill(j11) circle (1pt);  
             \fill(k9) circle (1pt);               \fill(k10) circle (1pt);      \fill(k11) circle (1pt);               \fill(k12) circle (1pt);  
\draw(a1) --  (b1);   \draw(b1) --  (f5); \draw(g5) --  (f5);
   }
    \end{tikzpicture}$$
    \caption{The path $\omega(\SSTT)\in\Path((3,4),(7,0))$.}
    \label{another path}
\end{figure}\end{eg}

Given the unique  $\SSTT^\mu \in \SStd(\mu,\mu)$, it is clear that 
 $\omega(\SSTT^\mu)=\omega^\mu$ is the path corresponding to the word $w:\{1,\ldots,n\}\to \{1,\ldots,l\}$  
given by 
 $$ 
 w(1)=\min\{i\mid\mu_i \neq0\}
 $$
 and for $i>1$, $$w(i)=(w({i-1}) + j)  \text{ modulo $l$ }$$
 where $  j\geq 1$ is   minimal such that 
 $ \langle \omega^\mu({i-1}) +\rho , \varepsilon_{w({i-1})+j}	\rangle < \mu_{w({i-1})+j}$ where the subscripts are also read modulo $l$.

%
 \begin{eg}
As in the introduction,   let $l=3$, $n=13$, $e=8$, $\kappa=(0,4,6)$. 
 For $\mu=(5,6,2)$ the component word of $\SSTT^\mu$ is  
 $$
 (
 1,  2,  3,
  1,  2,  3,
   1,  2,   
   1,2,1,2,2
 ).
$$
 \end{eg}

\begin{prop}\label{step3}
Given $\mu \in E_l$, we fix the distinguished path $\omega(\SSTT^\mu)$ as above.  We have that  $\omega$ defines a 
 bijective map
 $$
\omega :\SStd(\lambda,\mu) \to \Path(\lambda,\mu).
 $$ 
\end{prop}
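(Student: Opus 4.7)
The plan is to exhibit an explicit two-sided inverse $\tau\colon \Path(\lambda,\mu) \to \SStd(\lambda,\mu)$ and check that everything matches up. Given $\omega \in \Path(\lambda,\mu)$, each consecutive difference $\omega(k)-\omega(k-1)$ equals some $\varepsilon_{w(k)}$, which produces a word $w=(w(1),\dots,w(n))$ in $\{1,\dots,l\}$. Define $\tau(\omega)$ node-by-node: at step $k$, insert the entry $D_\mu(k)$ into the unique addable node of the $w(k)$-th component of the partial shape built so far. This addable node is unique precisely because every component of $\lambda\in\pi$ is a single column. Since the component word of $\tau(\omega)$ tautologically reads off $w$, the identity $\omega\circ\tau=\mathrm{id}$ is immediate; likewise $\tau\circ\omega=\mathrm{id}$. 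Thus the content of the statement is that $\tau(\omega)$ is a \emph{semistandard} tableau of the correct residue type.

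The verification proceeds by induction on the number of reflections expressing $\omega$ in terms of $\omega^\mu$. For the base case $\omega=\omega^\mu$, the inductive construction of $\omega^\mu$ (reading entries into the leftmost available component at each step) gives $\tau(\omega^\mu)=\SSTT^\mu$, which is semistandard by inspection. For the inductive step, write $\omega=s_{\varepsilon_i-\varepsilon_j,me}^{\ik}\cdot \omega'$. The hypothesis that $\omega'(\ik)$ lies on $h_{\varepsilon_i-\varepsilon_j,me}$ means, by Lemma~\ref{useful-lemma}, that after step $\ik$ the $i$-th and $j$-th components of $\tau(\omega')$ have \emph{addable nodes of the same residue}. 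Reflecting through this hyperplane swaps the component labels $i\leftrightarrow j$ in $w$ for all $k>\ik$, which is therefore residue-compatible: the entry $D_\mu(k)$ keeps its residue because the addable nodes it could occupy in components $i$ and $j$ carry that same residue. The semistandard inequalities $\SSTT(r,c,m)>\SSTT(r-1,c,m)+g$ and $\SSTT(r,c,m)>\SSTT(r,c-1,m)-g$ reduce for one-column multipartitions (with the FLOTW choice $\theta=(0,1,\dots,l-1)$ and $g=l$) to inequalities which are preserved under the reflection step, since we always fill the lowest available node of each column.

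Finally, one must verify that $\omega$ is \emph{degree-preserving}. The tableau degree is the sum $\sum_k d_\lambda(\SSTT_k)$, counting the signed difference between addable and removable $A$-nodes lying to the right of $\SSTT_k$; by Lemma~\ref{useful-lemma}, an addable (respectively removable) node to the right of $\SSTT_k$ in component $m'>m=R(\SSTT)_k$ corresponds to the point $\omega(\SSTT)(k)$ lying on a hyperplane $h_{\varepsilon_m-\varepsilon_{m'},m''e}$ in the step \emph{onto} (respectively \emph{off of}) the hyperplane. Matching the four cases of Definition~\ref{Soergeldegree} depicted in Figure~\ref{hyper} with the sign conventions in $d_\lambda(\square)$ gives $\deg(\SSTT)=\deg(\omega(\SSTT))$. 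I expect the main obstacle to be bookkeeping in this last step: one must handle simultaneous steps onto and off several hyperplanes (as in $\overline{\omega}^\gamma$ of Example~\ref{omegagamma}) and check that the decomposition of $d_\lambda(\square)$ into per-root contributions matches the $\alpha$-by-$\alpha$ sum defining the Soergel degree, exactly as was done inside the proof of Proposition~\ref{the same}.
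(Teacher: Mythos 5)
Your overall strategy (build an explicit inverse $\tau$ on paths and check residue-compatibility by induction on the reflections expressing $\omega$ in terms of $\omega^\mu$) is a legitimate alternative to the paper's argument, which instead takes injectivity of $\omega$ as clear and then counts: using Lemma \ref{useful-lemma} it identifies, at each step $k$, the set of components with an addable node of residue $r(k)$ with the set of hyperplanes through the relevant point of $\omega^\mu$, so that both $|\SStd(-,\mu)|$ and $|\Path(-,\mu)|$ equal $d_1d_2\cdots d_n$. Your forward direction (a reflected path still yields a residue-respecting filling, via the equal addable residues at the reflection point and the parallel decrement of addable residues in the two columns thereafter) is essentially sound, although your justification of semistandardness is thin: for one-column shapes the column inequality $\SSTT(r,1,m)>\SSTT(r-1,1,m)+g$ is not a consequence of ``filling the lowest available node'', but of the residue condition combined with the hypothesis $\kappa_i\notin\{\kappa_j,\kappa_j+1\}$ and the explicit form of $D_\mu$, which force two entries of $D_\mu$ whose residues differ by one (the lower by one) to be at least $g$ apart.

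The genuine gap is the word ``likewise'' in ``$\omega\circ\tau=\mathrm{id}$ is immediate; likewise $\tau\circ\omega=\mathrm{id}$''. For the composite $\tau\circ\omega$ to make sense on $\SStd(\lambda,\mu)$ you must know that $\omega(\SSTT)$ lies in $\Path(\lambda,\mu)$, i.e.\ that the path read off from an \emph{arbitrary} semistandard tableau is obtainable from $\omega^\mu$ by a series of reflections. Your induction only runs in the direction paths $\to$ tableaux, so what you have actually shown is that $\tau$ is a well-defined injection with $\omega\circ\tau=\mathrm{id}$, which gives a bijection between $\Path(\lambda,\mu)$ and the image $\tau(\Path(\lambda,\mu))\subseteq\SStd(\lambda,\mu)$, but not that this image is all of $\SStd(\lambda,\mu)$ --- and that surjectivity is precisely the nontrivial half of the proposition (indeed it is what makes $\omega$ well defined as a map into $\Path(\lambda,\mu)$ at all). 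It can be repaired symmetrically: given $\SSTT$, compare its component word with that of the current reference path (starting with $\omega^\mu$); at the first position $k$ of disagreement the two partial shapes coincide, both candidate components have addable nodes of residue $r(k)$, so by Lemma \ref{useful-lemma} the point $\omega(k-1)$ lies on the corresponding hyperplane and one may reflect the suffix there; iterating produces $\omega(\SSTT)$ from $\omega^\mu$ by reflections. Alternatively one can fall back on the paper's counting of both sides by $d_1\cdots d_n$. Finally, note that your last paragraph on degree-preservation addresses Proposition \ref{step3.5}, not the statement at hand; it is harmless but not needed here.
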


\begin{proof}
The map $\omega$ is clearly an injective map, it remains to show 
  that both sets have the same size. 
The sets $\SStd(\mu,\mu)$ and  $\Path(\mu,\mu)$
 each possess a unique element   $\SSTT^\mu$, respectively $\omega^\mu$.  
For $1\leq k \leq n$, let $r(k)$ denote the residue of the node $\SSTT^\mu_k$ and 
let $t(k)$ denote the component of the $l$-partition in which this node is added. For $1\leq k \leq n$,  it follows by Lemma \ref{useful-lemma} that   
 $$
{\rm Add}(\Shape(\SSTT^\mu_{\leq k-1}),r(k))=
 \{ i \mid   \omega^\mu(k) 
 \in h_{\varepsilon_i-\varepsilon_{t(k)}, m_{ik}e}				\text{ for some }  m_{ik}\in\ZZ	\}.
$$
We let $d_k$ denote the cardinality of this set.  

We construct both $\SSTT\in\SStd(-,\mu)$ and $\omega\in \Path(-,\mu)$ step-by-step; in
the former case, by adding one node at a time to the tableau
 and in the latter case
 by taking one step at a time in the geometry.

The number of choices to be made at the $k$th point in the tableau 
  is equal to $d_k$, for $1\leq k \leq n$.  Therefore the number of tableaux of weight $\mu$ is 
  equal to $d_1d_2\ldots d_n$.  
On the other hand, in the notation of Section \ref{pathsinanaclove}, any path $\omega \in \Path(-,\mu)$ may be written as
\[
\omega =
s_{\varepsilon_{i(1)} - \varepsilon_{t(1)},m_{i1}e}^1
\ldots 
s_{\varepsilon_{i(n)} - \varepsilon_{t(n)},m_{in}e}^n
\omega^\mu
\]  for $i(k)\in {\rm Add}(\Shape(\SSTT^\mu_{\leq k-1}),r(k))$ and $m_{ik}\in\ZZ$
(of course, if $d_k=1$, the reflection is necessarily trivial).  
The number of such paths is equal to the number of  distinct possible series of reflections, 
$d_1\ldots d_n$.  
\end{proof}

\begin{cor}
If  $\lambda,\mu \in \pi$, label simple modules in the same $\TL_n(\kappa)$-block, this implies that their images in $E_n$ are in the same $W_{l-1}^e$ orbit under the $\rho$-shifted action. 
\end{cor}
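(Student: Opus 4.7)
The plan is to combine the abstract Linkage Principle (Proposition \ref{linakge}) with the concrete bijection between semistandard tableaux and paths given in Proposition \ref{step3}. First I would invoke Proposition \ref{linakge}: if $\lambda$ and $\mu$ label simple modules in the same block of $\TL_n(\kappa)$, then they are tableau-linked, so there exists a chain
\[
\lambda=\lambda^{(0)},\lambda^{(1)},\ldots,\lambda^{(k)}=\mu
\]
in $\pi$ such that for every $0\leq i<k$, either $\SStd(\lambda^{(i)},\lambda^{(i+1)})\neq\emptyset$ or $\SStd(\lambda^{(i+1)},\lambda^{(i)})\neq\emptyset$.

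The next step is to transport this statement across the bijection of Proposition \ref{step3}: a nonempty semistandard tableau set $\SStd(\nu,\eta)$ corresponds to a nonempty path set $\Path(\nu,\eta)$. Hence for each adjacent pair $\lambda^{(i)},\lambda^{(i+1)}$, either $\Path(\lambda^{(i)},\lambda^{(i+1)})$ or $\Path(\lambda^{(i+1)},\lambda^{(i)})$ is nonempty. By the definition of $\Path(\nu,\eta)$ from Section \ref{pathsinanaclove}, this means that the endpoint $\nu$ of some element of $\Path(\nu,\eta)$ is obtained from the endpoint $\eta$ of the distinguished path $\omega^\eta$ by a finite sequence of reflections $s_{\alpha,me}$ through hyperplanes $h_{\alpha,me}$. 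Since each such reflection acts by the $\rho$-shifted action of an element of $W_l^e$ (as verified by the fact that $h_{\alpha,me}$ is the fixed set of $s_{\alpha,me}$ under the shifted action), we conclude that $\lambda^{(i)}$ and $\lambda^{(i+1)}$ lie in a single $W_l^e$-orbit on $E_l$ under $\cdot_\rho$.

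Finally, transitivity of the orbit equivalence relation, applied along the chain $\lambda^{(0)},\ldots,\lambda^{(k)}$, immediately yields $\mu \in W_l^e \cdot_\rho \lambda$, which is the claim. (The apparent notational mismatch between $W_l^e$ of the geometry subsection and the $W_{l-1}^e$ in the corollary statement is only cosmetic, both referring to the affine Weyl group attached to the root system $\Phi_{l-1}$ of type $A_{l-1}$.)

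I do not expect any serious obstacle: Proposition \ref{linakge} supplies the representation-theoretic input, Proposition \ref{step3} supplies the combinatorial bridge, and the geometric fact that $\Path(\nu,\eta)\neq\emptyset$ forces $\nu$ and $\eta$ to be $W_l^e$-conjugate under the $\rho$-shifted action is built into the very construction of paths by iterated reflection of the admissible distinguished path. The only point that requires a line of care is checking that this reflection of the endpoint does indeed coincide with the $\rho$-shifted action of the generator $s_{\alpha,me}\in W_l^e$, but this is immediate from the definition in Section \ref{Soergeldeg}.
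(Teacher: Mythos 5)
Your proposal is correct and follows essentially the same route as the paper: the paper's proof simply invokes Proposition \ref{linakge} and notes that the equivalence relation generated by $\Path(\lambda,\mu)\neq\emptyset$ coincides with the $W^e$-orbit relation under the $\rho$-shifted action. You merely spell out the details the paper leaves as ``easy to see'' (the transfer via the bijection of Proposition \ref{step3} and the fact that reflecting a path moves its endpoint by a shifted reflection), which is fine.
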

\begin{proof}
This follows from Proposition \ref{linakge},  as it is easy to see that the equivalence classes of the relation generated by $\lambda \sim \mu$ if  $\Path(\lambda,\mu)\neq \emptyset$ are the same as the $W_{l-1}^e$-orbits. 
\end{proof}

\begin{lem}\label{useful-lemma2} 
Let $\lambda=(1^{\lambda_1},\ldots,1^{\lambda_l}) \in \pi$ be   such that 
$ {\lambda_i} > {\lambda_j} $ for some $1\leq i,j \leq l$ and suppose that
the residues of the addable nodes in  $i$th and $j$th components   of $\lambda$ are equal to
 $r\in(\ZZ/e\ZZ)$.  

Then $\lambda\in E_l$ lies on a hyperplane of the form $x_{i}-x_{j}=m_{ij}e$ for some $m_{ij} \in \ZZ$. 
We have that $(\lambda+\varepsilon_i) \in E_l^+(\varepsilon_i-\varepsilon_j,m_{ij}e)$ and 
 $(\lambda+\varepsilon_j) \in E_l^-(\varepsilon_i-\varepsilon_j,m_{ij}e)$.  
\end{lem}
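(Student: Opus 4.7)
The plan is to reduce both assertions to a single linear computation with $\langle{-}+\rho,\varepsilon_i-\varepsilon_j\rangle$. First I would invoke Lemma \ref{useful-lemma} to obtain the hyperplane containment: since the addable $i$-th and $j$-th component nodes of $\lambda$ share a residue, there is an integer $m_{ij}$ with
\[
\langle\lambda+\rho,\varepsilon_i-\varepsilon_j\rangle = m_{ij}e,
\]
which by the definition of $h_{\varepsilon_i-\varepsilon_j,m_{ij}e}$ given in Section~\ref{Soergeldeg} is precisely what it means for $\lambda$ to lie on the hyperplane denoted informally by $x_i-x_j=m_{ij}e$. This takes care of the first claim.

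For the second claim, I would first pin down which of $E_l^+$, $E_l^-$ is characterised by $\langle{-}+\rho,\varepsilon_i-\varepsilon_j\rangle > m_{ij}e$ versus $<m_{ij}e$. The convention fixed in Subsection~\ref{Soergeldeg} is that $\astrosun\in E_l^-$, so I only need to know on which side of the hyperplane the origin lies. Expanding directly,
\[
\langle\lambda+\rho,\varepsilon_i-\varepsilon_j\rangle = (\lambda_i-\lambda_j) + (\rho_i-\rho_j) = (\lambda_i-\lambda_j) + (\kappa_j-\kappa_i),
\]
while at the origin the same pairing equals $\kappa_j-\kappa_i$. The hypothesis $\lambda_i>\lambda_j$ therefore forces
\[
\langle\astrosun+\rho,\varepsilon_i-\varepsilon_j\rangle = \kappa_j-\kappa_i < m_{ij}e,
\]
so $E_l^-$ is exactly the half-space where the pairing with $\varepsilon_i-\varepsilon_j$ takes values strictly less than $m_{ij}e$, and $E_l^+$ is the complementary half-space.

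Finally, I would conclude by evaluating the pairing at the two shifted points. Since
\[
\langle\lambda+\varepsilon_i+\rho,\varepsilon_i-\varepsilon_j\rangle = m_{ij}e + 1 > m_{ij}e,
\]
we obtain $\lambda+\varepsilon_i\in E_l^+(\varepsilon_i-\varepsilon_j,m_{ij}e)$, and similarly
\[
\langle\lambda+\varepsilon_j+\rho,\varepsilon_i-\varepsilon_j\rangle = m_{ij}e - 1 < m_{ij}e
\]
gives $\lambda+\varepsilon_j\in E_l^-(\varepsilon_i-\varepsilon_j,m_{ij}e)$. There is no real obstacle here — the only subtlety is the bookkeeping which confirms that the positivity condition $\lambda_i>\lambda_j$ always suffices to place the origin on the ``small side'' of the relevant hyperplane, regardless of the signs of $\kappa_j-\kappa_i$ or of $m_{ij}$ itself; this is automatic from the identity $m_{ij}e = (\lambda_i-\lambda_j)+(\kappa_j-\kappa_i)>\kappa_j-\kappa_i$.
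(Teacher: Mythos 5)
Your proposal is correct and follows essentially the same route as the paper: the hyperplane containment comes from Lemma \ref{useful-lemma}, and the placement of $\lambda+\varepsilon_i$ and $\lambda+\varepsilon_j$ comes from evaluating $\langle\,\cdot\,+\rho,\varepsilon_i-\varepsilon_j\rangle$ on either side of $m_{ij}e$. In fact you are slightly more careful than the paper's two-line proof, since you spell out the step it leaves implicit, namely that $\lambda_i>\lambda_j$ forces $m_{ij}e>\kappa_j-\kappa_i=\langle\astrosun+\rho,\varepsilon_i-\varepsilon_j\rangle$, so the origin (hence $E_l^-$) lies on the side where the pairing is smaller.
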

\begin{proof}
We have seen that $\lambda$ lies on a hyperplane by Lemma \ref{useful-lemma}. We have assumed that  $ \lambda_i > \lambda_j $, and so 
 $$ 
 \langle \lambda+\rho +\varepsilon_i   , \varepsilon_i-\varepsilon_j \rangle 
 > \langle \lambda +\rho   , \varepsilon_i-\varepsilon_j\rangle 
 $$
  $$ 
 \langle \lambda+\rho +\varepsilon_j   , \varepsilon_i-\varepsilon_j \rangle 
 < \langle \lambda+\rho   , \varepsilon_i-\varepsilon_j\rangle 
 $$
 as required. 
 \end{proof}

\begin{prop}\label{step3.5}
The map $\omega:\SStd(\lambda,\mu) \to \Path(\lambda,\mu)$ is degree preserving.
\end{prop}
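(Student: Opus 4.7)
The plan is to prove the equality of degrees step by step, by comparing the contribution of the $k$th tableau entry with that of the $k$th path step. Both degrees are defined recursively, so it will suffice to show that for each $1 \leq k \leq n$
\[
d_{\mu_k}(\SSTT_k) \;=\; \sum_{\alpha \in R^+} d_\alpha(\omega(\SSTT), k),
\]
where $\mu_k := \Shape(\SSTT_{\leq k})$, and then sum over $k$.

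Fix $k$ and let $t := R(\SSTT)(k)$ be the component into which $\SSTT_k$ is placed, so its residue is $r = \kappa_t - (\mu_{k-1})_t \pmod e$. Because $\omega(k) = \omega(k-1) + \varepsilon_t$, the only roots contributing to the right-hand side are those of the form $\varepsilon_m - \varepsilon_t$ or $\varepsilon_t - \varepsilon_m$ with $m \neq t$. For such an $\alpha$, Lemma \ref{useful-lemma} gives the hyperplane incidences: $\omega(k-1)$ lies on some $h_{\alpha, m'e}$ iff component $m$ has an addable $r$-node of $\mu_{k-1}$, and $\omega(k)$ lies on one iff component $m$ has a removable $r$-node of $\mu_k$. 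Combining Lemma \ref{useful-lemma2} (to determine on which side of the hyperplane each endpoint lies) with the four sign cases of Figure \ref{hyper}, one reads off that each component $m \neq t$ contributes $+1$ to the path side exactly when component $m$ has an addable $r$-node of $\mu_{k-1}$ and $(\mu_{k-1})_m > (\mu_{k-1})_t$, and contributes $-1$ exactly when component $m$ has a removable $r$-node of $\mu_k$ and $(\mu_k)_m > (\mu_k)_t$. The residue-separation condition $\kappa_i \notin \{\kappa_j, \kappa_j+1\}$ built into $\TL_n(\kappa)$ rules out the boundary equalities $(\mu_{k-1})_m = (\mu_{k-1})_t$ and $(\mu_{k-1})_m = (\mu_{k-1})_t + 1$ whenever the corresponding addable or removable $r$-node exists, so no ties need to be analysed.

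To close the argument, I compare with the tableau side. Under our fixed choice $\theta = (0,1,\ldots,l-1)$ and $g = l$, the node $(r',1,m)$ of a one-column multipartition sits at $x$-coordinate $(m-1) + l(r'-1)$, so a direct check shows that an addable $r$-node in component $m \neq t$ lies to the right of $\SSTT_k$ iff $(\mu_{k-1})_m > (\mu_{k-1})_t$, while a removable $r$-node of $\mu_k$ in component $m \neq t$ lies to the right of $\SSTT_k$ iff $(\mu_k)_m > (\mu_k)_t$; the same residue-separation hypothesis eliminates the boundary cases. Component $t$ itself contributes nothing to $d_{\mu_k}(\SSTT_k)$: its addable node in $\mu_k$ has residue $r-1$, and its unique removable $r$-node is $\SSTT_k$ itself. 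Matching the two counts gives the step-wise identity, and summing over $k$ proves the proposition. The main obstacle will be keeping the sign conventions of Figure \ref{hyper} straight through the case analysis; once Lemma \ref{useful-lemma2} is invoked to locate $\omega(k-1)$ and $\omega(k)$ on the correct sides of each hyperplane and the boundary degeneracies are eliminated via the residue-separation condition, the comparison with the tableau count becomes essentially automatic.
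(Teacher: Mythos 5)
Your argument is correct and follows essentially the same route as the paper's proof: a step-by-step comparison of the two recursively defined degrees, translating hyperplane incidences into addable/removable nodes of the relevant residue via Lemma \ref{useful-lemma}, determining signs via Lemma \ref{useful-lemma2} and the cases of Figure \ref{hyper}, and using the condition $\kappa_i\notin\{\kappa_j,\kappa_j+1\}$ to exclude the degenerate coincidences. The only difference is cosmetic: you make the ``to the right of $\SSTT_k$'' comparison explicit through the coordinates $(m-1)+l(r'-1)$, where the paper states the identification of the two counts directly.
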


\begin{proof}
We fix a tableau $\SSTT \in \SStd(\lambda,\mu)$ and let $\omega:=\omega(\SSTT)$ denote the corresponding element of $\Path(\lambda,\mu)$. 
For $1\leq k \leq n$, we truncate to consider the path of length $k-1$ (respectively tableau with $k-1$ nodes),
  $\omega_{\leq \ik-1}$ (respectively $\SSTT_{\leq k-1}$) and identify this with
  the multipartition $\Shape(\SSTT_{\leq k-1}) \in \pi$.  

Let $r_k$ denote the residue of the addable node $\SSTT_k$ and let $t(k)$ denote the
component in which this node is added.   
By the definition of the Soergel-degree, we are interested in the cases
 where $1\leq i \leq l$ is such that
\begin{itemize}
\item[$(i)$] 
  $\omega(k-1)\in h_{\varepsilon_i-\varepsilon_{t(k)},m_{ik}e}$
 	and $\omega(k) \in E^-_l(\varepsilon_i-\varepsilon_{t(k)},m_{ik}e)$ for some $m_{ik}\in \ZZ$,  
\item[$(ii)$] 
   $\omega(k-1)\in E^+_l(\varepsilon_i-\varepsilon_{t(k)},m_{ik}e)$
 	and $\omega(k) \in  h_{\varepsilon_i-\varepsilon_{t(k)}
	,m_{ik}e}$ for some $m_{ik}\in \ZZ$.  
\end{itemize}
By Lemma \ref{useful-lemma2}, the 
$1\leq i \leq l$ above  label the components of  
\begin{itemize}
\item[$(i)$]   the  $r_k$-addable nodes of $\SSTT_{\leq k-1}$
		 to the right of  $\SSTT_k$, 
\item[$(ii)$]  the  $(r_k-1)$-addable nodes of $\SSTT_{\leq k-1}$
		 to the right of  $\SSTT_k $.
		 \end{itemize}
We observe that, because of the condition $\kappa_i\not\in \{
  \kappa_j, \kappa_j +1 \}$ for $i\neq j$, the set of $1\leq i \leq l$ which label  $(r_k-1)$-addable nodes of $\SSTT_{\leq k}$
		 to the right of  $\SSTT_k$ is equal to the set of 
		 $r_k$-removable nodes of $\SSTT_{\leq k-1}$  to the right of  $\SSTT_{k}$.
Therefore the result follows.  
\end{proof}

\begin{prop}\label{step4}
Given an $e$-regular $\mu\in\mptn ln$, the path $\omega^\mu$ is admissible.
\end{prop}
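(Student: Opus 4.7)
The plan is to verify the two conditions of Definition \ref{admissibledefn} in turn.

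For condition (i), I would first establish, by induction on $k$, the identity $\omega^\mu_{\leq k}=\omega^{\nu_k}$ where $\nu_k:=\omega^\mu(k)$. In the inductive step, the cyclic rules defining $\omega^\mu$ and $\omega^{\nu_k}$ both act on the common state $\nu_{k-1}$ from the common previous position $w(k-1)$; for $\omega^{\nu_k}$ the only component whose current value is strictly less than its target $\nu_k$ is $w(k)$, since $\nu_k-\nu_{k-1}=\varepsilon_{w(k)}$, so the two rules must select the same minimal $j\in\{1,\dots,l\}$. Once the identity is in hand, Proposition \ref{step3.5} gives $\deg(\omega^\mu_{\leq k})=\deg(\SSTT^{\nu_k})$, and this vanishes because $c^{\nu_k}_{\SSTT^{\nu_k}\SSTT^{\nu_k}}=1_{\nu_k}$ is a degree-zero idempotent by condition (6) of Definition \ref{defn1}.

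For condition (ii), Lemma \ref{useful-lemma} lets me reformulate the question: admissibility fails at $\omega^\mu(k)$ precisely when three distinct components of $\nu:=\omega^\mu(k)$ admit addable nodes of a common residue modulo $e$, since two hyperplanes $h_{\varepsilon_i-\varepsilon_j,m_1e}$ and $h_{\varepsilon_{i'}-\varepsilon_{j'},m_2e}$ are non-orthogonal iff $\{i,j\}\cap\{i',j'\}\neq\emptyset$. My plan is to partition the indices into the active set $A=\{i:\nu_i<\mu_i\}$ and the full set $F=\{i:\nu_i=\mu_i\}$. The cyclic construction of $\omega^\mu$ maintains the round-robin invariant $|\nu_i-\nu_j|\leq 1$ for all $i,j\in A$, because at every step the rule advances to the next cyclically available component. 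Combined with the standing hypothesis $\kappa_i-\kappa_j\not\equiv-1,0,1\pmod e$ for $i\neq j$, this forces the addable residues $\kappa_i-\nu_i$ to be pairwise distinct across $A$. For full components, the addable residues $\kappa_i-\mu_i$ are pairwise distinct across $F$ by the $e$-regularity of $\mu$ (applying Lemma \ref{useful-lemma} to $\mu$ itself). In any triple of components sharing a residue, at least two lie in the same set by pigeonhole, contradicting one of these two distinctness statements; so no such triple exists.

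The main technical obstacle will be proving the round-robin invariant rigorously once components begin to leave $A$ at different times. Each fill event contracts the active cycle, and one must verify that the balance among surviving active components is preserved through the transition. This is a purely combinatorial property of $\omega^\mu$ independent of the algebra, and I expect it to follow from a direct induction tracking the cyclic position in parallel with the fill events. Modulo this invariant, part (i) reduces to bookkeeping using the cellular structure and part (ii) reduces to pigeonhole plus residue arithmetic.
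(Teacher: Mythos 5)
Your proof is correct, and it rests on the same two pillars as the paper's own argument: the separation condition $\kappa_i-\kappa_j\not\equiv 0,\pm1\pmod e$ controls the components that are still being filled, while $e$-regularity of $\mu$ controls those that are already full. The organisation, however, is genuinely different. The paper proves the orthogonality condition of Definition~\ref{admissibledefn} by induction on the step $k$, looking only at hyperplanes through the component just incremented and asserting --- with the balance of the round-robin left implicit --- that any component congruent to it is already full, so that $e$-regularity leaves at most one incident hyperplane through that index. Your version is non-inductive: you make the balance invariant $|\nu_i-\nu_j|\le 1$ on active components explicit, deduce via Lemma~\ref{useful-lemma} that addable residues are pairwise distinct within the active set and (by $e$-regularity) within the full set, and conclude by pigeonhole that no three components share an addable residue; this isolates exactly the combinatorial fact the paper elides and avoids the induction, which is a genuine gain in clarity. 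For the degree-zero condition the paper simply declares it clear, whereas you derive it from the truncation identity $\omega^\mu_{\le k}=\omega^{\nu_k}$ together with Proposition~\ref{step3.5} and the fact that $1_{\nu_k}=c^{\nu_k}_{\SSTT^{\nu_k}\SSTT^{\nu_k}}$ is a homogeneous idempotent, hence of degree zero by Definition~\ref{defn1}. One point to tighten: as stated, your induction for the truncation identity only compares the final steps, since knowing $\omega^\mu_{\le k-1}=\omega^{\nu_{k-1}}$ does not by itself place $\omega^{\nu_k}$ at the state $\nu_{k-1}$ after $k-1$ steps. The identity is true and the repair is routine: induct on the step index $j\le k$, observing that at each common state every component that is full for $\mu$ is full for $\nu_k$, while the component selected by the $\mu$-rule at a step $j\le k$ is never full for $\nu_k$ (its coordinate still increases by time $k$), so the two selection rules agree step by step. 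With that bookkeeping in place, your argument is complete.
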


\begin{proof}
 It is clear that $\deg(\omega_{\leq \ik})=0$ for $1\leq k \leq n$.  
Now assume that
 $\omega^\mu(k)$ lies on two
 (or more) hyperplanes $x_i-x_j=m_1e$ and  $x_{i'}-x_{j'}=m_2e$ for some $1 \leq k\leq n$ and $m_1,m_2\in\ZZ$. We will show that 
$i,j,i',j'$ are necessarily distinct.  
 

To prove the claim, we recall our description of $\omega^\mu$.  Let $r_k$ denote the residue of the addable node $\SSTT_k$ and let $t(k)$ denote the
component in which this node is added.
It is clear that the result holds for $k=0$, we proceed by induction.  
For $1\leq k \leq n$, assume $\omega^\mu(k)$ lies on the hyperplane $h_{\varepsilon_i - \varepsilon_{t(k)},m_{ik}e} $ for some $m_{ik}\in \ZZ$.  
Our assumption on $\kappa$ ensures that $\kappa_{t(k)} \neq \kappa_j,\kappa_j\pm 1$
for any $1\leq j \leq l$.
 This implies that if $\langle \omega^\mu(k)+\rho  , \varepsilon_{t(k)}-\varepsilon_j \rangle \equiv 0 \pmod e$
 for any $1\leq j \leq l$, then $\langle \omega^\mu(k) +\rho, \varepsilon_j \rangle =
 \langle \omega^\mu+\rho, \varepsilon_j \rangle$.  
Our assumption that $\mu$ is $e$-regular implies that there is a maximum of one such value of $1\leq j \leq l$.  The result follows. 
 \end{proof}
  
  \begin{thm}\label{mainresult} 
The   algebra $\TL_n(\kappa)$  for $\kappa \in (\ZZ/e\ZZ)^l$ 
has  a  {Soergel-path basis} of type $\hat{A}_{l-1}$.  
 The  graded decomposition numbers   of an $e$-regular  block 
  are given by   Soergel's algorithm
\begin{align*} 
d_{\lambda\mu}	(t)= n_\mu({\lambda}),  \end{align*}
and the characters of the simple modules are given by the character algorithm
\[
\Dim{(L_\mu(\lambda))} = e_\mu(\lambda).
\]
  \end{thm}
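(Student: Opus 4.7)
The plan is to reduce the theorem directly to Theorem \ref{main:theorem:general} by verifying that $\TL_n(\kappa)$ satisfies the hypotheses of Definition \ref{soergeldefn}. All the real work has been distributed across the preceding propositions, so the main task here is only to assemble them.

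First I would record that the algebra is a graded cellular algebra with a theory of highest weights: this is precisely the content of Proposition \ref{step2}, which also embeds the weight poset $\pi$ into $E_l$ via the map $(1^{\lambda_1},\ldots,1^{\lambda_l})\mapsto \sum_i \lambda_i\varepsilon_i$. The accompanying alcove geometry is the one coming from the root system $\Phi_{l-1}$ of type $A_{l-1}$ with the $\rho$-shifted action of $W_l^e$ described in the preceding subsection; Lemma \ref{useful-lemma} guarantees that these hyperplanes have the right representation-theoretic interpretation. Next, I would combine Proposition \ref{step3} and Proposition \ref{step3.5} to obtain, for each pair $\lambda,\mu\in\pi$, a degree-preserving bijection
\[
\omega : \SStd(\lambda,\mu) \longrightarrow \Path(\lambda,\mu),\qquad \SSTT\mapsto \omega(\SSTT),
\]
that sends the distinguished tableau $\SSTT^\mu$ to the distinguished path $\omega^\mu$ attached to $\mu$. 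Proposition \ref{step4} then certifies that $\omega^\mu$ is admissible in the sense of Definition \ref{admissibledefn}, which is the last ingredient needed to conclude that $\TL_n(\kappa)$ is an algebra with a Soergel-path basis of type $\hat{A}_{l-1}$.

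Finally, I would check the positivity hypothesis of Theorem \ref{main:theorem:general}: by Proposition \ref{step2}, the quotient $\TL_n(\kappa)=A(n,\theta,\kappa)/(A(n,\theta,\kappa) e_\pi A(n,\theta,\kappa))$ is a saturated quotient of the diagrammatic Cherednik algebra, so graded decomposition numbers are inherited from the ambient algebra, and by Theorem \ref{step1} these satisfy $d_{\lambda\mu}(t)\in t\mathbb{N}_0[t]$ whenever $\lambda\ne\mu$. Therefore Theorem \ref{main:theorem:general} applies and gives
\[
d_{\lambda\mu}(t) = n_\mu(\lambda), \qquad \Dim{(L_\mu(\lambda))} = e_\mu(\lambda),
\]
as required.

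The main obstacle in this scheme is really located upstream, in the compatibility between the tableau combinatorics for $\pi$ and the alcove combinatorics of $\hat{A}_{l-1}$ (namely Propositions \ref{step3}, \ref{step3.5} and \ref{step4}); the condition $\kappa_i\notin\{\kappa_j,\kappa_j+1\}$ for $i\neq j$ is what forces the relevant hyperplanes to be pairwise orthogonal at any crossing point of $\omega^\mu$, and hence what makes the distinguished paths admissible. Once those propositions are in hand, the final assembly here is essentially formal, consisting of matching the data to Definition \ref{soergeldefn} and invoking Theorem \ref{main:theorem:general}.
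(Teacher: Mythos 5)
Your proposal is correct and matches the paper's own argument, which deduces Theorem \ref{mainresult} exactly by combining Theorem \ref{main:theorem:general}, the positivity from Theorem \ref{step1} (via Proposition \ref{step2}), and Propositions \ref{step2}, \ref{step3}, \ref{step3.5} and \ref{step4}. The assembly you describe, including the role of the condition $\kappa_i\notin\{\kappa_j,\kappa_j+1\}$ in the admissibility step, is precisely how the paper proceeds.
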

  \begin{proof}
  This follows from Theorems \ref{main:theorem:general} and \ref{step1} and Propositions \ref{step2}, \ref{step3},  \ref{step3.5}, and \ref{step4}.
  \end{proof}
  
  We also observe the following stability in the decomposition numbers as $n$ tends to infinity.  
Fix $n,l \in \NN$.  
 Given $\lambda$ a one-column multipartition of $n$ and $i\geq 0$, we let 
 $\lambda+{(1^i,\ldots, 1^i)}$ denote the one-column multipartition of $n+il$ obtained by adding $i$ boxes to every component of $\lambda$.  This defines an injective map from  multipartitions of $n$ to
    multipartitions of $n'=n+il$.  
These points may be identified with points    in the hyperplanes 
$ \varepsilon_1+\dots +\varepsilon_l=n$ 
and 
$ \varepsilon_1+\dots +\varepsilon_l=n+il$ 
of   $E_{l}$, respectively.  
We identify points in these two hyperplanes via the projection in the direction $ \varepsilon_1+\dots +\varepsilon_l$.  
%
%
%
%

  \begin{thm}\label{limiting}
 The decomposition numbers of  
 $\TL_n(\kappa)$  for $\kappa \in (\ZZ/e\ZZ)^l$ are stable as $n$ tends to infinity.  
 To be more precise,
 $$d_{\lambda\mu}(t)=d_{\lambda+(1^i,\ldots, 1^i), \mu+(1^i,\ldots, 1^i)}(t)$$
 for $i\geq 0$.
  \end{thm}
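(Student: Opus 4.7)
The plan is to reduce the statement to Theorem \ref{mainresult} and exploit a translation invariance of the alcove geometry. Setting $v := \varepsilon_1 + \cdots + \varepsilon_l$, so that the one-column multipartition $(1^i,\ldots,1^i)$ is identified with $iv \in E_l$ under the embedding, the theorem amounts to showing that $n_{\mu+iv}(\lambda+iv) = n_\mu(\lambda)$ for all $\lambda,\mu$ in a common $e$-regular block.

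The key geometric observation is that $v$ is orthogonal to every root $\varepsilon_j-\varepsilon_k$ of $\Phi_{l-1}$. Consequently every affine hyperplane $h_{\alpha,me}$ is preserved by translation by $v$, and each alcove of $W_l^e$ acting on $E_l$ is itself $v$-invariant: the alcoves extend infinitely in the $v$-direction. In particular the fundamental alcove contains both $\astrosun$ and $iv$, and $\mu$ (resp.\ $\lambda$) lies in the same alcove as $\mu+iv$ (resp.\ $\lambda+iv$). Moreover, translation by $v$ commutes with the $W_l^e$-action, so it induces a length-preserving bijection between the alcove series arising from $\omega^\mu$ and its $v$-translate.

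Next I would verify that the canonical path $\omega^{iv}$ from $\astrosun$ to $iv$ (which cycles through $\varepsilon_1,\varepsilon_2,\ldots,\varepsilon_l$ a total of $i$ times) is entirely contained in the fundamental alcove. At every intermediate point $p$ of this path, the coordinate differences satisfy $p_j - p_k \in \{-1,0,1\}$, so a crossing of $h_{\varepsilon_j-\varepsilon_k,me}$ during a single step would force $\kappa_j - \kappa_k \equiv 0 \text{ or } \pm 1 \pmod e$, contradicting the standing hypothesis $\kappa_j \notin \{\kappa_k,\kappa_k+1\}$ for $j \neq k$. I can then form an admissible path $\tilde\omega$ from $\astrosun$ to $\mu+iv$ by concatenating $\omega^{iv}$ with the $v$-translate of $\omega^\mu$; the latter segment is admissible because $v$-translation preserves the alcove geometry and $\omega^\mu$ is admissible by Proposition \ref{step4}.

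Finally, invoking the path-independence of Soergel's $n$-algorithm, I compute $n_{\mu+iv}(\lambda+iv)$ using $\tilde\omega$ rather than the canonical path. The first $il$ steps of $\tilde\omega$ never leave the fundamental alcove, so the $n$-function is unchanged throughout this prefix. The remaining steps trace the $v$-translate of $\omega^\mu$, and because $v$-translation matches alcove series bijectively and length-preservingly, the algorithm reproduces precisely the $v$-translate of its output on $\omega^\mu$. Therefore $n_{\mu+iv}(\lambda+iv) = n_\mu(\lambda)$, and Theorem \ref{mainresult} yields $d_{\lambda+(1^i,\ldots,1^i),\mu+(1^i,\ldots,1^i)}(t) = d_{\lambda\mu}(t)$. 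The main technical point is checking that $\omega^{iv}$ crosses no wall; this reduces to the residue calculation above, which is the only place where the hypothesis on $\kappa$ is used.
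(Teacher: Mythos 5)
Your argument is essentially sound, but it takes a longer route than the paper and, as written, only reaches the $e$-regular case. The paper's proof is a one-line combinatorial observation: concatenation $\omega\mapsto(\varepsilon_1,\ldots,\varepsilon_l)^i\circ\omega$ gives a degree-preserving bijection $\Path(\lambda,\mu)\to\Path(\lambda+(1^i,\ldots,1^i),\mu+(1^i,\ldots,1^i))$, applied uniformly to \emph{every} folded path, not just the distinguished one; this transfers the entire Soergel-path-basis data (in particular all graded weight-space dimensions $\Dim{(\Delta_\mu(\lambda))}$), from which the equality of graded decomposition numbers follows by the uniqueness recursion of Theorem \ref{main:theorem:general} (Kleshchev--Nash style), with no appeal to the path-independence of Soergel's algorithm. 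You instead translate the alcove geometry by $v=\varepsilon_1+\cdots+\varepsilon_l$, note that $v$ is orthogonal to all roots so that alcoves and the $W^e_l$-action are $v$-invariant, and then rerun the $n$-algorithm along a concatenated admissible path, invoking \cite{Soergel} for path-independence and Theorem \ref{mainresult} to convert $n_{\mu+iv}(\lambda+iv)=n_\mu(\lambda)$ into the statement about decomposition numbers. Your verification that the prefix $(\varepsilon_1,\ldots,\varepsilon_l)^i$ never meets a wall (the residue computation $p_j-p_k\in\{-1,0,1\}$ versus $\kappa_j\notin\{\kappa_k,\kappa_k+1\}$, together with the fact that integer unit steps can only cross a wall by landing on it) is exactly the point the paper leaves implicit in ``clearly degree-preserving,'' and is a worthwhile addition; it is also what guarantees that $\omega^{\mu+iv}$ is the concatenation of $(\varepsilon_1,\ldots,\varepsilon_l)^i$ with $\omega^\mu$ and that all foldings occur in the suffix.

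The one genuine shortfall is the restriction to $e$-regular blocks: Theorem \ref{limiting} is stated for all $\lambda,\mu\in\pi$, but your reduction to $n_{\mu+iv}(\lambda+iv)=n_\mu(\lambda)$ goes through Theorem \ref{mainresult}, which only computes decomposition numbers in $e$-regular blocks. To cover singular weights you should argue as the paper does at the level of path sets (equivalently, of semistandard tableaux via Propositions \ref{step3} and \ref{step3.5}): the concatenation bijection gives $\Dim{(\Delta_{\mu+iv}(\lambda+iv))}=\Dim{(\Delta_\mu(\lambda))}$ for all pairs in the block, and since the decomposition matrix is uniquely determined from these data by unitriangularity, $d_{\lambda\mu}(t)\in t\NN_0[t]$ for $\lambda\neq\mu$ (Proposition \ref{step2}), and $\Dim{(L_\mu(\nu))}\in\NN_0[t+t^{-1}]$ (Proposition \ref{troll}), the stability of decomposition numbers follows without any regularity hypothesis.
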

  \begin{proof}
  Given $\omega\in \Path(\lambda,\mu)$ we let 
 $\omega' \in \Path(\lambda+(1^i,\ldots, 1^i),\mu+(1^i,\ldots, 1^i))$
 denote the concatenated path 
 $$(\varepsilon_1,\varepsilon_2,\ldots ,\varepsilon_l)^i\circ \omega.$$
It is clear that this map is a degree preserving bijection.  The result follows.  
  \end{proof}

    \begin{cor}\label{mainresultforchered} 
  Fix a multicharge $ \kappa\in (\ZZ/e\ZZ)^l$ such  that 
  $\kappa_i\not\in \{
  \kappa_j ,  \kappa_j +1  \}$
   for all $i\neq j$.          Let $\theta\in\RR^l$ denote a FLOTW weighting.  
 Let $\lambda,\mu$ denote a pair of $e$-regular one-column multipartitions. 
 The  graded decomposition numbers  for $A(n,\theta,\kappa)$
  are  
\begin{align*} 
d_{\lambda\mu}	(t)= n_\mu({\lambda}),
 \end{align*}
where $n_\mu({\lambda})$ is the associated 
   affine Kazhdan--Lusztig polynomial of type $\hat{A}_{l-1}$.  
   These decomposition numbers are stable in the limit as $n$ tends to infinity as in Theorem \ref{limiting} above.
  \end{cor}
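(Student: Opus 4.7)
The plan is to deduce this corollary as a direct consequence of Theorem \ref{mainresult}, together with the fact that the graded decomposition numbers for weights in a saturated subset are preserved under the passage from $A(n,\theta,\kappa)$ to its cellular quotient $\TL_n(\kappa)$. Specifically, fix $\lambda,\mu \in \pi$ both $e$-regular. By Proposition \ref{step2}, the set $\pi$ is saturated for the FLOTW dominance order, so the ideal generated by $e_\pi$ is a cell ideal lying strictly above $\pi$ in the weight poset. Standard cellular algebra considerations (as recalled in the proof of Proposition \ref{step2}, cf.\ \cite[Appendix]{Donkin} for the ungraded analogue, which carries over verbatim to the graded setting using the graded cell chain) then give an identification of the corresponding graded standard modules and their composition multiplicities, so that
\[
[\Delta_{A(n,\theta,\kappa)}(\lambda) : L_{A(n,\theta,\kappa)}(\mu)\langle k \rangle] \;=\; [\Delta_{\TL_n(\kappa)}(\lambda) : L_{\TL_n(\kappa)}(\mu)\langle k \rangle]
\]
for all $k \in \ZZ$.

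Next I would invoke Theorem \ref{mainresult} to conclude that the right-hand side equals the coefficient $n_\mu(\lambda)$ produced by the Soergel algorithm run along the distinguished admissible path $\omega^\mu$ in the alcove geometry of type $\hat{A}_{l-1}$ attached to $(\rho,e)$ via $\rho = e(1,\ldots,1) - \kappa$. The identification of $n_\mu(\lambda)$ with an affine Kazhdan--Lusztig polynomial is then classical: Soergel's cancellation-free algorithm as described in Section \ref{pathalgorithm} is precisely the algorithm of \cite{Soergel} computing the parabolic Kazhdan--Lusztig polynomials of the affine Weyl group $W_l^e$ of type $\hat{A}_{l-1}$ acting on the dominant chamber via the $\rho$-shifted action, evaluated at the pair of alcoves containing $\mu$ and $\lambda$ respectively. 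This gives the first assertion.

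For the stability statement, I would simply quote Theorem \ref{limiting}, since the quotient $A(n,\theta,\kappa) \to \TL_n(\kappa)$ preserves decomposition numbers on $\pi$, and the shift $\lambda \mapsto \lambda + (1^i,\ldots,1^i)$ keeps us inside the set of one-column multipartitions and inside the same $W_l^e$-orbit under the identification of hyperplanes described before Theorem \ref{limiting}.

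The main obstacle, such as it is, is verifying that the cellular quotient preserves the \emph{graded} decomposition numbers and not merely the ungraded ones; but this is immediate from the graded version of the cell chain argument, since the cell ideal $A(n,\theta,\kappa)e_\pi A(n,\theta,\kappa)$ is a graded two-sided ideal and the standard and simple modules indexed by $\pi$ pull back degree-by-degree. Everything else is bookkeeping: Propositions \ref{step3} and \ref{step3.5} give the Soergel-path basis, Proposition \ref{step4} verifies admissibility of $\omega^\mu$, and Theorem \ref{step1} supplies the positivity $d_{\lambda\mu}(t) \in t\NN_0[t]$ needed to apply Theorem \ref{main:theorem:general}.
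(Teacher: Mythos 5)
Your proposal is correct and matches the paper's intended argument: the corollary is deduced exactly as you describe, by combining Theorem \ref{mainresult} for $\TL_n(\kappa)$ with the observation (made in the proof of Proposition \ref{step2}, via the saturatedness of $\pi$ and the reference to \cite[Appendix]{Donkin}) that graded decomposition numbers indexed by $\pi$ are unchanged under passage to the quotient, and then quoting Theorem \ref{limiting} for stability. Your extra care about the graded version of the cell-ideal argument is the same point the paper handles by noting the ideal generated by $e_\pi$ is a homogeneous cell ideal, so nothing further is needed.
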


 \begin{rem}\label{remakonblob}
This Soergel path basis  contains a vast amount of information
concerning  the representation theory of the    quiver Temperley--Lieb algebras of type $G(l,1,n)$.
 We have already seen that it provides a new interpretation for 
 Soergel's algorithm for computing the decomposition numbers of $\TL_n(\kappa)$.
 In the   next section we shall consider the $l=2$ case, 
   calculate the full submodule structure of the standard modules
 of $\TL_n(\kappa)$ for $\kappa\in (\ZZ/e\ZZ)^2$, and  show that the  algebra is positively graded.

 We have already remarked that our approach to the algebras
  $\TL_n(\kappa)$ is  heavily inspired by the combinatorics of 
\cite{MW03}. 
 In \cite{MW03} it is    conjectured that the   decomposition numbers
  of  the generalised blob algebras are given by 
 the  same  Kazhdan--Lusztig polynomials as those considered here.  
  Our algebra is a quotient of the diagrammatic Cherednik algebra, whereas the generalised blob algebra is 
 the corresponding quotient of the Ariki--Koike algebra.  
For a fixed  weighting $\theta$, the standard/Specht modules of these algebras have the same
  labelling set; however,  there is no known
 cellular basis for  the Ariki--Koike algebra with respect to the $\theta$-dominance order (except when $\theta$ is well-separated,  see \cite{Jacon}) and hence no way to relate the representation theories of the generalised blob
   and   Ariki--Koike algebras via an analogue of Proposition \ref{step2}.  
   Moreover, the resulting quotient algebra would not be amenable to our methods
   as it does not possess a Soergel path basis (for example,  
   for $l=2$ the blob algebra is not positively graded, \cite{Pla13}).  
   However we do believe that the generalised blob algebras are (graded) Morita equivalent to the corresponding quiver Temperley--Lieb algebras.  
 \end{rem}
 
  \subsection{The level two case} For $l=2$, the structure of the standard modules for $\TL_n(\kappa)$ labelled by $e$-regular points is particularly simple.   The proofs in this section are lightly sketched, but augmented with illustrative examples.  
 
 We remark that the submodule lattices obtained here are identical to those computed for the blob algebra in \cite{MW00}. This provides further evidence that the quiver Temperley--Lieb algebras are (graded) Morita equivalent to the generalised blob algebras.
 
Let $\mathfrak{a}_i$ denote the alcove of length $i=\ell( \mathfrak{a}_i) $ to the right of the origin
and 
$\mathfrak{a}_{i'}$ denote the alcove of length $i=\ell( \mathfrak{a}_{i'}) $ to the left of the origin, as depicted in the examples below.  
Fix a point $\lambda_0$ in the alcove containing the origin.  We let 
$\lambda_i$ and $\lambda_{i^{(\prime)}}$ denote the points in alcoves $\mathfrak{a}_i$ and
$\mathfrak{a}_{i'}$ which are in the same orbit as $\lambda_0$.
 For ease of notation, we often identify $\lambda_{i^{(\prime)}}$ with the 
 subscript ${i}^{(\prime)}$.

\begin{prop}
For $\kappa \in (\ZZ/e\ZZ)^2$, the algebra $\TL_n(\kappa)$ is positively graded. 
 For $\lambda_{i^{(\prime)}}$ and $\lambda_{j^{(\prime)}}$ in $E_2$, we have that
$$
d_{ {i^{(\prime)}} {j^{(\prime)}}}(t) = n_{j^{(\prime)}}(i^{(\prime)})=
\begin{cases}
  t^{j-i} & \text{ for $i<j$}, \\
0					& \text{otherwise}.  
\end{cases}  $$
\end{prop}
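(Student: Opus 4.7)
By Theorem \ref{mainresult}, the decomposition numbers are given by the values $n_{\mathfrak{a}_{j^{(\prime)}}}(\mathfrak{a}_{i^{(\prime)}})$ produced by Soergel's algorithm, so the explicit formula reduces to a direct computation. The type $\hat{A}_1$ geometry possesses a reflection symmetry through the origin swapping $\mathfrak{a}_k \leftrightarrow \mathfrak{a}_{k'}$, and both $n$ and the length function are equivariant under this involution; it therefore suffices to handle the target $\mathfrak{a}_j$ on the right. We proceed by induction on $j \geq 0$ along the alcove path $\mathfrak{a}_0, \mathfrak{a}_1, \ldots, \mathfrak{a}_j$.

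The one-dimensionality of the geometry makes the reflection $s_k$ through the wall between $\mathfrak{a}_k$ and $\mathfrak{a}_{k+1}$ simple to describe: it interchanges $\mathfrak{a}_{k-m} \leftrightarrow \mathfrak{a}_{k+1+m}$ for $0 \leq m \leq k$ and sends each left-side alcove $\mathfrak{a}_{m'}$ (for $m \geq 1$) to $\mathfrak{a}_{2k+1+m}$, which has length $\geq 2k+2$. The inductive hypothesis is
\[ n_{\mathfrak{a}_k}(\mathfrak{a}_i) = t^{k-i} \text{ for } 0 \leq i \leq k, \qquad n_{\mathfrak{a}_k}(\mathfrak{a}_{m'}) = 0 \text{ for } m \geq 1. \]
Applying the recurrence of Section \ref{pathalgorithm} for $n'_{\mathfrak{a}_{k+1}}$ with these values (case-splitting on whether $s_k$ is length-increasing or length-decreasing) yields $n'_{\mathfrak{a}_{k+1}}(\mathfrak{a}_i) = t^{k+1-i}$ for $0 \leq i \leq k+1$ and $n'_{\mathfrak{a}_{k+1}}(\mathfrak{a}_{m'}) = 0$ for $m \geq 1$. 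The only nonzero constant term appears at $\mathfrak{a}_{k+1}$ itself, whose length is excluded from the subpattern sum; hence no cancellation occurs and $n_{\mathfrak{a}_{k+1}} = n'_{\mathfrak{a}_{k+1}}$, completing the induction.

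Positive grading then follows immediately. The formula shows that $d_{\lambda \mu}(t) \in \NN_0[t]$, and a direct inspection of Definition \ref{Soergeldegree} in the one-dimensional setting confirms that every element of $\Path(\lambda, \mu)$ has non-negative degree (reflections of the admissible path $\omega^\mu$ can only contribute $+1$ terms at each wall crossing in one dimension). Combined with the degree-preserving bijection of Proposition \ref{step3.5}, this gives non-negative degrees on all tableaux and hence on all cellular basis elements, so $\TL_n(\kappa)$ is concentrated in non-negative degrees. The main obstacle is the careful bookkeeping of the $s_k$-action on alcoves on both sides of the origin and the verification that the constant-term-based subpattern sum vanishes at every inductive step; both reduce to short explicit computations once the one-dimensional geometry is parametrised.
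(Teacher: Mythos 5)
Your reduction to Theorem \ref{mainresult} is fine, but the induction that is meant to compute $n_{\mathfrak{a}_j}$ is not: it rests on a misreading of the algorithm and it ends up proving something that contradicts the formula you are trying to establish. The operation $s_i\cdot\mathfrak{b}$ in Section \ref{pathalgorithm} is the reflection of $\mathfrak{b}$ through the wall of $\overline{\mathfrak{b}}$ lying in the $W^e$-orbit of $s_i$ (an adjacent-alcove swap, i.e.\ the right action), not the global reflection through the single hyperplane $\overline{\mathfrak{a}}_i\cap\overline{\mathfrak{a}}_{i+1}$; your bookkeeping ``$\mathfrak{a}_{k-m}\leftrightarrow\mathfrak{a}_{k+1+m}$, $\mathfrak{a}_{m'}\mapsto\mathfrak{a}_{2k+1+m}$'' describes the latter. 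With the correct action the support of $n_{\mathfrak{a}_k}$ spreads to the opposite side of the fundamental alcove, so your inductive hypothesis $n_{\mathfrak{a}_k}(\mathfrak{a}_{m'})=0$ for $m\geq 1$ is false: the true values are $n_{\mathfrak{a}_k}(\mathfrak{a}_{m'})=t^{k-m}$ for $1\leq m\leq k-1$. This is exactly what the proposition asserts when $\lambda$ and $\mu$ lie on opposite sides of the origin, it is what the paper's worked example shows (in Figure \ref{tabular} the target is $\mathfrak{a}_{3'}$ and one obtains $n_{\mathfrak{a}_{3'}}(\mathfrak{a}_{1R})=t^2$ and $n_{\mathfrak{a}_{3'}}(\mathfrak{a}_{2R})=t$), and it is forced by the Alperin diagrams in the theorem that follows. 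Your claim that no subpattern cancellation ever occurs is also false: already at the third alcove one finds $n'_{\mathfrak{a}_3}(\mathfrak{a}_1)=t^2+1$, so the correction term $n_{\mathfrak{a}_1}$ must be subtracted, and an analogous correction occurs at every later step; the example records precisely this ($n'_\mu(\lambda)=t^2+1$, $n_\mu(\lambda)=t^2$). As written, your induction would conclude $d_{i'j}(t)=0$ whenever $\lambda_{i'}$ and $\lambda_j$ lie on opposite sides with $i<j$, contradicting the statement.

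The positivity argument also relies on a false claim. It is not true that in rank one every wall crossing of a reflected path contributes $+1$: Definition \ref{Soergeldegree} assigns $-1$ whenever a path steps onto a hyperplane from the far side, and this happens in rank one --- in the paper's own $\hat{A}_1$ example the path $\omega'$ of Figure \ref{LEVEL2EXAMPLE1} has $d_{\varepsilon_1-\varepsilon_2}(\omega',10)=-1$. What is true, and is the whole of the paper's argument, is that the \emph{total} degree of any such path is non-negative because all paths start at the origin and the root system has rank one (negative total degree requires the interaction of more than one root direction, cf.\ Example \ref{exampleofnonpositive}); for the closed form $t^{j-i}$ the paper simply cites the well-known rank-one (blob algebra) computation in \cite{MW03}. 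To salvage your self-contained approach you would need to rerun the induction with the correct wall-crossing action, carrying the two-sided support and the correction term at each step, and give a genuine argument for non-negativity of path degrees rather than the claim that $-1$ contributions cannot occur.
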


\begin{proof}
 Positivity   follows as our paths start at $\astrosun$ and 
the root system is of rank 1.  
The closed form for $n_{j^{(\prime)}}(i^{(\prime)})$ is well-known (see for example the introduction to \cite{MW03}).
 \end{proof}

%
%
%
%

%

\begin{rmk}
The algebra $\TL_n(\kappa)$  is not positively graded for  $l\geq 3$, as seen in Example \ref{exampleofnonpositive}.  
\end{rmk}
 
Given any pair $
\lambda_{i^{(\prime)}}, \lambda_{j^{(\prime)}}$ with $i<j$, there exists a unique element 
of $\Path(\lambda_{i^{(\prime)}}, \lambda_{j^{(\prime)}})$ of maximal degree equal to 
$j-i$.  
This is the unique path terminating at $\lambda_{i^{(\prime)}}$ which  may be  obtained from the distinguished path from $\astrosun$ to ${\lambda_{j^{(\prime)}}}$
 using the maximum number of   reflections in the hyperplanes $\overline{\mathfrak{a}}_0 \cap \overline{\mathfrak{a}}_1$
 and $\overline{\mathfrak{a}}_0 \cap \overline{\mathfrak{a}}_{1'}$.

\begin{eg} Let $n=11$, $e=4$ and $\kappa=(0,2)$.  Some of  the maximal paths in $\Path((5,6),-)$ are given in Figures
  \ref{22law} and \ref{11law}.  

\begin{figure}[ht]\captionsetup{width=0.9\textwidth}
$$\scalefont{0.8}  \begin{tikzpicture}[scale=0.6]
{ \clip (-4.5,-4.8) rectangle ++(9,5);
\path [draw,name path=upward line] (-3,-5) -- (3,-5);
  \path 
      (0, 0)              coordinate (a1)
        (a1)    -- +(-45:0.5) coordinate (a2) 
        (a1)    -- +(-135:0.5) coordinate (a3) 
        (a2)    -- +(-45:0.5) coordinate (b1) 
        (a2)    -- +(-135:0.5) coordinate (b2) 
         (a3)    -- +(-135:0.5) coordinate (b3) 
  (b1)    -- +(-45:0.5) coordinate (c1) 
        (b1)    -- +(-135:0.5) coordinate (c2) 
  (b3)    -- +(-45:0.5) coordinate (c3) 
        (b3)    -- +(-135:0.5) coordinate (c4) 
  (c1)    -- +(-45:0.5) coordinate (d1) 
        (c1)    -- +(-135:0.5) coordinate (d2) 
  (c3)    -- +(-45:0.5) coordinate (d3) 
        (c3)    -- +(-135:0.5) coordinate (d4) 
                (c4)    -- +(-135:0.5) coordinate (d5) 
  (d1)    -- +(-45:0.5) coordinate (e1) 
        (d1)    -- +(-135:0.5) coordinate (e2) 
  (d3)    -- +(-45:0.5) coordinate (e3) 
        (d3)    -- +(-135:0.5) coordinate (e4) 
                (d5)    -- +(-135:0.5) coordinate (e6) 
                                (d5)    -- +(-45:0.5) coordinate (e5) 
  (e1)    -- +(-45:0.5) coordinate (f1) 
        (e1)    -- +(-135:0.5) coordinate (f2) 
  (e3)    -- +(-45:0.5) coordinate (f3) 
        (e3)    -- +(-135:0.5) coordinate (f4) 
                (e5)    -- +(-45:0.5) coordinate (f5) 
                (e5)    -- +(-135:0.5) coordinate (f6) 
                                (e6)    -- +(-135:0.5) coordinate (f7) 
  (f1)    -- +(-45:0.5) coordinate (g1) 
        (f1)    -- +(-135:0.5) coordinate (g2) 
  (f3)    -- +(-45:0.5) coordinate (g3) 
        (f3)    -- +(-135:0.5) coordinate (g4) 
                (f5)    -- +(-45:0.5) coordinate (g5) 
                (f5)    -- +(-135:0.5) coordinate (g6) 
                                (f6)    -- +(-135:0.5) coordinate (g7)          
                                                                (f7)    -- +(-135:0.5) coordinate (g8)        
          (g1)    -- +(-45:0.5) coordinate (h1) 
        (g1)    -- +(-135:0.5) coordinate (h2) 
  (g3)    -- +(-45:0.5) coordinate (h3) 
        (g3)    -- +(-135:0.5) coordinate (h4) 
                (g5)    -- +(-45:0.5) coordinate (h5) 
                (g5)    -- +(-135:0.5) coordinate (h6) 
                                (g6)    -- +(-135:0.5) coordinate (h7)          
                                                                (g7)    -- +(-135:0.5) coordinate (h8)           
                                                                       (h1)    -- +(-45:0.5) coordinate (i1) 
        (h1)    -- +(-135:0.5) coordinate (i2) 
  (h3)    -- +(-45:0.5) coordinate (i3) 
        (h3)    -- +(-135:0.5) coordinate (i4) 
                (h5)    -- +(-45:0.5) coordinate (i5) 
                (h5)    -- +(-135:0.5) coordinate (i6) 
                                (h6)    -- +(-135:0.5) coordinate (i7)          
                                                                (h7)    -- +(-135:0.5) coordinate (i8)           
                            (i1)    -- +(-45:0.5) coordinate (j1) 
        (i1)    -- +(-135:0.5) coordinate (j2) 
  (i3)    -- +(-45:0.5) coordinate (j3) 
        (i3)    -- +(-135:0.5) coordinate (j4) 
                (i5)    -- +(-45:0.5) coordinate (j5) 
                (i5)    -- +(-135:0.5) coordinate (j6) 
                                (i6)    -- +(-135:0.5) coordinate (j7)          
                                                                (i7)    -- +(-135:0.5) coordinate (j8)      
         (j1)    -- +(-45:0.5) coordinate (k1) 
        (j1)    -- +(-135:0.5) coordinate (k2) 
  (j3)    -- +(-45:0.5) coordinate (k3) 
        (j3)    -- +(-135:0.5) coordinate (k4) 
                (j5)    -- +(-45:0.5) coordinate (k5) 
                (j5)    -- +(-135:0.5) coordinate (k6) 
                                (j6)    -- +(-135:0.5) coordinate (k7)          
                                                                (j7)    -- +(-135:0.5) coordinate (k8)                                      
 (g8)    -- +(-135:0.5) coordinate (h9)
   (h9)    -- +(-45:0.5) coordinate (i9)  
   (h9)    -- +(-135:0.5) coordinate (i10)
   (i9)    -- +(-45:0.5) coordinate (j9)  
   (i9)    -- +(-135:0.5) coordinate (j10)
   (i10)    -- +(-135:0.5) coordinate (j11)   
   (j9)    -- +(-45:0.5) coordinate (k9)  
   (j9)    -- +(-135:0.5) coordinate (k10)
   (j10)    -- +(-135:0.5) coordinate (k11)  
      (j11)    -- +(-135:0.5) coordinate (k12)  
   ;
            \fill(a1) circle (1pt);             \fill(a2) circle (1pt); \fill(a3) circle (1pt); 
              \fill(b1) circle (1pt);                 \fill(b2) circle (1pt);                          \fill(b3) circle (1pt);   
               \fill(c1) circle (1pt);                 \fill(c2) circle (1pt);                          \fill(c3) circle (1pt);   \fill(c4) circle (1pt);   
                              \fill(d1) circle (1pt);                 \fill(d2) circle (1pt);                          \fill(d3) circle (1pt);   \fill(d4) circle (1pt);   \fill(d5) circle (1pt);   
                                    \fill(e1) circle (1pt);                 \fill(e2) circle (1pt);                          \fill(e3) circle (1pt);   \fill(e4) circle (1pt);   
\fill(e5) circle (1pt);     \fill(e6) circle (1pt);   
                                                                        \fill(f1) circle (1pt);                 \fill(f2) circle (1pt);                          \fill(f3) circle (1pt);   \fill(f4) circle (1pt);   \fill(f5) circle (1pt);     \fill(f6) circle (1pt);    \fill(f7) circle (1pt);   
    \fill(g1) circle (1pt);                 \fill(g2) circle (1pt);                          \fill(g3) circle (1pt);   \fill(g4) circle (1pt);   \fill(g5) circle (1pt);     \fill(g6) circle (1pt);    \fill(g7) circle (1pt);   
 \fill(g8) circle (1pt);   
  \fill(h1) circle (1pt);                 \fill(h2) circle (1pt);                          \fill(h3) circle (1pt);   \fill(h4) circle (1pt);   \fill(h5) circle (1pt);     \fill(h6) circle (1pt);    \fill(h7) circle (1pt);   
 \fill(h8) circle (1pt);   
  \fill(j1) circle (1pt);                 \fill(j2) circle (1pt);                          \fill(j3) circle (1pt);   \fill(j4) circle (1pt);   \fill(j5) circle (1pt);     \fill(j6) circle (1pt);    \fill(j7) circle (1pt);   
 \fill(j8) circle (1pt);   
  \fill(i1) circle (1pt);                 \fill(i2) circle (1pt);                          \fill(i3) circle (1pt);   \fill(i4) circle (1pt);   \fill(i5) circle (1pt);     \fill(i6) circle (1pt);    \fill(i7) circle (1pt);   
 \fill(i8) circle (1pt);  
   \fill(k1) circle (1pt);                 \fill(k2) circle (1pt);                          \fill(k3) circle (1pt);   \fill(k4) circle (1pt);   \fill(k5) circle (1pt);     \fill(k6) circle (1pt);    \fill(k7) circle (1pt);   
 \fill(k8) circle (1pt);   
        \draw[dotted](b3) -- +(270:10);       \draw[dotted](b3) -- +(90:1);
        \draw[dotted](b1) -- +(270:10);       \draw[dotted](b1) -- +(90:1);
        \draw[dotted](f7) -- +(270:10);        \draw[dotted](f7) -- +(90:1);
         \draw[dotted](f1) -- +(270:10);        \draw[dotted](f1) -- +(90:1);
 \draw[dotted](j1) -- +(270:10);        \draw[dotted](j1) -- +(90:1);
 \draw[dotted](j11) -- +(270:10);        \draw[dotted](j11) -- +(90:1);       ; 
          \fill(i10) circle (1pt);     \fill(i9) circle (1pt);   \fill(h9) circle (1pt);  
             \fill(j9) circle (1pt);   \fill(j10) circle (1pt);  \fill(j11) circle (1pt);  
             \fill(k9) circle (1pt);               \fill(k10) circle (1pt);      \fill(k11) circle (1pt);               \fill(k12) circle (1pt);  
\draw(a1) --  (a2); \draw(a2) --  (b2); 
\draw(b2) --  (c2);\draw(c2) --  (d3); \draw(d3) --  (f3); 
\draw(f3) --  (j7); \draw(k7) --  (j7); 
\path (0,0) + (-90:4.4 cm) coordinate (origin); 
\path (origin) ++ (120:1.25)++(-120:1.25) coordinate (1'); 
\path (1') ++ (120:1.5)++(-120:1.5) coordinate (2'); 
\path (2') ++ (120:1.25)++(-120:1.25) coordinate (3'); 
\path (origin) ++ (60:1.5)++(-60:1.5) coordinate (1); 
\path (1) ++ (60:1.5)++(-60:1.5) coordinate (2); 
\path (2) ++ (60:1.25)++(-60:1.25) coordinate (3); 
\draw(origin) node {$\mathfrak{a}_0$};
\draw(1') node {$\mathfrak{a}_{1'}$};
\draw(2') node {$\mathfrak{a}_{2'}$};
\draw(3') node {$\mathfrak{a}_{3'}$};
\draw(1) node {$\mathfrak{a}_{1}$};
\draw(2) node {$\mathfrak{a}_{2}$};
\draw(3) node {$\mathfrak{a}_{3}$};
  }
    \end{tikzpicture}
    \quad\quad
    \begin{tikzpicture}[scale=0.6]
{ \clip (-4.5,-4.8) rectangle ++(9,5);
\path [draw,name path=upward line] (-3,-5) -- (3,-5);
  \path 
      (0, 0)              coordinate (a1)
        (a1)    -- +(-45:0.5) coordinate (a2) 
        (a1)    -- +(-135:0.5) coordinate (a3) 
        (a2)    -- +(-45:0.5) coordinate (b1) 
        (a2)    -- +(-135:0.5) coordinate (b2) 
         (a3)    -- +(-135:0.5) coordinate (b3) 
  (b1)    -- +(-45:0.5) coordinate (c1) 
        (b1)    -- +(-135:0.5) coordinate (c2) 
  (b3)    -- +(-45:0.5) coordinate (c3) 
        (b3)    -- +(-135:0.5) coordinate (c4) 
  (c1)    -- +(-45:0.5) coordinate (d1) 
        (c1)    -- +(-135:0.5) coordinate (d2) 
  (c3)    -- +(-45:0.5) coordinate (d3) 
        (c3)    -- +(-135:0.5) coordinate (d4) 
                (c4)    -- +(-135:0.5) coordinate (d5) 
  (d1)    -- +(-45:0.5) coordinate (e1) 
        (d1)    -- +(-135:0.5) coordinate (e2) 
  (d3)    -- +(-45:0.5) coordinate (e3) 
        (d3)    -- +(-135:0.5) coordinate (e4) 
                (d5)    -- +(-135:0.5) coordinate (e6) 
                                (d5)    -- +(-45:0.5) coordinate (e5) 
  (e1)    -- +(-45:0.5) coordinate (f1) 
        (e1)    -- +(-135:0.5) coordinate (f2) 
  (e3)    -- +(-45:0.5) coordinate (f3) 
        (e3)    -- +(-135:0.5) coordinate (f4) 
                (e5)    -- +(-45:0.5) coordinate (f5) 
                (e5)    -- +(-135:0.5) coordinate (f6) 
                                (e6)    -- +(-135:0.5) coordinate (f7) 
  (f1)    -- +(-45:0.5) coordinate (g1) 
        (f1)    -- +(-135:0.5) coordinate (g2) 
  (f3)    -- +(-45:0.5) coordinate (g3) 
        (f3)    -- +(-135:0.5) coordinate (g4) 
                (f5)    -- +(-45:0.5) coordinate (g5) 
                (f5)    -- +(-135:0.5) coordinate (g6) 
                                (f6)    -- +(-135:0.5) coordinate (g7)          
                                                                (f7)    -- +(-135:0.5) coordinate (g8)        
          (g1)    -- +(-45:0.5) coordinate (h1) 
        (g1)    -- +(-135:0.5) coordinate (h2) 
  (g3)    -- +(-45:0.5) coordinate (h3) 
        (g3)    -- +(-135:0.5) coordinate (h4) 
                (g5)    -- +(-45:0.5) coordinate (h5) 
                (g5)    -- +(-135:0.5) coordinate (h6) 
                                (g6)    -- +(-135:0.5) coordinate (h7)          
                                                                (g7)    -- +(-135:0.5) coordinate (h8)           
                                                                       (h1)    -- +(-45:0.5) coordinate (i1) 
        (h1)    -- +(-135:0.5) coordinate (i2) 
  (h3)    -- +(-45:0.5) coordinate (i3) 
        (h3)    -- +(-135:0.5) coordinate (i4) 
                (h5)    -- +(-45:0.5) coordinate (i5) 
                (h5)    -- +(-135:0.5) coordinate (i6) 
                                (h6)    -- +(-135:0.5) coordinate (i7)          
                                                                (h7)    -- +(-135:0.5) coordinate (i8)           
                            (i1)    -- +(-45:0.5) coordinate (j1) 
        (i1)    -- +(-135:0.5) coordinate (j2) 
  (i3)    -- +(-45:0.5) coordinate (j3) 
        (i3)    -- +(-135:0.5) coordinate (j4) 
                (i5)    -- +(-45:0.5) coordinate (j5) 
                (i5)    -- +(-135:0.5) coordinate (j6) 
                                (i6)    -- +(-135:0.5) coordinate (j7)          
                                                                (i7)    -- +(-135:0.5) coordinate (j8)      
         (j1)    -- +(-45:0.5) coordinate (k1) 
        (j1)    -- +(-135:0.5) coordinate (k2) 
  (j3)    -- +(-45:0.5) coordinate (k3) 
        (j3)    -- +(-135:0.5) coordinate (k4) 
                (j5)    -- +(-45:0.5) coordinate (k5) 
                (j5)    -- +(-135:0.5) coordinate (k6) 
                                (j6)    -- +(-135:0.5) coordinate (k7)          
                                                                (j7)    -- +(-135:0.5) coordinate (k8)                                      
 (g8)    -- +(-135:0.5) coordinate (h9)
   (h9)    -- +(-45:0.5) coordinate (i9)  
   (h9)    -- +(-135:0.5) coordinate (i10)
   (i9)    -- +(-45:0.5) coordinate (j9)  
   (i9)    -- +(-135:0.5) coordinate (j10)
   (i10)    -- +(-135:0.5) coordinate (j11)   
   (j9)    -- +(-45:0.5) coordinate (k9)  
   (j9)    -- +(-135:0.5) coordinate (k10)
   (j10)    -- +(-135:0.5) coordinate (k11)  
      (j11)    -- +(-135:0.5) coordinate (k12)  
   ;
            \fill(a1) circle (1pt);             \fill(a2) circle (1pt); \fill(a3) circle (1pt); 
              \fill(b1) circle (1pt);                 \fill(b2) circle (1pt);                          \fill(b3) circle (1pt);   
               \fill(c1) circle (1pt);                 \fill(c2) circle (1pt);                          \fill(c3) circle (1pt);   \fill(c4) circle (1pt);   
                              \fill(d1) circle (1pt);                 \fill(d2) circle (1pt);                          \fill(d3) circle (1pt);   \fill(d4) circle (1pt);   \fill(d5) circle (1pt);   
                                    \fill(e1) circle (1pt);                 \fill(e2) circle (1pt);                          \fill(e3) circle (1pt);   \fill(e4) circle (1pt);   
\fill(e5) circle (1pt);     \fill(e6) circle (1pt);   
                                                                        \fill(f1) circle (1pt);                 \fill(f2) circle (1pt);                          \fill(f3) circle (1pt);   \fill(f4) circle (1pt);   \fill(f5) circle (1pt);     \fill(f6) circle (1pt);    \fill(f7) circle (1pt);   
    \fill(g1) circle (1pt);                 \fill(g2) circle (1pt);                          \fill(g3) circle (1pt);   \fill(g4) circle (1pt);   \fill(g5) circle (1pt);     \fill(g6) circle (1pt);    \fill(g7) circle (1pt);   
 \fill(g8) circle (1pt);   
  \fill(h1) circle (1pt);                 \fill(h2) circle (1pt);                          \fill(h3) circle (1pt);   \fill(h4) circle (1pt);   \fill(h5) circle (1pt);     \fill(h6) circle (1pt);    \fill(h7) circle (1pt);   
 \fill(h8) circle (1pt);   
  \fill(j1) circle (1pt);                 \fill(j2) circle (1pt);                          \fill(j3) circle (1pt);   \fill(j4) circle (1pt);   \fill(j5) circle (1pt);     \fill(j6) circle (1pt);    \fill(j7) circle (1pt);   
 \fill(j8) circle (1pt);   
  \fill(i1) circle (1pt);                 \fill(i2) circle (1pt);                          \fill(i3) circle (1pt);   \fill(i4) circle (1pt);   \fill(i5) circle (1pt);     \fill(i6) circle (1pt);    \fill(i7) circle (1pt);   
 \fill(i8) circle (1pt);  
   \fill(k1) circle (1pt);                 \fill(k2) circle (1pt);                          \fill(k3) circle (1pt);   \fill(k4) circle (1pt);   \fill(k5) circle (1pt);     \fill(k6) circle (1pt);    \fill(k7) circle (1pt);   
 \fill(k8) circle (1pt);   
        \draw[dotted](b3) -- +(270:10);       \draw[dotted](b3) -- +(90:1);
        \draw[dotted](b1) -- +(270:10);       \draw[dotted](b1) -- +(90:1);
        \draw[dotted](f7) -- +(270:10);        \draw[dotted](f7) -- +(90:1);
         \draw[dotted](f1) -- +(270:10);        \draw[dotted](f1) -- +(90:1);
 \draw[dotted](j1) -- +(270:10);        \draw[dotted](j1) -- +(90:1);
 \draw[dotted](j11) -- +(270:10);        \draw[dotted](j11) -- +(90:1);       ; 
          \fill(i10) circle (1pt);     \fill(i9) circle (1pt);   \fill(h9) circle (1pt);  
             \fill(j9) circle (1pt);   \fill(j10) circle (1pt);  \fill(j11) circle (1pt);  
             \fill(k9) circle (1pt);               \fill(k10) circle (1pt);      \fill(k11) circle (1pt);               \fill(k12) circle (1pt);  
\draw(a1) --  (a2); \draw(a2) --  (b2); 
\draw(b2) --  (d4); \draw(d4) --  (h4); 
\draw(h4) --  (k7); 
\path (0,0) + (-90:4.4 cm) coordinate (origin); 
\path (origin) ++ (120:1.25)++(-120:1.25) coordinate (1'); 
\path (1') ++ (120:1.5)++(-120:1.5) coordinate (2'); 
\path (2') ++ (120:1.25)++(-120:1.25) coordinate (3'); 
\path (origin) ++ (60:1.5)++(-60:1.5) coordinate (1); 
\path (1) ++ (60:1.5)++(-60:1.5) coordinate (2); 
\path (2) ++ (60:1.25)++(-60:1.25) coordinate (3); 
\draw(origin) node {$\mathfrak{a}_0$};
\draw(1') node {$\mathfrak{a}_{1'}$};
\draw(2') node {$\mathfrak{a}_{2'}$};
\draw(3') node {$\mathfrak{a}_{3'}$};
\draw(1) node {$\mathfrak{a}_{1}$};
\draw(2) node {$\mathfrak{a}_{2}$};
\draw(3) node {$\mathfrak{a}_{3}$};
  }
    \end{tikzpicture}
   $$ 
   \caption{Maximal paths 
    in $\Path(\lambda_0,\lambda_{2'})$
     and $\Path(\lambda_0,\lambda_{2})$, respectively. 
   Both paths  have degree 2.}  
      \label{22law}  \end{figure}

\begin{figure}[ht]\captionsetup{width=0.9\textwidth}
$$\scalefont{0.8}  \begin{tikzpicture}[scale=0.6]
{ \clip (-4.5,-4.8) rectangle ++(9,5);
\path [draw,name path=upward line] (-3,-5) -- (3,-5);
  \path 
      (0, 0)              coordinate (a1)
        (a1)    -- +(-45:0.5) coordinate (a2) 
        (a1)    -- +(-135:0.5) coordinate (a3) 
        (a2)    -- +(-45:0.5) coordinate (b1) 
        (a2)    -- +(-135:0.5) coordinate (b2) 
         (a3)    -- +(-135:0.5) coordinate (b3) 
  (b1)    -- +(-45:0.5) coordinate (c1) 
        (b1)    -- +(-135:0.5) coordinate (c2) 
  (b3)    -- +(-45:0.5) coordinate (c3) 
        (b3)    -- +(-135:0.5) coordinate (c4) 
  (c1)    -- +(-45:0.5) coordinate (d1) 
        (c1)    -- +(-135:0.5) coordinate (d2) 
  (c3)    -- +(-45:0.5) coordinate (d3) 
        (c3)    -- +(-135:0.5) coordinate (d4) 
                (c4)    -- +(-135:0.5) coordinate (d5) 
  (d1)    -- +(-45:0.5) coordinate (e1) 
        (d1)    -- +(-135:0.5) coordinate (e2) 
  (d3)    -- +(-45:0.5) coordinate (e3) 
        (d3)    -- +(-135:0.5) coordinate (e4) 
                (d5)    -- +(-135:0.5) coordinate (e6) 
                                (d5)    -- +(-45:0.5) coordinate (e5) 
  (e1)    -- +(-45:0.5) coordinate (f1) 
        (e1)    -- +(-135:0.5) coordinate (f2) 
  (e3)    -- +(-45:0.5) coordinate (f3) 
        (e3)    -- +(-135:0.5) coordinate (f4) 
                (e5)    -- +(-45:0.5) coordinate (f5) 
                (e5)    -- +(-135:0.5) coordinate (f6) 
                                (e6)    -- +(-135:0.5) coordinate (f7) 
  (f1)    -- +(-45:0.5) coordinate (g1) 
        (f1)    -- +(-135:0.5) coordinate (g2) 
  (f3)    -- +(-45:0.5) coordinate (g3) 
        (f3)    -- +(-135:0.5) coordinate (g4) 
                (f5)    -- +(-45:0.5) coordinate (g5) 
                (f5)    -- +(-135:0.5) coordinate (g6) 
                                (f6)    -- +(-135:0.5) coordinate (g7)          
                                                                (f7)    -- +(-135:0.5) coordinate (g8)        
          (g1)    -- +(-45:0.5) coordinate (h1) 
        (g1)    -- +(-135:0.5) coordinate (h2) 
  (g3)    -- +(-45:0.5) coordinate (h3) 
        (g3)    -- +(-135:0.5) coordinate (h4) 
                (g5)    -- +(-45:0.5) coordinate (h5) 
                (g5)    -- +(-135:0.5) coordinate (h6) 
                                (g6)    -- +(-135:0.5) coordinate (h7)          
                                                                (g7)    -- +(-135:0.5) coordinate (h8)           
                                                                       (h1)    -- +(-45:0.5) coordinate (i1) 
        (h1)    -- +(-135:0.5) coordinate (i2) 
  (h3)    -- +(-45:0.5) coordinate (i3) 
        (h3)    -- +(-135:0.5) coordinate (i4) 
                (h5)    -- +(-45:0.5) coordinate (i5) 
                (h5)    -- +(-135:0.5) coordinate (i6) 
                                (h6)    -- +(-135:0.5) coordinate (i7)          
                                                                (h7)    -- +(-135:0.5) coordinate (i8)           
                            (i1)    -- +(-45:0.5) coordinate (j1) 
        (i1)    -- +(-135:0.5) coordinate (j2) 
  (i3)    -- +(-45:0.5) coordinate (j3) 
        (i3)    -- +(-135:0.5) coordinate (j4) 
                (i5)    -- +(-45:0.5) coordinate (j5) 
                (i5)    -- +(-135:0.5) coordinate (j6) 
                                (i6)    -- +(-135:0.5) coordinate (j7)          
                                                                (i7)    -- +(-135:0.5) coordinate (j8)      
         (j1)    -- +(-45:0.5) coordinate (k1) 
        (j1)    -- +(-135:0.5) coordinate (k2) 
  (j3)    -- +(-45:0.5) coordinate (k3) 
        (j3)    -- +(-135:0.5) coordinate (k4) 
                (j5)    -- +(-45:0.5) coordinate (k5) 
                (j5)    -- +(-135:0.5) coordinate (k6) 
                                (j6)    -- +(-135:0.5) coordinate (k7)          
                                                                (j7)    -- +(-135:0.5) coordinate (k8)                                      
 (g8)    -- +(-135:0.5) coordinate (h9)
   (h9)    -- +(-45:0.5) coordinate (i9)  
   (h9)    -- +(-135:0.5) coordinate (i10)
   (i9)    -- +(-45:0.5) coordinate (j9)  
   (i9)    -- +(-135:0.5) coordinate (j10)
   (i10)    -- +(-135:0.5) coordinate (j11)   
   (j9)    -- +(-45:0.5) coordinate (k9)  
   (j9)    -- +(-135:0.5) coordinate (k10)
   (j10)    -- +(-135:0.5) coordinate (k11)  
      (j11)    -- +(-135:0.5) coordinate (k12)  
   ;
            \fill(a1) circle (1pt);             \fill(a2) circle (1pt); \fill(a3) circle (1pt); 
              \fill(b1) circle (1pt);                 \fill(b2) circle (1pt);                          \fill(b3) circle (1pt);   
               \fill(c1) circle (1pt);                 \fill(c2) circle (1pt);                          \fill(c3) circle (1pt);   \fill(c4) circle (1pt);   
                              \fill(d1) circle (1pt);                 \fill(d2) circle (1pt);                          \fill(d3) circle (1pt);   \fill(d4) circle (1pt);   \fill(d5) circle (1pt);   
                                    \fill(e1) circle (1pt);                 \fill(e2) circle (1pt);                          \fill(e3) circle (1pt);   \fill(e4) circle (1pt);   
\fill(e5) circle (1pt);     \fill(e6) circle (1pt);   
                                                                        \fill(f1) circle (1pt);                 \fill(f2) circle (1pt);                          \fill(f3) circle (1pt);   \fill(f4) circle (1pt);   \fill(f5) circle (1pt);     \fill(f6) circle (1pt);    \fill(f7) circle (1pt);   
    \fill(g1) circle (1pt);                 \fill(g2) circle (1pt);                          \fill(g3) circle (1pt);   \fill(g4) circle (1pt);   \fill(g5) circle (1pt);     \fill(g6) circle (1pt);    \fill(g7) circle (1pt);   
 \fill(g8) circle (1pt);   
  \fill(h1) circle (1pt);                 \fill(h2) circle (1pt);                          \fill(h3) circle (1pt);   \fill(h4) circle (1pt);   \fill(h5) circle (1pt);     \fill(h6) circle (1pt);    \fill(h7) circle (1pt);   
 \fill(h8) circle (1pt);   
  \fill(j1) circle (1pt);                 \fill(j2) circle (1pt);                          \fill(j3) circle (1pt);   \fill(j4) circle (1pt);   \fill(j5) circle (1pt);     \fill(j6) circle (1pt);    \fill(j7) circle (1pt);   
 \fill(j8) circle (1pt);   
  \fill(i1) circle (1pt);                 \fill(i2) circle (1pt);                          \fill(i3) circle (1pt);   \fill(i4) circle (1pt);   \fill(i5) circle (1pt);     \fill(i6) circle (1pt);    \fill(i7) circle (1pt);   
 \fill(i8) circle (1pt);  
   \fill(k1) circle (1pt);                 \fill(k2) circle (1pt);                          \fill(k3) circle (1pt);   \fill(k4) circle (1pt);   \fill(k5) circle (1pt);     \fill(k6) circle (1pt);    \fill(k7) circle (1pt);   
 \fill(k8) circle (1pt);   
        \draw[dotted](b3) -- +(270:10);       \draw[dotted](b3) -- +(90:1);
        \draw[dotted](b1) -- +(270:10);       \draw[dotted](b1) -- +(90:1);
        \draw[dotted](f7) -- +(270:10);        \draw[dotted](f7) -- +(90:1);
         \draw[dotted](f1) -- +(270:10);        \draw[dotted](f1) -- +(90:1);
 \draw[dotted](j1) -- +(270:10);        \draw[dotted](j1) -- +(90:1);
 \draw[dotted](j11) -- +(270:10);        \draw[dotted](j11) -- +(90:1);       ; 
          \fill(i10) circle (1pt);     \fill(i9) circle (1pt);   \fill(h9) circle (1pt);  
             \fill(j9) circle (1pt);   \fill(j10) circle (1pt);  \fill(j11) circle (1pt);  
             \fill(k9) circle (1pt);               \fill(k10) circle (1pt);      \fill(k11) circle (1pt);               \fill(k12) circle (1pt);  
\draw(a1) --  (a2); \draw(a2) --  (b2); 
\draw(b2) --  (c2);\draw(c2) --  (d3); \draw(d3) --  (e3); 
\draw(f4) --  (e3); \draw(f4) --  (g4);
\draw(g4) --  (h5);
\draw(h5) --  (j7); \draw(k7) --  (j7); 
\path (0,0) + (-90:4.4 cm) coordinate (origin); 
\path (origin) ++ (120:1.25)++(-120:1.25) coordinate (1'); 
\path (1') ++ (120:1.5)++(-120:1.5) coordinate (2'); 
\path (2') ++ (120:1.25)++(-120:1.25) coordinate (3'); 
\path (origin) ++ (60:1.5)++(-60:1.5) coordinate (1); 
\path (1) ++ (60:1.5)++(-60:1.5) coordinate (2); 
\path (2) ++ (60:1.25)++(-60:1.25) coordinate (3); 
\draw(origin) node {$\mathfrak{a}_0$};
\draw(1') node {$\mathfrak{a}_{1'}$};
\draw(2') node {$\mathfrak{a}_{2'}$};
\draw(3') node {$\mathfrak{a}_{3'}$};
\draw(1) node {$\mathfrak{a}_{1}$};
\draw(2) node {$\mathfrak{a}_{2}$};
\draw(3) node {$\mathfrak{a}_{3}$};
  }
    \end{tikzpicture} 
   \quad\quad
   \begin{tikzpicture}[scale=0.6]
{ \clip (-4.5,-4.8) rectangle ++(9,5);
\path [draw,name path=upward line] (-3,-5) -- (3,-5);
  \path 
      (0, 0)              coordinate (a1)
        (a1)    -- +(-45:0.5) coordinate (a2) 
        (a1)    -- +(-135:0.5) coordinate (a3) 
        (a2)    -- +(-45:0.5) coordinate (b1) 
        (a2)    -- +(-135:0.5) coordinate (b2) 
         (a3)    -- +(-135:0.5) coordinate (b3) 
  (b1)    -- +(-45:0.5) coordinate (c1) 
        (b1)    -- +(-135:0.5) coordinate (c2) 
  (b3)    -- +(-45:0.5) coordinate (c3) 
        (b3)    -- +(-135:0.5) coordinate (c4) 
  (c1)    -- +(-45:0.5) coordinate (d1) 
        (c1)    -- +(-135:0.5) coordinate (d2) 
  (c3)    -- +(-45:0.5) coordinate (d3) 
        (c3)    -- +(-135:0.5) coordinate (d4) 
                (c4)    -- +(-135:0.5) coordinate (d5) 
  (d1)    -- +(-45:0.5) coordinate (e1) 
        (d1)    -- +(-135:0.5) coordinate (e2) 
  (d3)    -- +(-45:0.5) coordinate (e3) 
        (d3)    -- +(-135:0.5) coordinate (e4) 
                (d5)    -- +(-135:0.5) coordinate (e6) 
                                (d5)    -- +(-45:0.5) coordinate (e5) 
  (e1)    -- +(-45:0.5) coordinate (f1) 
        (e1)    -- +(-135:0.5) coordinate (f2) 
  (e3)    -- +(-45:0.5) coordinate (f3) 
        (e3)    -- +(-135:0.5) coordinate (f4) 
                (e5)    -- +(-45:0.5) coordinate (f5) 
                (e5)    -- +(-135:0.5) coordinate (f6) 
                                (e6)    -- +(-135:0.5) coordinate (f7) 
  (f1)    -- +(-45:0.5) coordinate (g1) 
        (f1)    -- +(-135:0.5) coordinate (g2) 
  (f3)    -- +(-45:0.5) coordinate (g3) 
        (f3)    -- +(-135:0.5) coordinate (g4) 
                (f5)    -- +(-45:0.5) coordinate (g5) 
                (f5)    -- +(-135:0.5) coordinate (g6) 
                                (f6)    -- +(-135:0.5) coordinate (g7)          
                                                                (f7)    -- +(-135:0.5) coordinate (g8)        
          (g1)    -- +(-45:0.5) coordinate (h1) 
        (g1)    -- +(-135:0.5) coordinate (h2) 
  (g3)    -- +(-45:0.5) coordinate (h3) 
        (g3)    -- +(-135:0.5) coordinate (h4) 
                (g5)    -- +(-45:0.5) coordinate (h5) 
                (g5)    -- +(-135:0.5) coordinate (h6) 
                                (g6)    -- +(-135:0.5) coordinate (h7)          
                                                                (g7)    -- +(-135:0.5) coordinate (h8)           
                                                                       (h1)    -- +(-45:0.5) coordinate (i1) 
        (h1)    -- +(-135:0.5) coordinate (i2) 
  (h3)    -- +(-45:0.5) coordinate (i3) 
        (h3)    -- +(-135:0.5) coordinate (i4) 
                (h5)    -- +(-45:0.5) coordinate (i5) 
                (h5)    -- +(-135:0.5) coordinate (i6) 
                                (h6)    -- +(-135:0.5) coordinate (i7)          
                                                                (h7)    -- +(-135:0.5) coordinate (i8)           
                            (i1)    -- +(-45:0.5) coordinate (j1) 
        (i1)    -- +(-135:0.5) coordinate (j2) 
  (i3)    -- +(-45:0.5) coordinate (j3) 
        (i3)    -- +(-135:0.5) coordinate (j4) 
                (i5)    -- +(-45:0.5) coordinate (j5) 
                (i5)    -- +(-135:0.5) coordinate (j6) 
                                (i6)    -- +(-135:0.5) coordinate (j7)          
                                                                (i7)    -- +(-135:0.5) coordinate (j8)      
         (j1)    -- +(-45:0.5) coordinate (k1) 
        (j1)    -- +(-135:0.5) coordinate (k2) 
  (j3)    -- +(-45:0.5) coordinate (k3) 
        (j3)    -- +(-135:0.5) coordinate (k4) 
                (j5)    -- +(-45:0.5) coordinate (k5) 
                (j5)    -- +(-135:0.5) coordinate (k6) 
                                (j6)    -- +(-135:0.5) coordinate (k7)          
                                                                (j7)    -- +(-135:0.5) coordinate (k8)                                      
 (g8)    -- +(-135:0.5) coordinate (h9)
   (h9)    -- +(-45:0.5) coordinate (i9)  
   (h9)    -- +(-135:0.5) coordinate (i10)
   (i9)    -- +(-45:0.5) coordinate (j9)  
   (i9)    -- +(-135:0.5) coordinate (j10)
   (i10)    -- +(-135:0.5) coordinate (j11)   
   (j9)    -- +(-45:0.5) coordinate (k9)  
   (j9)    -- +(-135:0.5) coordinate (k10)
   (j10)    -- +(-135:0.5) coordinate (k11)  
      (j11)    -- +(-135:0.5) coordinate (k12)  
   ;
            \fill(a1) circle (1pt);             \fill(a2) circle (1pt); \fill(a3) circle (1pt); 
              \fill(b1) circle (1pt);                 \fill(b2) circle (1pt);                          \fill(b3) circle (1pt);   
               \fill(c1) circle (1pt);                 \fill(c2) circle (1pt);                          \fill(c3) circle (1pt);   \fill(c4) circle (1pt);   
                              \fill(d1) circle (1pt);                 \fill(d2) circle (1pt);                          \fill(d3) circle (1pt);   \fill(d4) circle (1pt);   \fill(d5) circle (1pt);   
                                    \fill(e1) circle (1pt);                 \fill(e2) circle (1pt);                          \fill(e3) circle (1pt);   \fill(e4) circle (1pt);   
\fill(e5) circle (1pt);     \fill(e6) circle (1pt);   
                                                                        \fill(f1) circle (1pt);                 \fill(f2) circle (1pt);                          \fill(f3) circle (1pt);   \fill(f4) circle (1pt);   \fill(f5) circle (1pt);     \fill(f6) circle (1pt);    \fill(f7) circle (1pt);   
    \fill(g1) circle (1pt);                 \fill(g2) circle (1pt);                          \fill(g3) circle (1pt);   \fill(g4) circle (1pt);   \fill(g5) circle (1pt);     \fill(g6) circle (1pt);    \fill(g7) circle (1pt);   
 \fill(g8) circle (1pt);   
  \fill(h1) circle (1pt);                 \fill(h2) circle (1pt);                          \fill(h3) circle (1pt);   \fill(h4) circle (1pt);   \fill(h5) circle (1pt);     \fill(h6) circle (1pt);    \fill(h7) circle (1pt);   
 \fill(h8) circle (1pt);   
  \fill(j1) circle (1pt);                 \fill(j2) circle (1pt);                          \fill(j3) circle (1pt);   \fill(j4) circle (1pt);   \fill(j5) circle (1pt);     \fill(j6) circle (1pt);    \fill(j7) circle (1pt);   
 \fill(j8) circle (1pt);   
  \fill(i1) circle (1pt);                 \fill(i2) circle (1pt);                          \fill(i3) circle (1pt);   \fill(i4) circle (1pt);   \fill(i5) circle (1pt);     \fill(i6) circle (1pt);    \fill(i7) circle (1pt);   
 \fill(i8) circle (1pt);  
   \fill(k1) circle (1pt);                 \fill(k2) circle (1pt);                          \fill(k3) circle (1pt);   \fill(k4) circle (1pt);   \fill(k5) circle (1pt);     \fill(k6) circle (1pt);    \fill(k7) circle (1pt);   
 \fill(k8) circle (1pt);   
        \draw[dotted](b3) -- +(270:10);       \draw[dotted](b3) -- +(90:1);
        \draw[dotted](b1) -- +(270:10);       \draw[dotted](b1) -- +(90:1);
        \draw[dotted](f7) -- +(270:10);        \draw[dotted](f7) -- +(90:1);
         \draw[dotted](f1) -- +(270:10);        \draw[dotted](f1) -- +(90:1);
 \draw[dotted](j1) -- +(270:10);        \draw[dotted](j1) -- +(90:1);
 \draw[dotted](j11) -- +(270:10);        \draw[dotted](j11) -- +(90:1);       ; 
          \fill(i10) circle (1pt);     \fill(i9) circle (1pt);   \fill(h9) circle (1pt);  
             \fill(j9) circle (1pt);   \fill(j10) circle (1pt);  \fill(j11) circle (1pt);  
             \fill(k9) circle (1pt);               \fill(k10) circle (1pt);      \fill(k11) circle (1pt);               \fill(k12) circle (1pt);  
\draw(a1) --  (a2); \draw(a2) --  (b2); 
\draw(b2) --  (c2);\draw(c2) --  (d3); \draw(d3) --  (e3); 
\draw(f4) --  (e3); \draw(f4) --  (g4);
\draw(g4) --  (h4);
\draw(h4)   --  (k7); 
\path (0,0) + (-90:4.4 cm) coordinate (origin); 
\path (origin) ++ (120:1.25)++(-120:1.25) coordinate (1'); 
\path (1') ++ (120:1.5)++(-120:1.5) coordinate (2'); 
\path (2') ++ (120:1.25)++(-120:1.25) coordinate (3'); 
\path (origin) ++ (60:1.5)++(-60:1.5) coordinate (1); 
\path (1) ++ (60:1.5)++(-60:1.5) coordinate (2); 
\path (2) ++ (60:1.25)++(-60:1.25) coordinate (3); 
\draw(origin) node {$\mathfrak{a}_0$};
\draw(1') node {$\mathfrak{a}_{1'}$};
\draw(2') node {$\mathfrak{a}_{2'}$};
\draw(3') node {$\mathfrak{a}_{3'}$};
\draw(1) node {$\mathfrak{a}_{1}$};
\draw(2) node {$\mathfrak{a}_{2}$};
\draw(3) node {$\mathfrak{a}_{3}$};
  }
    \end{tikzpicture}   $$
\caption{Maximal paths 
    in $\Path(\lambda_0,\lambda_{1'})$
     and $\Path(\lambda_0,\lambda_{1})$, respectively.    Both paths  have degree 1. }         \label{11law} \end{figure}

  The maximal paths in $\Path(\lambda_1,\lambda_{2'})$ and $\Path(\lambda_{1'},\lambda_{2'})$ (which index basis elements of $\Delta(\lambda_{1})$  and 
 $\Delta(\lambda_{1'})$ respectively) are depicted in Figure \ref{the other bits} below.
 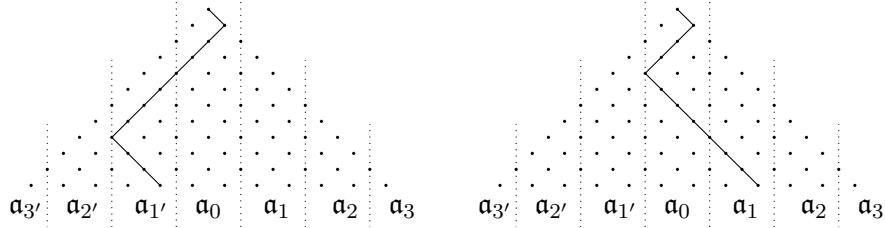
\begin{figure}[ht]\captionsetup{width=0.9\textwidth}
 $$
     \begin{tikzpicture}[scale=0.6]
{ \clip (-4.5,-4.8) rectangle ++(9,5);
\path [draw,name path=upward line] (-3,-5) -- (3,-5);
  \path 
      (0, 0)              coordinate (a1)
        (a1)    -- +(-45:0.5) coordinate (a2) 
        (a1)    -- +(-135:0.5) coordinate (a3) 
        (a2)    -- +(-45:0.5) coordinate (b1) 
        (a2)    -- +(-135:0.5) coordinate (b2) 
         (a3)    -- +(-135:0.5) coordinate (b3) 
  (b1)    -- +(-45:0.5) coordinate (c1) 
        (b1)    -- +(-135:0.5) coordinate (c2) 
  (b3)    -- +(-45:0.5) coordinate (c3) 
        (b3)    -- +(-135:0.5) coordinate (c4) 
  (c1)    -- +(-45:0.5) coordinate (d1) 
        (c1)    -- +(-135:0.5) coordinate (d2) 
  (c3)    -- +(-45:0.5) coordinate (d3) 
        (c3)    -- +(-135:0.5) coordinate (d4) 
                (c4)    -- +(-135:0.5) coordinate (d5) 
  (d1)    -- +(-45:0.5) coordinate (e1) 
        (d1)    -- +(-135:0.5) coordinate (e2) 
  (d3)    -- +(-45:0.5) coordinate (e3) 
        (d3)    -- +(-135:0.5) coordinate (e4) 
                (d5)    -- +(-135:0.5) coordinate (e6) 
                                (d5)    -- +(-45:0.5) coordinate (e5) 
  (e1)    -- +(-45:0.5) coordinate (f1) 
        (e1)    -- +(-135:0.5) coordinate (f2) 
  (e3)    -- +(-45:0.5) coordinate (f3) 
        (e3)    -- +(-135:0.5) coordinate (f4) 
                (e5)    -- +(-45:0.5) coordinate (f5) 
                (e5)    -- +(-135:0.5) coordinate (f6) 
                                (e6)    -- +(-135:0.5) coordinate (f7) 
  (f1)    -- +(-45:0.5) coordinate (g1) 
        (f1)    -- +(-135:0.5) coordinate (g2) 
  (f3)    -- +(-45:0.5) coordinate (g3) 
        (f3)    -- +(-135:0.5) coordinate (g4) 
                (f5)    -- +(-45:0.5) coordinate (g5) 
                (f5)    -- +(-135:0.5) coordinate (g6) 
                                (f6)    -- +(-135:0.5) coordinate (g7)          
                                                                (f7)    -- +(-135:0.5) coordinate (g8)        
          (g1)    -- +(-45:0.5) coordinate (h1) 
        (g1)    -- +(-135:0.5) coordinate (h2) 
  (g3)    -- +(-45:0.5) coordinate (h3) 
        (g3)    -- +(-135:0.5) coordinate (h4) 
                (g5)    -- +(-45:0.5) coordinate (h5) 
                (g5)    -- +(-135:0.5) coordinate (h6) 
                                (g6)    -- +(-135:0.5) coordinate (h7)          
                                                                (g7)    -- +(-135:0.5) coordinate (h8)           
                                                                       (h1)    -- +(-45:0.5) coordinate (i1) 
        (h1)    -- +(-135:0.5) coordinate (i2) 
  (h3)    -- +(-45:0.5) coordinate (i3) 
        (h3)    -- +(-135:0.5) coordinate (i4) 
                (h5)    -- +(-45:0.5) coordinate (i5) 
                (h5)    -- +(-135:0.5) coordinate (i6) 
                                (h6)    -- +(-135:0.5) coordinate (i7)          
                                                                (h7)    -- +(-135:0.5) coordinate (i8)           
                            (i1)    -- +(-45:0.5) coordinate (j1) 
        (i1)    -- +(-135:0.5) coordinate (j2) 
  (i3)    -- +(-45:0.5) coordinate (j3) 
        (i3)    -- +(-135:0.5) coordinate (j4) 
                (i5)    -- +(-45:0.5) coordinate (j5) 
                (i5)    -- +(-135:0.5) coordinate (j6) 
                                (i6)    -- +(-135:0.5) coordinate (j7)          
                                                                (i7)    -- +(-135:0.5) coordinate (j8)      
         (j1)    -- +(-45:0.5) coordinate (k1) 
        (j1)    -- +(-135:0.5) coordinate (k2) 
  (j3)    -- +(-45:0.5) coordinate (k3) 
        (j3)    -- +(-135:0.5) coordinate (k4) 
                (j5)    -- +(-45:0.5) coordinate (k5) 
                (j5)    -- +(-135:0.5) coordinate (k6) 
                                (j6)    -- +(-135:0.5) coordinate (k7)          
                                                                (j7)    -- +(-135:0.5) coordinate (k8)                                      
 (g8)    -- +(-135:0.5) coordinate (h9)
   (h9)    -- +(-45:0.5) coordinate (i9)  
   (h9)    -- +(-135:0.5) coordinate (i10)
   (i9)    -- +(-45:0.5) coordinate (j9)  
   (i9)    -- +(-135:0.5) coordinate (j10)
   (i10)    -- +(-135:0.5) coordinate (j11)   
   (j9)    -- +(-45:0.5) coordinate (k9)  
   (j9)    -- +(-135:0.5) coordinate (k10)
   (j10)    -- +(-135:0.5) coordinate (k11)  
      (j11)    -- +(-135:0.5) coordinate (k12)  
   ;
            \fill(a1) circle (1pt);             \fill(a2) circle (1pt); \fill(a3) circle (1pt); 
              \fill(b1) circle (1pt);                 \fill(b2) circle (1pt);                          \fill(b3) circle (1pt);   
               \fill(c1) circle (1pt);                 \fill(c2) circle (1pt);                          \fill(c3) circle (1pt);   \fill(c4) circle (1pt);   
                              \fill(d1) circle (1pt);                 \fill(d2) circle (1pt);                          \fill(d3) circle (1pt);   \fill(d4) circle (1pt);   \fill(d5) circle (1pt);   
                                    \fill(e1) circle (1pt);                 \fill(e2) circle (1pt);                          \fill(e3) circle (1pt);   \fill(e4) circle (1pt);   
\fill(e5) circle (1pt);     \fill(e6) circle (1pt);   
                                                                        \fill(f1) circle (1pt);                 \fill(f2) circle (1pt);                          \fill(f3) circle (1pt);   \fill(f4) circle (1pt);   \fill(f5) circle (1pt);     \fill(f6) circle (1pt);    \fill(f7) circle (1pt);   
    \fill(g1) circle (1pt);                 \fill(g2) circle (1pt);                          \fill(g3) circle (1pt);   \fill(g4) circle (1pt);   \fill(g5) circle (1pt);     \fill(g6) circle (1pt);    \fill(g7) circle (1pt);   
 \fill(g8) circle (1pt);   
  \fill(h1) circle (1pt);                 \fill(h2) circle (1pt);                          \fill(h3) circle (1pt);   \fill(h4) circle (1pt);   \fill(h5) circle (1pt);     \fill(h6) circle (1pt);    \fill(h7) circle (1pt);   
 \fill(h8) circle (1pt);   
  \fill(j1) circle (1pt);                 \fill(j2) circle (1pt);                          \fill(j3) circle (1pt);   \fill(j4) circle (1pt);   \fill(j5) circle (1pt);     \fill(j6) circle (1pt);    \fill(j7) circle (1pt);   
 \fill(j8) circle (1pt);   
  \fill(i1) circle (1pt);                 \fill(i2) circle (1pt);                          \fill(i3) circle (1pt);   \fill(i4) circle (1pt);   \fill(i5) circle (1pt);     \fill(i6) circle (1pt);    \fill(i7) circle (1pt);   
 \fill(i8) circle (1pt);  
   \fill(k1) circle (1pt);                 \fill(k2) circle (1pt);                          \fill(k3) circle (1pt);   \fill(k4) circle (1pt);   \fill(k5) circle (1pt);     \fill(k6) circle (1pt);    \fill(k7) circle (1pt);   
 \fill(k8) circle (1pt);   
        \draw[dotted](b3) -- +(270:10);       \draw[dotted](b3) -- +(90:1);
        \draw[dotted](b1) -- +(270:10);       \draw[dotted](b1) -- +(90:1);
        \draw[dotted](f7) -- +(270:10);        \draw[dotted](f7) -- +(90:1);
         \draw[dotted](f1) -- +(270:10);        \draw[dotted](f1) -- +(90:1);
 \draw[dotted](j1) -- +(270:10);        \draw[dotted](j1) -- +(90:1);
 \draw[dotted](j11) -- +(270:10);        \draw[dotted](j11) -- +(90:1);       ; 
          \fill(i10) circle (1pt);     \fill(i9) circle (1pt);   \fill(h9) circle (1pt);  
             \fill(j9) circle (1pt);   \fill(j10) circle (1pt);  \fill(j11) circle (1pt);  
             \fill(k9) circle (1pt);               \fill(k10) circle (1pt);      \fill(k11) circle (1pt);               \fill(k12) circle (1pt);  
\draw(a1) --  (a2); \draw(a2) --  (h8);  \draw(k8) --  (h8); 
\path (0,0) + (-90:4.4 cm) coordinate (origin); 
\path (origin) ++ (120:1.25)++(-120:1.25) coordinate (1'); 
\path (1') ++ (120:1.5)++(-120:1.5) coordinate (2'); 
\path (2') ++ (120:1.25)++(-120:1.25) coordinate (3'); 
\path (origin) ++ (60:1.5)++(-60:1.5) coordinate (1); 
\path (1) ++ (60:1.5)++(-60:1.5) coordinate (2); 
\path (2) ++ (60:1.25)++(-60:1.25) coordinate (3); 
\draw(origin) node {$\mathfrak{a}_0$};
\draw(1') node {$\mathfrak{a}_{1'}$};
\draw(2') node {$\mathfrak{a}_{2'}$};
\draw(3') node {$\mathfrak{a}_{3'}$};
\draw(1) node {$\mathfrak{a}_{1}$};
\draw(2) node {$\mathfrak{a}_{2}$};
\draw(3) node {$\mathfrak{a}_{3}$};
  }
    \end{tikzpicture}
    \quad
    \quad
        \begin{tikzpicture}[scale=0.6]
{ \clip (-4.5,-4.8) rectangle ++(9,5);
\path [draw,name path=upward line] (-3,-5) -- (3,-5);
  \path 
      (0, 0)              coordinate (a1)
        (a1)    -- +(-45:0.5) coordinate (a2) 
        (a1)    -- +(-135:0.5) coordinate (a3) 
        (a2)    -- +(-45:0.5) coordinate (b1) 
        (a2)    -- +(-135:0.5) coordinate (b2) 
         (a3)    -- +(-135:0.5) coordinate (b3) 
  (b1)    -- +(-45:0.5) coordinate (c1) 
        (b1)    -- +(-135:0.5) coordinate (c2) 
  (b3)    -- +(-45:0.5) coordinate (c3) 
        (b3)    -- +(-135:0.5) coordinate (c4) 
  (c1)    -- +(-45:0.5) coordinate (d1) 
        (c1)    -- +(-135:0.5) coordinate (d2) 
  (c3)    -- +(-45:0.5) coordinate (d3) 
        (c3)    -- +(-135:0.5) coordinate (d4) 
                (c4)    -- +(-135:0.5) coordinate (d5) 
  (d1)    -- +(-45:0.5) coordinate (e1) 
        (d1)    -- +(-135:0.5) coordinate (e2) 
  (d3)    -- +(-45:0.5) coordinate (e3) 
        (d3)    -- +(-135:0.5) coordinate (e4) 
                (d5)    -- +(-135:0.5) coordinate (e6) 
                                (d5)    -- +(-45:0.5) coordinate (e5) 
  (e1)    -- +(-45:0.5) coordinate (f1) 
        (e1)    -- +(-135:0.5) coordinate (f2) 
  (e3)    -- +(-45:0.5) coordinate (f3) 
        (e3)    -- +(-135:0.5) coordinate (f4) 
                (e5)    -- +(-45:0.5) coordinate (f5) 
                (e5)    -- +(-135:0.5) coordinate (f6) 
                                (e6)    -- +(-135:0.5) coordinate (f7) 
  (f1)    -- +(-45:0.5) coordinate (g1) 
        (f1)    -- +(-135:0.5) coordinate (g2) 
  (f3)    -- +(-45:0.5) coordinate (g3) 
        (f3)    -- +(-135:0.5) coordinate (g4) 
                (f5)    -- +(-45:0.5) coordinate (g5) 
                (f5)    -- +(-135:0.5) coordinate (g6) 
                                (f6)    -- +(-135:0.5) coordinate (g7)          
                                                                (f7)    -- +(-135:0.5) coordinate (g8)        
          (g1)    -- +(-45:0.5) coordinate (h1) 
        (g1)    -- +(-135:0.5) coordinate (h2) 
  (g3)    -- +(-45:0.5) coordinate (h3) 
        (g3)    -- +(-135:0.5) coordinate (h4) 
                (g5)    -- +(-45:0.5) coordinate (h5) 
                (g5)    -- +(-135:0.5) coordinate (h6) 
                                (g6)    -- +(-135:0.5) coordinate (h7)          
                                                                (g7)    -- +(-135:0.5) coordinate (h8)           
                                                                       (h1)    -- +(-45:0.5) coordinate (i1) 
        (h1)    -- +(-135:0.5) coordinate (i2) 
  (h3)    -- +(-45:0.5) coordinate (i3) 
        (h3)    -- +(-135:0.5) coordinate (i4) 
                (h5)    -- +(-45:0.5) coordinate (i5) 
                (h5)    -- +(-135:0.5) coordinate (i6) 
                                (h6)    -- +(-135:0.5) coordinate (i7)          
                                                                (h7)    -- +(-135:0.5) coordinate (i8)           
                            (i1)    -- +(-45:0.5) coordinate (j1) 
        (i1)    -- +(-135:0.5) coordinate (j2) 
  (i3)    -- +(-45:0.5) coordinate (j3) 
        (i3)    -- +(-135:0.5) coordinate (j4) 
                (i5)    -- +(-45:0.5) coordinate (j5) 
                (i5)    -- +(-135:0.5) coordinate (j6) 
                                (i6)    -- +(-135:0.5) coordinate (j7)          
                                                                (i7)    -- +(-135:0.5) coordinate (j8)      
         (j1)    -- +(-45:0.5) coordinate (k1) 
        (j1)    -- +(-135:0.5) coordinate (k2) 
  (j3)    -- +(-45:0.5) coordinate (k3) 
        (j3)    -- +(-135:0.5) coordinate (k4) 
                (j5)    -- +(-45:0.5) coordinate (k5) 
                (j5)    -- +(-135:0.5) coordinate (k6) 
                                (j6)    -- +(-135:0.5) coordinate (k7)          
                                                                (j7)    -- +(-135:0.5) coordinate (k8)                                      
 (g8)    -- +(-135:0.5) coordinate (h9)
   (h9)    -- +(-45:0.5) coordinate (i9)  
   (h9)    -- +(-135:0.5) coordinate (i10)
   (i9)    -- +(-45:0.5) coordinate (j9)  
   (i9)    -- +(-135:0.5) coordinate (j10)
   (i10)    -- +(-135:0.5) coordinate (j11)   
   (j9)    -- +(-45:0.5) coordinate (k9)  
   (j9)    -- +(-135:0.5) coordinate (k10)
   (j10)    -- +(-135:0.5) coordinate (k11)  
      (j11)    -- +(-135:0.5) coordinate (k12)  
   ;
            \fill(a1) circle (1pt);             \fill(a2) circle (1pt); \fill(a3) circle (1pt); 
              \fill(b1) circle (1pt);                 \fill(b2) circle (1pt);                          \fill(b3) circle (1pt);   
               \fill(c1) circle (1pt);                 \fill(c2) circle (1pt);                          \fill(c3) circle (1pt);   \fill(c4) circle (1pt);   
                              \fill(d1) circle (1pt);                 \fill(d2) circle (1pt);                          \fill(d3) circle (1pt);   \fill(d4) circle (1pt);   \fill(d5) circle (1pt);   
                                    \fill(e1) circle (1pt);                 \fill(e2) circle (1pt);                          \fill(e3) circle (1pt);   \fill(e4) circle (1pt);   
\fill(e5) circle (1pt);     \fill(e6) circle (1pt);   
                                                                        \fill(f1) circle (1pt);                 \fill(f2) circle (1pt);                          \fill(f3) circle (1pt);   \fill(f4) circle (1pt);   \fill(f5) circle (1pt);     \fill(f6) circle (1pt);    \fill(f7) circle (1pt);   
    \fill(g1) circle (1pt);                 \fill(g2) circle (1pt);                          \fill(g3) circle (1pt);   \fill(g4) circle (1pt);   \fill(g5) circle (1pt);     \fill(g6) circle (1pt);    \fill(g7) circle (1pt);   
 \fill(g8) circle (1pt);   
  \fill(h1) circle (1pt);                 \fill(h2) circle (1pt);                          \fill(h3) circle (1pt);   \fill(h4) circle (1pt);   \fill(h5) circle (1pt);     \fill(h6) circle (1pt);    \fill(h7) circle (1pt);   
 \fill(h8) circle (1pt);   
  \fill(j1) circle (1pt);                 \fill(j2) circle (1pt);                          \fill(j3) circle (1pt);   \fill(j4) circle (1pt);   \fill(j5) circle (1pt);     \fill(j6) circle (1pt);    \fill(j7) circle (1pt);   
 \fill(j8) circle (1pt);   
  \fill(i1) circle (1pt);                 \fill(i2) circle (1pt);                          \fill(i3) circle (1pt);   \fill(i4) circle (1pt);   \fill(i5) circle (1pt);     \fill(i6) circle (1pt);    \fill(i7) circle (1pt);   
 \fill(i8) circle (1pt);  
   \fill(k1) circle (1pt);                 \fill(k2) circle (1pt);                          \fill(k3) circle (1pt);   \fill(k4) circle (1pt);   \fill(k5) circle (1pt);     \fill(k6) circle (1pt);    \fill(k7) circle (1pt);   
 \fill(k8) circle (1pt);   
        \draw[dotted](b3) -- +(270:10);       \draw[dotted](b3) -- +(90:1);
        \draw[dotted](b1) -- +(270:10);       \draw[dotted](b1) -- +(90:1);
        \draw[dotted](f7) -- +(270:10);        \draw[dotted](f7) -- +(90:1);
         \draw[dotted](f1) -- +(270:10);        \draw[dotted](f1) -- +(90:1);
 \draw[dotted](j1) -- +(270:10);        \draw[dotted](j1) -- +(90:1);
 \draw[dotted](j11) -- +(270:10);        \draw[dotted](j11) -- +(90:1);       ; 
          \fill(i10) circle (1pt);     \fill(i9) circle (1pt);   \fill(h9) circle (1pt);  
             \fill(j9) circle (1pt);   \fill(j10) circle (1pt);  \fill(j11) circle (1pt);  
             \fill(k9) circle (1pt);               \fill(k10) circle (1pt);      \fill(k11) circle (1pt);               \fill(k12) circle (1pt);  
\draw(a1) --  (a2); \draw(a2) --  (d4);  \draw(d4) --  (k4); 
\path (0,0) + (-90:4.4 cm) coordinate (origin); 
\path (origin) ++ (120:1.25)++(-120:1.25) coordinate (1'); 
\path (1') ++ (120:1.5)++(-120:1.5) coordinate (2'); 
\path (2') ++ (120:1.25)++(-120:1.25) coordinate (3'); 
\path (origin) ++ (60:1.5)++(-60:1.5) coordinate (1); 
\path (1) ++ (60:1.5)++(-60:1.5) coordinate (2); 
\path (2) ++ (60:1.25)++(-60:1.25) coordinate (3); 
\draw(origin) node {$\mathfrak{a}_0$};
\draw(1') node {$\mathfrak{a}_{1'}$};
\draw(2') node {$\mathfrak{a}_{2'}$};
\draw(3') node {$\mathfrak{a}_{3'}$};
\draw(1) node {$\mathfrak{a}_{1}$};
\draw(2) node {$\mathfrak{a}_{2}$};
\draw(3) node {$\mathfrak{a}_{3}$};
  }
    \end{tikzpicture}
   $$ 
   \caption{Maximal paths 
    in $\Path(
    \lambda_{1'},
    \lambda_{2'})$
    and 
    $\Path(
    \lambda_{1},
    \lambda_{2'})$
  respectively.    The degree is equal to 1 in both cases.
}  
      \label{the other bits}  \end{figure}

\end{eg}

%

\begin{thm} If $l=2$,  
 the full submodule structure of the $\TL_n(\kappa)$-modules $\Delta(\lambda_i)$
 and $\Delta(\lambda_{i'})$ are given by the strong Alperin diagrams (in the sense of \cite{alperin}) below.
$$
 \scalefont{0.8} \begin{tikzpicture}[scale=0.7]
{  
   \path 
      (0, 0)              coordinate (a1)
        (a1)    -- +(-45:2) coordinate (a2) 
        (a1)    -- +(-135:2) coordinate (a3) 
        (a2)    -- +(-90:2) coordinate (b2) 
        (a3)    -- +(-90:2) coordinate (b3) 
        (b3)    -- +(-90:2) coordinate (c3) 
        (b2)    -- +(-90:2) coordinate (c2)  ;
\draw(a1)--(a2);
\draw(a1)--(a3); 
\draw(b2)--(a2);\draw(b3)--(a2);
\draw(b3)--(a3);  \draw(b2)--(a3);
\draw[dashed](b3)--(c3);  \draw[dashed](b2)--(c3); 
\draw[dashed](b3)--(c2);  \draw[dashed](b2)--(c2); 
 \fill[white] (a1) circle (0.7cm); 
 \draw(a1) node {$ \scalefont{0.8} L(\lambda_i)$};  
   \fill[white] (a2)circle (0.7cm); 
 \draw(a2) node {$ \scalefont{0.8} L(\lambda_{i+1'}	\langle  1 \rangle )$};  
  \fill[white] (a3) circle (0.7cm); 
 \draw(a3) node {$ L(\lambda_{i+1}\langle  1 \rangle)$};  
    \fill[white] (b2)circle (0.7cm); 
 \draw(b2) node {$ L(\lambda_{i+2}\langle  2\rangle)$};  
  \fill[white] (b3) circle (0.7cm); 
 \draw(b3) node {$ L(\lambda_{i+2'}\langle  2 \rangle)$};  
   \fill[white] (c3) circle (0.7cm); 
     \fill[white] (c2) circle (0.7cm); }
\end{tikzpicture} 
\quad \quad 
 \begin{tikzpicture}[scale=0.7]
{  
   \path 
      (0, 0)              coordinate (a1)
        (a1)    -- +(-45:2) coordinate (a2) 
        (a1)    -- +(-135:2) coordinate (a3) 
        (a2)    -- +(-90:2) coordinate (b2) 
        (a3)    -- +(-90:2) coordinate (b3) 
        (b3)    -- +(-90:2) coordinate (c3) 
        (b2)    -- +(-90:2) coordinate (c2)  ;
\draw(a1)--(a2);
\draw(a1)--(a3); 
\draw(b2)--(a2);\draw(b3)--(a2);
\draw(b3)--(a3);  \draw(b2)--(a3);
\draw[dashed](b3)--(c3);  \draw[dashed](b2)--(c3); 
\draw[dashed](b3)--(c2);  \draw[dashed](b2)--(c2); 
 \fill[white] (a1) circle (0.7cm); 
 \draw(a1) node {$ \scalefont{0.8} L(\lambda_{i'})$};  
   \fill[white] (a2)circle (0.7cm); 
 \draw(a2) node {$ \scalefont{0.8} L(\lambda_{i+1'}	\langle  1 \rangle )$};  
  \fill[white] (a3) circle (0.7cm); 
 \draw(a3) node {$ L(\lambda_{i+1}\langle  1 \rangle)$};  
    \fill[white] (b2)circle (0.7cm); 
 \draw(b2) node {$ L(\lambda_{i+2}\langle  2\rangle)$};  
  \fill[white] (b3) circle (0.7cm); 
 \draw(b3) node {$ L(\lambda_{i+2'}\langle  2 \rangle)$};  
   \fill[white] (c3) circle (0.7cm); 
     \fill[white] (c2) circle (0.7cm); }
\end{tikzpicture} 
$$
Therefore  $\Dim{(\Hom_{\TL_n(\kappa)}(\Delta(\lambda_j),\Delta(\lambda_i)))} = t^{j-i}$ for $i<j$ (in which case this homomorphism is injective) and the dimension is 0 otherwise.
 \end{thm}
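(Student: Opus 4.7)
The plan is to deduce the full submodule structure from the already-computed decomposition numbers $d_{i^{(\prime)}j^{(\prime)}}(t)=t^{j-i}$ of the preceding proposition, together with Koszulity and positive grading. By Theorem \ref{mainresult}, the graded composition multiplicities of $\Delta(\lambda_{i^{(\prime)}})$ are exactly one copy of $L(\lambda_{i^{(\prime)}})$ (the head) together with, for each $j>i$, one copy of each of $L(\lambda_{j})\langle j-i\rangle$ and $L(\lambda_{j'})\langle j-i\rangle$. These are precisely the vertices appearing at each level of the proposed Alperin diagram.

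The next step is to identify the horizontal levels of the diagram with layers of the radical filtration. Since $\TL_n(\kappa)$ is positively graded in the level-two case (established in the immediately preceding proposition) and Koszul (Theorem \ref{step1} applied via Proposition \ref{step2}), the $k$th radical layer of $\Delta(\lambda_{i^{(\prime)}})$ coincides with its degree-$k$ part, and hence equals $L(\lambda_{i+k})\langle k\rangle\oplus L(\lambda_{(i+k)'})\langle k\rangle$ for $k\geq 1$. The remaining task is to establish the edges of the diagram: that each simple $L(\lambda_{j^{(\prime)}})\langle j-i\rangle$ at level $j-i$ admits non-split extensions with \emph{both} $L(\lambda_{j+1})\langle j-i+1\rangle$ and $L(\lambda_{(j+1)'})\langle j-i+1\rangle$ at the next level. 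For this I would work directly with the Soergel-path basis: the weight space $\Delta_{\lambda_{j^{(\prime)}}}(\lambda_{i^{(\prime)}})$ is indexed by $\Path(\lambda_{i^{(\prime)}},\lambda_{j^{(\prime)}})$, and the unique maximal-degree path illustrated in Figures \ref{22law}--\ref{the other bits} can be reflected through either of the two walls bounding $\mathfrak{a}_{j^{(\prime)}}$ in the $\hat{A}_1$ geometry; the corresponding cellular basis elements act as the two degree-raising operators witnessing both out-edges.

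Finally, the Hom statement follows from the Alperin structure. A non-zero graded map $\Delta(\lambda_j)\to\Delta(\lambda_i)$ must send the simple head $L(\lambda_j)$ of the source to a copy of $L(\lambda_j)$ in $\Delta(\lambda_i)$; since there is a unique such copy, placed in degree $j-i$, any such map has degree $j-i$ and is determined up to scalar. Reading off the Alperin diagram, the submodule of $\Delta(\lambda_i)$ generated by this copy has the same composition factors with the same degree shifts as $\Delta(\lambda_j)\langle j-i\rangle$, so the map is injective and $\Dim\Hom_{\TL_n(\kappa)}(\Delta(\lambda_j),\Delta(\lambda_i))=t^{j-i}$. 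The main obstacle in the argument is the cross-edge step: verifying that each simple extends the \emph{opposite} chamber's simple at the next level, not merely the same chamber, which is where the specific geometry of reflections in the rank-one affine root system (rather than a general diagrammatic argument) becomes essential.
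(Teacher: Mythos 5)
The outline you give (graded decomposition numbers supply the composition factors in their degrees; the positive grading places them in layers; then establish the edges; then read off the Hom spaces) matches the shape of the paper's argument, but at the decisive step your proposal substitutes an assertion for the computation that the paper actually performs. You say that the maximal path may be reflected through either of the two walls bounding the relevant alcove and that ``the corresponding cellular basis elements act as the two degree-raising operators witnessing both out-edges.'' The existence of those reflected paths only re-derives the weight-space dimensions, i.e.\ the composition multiplicities you already have; it does not show that the basis vector at level $k+1$ lies in the submodule generated by the basis vector at level $k$. That requires proving that a product of basis elements is non-zero, and in this algebra such products can very well vanish modulo more dominant terms (compare the computation in Section 2.4, where $B_\SSTT^\ast B_\SSTT=0$ because an unsteady idempotent appears). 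The paper's proof is devoted almost entirely to this point: writing $1^{j^{(\prime)}}_{i^{(\prime)}}$ for the basis element of the unique maximal path, it shows by comparing degrees that $1_{i+1^{(\prime)}}^{i+2^{(\prime)}}\circ 1_{i^{(\prime)}}^{i+1^{(\prime)}}=c\,1_{i^{(\prime)}}^{i+2^{(\prime)}}$ for some scalar $c$, and then proves $c=\pm1$ by a diagrammatic relation-chase, repeatedly using relation (2.10) to slide a ghost past each extra like-labelled crossing and untie it, discarding the term that is zero modulo more dominant terms. Both the cross-edges of the Alperin diagrams and the injectivity of the maps $\Delta(\lambda_j)\to\Delta(\lambda_i)$ come from this non-vanishing; without it (or an equivalent $\operatorname{Ext}$ computation) your argument cannot exclude, say, the extension by the opposite chamber's simple being split, which is exactly the obstacle you name but do not resolve.

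Two smaller points. First, the identification ``$k$th radical layer $=$ degree-$k$ part'' of $\Delta(\lambda_{i^{(\prime)}})$ does not follow formally from positivity of the grading plus Koszulity of the basic algebra: positivity and cyclicity in degree $0$ give $\rad^k\Delta\subseteq\Delta_{\geq k}$, but the reverse inclusion is again the statement that each layer is generated by the one above, i.e.\ the same non-vanishing of products you need for the edges. Second, your deduction of $\Dim{(\Hom_{\TL_n(\kappa)}(\Delta(\lambda_j),\Delta(\lambda_i)))}=t^{j-i}$ from the diagram is fine in spirit, but to produce the map you should invoke the quasi-hereditary universal property (the submodule generated by the copy of $L(\lambda_j)\langle j-i\rangle$ has all composition factors labelled by weights below $\lambda_j$, hence is a quotient of $\Delta(\lambda_j)\langle j-i\rangle$); in the paper this is unnecessary because the maps are realised directly by the elements $1^{j^{(\prime)}}_{i^{(\prime)}}$ whose composites have just been shown to be non-zero.
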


\begin{proof}
Fix points $\lambda_{i^{(\prime)}},\lambda_{j^{(\prime)}}  \in E_l$ such  that 
$ i<j$.  
We have seen that  if $\omega(\SSTT)\in \Path(\lambda_{i^{(\prime)}},\lambda_{j^{(\prime)}})$ is maximal, then it labels a decomposition number $d_{i^{(\prime)}j^{(\prime)}}= t^{j-i}$.  Therefore
 $B_\SSTT$ generates a simple composition factor 
$L(\lambda_{j^{(\prime)}})\langle   j-i  \rangle$ of the standard module 
$\Delta(\lambda_{i^{(\prime)}})$. 

Given $\lambda_{i^{(\prime)}},\lambda_{j^{(\prime)}}\in E_l$, and $i<j$, 
we let $1^{{j^{(\prime)}}}_{{i^{(\prime)}}}$  denote the element 
 $B_\SSTT$ for $\omega(\SSTT)$ the 
 unique maximal path in $\Path(\lambda_{j^{(\prime)}},\lambda_{i^{(\prime)}})$.  
 We shall show that
 \[
1_{i+1}^{i+2^{(\prime)}}\circ 1_{i^{(\prime)}}^{i+1}=  \pm1_{i^{(\prime)}}^{i+2^{(\prime)}}= 
 1_{i+1'}^{i+2^{(\prime)}}\circ 1_{i^{(\prime)}}^{i+1'}
\]
and the result will follow.  
First, notice that 
$\deg(1_{i^{(\prime)}}^{j^{(\prime)}})=j-i$ and this is the unique basis element of 
$\Delta({i^{(\prime)}})$ of this degree.  
By comparing degrees, we deduce that 
\begin{equation}\label{jjjjjjjjj}
 1_{i+1^{(\prime)}}^{i+2^{(\prime)}}    \circ    1_{i^{(\prime)}}^{i+1^{(\prime)}} = 
  c   1_{i^{(\prime)}} ^{i+2^{(\prime)}} 
\end{equation}
for some $c \in \mathbb{C}$.  It remains to show that $c=\pm1$ (note that, for the result to hold, it is enough to show that $c \neq 0$).  
 It is clear that the lefthand side of Equation \ref{jjjjjjjjj} is a 
 diagram with distinguished black points on  northern and southern
 boundaries given by the loadings corresponding to the partitions $\lambda_{i+2^{(\prime)}}$ and $\lambda_{i^{(\prime)}}$, respectively.  
 If the bijection traced out by the strands (after concatenation) uses the minimal number of crossings, then we are done.

Suppose that we are not in the case above, then we must apply the relations 
to the product 
  to obtain a diagram of the form 
$      c   1_{i^{(\prime)}} ^{i+2^{(\prime)}} $ for some $c \in \mathbb{C}$.     
   This product 
   has a number of `extra crossings' of strands of the same residue (that is, crossings which do not appear in $ 1_{i^{(\prime)}} ^{i+2^{(\prime)}} $).
  The rightmost of these  crossings involves a pair of strands of 
  residue $r$, say.  This crossing is bypassed  by the ghost of the strand of residue $r-1$ immediately to its right (for an example, see Figure \ref{Lironsayingrelations}). 
   Applying relation (2.10), the product can be written as a sum of two terms: one is zero modulo  more dominant terms, the other differs from the  original diagram 
  only   where we have untied the distinguished  crossing (for an example, see Figure \ref{Lironsayingrelations}).   
   
 Now suppose that  the resulting diagram is not equal to
$ 1_{i^{(\prime)}} ^{i+2^{(\prime)}} $, in which case it 
 has a 
rightmost `extra crossing' of residue $r+1$.  Now consider the  ghost of the leftmost of the two strands we  untied in the previous step;
the ghost of    this strand  bypasses  the rightmost `extra crossing'.  Repeating the above argument for all the  crossings, we obtain the result.  
  \end{proof}

\begin{eg}The elements $1^\nu_\mu$ for $\nu,\mu \in \lambda_{0},\lambda_{1},\lambda_{1'},\lambda_{2'}$ are depicted in Figures \ref{plentyoffishdiagrams2} and \ref{plentyoffishdiagrams}, below.  Figure \ref{Lironsayingrelations}  depicts the first use of relation (2.10) on the  product $1_{1'}^{2'}\circ 1_0^{1'}$ to untie a crossing.  
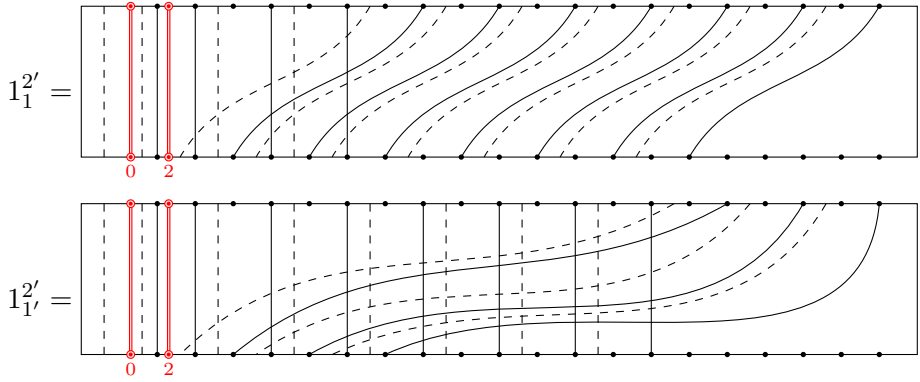
\begin{figure}[ht]\captionsetup{width=0.9\textwidth}
$$
   1_{1}^{2'}=\begin{minipage}{120mm} \scalefont{0.7}
    \begin{tikzpicture}[scale=1] 
   \draw (-2,0) rectangle (9,2);  
  \foreach \x in {-2,...,17}  
     {\fill (0.5*\x,2) circle (1pt); 
       \fill (0.5*\x,0) circle (1pt);}
  \draw[wei2] (-1.35,0)--(-1.35,2);
    \draw[wei2] (-1.35+0.5,0)--(-1.35+0.5,2);
      \node [wei2,below] at (-1.35,0) {\tiny $0$};
            \node [wei2,below] at (-1.35+0.5,0) {\tiny $2$}; 
            \draw[wei2,fill]  (-1.35,0)   circle(1pt);\draw[wei2,fill]  (-1.35,2)   circle(1pt);\draw[wei2,fill]  (-1.35+0.5,2)   circle(1pt);\draw[wei2,fill]  (-1.35+0.5,0)   circle(1pt);
                \foreach \x in {-2,-1,1,3}  
    {\draw (0.5*\x,2) -- (0.5*\x,0); ;}
  \foreach \x in {2,4,6,8,0,10,12}  
  {  \draw  (0.5*\x+2.5,2)  to [out=-120,in=60] (0.5*\x,0) ; }
            \foreach \x  in {-2,-1,1,3}  
    {\draw[dashed] (0.5*\x-0.7,2) -- (0.5*\x-0.7,0); ;}
  \foreach \x  in {2,4,6,8,0,10,12}  
  {  \draw[dashed]  (0.5*\x-0.7+2.5,2)  to [out=-120,in=60] (0.5*\x-0.7,0) ; }

 \end{tikzpicture}
 \end{minipage}
    $$ 
  $$
   1_{1'}^{2'}=\begin{minipage}{120mm} \scalefont{0.7}
    \begin{tikzpicture}[scale=1] 
   \draw (-2,0) rectangle (9,2);  
  \foreach \x in {-2,...,17}  
     {\fill (0.5*\x,2) circle (1pt); 
       \fill (0.5*\x,0) circle (1pt);}
  \draw[wei2] (-1.35,0)--(-1.35,2);
    \draw[wei2] (-1.35+0.5,0)--(-1.35+0.5,2);
      \node [wei2,below] at (-1.35,0) {\tiny $0$};
            \node [wei2,below] at (-1.35+0.5,0) {\tiny $2$}; 
            \draw[wei2,fill]  (-1.35,0)   circle(1pt);\draw[wei2,fill]  (-1.35,2)   circle(1pt);\draw[wei2,fill]  (-1.35+0.5,2)   circle(1pt);\draw[wei2,fill]  (-1.35+0.5,0)   circle(1pt);
               \foreach \x in {-2,-1,1,3,5,7,9,11}  
   {\draw (0.5*\x,2) -- (0.5*\x,0); ;}
       \foreach \x in {-2,-1,1,3,5,7,9,11}  
   {\draw[dashed] (0.5*\x-0.7,2) -- (0.5*\x-0.7,0); ;}

  \foreach \x in {0}  
  {  \draw  (0.5*\x+6.5,2)  to [out=-150,in=40] (0.5*\x,0) ; }
 \foreach \x in {2}  
  {  \draw  (0.5*\x+6.5,2)  to [out=-120,in=30] (0.5*\x,0) ; }
  \foreach \x in {4}  
  {  \draw  (0.5*\x+6.5,2)  to [out=-95,in=25] (0.5*\x,0) ; }
      \foreach \x in {0}  
  {  \draw[dashed]  (0.5*\x-0.7+6.5,2)  to [out=-150,in=45] (0.5*\x-0.7,0) ; }
 \foreach \x in {2}  
  {  \draw[dashed]  (0.5*\x-0.7+6.5,2)  to [out=-130,in=35] (0.5*\x-0.7,0) ; }
  \foreach \x in {4}  
  {  \draw[dashed]  (0.5*\x-0.7+6.5,2)  to [out=-120,in=25] (0.5*\x-0.7,0) ; }
  \end{tikzpicture}
 \end{minipage}    $$ 
 \caption{The elements $1^{2'}_{1'}$ and $1^{2'}_{1}$ corresponding to the maximal paths in Figure \ref{the other bits}.
 }
 \label{plentyoffishdiagrams2}
 \end{figure}

\begin{figure}[ht]\captionsetup{width=0.9\textwidth}
  $$
    \scalefont{0.7}
1_{0}^{1}=\begin{minipage}{120mm}    \begin{tikzpicture}[scale=1] 
   \draw (-2,0) rectangle (9,2);  
  \foreach \x in {-2,...,17}  
     {\fill (0.5*\x,2) circle (1pt); 
       \fill (0.5*\x,0) circle (1pt);}
  \draw[wei2] (-1.35,0)--(-1.35,2);
    \draw[wei2] (-1.35+0.5,0)--(-1.35+0.5,2);
      \node [wei2,below] at (-1.35,0) {\tiny $0$};
            \node [wei2,below] at (-1.35+0.5,0) {\tiny $2$}; 
            \draw[wei2,fill]  (-1.35,0)   circle(1pt);\draw[wei2,fill]  (-1.35,2)   circle(1pt);\draw[wei2,fill]  (-1.35+0.5,2)   circle(1pt);\draw[wei2,fill]  (-1.35+0.5,0)   circle(1pt);
      \foreach \x in {-2,-1,0,1,2,3,4,6}  
   {\draw (0.5*\x,2) -- (0.5*\x,0); ;}
      \foreach \x in {5,7,9}  
    {\draw (0.5*\x,0)  to [out=90,in=-90] (0.5*\x+1.5,2); ;}
        \foreach \x in {-2,-1,0,1,2,3,4,6}  
   {\draw[dashed] (0.5*\x-0.7,2) -- (0.5*\x-0.7,0); ;}
      \foreach \x in {5,7,9}  
    {\draw[dashed] (0.5*\x-0.7,0)  to [out=90,in=-90] (0.5*\x-0.7+1.5,2); ;}
 \end{tikzpicture}
 \end{minipage}
    $$  
    $$
    \scalefont{0.7}
1_{0}^{1'}=\begin{minipage}{120mm}    \begin{tikzpicture}[scale=1] 
   \draw (-2,0) rectangle (9,2);  
  \foreach \x in {-2,...,17}  
     {\fill (0.5*\x,2) circle (1pt); 
       \fill (0.5*\x,0) circle (1pt);}
  \draw[wei2] (-1.35,0)--(-1.35,2);
    \draw[wei2] (-1.35+0.5,0)--(-1.35+0.5,2);
      \node [wei2,below] at (-1.35,0) {\tiny $0$};
            \node [wei2,below] at (-1.35+0.5,0) {\tiny $2$}; 
            \draw[wei2,fill]  (-1.35,0)   circle(1pt);\draw[wei2,fill]  (-1.35,2)   circle(1pt);\draw[wei2,fill]  (-1.35+0.5,2)   circle(1pt);\draw[wei2,fill]  (-1.35+0.5,0)   circle(1pt);
      \foreach \x in {-2,-1,0,1,2,3,4,5}  
   {\draw (0.5*\x,2) -- (0.5*\x,0); ;}
     \foreach \x in {7,9}  
   {\draw (0.5*\x,2) -- (0.5*\x,0); ;}
      \draw  (0.5*11,2)  to [out=-120,in=60] (0.5*6,0) ; 
        \foreach \x in {-2,-1,0,1,2,3,4,5}  
   {\draw[dashed] (0.5*\x-0.7,2) -- (0.5*\x-0.7,0); ;}
     \foreach \x in {7,9}  
   {\draw[dashed] (0.5*\x-0.7,2) -- (0.5*\x-0.7,0); ;}
      \draw[dashed]  (0.5*11-0.7,2)  to [out=-120,in=60] (0.5*6-0.7,0) ; 
 \end{tikzpicture}
 \end{minipage}
    $$ 
 \caption{The elements $1^1_0$ and $1^{1'}_0$ corresponding to the maximal paths in Figure \ref{11law}.
 }
\label{plentyoffishdiagrams}
\end{figure}
\end{eg}

\begin{figure}[ht]\captionsetup{width=0.9\textwidth}
$$
   1_{0}^{2'}=\begin{minipage}{120mm} \scalefont{0.7}
    \begin{tikzpicture}[scale=1] 
   \draw (-2,0) rectangle (9,2);  
  \foreach \x in {-2,...,17}  
     {\fill (0.5*\x,2) circle (1pt); 
       \fill (0.5*\x,0) circle (1pt);}
  \draw[wei2] (-1.35,0)--(-1.35,2);
    \draw[wei2] (-1.35+0.5,0)--(-1.35+0.5,2);
      \node [wei2,below] at (-1.35,0) {\tiny $0$};
            \node [wei2,below] at (-1.35+0.5,0) {\tiny $2$}; 
            \draw[wei2,fill]  (-1.35,0)   circle(1pt);\draw[wei2,fill]  (-1.35,2)   circle(1pt);\draw[wei2,fill]  (-1.35+0.5,2)   circle(1pt);\draw[wei2,fill]  (-1.35+0.5,0)   circle(1pt);
                \foreach \x in {-2,-1,1,3}  
    {\draw (0.5*\x,2) -- (0.5*\x,0); ;}
  \foreach \x in {0,2,4,6}  
  {  \draw  (0.5*\x+2.5,2)  to [out=-130,in=90] (0.5*\x,0) ; }
 \foreach \x in {5}  
  {  \draw  (0.5*\x+4,2)  to [out=-110,in=50] (0.5*\x,0) ; }
   \foreach \x in {7}  
  {  \draw  (0.5*\x+4,2)  to [out=-110,in=40] (0.5*\x,0) ; }
     \foreach \x in {9}  
  {  \draw  (0.5*\x+4,2)  to [out=-110,in=30] (0.5*\x,0) ; }
                  \foreach \x in {-2,-1,1,3}  
    {\draw[dashed] (0.5*\x-0.7,2) -- (0.5*\x-0.7,0); ;}
  \foreach \x in {0,2,4,6}  
  {  \draw[dashed]  (0.5*\x-0.7+2.5,2)  to [out=-130,in=90] (0.5*\x-0.7,0) ; }
 \foreach \x in {5}  
  {  \draw[dashed]  (0.5*\x-0.7+4,2)  to [out=-110,in=50] (0.5*\x-0.7,0) ; }
   \foreach \x in {7}  
  {  \draw[dashed]  (0.5*\x-0.7+4,2)  to [out=-110,in=40] (0.5*\x-0.7,0) ; }
     \foreach \x in {9}  
  {  \draw[dashed]  (0.5*\x-0.7+4,2)  to [out=-110,in=30] (0.5*\x-0.7,0) ; }
  \end{tikzpicture}
 \end{minipage}
    $$ 
    \caption{The element $1_0^{2'}$.}
    \label{concatenateddiagram}
    \end{figure}
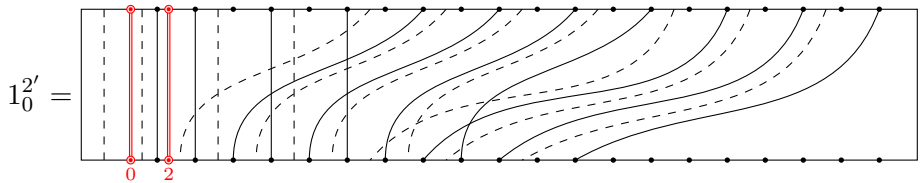

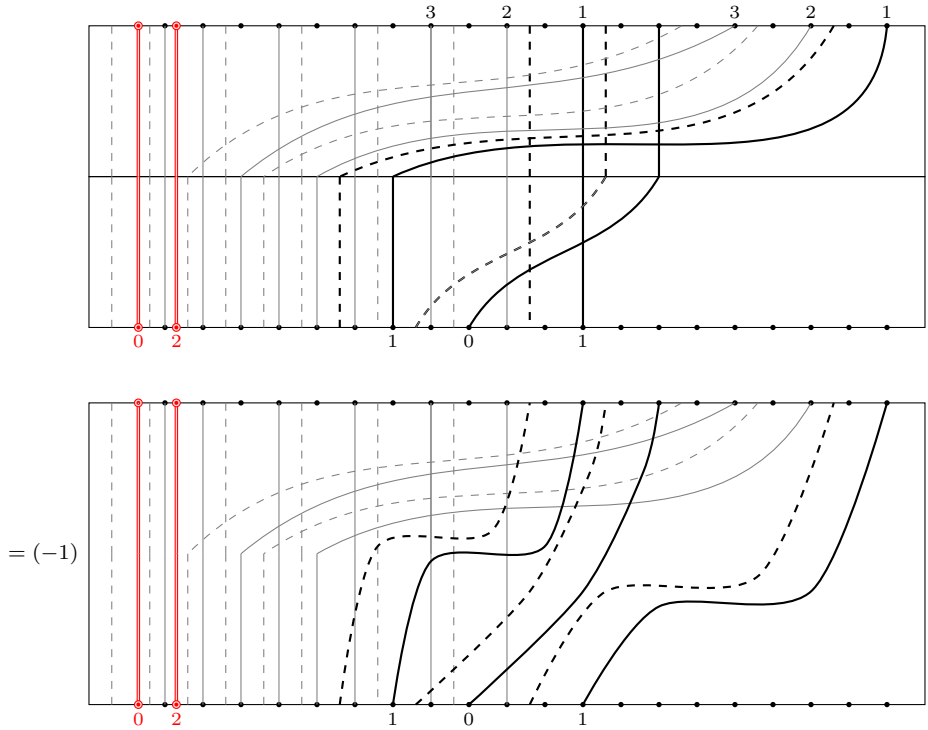
\begin{figure}[ht]\captionsetup{width=0.9\textwidth}
\captionsetup{width=0.9\textwidth}
\centering
$$
\scalefont{0.7}
    \begin{tikzpicture}[scale=1] 
   \draw (-2,0) rectangle (9,2);   \draw (-2,2) rectangle (9,4);  
  \foreach \x in {-2,...,17}  
     {\fill (0.5*\x,4) circle (1pt); 
 }
  \draw[wei2] (-1.35,2)--(-1.35,4);
    \draw[wei2] (-1.35+0.5,2)--(-1.35+0.5,4);
               \draw[wei2,fill]  (-1.35,4)   circle(1pt);\draw[wei2,fill]  (-1.35+0.5,4)   circle(1pt);                \foreach \x in {-2,-1,1,3,5,7}  
   {\draw[gray,] (0.5*\x,4) -- (0.5*\x,2); ;}  
    {\draw[thick] (0.5*11,4) -- (0.5*11,2); ;}
     {\draw[thick] (0.5*9,4) -- (0.5*9,0); ;}
          {\draw[thick,dashed] (0.5*9-0.7,4) -- (0.5*9-0.7,0); ;}
       \foreach \x in {-2,-1,1,3,7}  
   {\draw[gray,dashed] (0.5*\x-0.7,4) -- (0.5*\x-0.7,2); ;}
 {\draw[dashed, gray] (0.5*5-0.7,4) -- (0.5*5-0.7,-0); ;}
{\draw[dashed, thick] (0.5*11-0.7,4) -- (0.5*11-0.7,2); ;}
 {\draw[gray] (0.5*5,4) -- (0.5*5,0); ;}
  \foreach \x in {0}  
  {  \draw[gray]  (0.5*\x+6.5,4)  to [out=-150,in=40] (0.5*\x,2) ; }
 \foreach \x in {2}  
  {  \draw[gray]  (0.5*\x+6.5,4)  to [out=-120,in=30] (0.5*\x,2) ; }
  \foreach \x in {4}  
  {\draw[thick]  (0.5*\x+6.5,4)  to [out=-95,in=25] (0.5*\x,2) ; }
      \foreach \x in {0}  
  {  \draw[dashed,gray]  (0.5*\x-0.7+6.5,4)  to [out=-150,in=45] (0.5*\x-0.7,2) ; }
 \foreach \x in {2}  
  {  \draw[dashed,gray,]  (0.5*\x-0.7+6.5,4)  to [out=-130,in=35] (0.5*\x-0.7,2) ; }
  \foreach \x in {4}  
  {  \draw[dashed,thick]  (0.5*\x-0.7+6.5,4)  to [out=-120,in=25] (0.5*\x-0.7,2) 
  ; }
  {
   \foreach \x in {-2,...,17}  
     {
        \fill (0.5*\x,0) circle (1pt);}
  \draw[wei2] (-1.35,0)--(-1.35,2);
    \draw[wei2] (-1.35+0.5,0)--(-1.35+0.5,2);
      \node [wei2,below] at (-1.35,0) {\tiny $0$};
             \node [below] at (3,0) {\tiny $0$};
                          \node [below] at (4.5,0) {\tiny $1$};
\node [above] at (4.5,4) {\tiny $1$};\node [above] at (3.5,4) {\tiny $2$};
\node [above] at (2.5,4) {\tiny $3$};
\node [above] at (8.5,4) {\tiny $1$};\node [above] at (7.5,4) {\tiny $2$};
\node [above] at (6.5,4) {\tiny $3$};
                                       \node [below] at (2,0) {\tiny $1$};
            \node [wei2,below] at (-1.35+0.5,0) {\tiny $2$}; 
            \draw[wei2,fill]  (-1.35,0)   circle(1pt); \draw[wei2,fill]  (-1.35+0.5,0)   circle(1pt);
      \foreach \x in {-2,-1,0,1,2,3}  
   {\draw[gray,] (0.5*\x,2) -- (0.5*\x,0); ;}
    {\draw[thick] (0.5*4,2) -- (0.5*4,0); ;}
    {\draw[dashed,thick] (0.5*4-0.7,2) -- (0.5*4-0.7,0); ;}

     \foreach \x in {7}  
   {\draw[gray,] (0.5*\x,2) -- (0.5*\x,0); ;}
      \draw[dashed, thick]  (0.5*11-0.7,2)  to [out=-120,in=60] (0.5*6-0.7,0) ; 
      \draw[thick]  (0.5*11,2)  to [out=-120,in=60] (0.5*6,0) ; 
        \foreach \x in {-2,-1,0,1,2,3}  
   {\draw[dashed,gray,] (0.5*\x-0.7,2) -- (0.5*\x-0.7,0); ;}
     \foreach \x in {7}  
   {\draw[dashed,gray,] (0.5*\x-0.7,2) -- (0.5*\x-0.7,0); ;}
      \draw[dashed,gray]  (0.5*11-0.7,2)  to [out=-120,in=60] (0.5*6-0.7,0) ; 
  }
   \draw (-2,-5+0) rectangle (9,-5+4);  
  \foreach \x in {-2,...,17}  
     {\fill (0.5*\x,-5+4) circle (1pt); 
}
  \draw[wei2] (-1.35,-5+2)--(-1.35,-5+4);
    \draw[wei2] (-1.35+0.5,-5+2)--(-1.35+0.5,-5+4);
               \draw[wei2]  (-1.35,-5+4)   circle(1pt);\draw[wei2,fill]  (-1.35+0.5,-5+4)   circle(1pt);                \foreach \x in {-2, -1,1, 3, 5, 7}  
   {\draw[gray] (0.5*\x,-5+4) -- (0.5*\x,-5+2); ;}  
       \foreach \x in {-2, -1, +1, 3,7}  
   {\draw[gray,dashed] (0.5*\x-0.7,-5+4) -- (0.5*\x-0.7,-5+2); ;}
  {\draw[dashed,gray] (0.5*5-0.7,-5+4) -- (0.5*5-0.7,-5+-0); ;}
  {\draw[gray] (0.5*5,-5+4) -- (0.5*5,-5+0); ;}
  \foreach \x in {0}  
  {  \draw[gray]  (0.5*\x+6.5,-5+4)  to [out=-150,in=40] (0.5*\x,-5+2) ; }
 \foreach \x in {2}  
  {  \draw[gray]  (0.5*\x+6.5,-5+4)  to [out=-120,in=30] (0.5*\x,-5+2) ; }
  \foreach \x in {4}  
      \foreach \x in {0}  
  {  \draw[dashed,,gray]  (0.5*\x-0.7+6.5,-5+4)  to [out=-150,in=45] (0.5*\x-0.7,-5+2) ; }
 \foreach \x in {2}  
  {  \draw[dashed,,gray]  (0.5*\x-0.7+6.5,-5+4)  to [out=-130,in=35] (0.5*\x-0.7,-5+2) ; }
  {
   \foreach \x in {-2,...,17}  
     { 
        \fill (0.5*\x,-5+0) circle (1pt);}
  \draw[wei2] (-1.35,-5+0)--(-1.35,-5+2);
    \draw[wei2] (-1.35+0.5,-5+0)--(-1.35+0.5,-5+2);
      \node [wei2,below] at (-1.35,-5+0) {\tiny $0$};
             \node [below] at (3,-5+0) {\tiny $0$};
                          \node [below] at (4.5,-5+0) {\tiny $1$};
                                       \node [below] at (2,-5+0) {\tiny $1$};
            \node [wei2,below] at (-1.35+0.5,-5+0) {\tiny $2$}; 
            \draw[wei2,fill]  (-1.35,-5+0)   circle(1pt); \draw[wei2,fill]  (-1.35+0.5,-5+0)   circle(1pt);
      \foreach \x in {-2,-1,0,1,2,3}  
   {\draw[gray,] (0.5*\x,-5+2) -- (0.5*\x,-5+0); ;}
 
     \foreach \x in {7}  
   {\draw[gray,] (0.5*\x,-5+2) -- (0.5*\x,-5+0); ;}
        \foreach \x in {-2,-1, 0, 1, 2,3}  
   {\draw[dashed,,gray,] (0.5*\x-0.7,-5+2) -- (0.5*\x-0.7,-5+0); ;}
     \foreach \x in {7}  
   {\draw[dashed,,gray,] (0.5*\x-0.7,-5+2) -- (0.5*\x-0.7,-5+0); ;}

   \draw[black,thick] plot[smooth] coordinates 
     {(4.5,-5+0)(5.5,-5+1.3) (7.5,-5+1.5)(8.5,-5+4) };
   \draw[black,thick,dashed] plot[smooth] coordinates 
     {(4.5-0.7,-5+0)(5.5-0.7,-5+1.5) (7.5-0.7,-5+1.7)(8.5-0.7,-5+4) };

   \draw[black,thick] plot[smooth] coordinates 
     {(2,-5+0)(2.5,-5+1.9)(4,-5+2.1) (4.5,-5+4) };
  \draw[black,thick,dashed] plot[smooth] coordinates 
     {(2-0.7,-5+0)(2.5-0.7,-5+2.1)(4-0.7,-5+2.3) (4.5-0.7,-5+4) };

 \draw[black,thick] plot[smooth] coordinates 
     {(3,-5+0)(4.5,-5+1.5)(5.3,-5+3.1)  (5.5,-5+4) };
 \draw[black,dashed,thick] plot[smooth] coordinates 
     {(3-0.7,-5+0)(4.5-0.7,-5+1.5)(5.3-0.7,-5+3.1)  (5.5-0.7,-5+4) }; 
\draw(-2.6,-3) node {$= (-1) $};
 }
  \end{tikzpicture}
$$
 \caption{
The top diagram is obtained by concatenation of the diagram  $1_{1'}^{2'}$ above $1_0^{1'}$.
The lower diagram is obtained by applying relation (2.10) to the product $1_{1'}^{2'}\circ 1_0^{1'}$.
We move the ghost $0$ strand through the crossing pair of black strands of residue 1 (we do not record the diagram which is zero modulo more dominant terms). 
 \\
We have made emphasised the strands to which we are applying relation (2.10) and we have recorded their residues along
the southern edge of the frames.
Along the northern edge of the frame of the top diagram, we have recorded the residues of the 
3 extra crossings of like-labelled pairs.  
}
\label{Lironsayingrelations}
\end{figure}

 \clearpage
 
\newcommand{\etalchar}[1]{$^{#1}$}
\providecommand{\bysame}{\leavevmode\hbox to3em{\hrulefill}\thinspace}
\providecommand{\MR}{\relax\ifhmode\unskip\space\fi MR }
\providecommand{\MRhref}[2]{%
  \href{http://www.ams.org/mathscinet-getitem?mr=#1}{#2}
}
\providecommand{\href}[2]{#2}

 \end{document}